\newcommand*{\transpose}{%
  {\mathpalette\@transpose{}}%
}
\newcommand*{\@transpose}[2]{%
  % #1: math style
  % #2: unused
  \raisebox{\depth}{$\m@th#1\intercal$}%
}
\let\OLDthebibliography\thebibliography
\renewcommand\thebibliography[1]{
  \OLDthebibliography{#1}
  \setlength{\parskip}{0pt}
  \setlength{\itemsep}{0pt plus 0.3ex}
}
\preto\tabular{\setcounter{magicrownumbers}{0}}
\newcounter{magicrownumbers}
\preto\tabular{\setcounter{magicrownumbers2}{0}}
\newcounter{magicrownumbers2}
\newtheorem{theorem}{Theorem}[section]
\newtheorem{lemma}[theorem]{Lemma}
\newtheorem{proposition}[theorem]{Proposition}
\newtheorem{corollary}[theorem]{Corollary}
\theoremstyle{definition}
\newtheorem{example}[theorem]{Example}
\theoremstyle{remark}
\newtheorem{remark}[theorem]{Remark}
\newtheorem{question}[theorem]{Question}
\title{The coherent rank of a graph with three eigenvalues}
\author{Gary Greaves 
  \thanks{School of Physical and Mathematical Sciences, 
  Nanyang Technological University, 
  21 Nanyang Link, Singapore 637371,
 \tt{gary@ntu.edu.sg}}
 \and
 Jose Yip
  \thanks{School of Physical and Mathematical Sciences, 
  Nanyang Technological University, 
  21 Nanyang Link, Singapore 637371,
 \tt{josezhengho.yip@ntu.edu.sg}}
}
\date{}
\begin{document}

\maketitle

\begin{abstract}
    We characterise graphs that have three distinct eigenvalues and coherent ranks 8 and 9, linking the former to certain symmetric $2$-designs and the latter to specific quasi-symmetric $2$-designs. This characterisation leads to the discovery of a new biregular graph with three distinct eigenvalues. Additionally, we demonstrate that the coherent rank of a triregular graph with three distinct eigenvalues is at least 14. Finally, we introduce a conjecturally infinite family of biregular graphs with three distinct eigenvalues, obtained by switching the block graphs of orthogonal arrays.
\end{abstract}

\section{Introduction}

Strongly regular graphs are a fundamental object of study in algebraic graph theory. These graphs can be characterised by their spectrum: connected strongly regular graphs are precisely the connected regular graphs that have precisely three distinct eigenvalues. 
Thus, graphs with exactly three distinct eigenvalues can be viewed as a generalisation of strongly regular graphs.

At the British Combinatorial Conference in 1995, Willem Haemers posed a question: apart from complete bipartite graphs, which non-regular graphs have precisely three distinct eigenvalues? This prompted seminal work by Muzychuk and Klin~\cite{MUZYCHUK1998191} and Van Dam~\cite{VANDAM1998101}, who each produced infinite families of non-regular graphs with this property. These families include graphs with vertices having exactly two distinct degrees, as well as a finite number of examples with three distinct degrees, such as the multiplicative cones discovered by Bridges and Mena~\cite{BM81} in 1981.

Let $\mathscr{G}_3$ denote the set of all connected graphs having precisely three distinct eigenvalues. Several attempts have been made to systematically classify the graphs in $\mathscr{G}_3$:
\begin{itemize}
    \item Van Dam~\cite{VANDAM1998101}: graphs in $\mathscr{G}_3$ with at most 29 vertices;
    \item Chuang and Omidi~\cite{chuang}: graphs in $\mathscr{G}_3$ whose largest eigenvalue is less than 8;
    \item Van Dam~\cite{VANDAM1998101} again: graphs in $\mathscr{G}_3$ whose smallest eigenvalue is at least $-2$;
    \item Cheng et al.~\cite{Cheng18}: graphs in $\mathscr{G}_3$ whose second largest eigenvalue is at most 1.
\end{itemize}
These classifications did not yield any graphs in $\mathscr{G}_3$ with more than three distinct degrees. 
In fact, the following question by De Caen~\cite{ddC99} and {\cite[Problem 9]{vDddC05}} remains open:

\begin{question}
\label{q:decaen}
    Must each graph in $\mathscr{G}_3$ have at most three distinct degrees?
\end{question}

Van Dam et al.~\cite{manyvalencies15} demonstrated that the condition of having three distinct eigenvalues in Question~\ref{q:decaen} is critical, showing that there exist connected graphs with four or five distinct eigenvalues and arbitrarily many valencies.

Muzychuk and Klin~\cite{MUZYCHUK1998191} approached the problem of characterising of $\mathscr{G}_3$ graphs by examining the rank of the \emph{Weisfeiler-Leman closure}, which we refer to as the \emph{coherent rank}. 
Strongly regular graphs can be characterised as graphs whose coherent rank is equal to 3. 
Loosely speaking, the coherent rank can be thought of as a measure of how close a graph is to being strongly regular. 
Muzychuk and Klin characterised the graphs in $\mathscr{G}_3$ that have a coherent rank of at most 6 but claimed without proof~\cite[Proposition 6.2]{MUZYCHUK1998191} that no graph in $\mathscr{G}_3$ has a coherent rank of 7 or 8. 
Our first contribution is to prove that no graph in $\mathscr{G}_3$ has a coherent rank of 7. 
However, we show that there do exist graphs in $\mathscr{G}_3$ with a coherent rank of 8, and we characterise such graphs in Theorem~\ref{thm:rank8}, showing that they correspond to certain symmetric $2$-designs.

We continue the characterisation program of Muzychuk and Klin by characterising graphs in $\mathscr{G}_3$ with coherent rank 9. 
One consequence is the discovery of a new biregular graph in $\mathscr{G}_3$ (see Example~\ref{ex:new}). 
In fact, it turns out that graphs in $\mathscr{G}_3$ with coherent rank 9 correspond to certain quasi-symmetric designs (see Section~\ref{sec:9}). 
Quasi-symmetric designs, particularly affine designs~\cite{VANDAM1998101}, have been used to construct an infinite family of graphs in $\mathscr{G}_3$. 
Rowlinson~\cite{rowlinson} asked whether graphs in $\mathscr{G}_3$ can be constructed from quasi-symmetric designs that are not of affine type. 
We answer Rowlinson's question in the affirmative and characterise the quasi-symmetric designs that correspond to graphs in $\mathscr{G}_3$ with coherent rank 9.

Our next contribution is a lower bound for the coherent rank of a graph in $\mathscr{G}_3$ that has three distinct degrees. We show that if a graph in $\mathscr{G}_3$ has three distinct degrees, then its coherent rank is at least 14 (see Theorem~\ref{thm:rank14lb}). This bound has the potential to be sharp contingent on the existence of a certain quasi-symmetric design (see Example~\ref{ex:14}).

All currently known infinite families of graphs in $\mathscr{G}_3$ have a fixed coherent rank. Our final contribution is a conjecturally infinite construction of biregular graphs in $\mathscr{G}_3$ with conjecturally arbitrarily large coherent ranks. Each graph in this family corresponds to a prime power in a certain recurrence sequence and is obtained by switching the block graph of an orthogonal array (see Section~\ref{sec:latin}).

All graphs in this paper are finite, undirected, and simple. The \textbf{spectrum} and \textbf{eigenvalues} of a graph are defined to be those of its adjacency matrix. The paper is organised as follows. 
In Section~\ref{sec:basic}, we develop the foundational theory of graphs with three distinct eigenvalues. In Section~\ref{sec:cr}, we introduce the coherent closure and coherent rank of a graph and present Muzychuk and Klin's characterisation of graphs in $\mathscr{G}_3$ whose coherent rank is small. 
In Section~\ref{sec:8}, we characterise the graphs in $\mathscr{G}_3$ that have coherent rank 8. 
In Section~\ref{sec:9}, we characterise the graphs in $\mathscr{G}_3$ that have coherent rank 9. 
In Section~\ref{sec:tri}, we prove a lower bound for the coherent rank of a triregular graph in $\mathscr{G}_3$. In Section~\ref{sec:latin}, we exhibit a new family of biregular graphs in $\mathscr{G}_3$ obtained from switching the block graphs of orthogonal arrays. Finally, in Section~\ref{sec:open}, we provide a selection of open problems that emerged during our investigations.

\section{Graphs with three distinct eigenvalues}
\label{sec:basic}

In this section, we introduce some basic properties of graphs in $\mathscr G_3$ and tools, which we will use to establish our results, below.

\subsection{Fundamentals}

We denote by $I_n$, $J_n$, $O_n$, and $\mathbf 1_n$, the identity matrix, all-ones matrix, zero matrix, and all-ones (column) vector of order $n$, respectively. 
We merely write $I$, $J$, $O$, and $\mathbf 1$ when the order can be determined from context, or in the case of $J$ and $O$ when the matrix is not square. 
A graph $(V,E)$ is called \textbf{empty} or \textbf{complete} if $E = \emptyset$ or $E = \binom{V}{2}$, respectively.
Denote by $K_v$ the complete graph on $v$ vertices.
Suppose that $\Gamma$ has degree sequence $\{ [k_1]^{n_1}, [k_2]^{n_2}, \dots, [k_d]^{n_d}\}$.
If $d = 1$ then each vertex has $k_1$ neighbours and $\Gamma$ is called \textbf{regular}.
In the cases when $d = 2$ or $d=3$, we call $\Gamma$ \textbf{biregular} or \textbf{triregular}, respectively.

For fixed $\theta_0 > \theta_1 > \theta_2$, define $\mathscr{G}_3(\theta_0,\theta_1,\theta_2) \subset \mathscr{G}_3$ as the subset of connected graphs with eigenvalues $\theta_0$, $\theta_1$, and $\theta_2$. 
Let $\Gamma$ be a graph with $r+1$ distinct eigenvalues $\theta_0 > \theta_1 > \dots > \theta_r$.
The spectrum of $\Gamma$ is denoted by $\operatorname{spec}(\Gamma)$ and is written as $\operatorname{spec}(\Gamma) = \left \{ [\theta_0]^{m_0},[\theta_1]^{m_1}, \dots, [\theta_r]^{m_r} \right \}$, where $m_0, m_1, \dots, m_r$ are the multiplicities of $\theta_0, \theta_1, \dots, \theta_r$ respectively.
By the Perron-Frobenius Theorem~\cite[Theorem 2.2.1]{Brouwer:SpectraGraphs}, a graph $\Gamma \in \mathscr{G}_3(\theta_0,\theta_1,\theta_2)$ has spectrum $\left \{ [\theta_0]^{1},[\theta_1]^{m_1}, [\theta_2]^{m_2} \right \}$, where $m_1$ and $m_2$ are positive integers whose sum is $1$ less than the order of $\Gamma$.
Let $A(\Gamma)$ denote the adjacency matrix of a graph $\Gamma$, and $V(\Gamma)$ its vertex set. 
Again, by the Perron-Frobenius Theorem, if $\Gamma \in \mathscr{G}_3(\theta_0,\theta_1,\theta_2)$, then
\begin{equation}
\label{eqn:3ev}
A(\Gamma)^2 - (\theta_1+\theta_2)A(\Gamma)+\theta_1\theta_2 I = \bm \alpha \bm \alpha ^\top,
\end{equation}
where $\bm \alpha = (\alpha_{\mathsf x})_{\mathsf x \in V(\Gamma)}$ is a positive (column) eigenvector for $\theta_0$. 
Conversely, if a connected graph $\Gamma$ satisfies \eqref{eqn:3ev} for some $\theta_1$, $\theta_2$, and $\bm \alpha$, then $\Gamma \in \mathscr{G}_3$. 
The diagonal entries of \eqref{eqn:3ev} show that the degree $d_{\mathsf x}$ of vertex $\mathsf x$ is $\alpha_{\mathsf x}^2-\theta_1\theta_2$. 
Alternatively, for each $\mathsf x \in V(\Gamma)$, we can write $\alpha_{\mathsf x} = \sqrt{d_{\mathsf x}+\theta_1\theta_2}$.
Moreover, \eqref{eqn:3ev} implies any graph in $\mathscr{G}_3$ must have diameter 2.

The following theorem due to Cauchy~\cite[Theorem 2.5.1]{Brouwer:SpectraGraphs}, will be used repeatedly, throughout.

\begin{theorem}[Eigenvalue Interlacing Theorem]
\label{thm:interlacing}
Let $A$ be a real symmetric $n \times n$ matrix, and $B$ be an $m \times m$ principal submatrix of $A$. 
Suppose $A$ has eigenvalues $\lambda_1 \leqslant \dots \leqslant \lambda_n$, and $B$ has eigenvalues $\mu_1 \leqslant \dots \leqslant \mu_m$.
Then \[
\lambda_k \leqslant \mu_k \leqslant \lambda_{n - m+k}.
\]
\end{theorem}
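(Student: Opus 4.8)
The plan is to deduce the theorem from the Courant--Fischer variational characterisation of the eigenvalues of a real symmetric matrix, so that the substantive work is bookkeeping of subspace dimensions rather than any hard analysis.

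First I would normalise the set-up. A principal submatrix $B$ is obtained by deleting a fixed set of $n-m$ indices of $A$; conjugating $A$ by a suitable permutation matrix is a similarity that preserves the spectrum and sends the retained indices to $\{1,\dots,m\}$, so without loss of generality $B$ is the leading $m\times m$ block of $A$. I would then record the zero-padding embedding $\iota\colon\mathbb{R}^m\to\mathbb{R}^n$, $\iota(x)=(x,0)$, which preserves the dimension of subspaces and the Rayleigh quotient, since $\iota(x)^\top A\,\iota(x)=x^\top B x$ and $\iota(x)^\top\iota(x)=x^\top x$ for all $x$.

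Next I would invoke Courant--Fischer in both of its forms: for a real symmetric matrix $M$ of order $N$ with eigenvalues $\nu_1\leqslant\dots\leqslant\nu_N$,
\[
\nu_k=\min_{\dim S=k}\;\max_{0\neq y\in S}\frac{y^\top M y}{y^\top y}=\max_{\dim S=N-k+1}\;\min_{0\neq y\in S}\frac{y^\top M y}{y^\top y},
\]
where $S$ ranges over subspaces of $\mathbb{R}^N$. (If one wants a self-contained argument, this is itself proved from an orthonormal eigenbasis by noting that two subspaces whose dimensions sum to more than $N$ meet nontrivially.) For the left inequality $\lambda_k\leqslant\mu_k$, I would take a $k$-dimensional subspace $T\subseteq\mathbb{R}^m$ attaining the minimum in the min-max form for $\mu_k$; its image $\iota(T)$ is a $k$-dimensional subspace of $\mathbb{R}^n$ on which the Rayleigh quotient of $A$ has maximum exactly $\mu_k$, so the minimum over all $k$-dimensional subspaces defining $\lambda_k$ is at most $\mu_k$. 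For the right inequality $\mu_k\leqslant\lambda_{n-m+k}$, I would take an $(m-k+1)$-dimensional subspace attaining the maximum in the max-min form for $\mu_k$; its image is an $(m-k+1)$-dimensional subspace of $\mathbb{R}^n$ on which the minimum Rayleigh quotient of $A$ equals $\mu_k$, and since $n-(n-m+k)+1=m-k+1$, the max-min form for $\lambda_{n-m+k}$ is at least this minimum, namely $\mu_k$.

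The step I expect to require the most care is the index arithmetic: matching the subspace dimension $m-k+1$ produced by the max-min form on $B$ against the dimension $N-k'+1$ demanded by the max-min form on $A$ forces $k'=n-m+k$, which is precisely what delivers the claimed upper bound. An alternative route that avoids citing Courant--Fischer is to first prove the single-deletion case $m=n-1$ directly---by examining the sign changes of the rational function $1+\sum_i c_i^2/(\nu_i-\lambda)$ coming from the spectral decomposition of $A$---and then iterate $n-m$ times, composing the one-step interlacing inequalities; the bookkeeping there is comparably delicate, though the analytic content remains elementary.
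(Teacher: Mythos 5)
Your proof is correct: the Courant--Fischer min--max argument with the zero-padding embedding is sound, and the index count $n-(n-m+k)+1 = m-k+1$ that delivers the upper bound is handled properly. Note, however, that the paper itself gives no proof of this statement --- it quotes it as a classical theorem of Cauchy, citing Brouwer and Haemers, \emph{Spectra of Graphs}, Theorem 2.5.1 --- so there is nothing internal to compare against; your argument is essentially the standard one found in that reference, which likewise rests on Rayleigh quotients and a dimension-counting intersection of subspaces (there phrased via vectors orthogonal to the first $i-1$ eigenvectors rather than via the explicit min--max formula).
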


For a square matrix $M$ and $U$ a subset of its rows, denote by $M[U]$ the principal submatrix of $M$ induced on the rows and columns corresponding to $U$.
Let $\Gamma = (V,E)$ be a graph.
Similarly, for a subset $U \subset V$, denote by $\Gamma[U]$ the subgraph of $\Gamma$ induced on the vertices in $U$.
Thus, $A(\Gamma[U]) = A(\Gamma)[U]$.

Suppose $\Gamma \in \mathscr G_3(\theta_0,\theta_1,\theta_2)$.
Then $\Gamma$ is connected and not complete.
Hence, $\Gamma$ must contain the path graph $P_3 = K_{1,2}$ as an induced subgraph.
Since $P_3 \in \mathscr G_3(\sqrt{2},0,-\sqrt{2})$, by Theorem~\ref{thm:interlacing}, we must have
\begin{equation}
\label{eqn:basicevineq}
    \theta_1 \geqslant 0 \text{ and } \theta_2 \leqslant -\sqrt{2}.
\end{equation}

Regular graphs in $\mathscr{G}_3$ are known as \emph{strongly regular graphs}.
A \textbf{strongly regular graph} $\Gamma$ with parameters $(v,k,a,c)$ is a $k$-regular graph on $v$ vertices such that each pair of adjacent vertices in $V(\Gamma)$ has precisely $a$ common neighbours and each pair of non-adjacent vertices has precisely $c$ common neighbours.
Denote by $\operatorname{SRG}(v,k,a,c)$ the set of all strongly regular graphs that have parameters $(v,k,a,c)$.
If $A = A(\Gamma)$ then the $(\mathsf x,\mathsf y)$-entry of $A^2$ is equal to the number of $2$-walks from $\mathsf x$ to $\mathsf y$.
Thus, if $\Gamma \in \operatorname{SRG}(v,k,a,c)$ then
\[
A^2 = kI + a A + c(J-I-A).
\]
Furthermore, if $\Gamma$ is connected then 
\[
\Gamma \in \mathscr G_3\left (k,\frac{a-c+\sqrt{(a-c)^2+4(k-c)}}{2},\frac{a-c-\sqrt{(a-c)^2+4(k-c)}}{2} \right ).
\]
Conversely, it also follows that if $\Gamma \in \mathscr G_3(\theta_0,\theta_1,\theta_2)$ is regular with $v$ vertices then 
\[\Gamma \in \operatorname{SRG}(v,\theta_0,\theta_0-1+(\theta_1+1)(\theta_2+1),\theta_0+\theta_1\theta_2).
\]

\subsection{Cones and bipartite graphs}

A complete bipartite graph $K_{a,b}$ belongs to the set $\mathscr G_3(-\sqrt{ab},0,\sqrt{ab})$.
Moreover, $K_{a,b}$ has spectrum $\left \{ [-\sqrt{ab}]^1, [0]^{a+b-2}, [\sqrt{ab}]^1 \right \}$.
The \textbf{cone over} a graph $\Gamma$ is formed by adjoining a vertex adjacent to every vertex of $\Gamma$.
Muzychuk and Klin~\cite{MUZYCHUK1998191} showed that if $\Gamma$ is a $v$-vertex, $k$-regular strongly regular graph with smallest eigenvalue $\theta_2$ then the cone over $\Gamma$ is in $\mathscr G_3$ if and only if $v = \theta_2(\theta_2-k)$.

The \textbf{complement} of a graph $\Gamma = (V,E)$ is defined to be the graph $(V,\binom{V}{2}-E)$ and is denoted by $\overline{\Gamma}$.
The following theorem is a useful characterisation of cones and complete bipartite graphs.

\begin{theorem}[{\cite[Theorem 2.2]{Cheng_2016}}]
\label{thm:cheng}
    Let $\Gamma \in \mathscr G_3$ be a non-regular graph whose complement is disconnected.
    Then $\Gamma$ is a cone or complete bipartite.
\end{theorem}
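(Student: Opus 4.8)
The plan is to turn the disconnected complement into a join and then mine the three-eigenvalue relation \eqref{eqn:3ev} for a rigid quadratic identity on the Perron eigenvector $\bm\alpha$. Since $\overline\Gamma$ is disconnected, its components $C_1,\dots,C_t$ ($t\geqslant 2$) are completely joined to one another in $\Gamma$: every vertex of $C_i$ is adjacent to every vertex of $C_j$ for $i\neq j$, so $\Gamma$ is the join of the induced subgraphs $\Gamma_i=\Gamma[C_i]$. If some $C_i$ is a single vertex, that vertex is adjacent to all others and $\Gamma$ is a cone; so I assume each $|C_i|=n_i\geqslant 2$ and aim to show $\Gamma$ is complete bipartite. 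Throughout write $n=|V(\Gamma)|$ and $\alpha_{\mathsf x}=\sqrt{d_{\mathsf x}+\theta_1\theta_2}$.

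Taking $\mathsf x\in C_i$, $\mathsf y\in C_j$ with $i\neq j$ (hence $\mathsf x\sim\mathsf y$), sorting their common neighbours by block and substituting $\deg_{\Gamma_i}(\mathsf x)=\alpha_{\mathsf x}^{2}-\theta_1\theta_2-(n-n_i)$ into the off-diagonal of \eqref{eqn:3ev} gives the block-free identity
\[
\alpha_{\mathsf x}^{2}-\alpha_{\mathsf x}\alpha_{\mathsf y}+\alpha_{\mathsf y}^{2}=2\theta_1\theta_2+n+\theta_1+\theta_2=:K .
\]
Holding $\mathsf y$ fixed and varying $\mathsf x,\mathsf x'$ inside one block yields $(\alpha_{\mathsf x}-\alpha_{\mathsf x'})(\alpha_{\mathsf x}+\alpha_{\mathsf x'}-\alpha_{\mathsf y})=0$, so on each block $\bm\alpha$ is either constant or all outside values equal a single sum. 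Suppose first that $\bm\alpha$ is constant on every block. Then positivity forces distinct blocks to carry distinct values (two equal values give $K=a^2$, incompatible with $a^2-ab+b^2=K$ for a third value), and since $x^2-xy+y^2=K$ has at most two positive solutions in $x$ for fixed $y$, we get $t=2$. Splitting $C_i$ against its complement and repeating the count shows $A(\Gamma_i)^2-(\theta_1+\theta_2)A(\Gamma_i)+\theta_1\theta_2 I=cJ$ for a constant $c$, so each $\Gamma_i$ is regular with all $\mathbf 1$-orthogonal eigenvalues in $\{\theta_1,\theta_2\}$.

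Hence each block is empty, complete, or strongly regular with restricted eigenvalues $\theta_1,\theta_2$; the bound $\theta_2\leqslant-\sqrt2<-1$ of \eqref{eqn:basicevineq} rules out complete (and disconnected) blocks, whose restricted spectrum contains $-1$, while the connectivity of $\overline\Gamma[C_i]=\overline{\Gamma_i}$ rules out imprimitive strongly regular blocks, leaving empty or primitive blocks. A primitive strongly regular block has $\theta_1>0$, and its complement (again primitive strongly regular) has spectral gap at least $1$, whence $n_i-1-k_i\geqslant-\theta_2$, i.e.\ $k_i-\theta_2\leqslant n_i-1$, where $k_i$ is its valency. But if a block were strongly regular then, with $t=2$, the equitable partition $\{C_1,C_2\}$ has quotient matrix $\left(\begin{smallmatrix}k_1&n_2\\ n_1&k_2\end{smallmatrix}\right)$, whose second eigenvalue is an eigenvalue of $\Gamma$ below $\theta_0$ and so lies in $\{\theta_1,\theta_2\}$; this forces $(k_1-\theta_2)(k_2-\theta_2)=n_1n_2$ or $(k_1-\theta_1)(k_2-\theta_1)=n_1n_2$, each strictly below $n_1n_2$ by the inequalities $k_i-\theta_2\leqslant n_i-1$ and $k_i-\theta_1<n_i$ — a contradiction. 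So both blocks are empty and $\Gamma=K_{n_1,n_2}$.

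The step I expect to be the main obstacle is the other branch of the dichotomy, where $\bm\alpha$ is non-constant on some block. There the identity forces $t=2$, with $\bm\alpha\equiv\beta$ on $C_2$ and taking exactly two values $a,b$ (with $a+b=\beta$) on $C_1=A\sqcup B$. The Perron equation on $C_1$ shows each vertex of $A$ (resp.\ $B$) has a fixed number of neighbours in $A$ and in $B$, so $\{A,B,C_2\}$ is equitable with a $3\times 3$ quotient matrix of spectrum exactly $\{\theta_0,\theta_1,\theta_2\}$. The difficulty is that $\Gamma_1$ need not be regular, so the clean strongly-regular argument is unavailable; instead one must combine this quotient spectrum (via the interlacing theorem applied to its principal submatrices) with the rank-two identity $A(\Gamma_1)^2-(\theta_1+\theta_2)A(\Gamma_1)+\theta_1\theta_2 I=\bm\alpha_1\bm\alpha_1^{\top}-n_2 J$, where $\bm\alpha_1$ is the restriction of $\bm\alpha$ to $C_1$, together with the fact that the common-neighbour counts make $\Gamma_1$ amply regular relative to $\{A,B\}$, in order to force $a=b$ and thereby collapse this branch into the constant case already settled.
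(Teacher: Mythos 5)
Since the paper itself gives no proof of Theorem~\ref{thm:cheng} (it is quoted from \cite{Cheng_2016}), your argument has to stand on its own, and it does not: it is incomplete. The join reduction, the identity $\alpha_{\mathsf x}^{2}-\alpha_{\mathsf x}\alpha_{\mathsf y}+\alpha_{\mathsf y}^{2}=2\theta_1\theta_2+n+\theta_1+\theta_2$ across components of $\overline\Gamma$ (where $n$ is the order of $\Gamma$), and the entire branch where $\bm\alpha$ is constant on every component are essentially correct. But the second branch --- non-constant $\bm\alpha$ on a component, which you yourself call ``the main obstacle'' --- is a plan, not a proof: nothing you write forces $a=b$. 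You list ingredients (the $3\times 3$ quotient, interlacing, the rank-two identity) without carrying out any derivation, and along the way you assert that the quotient of $\{A,B,C_2\}$ has spectrum \emph{exactly} $\{\theta_0,\theta_1,\theta_2\}$, which is unjustified: Lemma~\ref{lem:equitable} gives only set containment, and a priori the quotient spectrum could be $\{\theta_0,\theta_1,\theta_1\}$ or $\{\theta_0,\theta_2,\theta_2\}$. Moreover, the gap is substantive rather than routine. The configuration you must exclude --- two values $a,b$ on $C_1$ and the single value $a+b$ on $C_2$ --- is precisely the structure of the triregular cones of Bridges and Mena (see Lemma~\ref{lem:VDcone} and Table~\ref{tab:3distinctValencies}): there the apex is a singleton component of the complement carrying value $\alpha_1=\alpha_2+\alpha_3$. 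So graphs realizing your ``bad'' local algebra genuinely exist when $|C_2|=1$, and no manipulation of \eqref{eqn:3ev} and the quadratic identity alone can collapse the branch to $a=b$; any correct completion must use $|C_2|\geqslant 2$ in an essential way, and your sketch never does.

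There is also a smaller hole in the constant branch. Your final contradiction uses the inequalities $k_i-\theta_2\leqslant n_i-1$ and $k_i-\theta_1<n_i$ for \emph{both} blocks, but you derive $k_i-\theta_2\leqslant n_i-1$ only for strongly regular blocks (via connectivity of the complement). In the mixed case --- one block empty, the other a primitive strongly regular graph --- the inequality for the empty block ($-\theta_2\leqslant n_i-1$) is never established, so the subcase where the quotient eigenvalue is $\theta_2$ is not ruled out as written. This one is easy to close: an empty block on at least two vertices makes $0$ a restricted eigenvalue, forcing $\theta_1=0$ by \eqref{eqn:basicevineq}, while, as you note, a primitive strongly regular block forces $\theta_1>0$; hence the mixed case is vacuous. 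Adding that line fixes the constant branch, but the non-constant branch remains an open hole in your proof.
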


Next, Van Dam~\cite[Section 4.2]{VANDAM1998101} answered Question~\ref{q:decaen} in the affirmative for cones.

\begin{lemma}[{\cite[Section 4.2]{VANDAM1998101}}]
    \label{lem:VDcone}
    Let $\Gamma$ be a cone in $\mathscr G_3$.
    Then either $\Gamma$ is a cone over a strongly regular graph or $\Gamma$ is triregular.
    Furthermore, if $\Gamma \in \mathscr G_3(\theta_0,\theta_1,\theta_2)$ is triregular with distinct degrees $k_1 > k_2 > k_3$ then $\sqrt{k_1+\theta_1\theta_2} = \sqrt{k_2+\theta_1\theta_2}+\sqrt{k_3+\theta_1\theta_2}$.
\end{lemma}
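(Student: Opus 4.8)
The plan is to place the apex of the cone first and exploit the block structure of \eqref{eqn:3ev}. Write $\infty$ for the universal vertex of $\Gamma$ and $\Gamma' = \Gamma - \infty$ for the graph it cones over, so that, ordering $\infty$ first,
\[
A(\Gamma) = \begin{pmatrix} 0 & \bm 1^\top \\ \bm 1 & B \end{pmatrix},
\]
where $B = A(\Gamma')$ and the off-diagonal block is all-ones because $\infty$ is adjacent to every other vertex. Partition the Perron eigenvector as $\bm\alpha = (\alpha_\infty, \bm\beta^\top)^\top$ and set $n = |V(\Gamma)|$. The diagonal entry of \eqref{eqn:3ev} at $\infty$ merely reproduces $\alpha_\infty^2 = (n-1) + \theta_1\theta_2$, while the off-diagonal block at $\infty$ gives, for every non-apex vertex $\mathsf x$, the scalar identity $\alpha_\infty \alpha_{\mathsf x} = d_{\mathsf x} - 1 - \theta_1 - \theta_2$. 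Combining this with $\alpha_{\mathsf x} = \sqrt{d_{\mathsf x} + \theta_1\theta_2}$ and squaring yields $\alpha_\infty^2(d_{\mathsf x} + \theta_1\theta_2) = (d_{\mathsf x} - 1 - \theta_1 - \theta_2)^2$, a fixed quadratic equation satisfied by $d_{\mathsf x}$. Hence the non-apex degrees take at most two values, and the dichotomy reduces to counting how many.

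If the non-apex degrees take a single value, then $\Gamma'$ is regular and $\bm\beta$ is a constant multiple of $\bm 1$, so the bottom-right block of \eqref{eqn:3ev} reads $B^2 - (\theta_1+\theta_2)B + \theta_1\theta_2 I = \bm\beta\bm\beta^\top - J$, which rearranges (using that $\bm\beta\bm\beta^\top$ is a scalar multiple of $J$) into $B^2 = k'I + aB + c(J - I - B)$ for suitable $a,c$, where $k'$ is the degree of $\Gamma'$; together with regularity this is the defining identity of a strongly regular graph, so $\Gamma$ is a cone over a strongly regular graph. If instead the non-apex degrees take two distinct values $d_2 > d_3$, I must rule out that the apex degree $n-1$ equals one of them. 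Substituting $t = n-1$ into the quadratic and using $\alpha_\infty^2 = (n-1)+\theta_1\theta_2$ reduces this to requiring $(1+\theta_1)(1+\theta_2) = 0$ or $2\alpha_\infty^2 = (1+\theta_1)(1+\theta_2)$; but \eqref{eqn:basicevineq} forces $1+\theta_1 > 0$ and $1+\theta_2 < 0$, so $(1+\theta_1)(1+\theta_2) < 0$ and both fail. Thus $n-1$ is not a root, the degrees $n-1 > d_2 > d_3$ are distinct, and $\Gamma$ is triregular with $k_1 = n-1$.

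For the final identity, take the triregular case, so $k_2 = d_2$ and $k_3 = d_3$ are the two roots of the quadratic and $k_1 = n-1$. Writing $\beta_i = \sqrt{k_i + \theta_1\theta_2}$, I will show $(\beta_2+\beta_3)^2 = \beta_1^2 = \alpha_\infty^2$ via Vieta's formulas. The sum of roots gives $k_2 + k_3 = 2(1+\theta_1+\theta_2) + \alpha_\infty^2$, and expanding the product $\beta_2^2\beta_3^2 = (k_2 + \theta_1\theta_2)(k_3 + \theta_1\theta_2)$ using the sum and product of roots makes the two $\alpha_\infty^2\theta_1\theta_2$ terms cancel, collapsing it to the perfect square $\big((1+\theta_1)(1+\theta_2)\big)^2$. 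Since $(1+\theta_1)(1+\theta_2) < 0$, this pins down $\beta_2\beta_3 = -(1+\theta_1)(1+\theta_2)$, and substituting back gives $(\beta_2+\beta_3)^2 = (k_2+k_3) + 2\theta_1\theta_2 + 2\beta_2\beta_3 = \alpha_\infty^2$; taking positive square roots yields $\sqrt{k_1+\theta_1\theta_2} = \sqrt{k_2+\theta_1\theta_2} + \sqrt{k_3+\theta_1\theta_2}$.

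The only genuinely delicate points are the two sign arguments, both of which ride entirely on \eqref{eqn:basicevineq}: the strict negativity of $(1+\theta_1)(1+\theta_2)$ is what simultaneously excludes $n-1$ from being a non-apex degree and fixes the correct sign of $\beta_2\beta_3$ after the Vieta computation. I expect the main bookkeeping obstacle to be the product-of-roots expansion, where one must spot the cancellation of the $\alpha_\infty^2\theta_1\theta_2$ terms that produces the clean perfect square; everything else is routine block arithmetic.
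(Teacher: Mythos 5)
Your proof is correct, and in fact there is nothing inside the paper to compare it against: Lemma~\ref{lem:VDcone} is stated with a citation to Van Dam and no proof is reproduced here, so your argument serves as a self-contained reconstruction, and it follows the natural route given the paper's framework (the rank-one identity \eqref{eqn:3ev} plus the eigenvalue bounds \eqref{eqn:basicevineq}). I checked the two pivotal computations and they hold: every non-apex degree satisfies the monic quadratic $t^{2}-\bigl(2(1+\theta_1+\theta_2)+\alpha_\infty^{2}\bigr)t+(1+\theta_1+\theta_2)^{2}-\alpha_\infty^{2}\theta_1\theta_2=0$, and substituting $t=n-1$ together with $\alpha_\infty^{2}=(n-1)+\theta_1\theta_2$ does reduce to $(1+\theta_1)(1+\theta_2)=0$ or $2\alpha_\infty^{2}=(1+\theta_1)(1+\theta_2)$, both impossible since \eqref{eqn:basicevineq} forces $(1+\theta_1)(1+\theta_2)<0$; likewise Vieta gives $(k_2+\theta_1\theta_2)(k_3+\theta_1\theta_2)=\bigl((1+\theta_1)(1+\theta_2)\bigr)^{2}$ with the $\alpha_\infty^{2}\theta_1\theta_2$ terms cancelling exactly as you predicted, whence $\beta_2\beta_3=-(1+\theta_1)(1+\theta_2)$ and $(\beta_2+\beta_3)^{2}=k_1+\theta_1\theta_2$. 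The only cosmetic caveat is in the single-degree case: if $\Gamma'$ is empty (so $\Gamma$ is a star $K_{1,b}$, which does lie in $\mathscr G_3$), the conclusion ``cone over a strongly regular graph'' holds only under the paper's permissive definition of strong regularity, which the empty graph satisfies vacuously; this is a convention issue, not a gap in your argument.
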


Two examples of triregular cones in $\mathscr G_3$ were discovered by Bridges and Mena~\cite{BM81}.
See Table~\ref{tab:3distinctValencies}, below.
In fact, Bridges and Mena~\cite{BM81} classified the triregular cones in $\mathscr G_3$ whose smallest two eigenvalues $\theta_1$ and $\theta_2$ satisfy $\theta_1+\theta_2 = 0$.
There are potentially three such graphs, with the existence of the third possibility still an open problem (see \cite[Section 9]{Cheng_2016}).

Van Dam~{\cite[Section 4.2]{VANDAM1998101}} showed that biregular cones in $\Gamma \in \mathscr G_3$ are precisely cones over strongly regular graphs.
We record this result as the following lemma.

\begin{lemma}
\label{lem:higmanType7}
    Let $\Gamma \in \mathscr G_3$ be a biregular cone.
    Then $\Gamma$ is a cone over a strongly regular graph.
\end{lemma}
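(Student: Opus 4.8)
The plan is to derive the statement immediately from the dichotomy already established in Lemma~\ref{lem:VDcone}. Since $\Gamma$ is a cone lying in $\mathscr{G}_3$, that lemma tells us that exactly one of two things happens: either $\Gamma$ is a cone over a strongly regular graph, or $\Gamma$ is triregular. My entire task is therefore to exclude the second alternative, after which the first is forced.

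To rule out triregularity I would simply invoke the terminology fixed in Section~\ref{sec:basic}: a graph is called biregular precisely when its degree sequence has exactly two distinct values, and triregular precisely when it has exactly three. These two conditions are mutually exclusive. As $\Gamma$ is assumed biregular, it has exactly two distinct degrees and so cannot be triregular. Feeding this back into the dichotomy of Lemma~\ref{lem:VDcone} leaves only the conclusion that $\Gamma$ is a cone over a strongly regular graph, which is exactly the claim.

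There is essentially no substantive obstacle here, which is consistent with the fact that the result is merely \emph{recorded} from Van Dam rather than proved afresh. The only point I would flag, for the sake of showing that the two descriptions genuinely coincide, is a consistency check in the converse direction: a cone over a $k$-regular strongly regular graph $\Delta$ on $v$ vertices has an apex of degree $v$ and every other vertex of degree $k+1$, hence exactly two distinct degrees unless $v = k+1$. But $v = k+1$ would force $\Delta = K_v$ and thus $\Gamma = K_{v+1}$, a complete graph with only two distinct eigenvalues, which cannot belong to $\mathscr{G}_3$. This confirms that ``biregular cone in $\mathscr{G}_3$'' and ``cone over a strongly regular graph in $\mathscr{G}_3$'' describe the same family, so the one-line deduction above is the whole proof.
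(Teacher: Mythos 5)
Your proposal is correct. The paper itself offers no proof of this lemma: it simply records the statement as a result of Van Dam, citing the same Section 4.2 of \cite{VANDAM1998101} that underlies Lemma~\ref{lem:VDcone}. Your derivation supplies the missing (one-line) justification in exactly the natural way: Lemma~\ref{lem:VDcone} says a cone in $\mathscr G_3$ is a cone over a strongly regular graph or is triregular, and a biregular graph has exactly two distinct degrees, hence is not triregular, so the first alternative must hold. One small logical remark: you phrase the dichotomy as ``exactly one of two things happens,'' but exclusivity is neither guaranteed by the statement of Lemma~\ref{lem:VDcone} nor needed --- the argument only requires that at least one alternative holds and that the triregular one fails. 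Your closing consistency check (that a cone over a strongly regular graph is genuinely biregular, since a strongly regular graph in this paper's convention lies in $\mathscr G_3$ and so is never complete) is not needed for the stated implication, but it is a correct and worthwhile sanity check that the two descriptions coincide.
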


Non-regular complete bipartite graphs can be characterised as the non-regular graphs in $\mathscr G_3$ with second largest eigenvalue $0$.

\begin{theorem}[{\cite[Corollary 2.3]{Cheng_2016}}]
\label{thm:chengbipartite}
    Let $\Gamma \in \mathscr G_3$ be a non-regular graph.
    Then the following are equivalent.
    \begin{itemize}
        \item[(i)] $\Gamma$ is bipartite;
        \item[(ii)] $\Gamma$ is complete bipartite;
        \item[(iii)] $\theta_1 = 0$.
    \end{itemize}
\end{theorem}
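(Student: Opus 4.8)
The plan is to prove the cycle of implications (ii) $\Rightarrow$ (i) $\Rightarrow$ (iii) $\Rightarrow$ (ii), so that the equivalence of all three follows. The implication (ii) $\Rightarrow$ (i) is immediate, and (i) $\Rightarrow$ (iii) is short: a connected bipartite graph has spectrum symmetric about $0$, so the set $\{\theta_0,\theta_1,\theta_2\}$ is closed under negation. Since $\theta_0 > 0$ by Perron--Frobenius, the only negation partner available for $\theta_0$ is the least eigenvalue, forcing $\theta_2 = -\theta_0$; then $-\theta_1$ must also lie in the set, and the constraints $\theta_0 > \theta_1 > \theta_2$ together with distinctness leave only $\theta_1 = 0$. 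All of the real content is therefore in (iii) $\Rightarrow$ (ii).

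To prove (iii) $\Rightarrow$ (ii), assume $\theta_1 = 0$. Then \eqref{eqn:3ev} reads $A^2 = \theta_2 A + \bm\alpha\bm\alpha^\top$, and its diagonal gives $d_{\mathsf x} = \alpha_{\mathsf x}^2$, i.e.\ $\alpha_{\mathsf x} = \sqrt{d_{\mathsf x}}$. I would pass to the spectral decomposition $A = \theta_0 E_0 + \theta_2 E_2$, where $E_0,E_2$ are the orthogonal projections onto the $\theta_0$- and $\theta_2$-eigenspaces; since $\bm\alpha$ spans the $\theta_0$-eigenspace we have $E_0 = \bm\alpha\bm\alpha^\top/(2|E|)$ with $2|E| = \bm\alpha^\top\bm\alpha$. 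Writing $E_2$ as a Gram matrix, $(E_2)_{\mathsf{xy}} = \langle \mathbf v_{\mathsf x},\mathbf v_{\mathsf y}\rangle$ (legitimate as $E_2$ is positive semidefinite), and reading off the entries of $A = \theta_0 E_0 + \theta_2 E_2$ using $A_{\mathsf{xx}} = 0$ and $\alpha_{\mathsf x} = \sqrt{d_{\mathsf x}}$, one finds $\|\mathbf v_{\mathsf x}\|^2$ proportional to $d_{\mathsf x}$, and for distinct $\mathsf x,\mathsf y$ that $(E_2)_{\mathsf{xy}} = \|\mathbf v_{\mathsf x}\|\,\|\mathbf v_{\mathsf y}\|$ when $\mathsf x \not\sim \mathsf y$, whereas $(E_2)_{\mathsf{xy}} = \|\mathbf v_{\mathsf x}\|\,\|\mathbf v_{\mathsf y}\| + 1/\theta_2$ when $\mathsf x \sim \mathsf y$.

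The crucial observation is that $1/\theta_2 < 0$ (by \eqref{eqn:basicevineq}), so the equality case of Cauchy--Schwarz applies: for distinct $\mathsf x,\mathsf y$ the vectors $\mathbf v_{\mathsf x},\mathbf v_{\mathsf y}$ are positive scalar multiples of one another precisely when $\mathsf x \not\sim \mathsf y$, and are strictly non-parallel when $\mathsf x \sim \mathsf y$. Grouping the vertices according to the direction of $\mathbf v_{\mathsf x}$ thus partitions $V(\Gamma)$ into independent sets between which all edges are present; that is, $\Gamma$ is complete multipartite, with at least two parts since $\Gamma$ is connected and not complete.

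It then remains to show that a non-regular complete multipartite graph $\Gamma = K_{n_1,\dots,n_t}$ in $\mathscr G_3$ is complete bipartite. Here $\alpha_{\mathsf x} = \sqrt{d_{\mathsf x}}$ with $d_{\mathsf x} = n - n_i$ for $\mathsf x$ in part $i$ (where $n = \sum_i n_i$), so the Perron equation $A\bm\alpha = \theta_0\bm\alpha$ gives that $(\theta_0 + n_i)\sqrt{n - n_i}$ is independent of $i$; as $m \mapsto (\theta_0 + m)\sqrt{n-m}$ is strictly unimodal on $(0,n)$, at most two distinct part sizes occur. Computing the spectrum from the equitable partition into parts, a part size $s$ occurring $a$ times contributes the eigenvalue $-s$ with multiplicity $a-1$, alongside the eigenvalue $0$ and the eigenvalues of a small reduced quotient matrix; requiring exactly three distinct eigenvalues with $0$ in the middle forces every negative eigenvalue to equal $\theta_2$, and since the two part sizes are distinct this forces each to occur once, giving $t = 2$ and $\Gamma = K_{n_1,n_2}$ with $n_1 \neq n_2$. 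I expect the main obstacle to be the rigidity step of the third paragraph, namely recognising that the combinatorial adjacency condition is exactly the Cauchy--Schwarz equality case for $E_2$; once complete multipartiteness is in hand, the reduction via the Perron eigenvector and the final two-part eigenvalue count are comparatively mechanical. (Alternatively, having shown $\Gamma$ is complete multipartite, its complement is disconnected, and Theorem~\ref{thm:cheng} reduces the finish to excluding the cone case.)
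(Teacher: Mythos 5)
The paper itself does not prove this statement; it quotes it directly from \cite[Corollary 2.3]{Cheng_2016}, so there is no internal proof to compare against and your self-contained argument is necessarily a different route. On its merits, most of it is sound: (ii)$\Rightarrow$(i) is trivial; the symmetric-spectrum argument for (i)$\Rightarrow$(iii) is correct; and the heart of (iii)$\Rightarrow$(ii) --- writing $A=\theta_0E_0+\theta_2E_2$, realising $E_2$ as a Gram matrix, and identifying non-adjacency with the Cauchy--Schwarz equality case to conclude that $\Gamma$ is complete multipartite --- is a correct (and pleasantly geometric) proof of the standard fact that a connected graph whose second largest eigenvalue is $0$ (indeed $\leqslant 0$) is complete multipartite. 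The unimodality of $m\mapsto(\theta_0+m)\sqrt{n-m}$ applied to the Perron equation also correctly limits the parts to at most two distinct sizes.

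The one step that does not hold up as written is the final count. From ``every negative eigenvalue equals $\theta_2$, and since the two part sizes are distinct this forces each to occur once'' you only rule out the case where \emph{both} sizes $s_1\neq s_2$ repeat (which would give two distinct negative eigenvalues $-s_1$ and $-s_2$). It does not exclude the case $a_1=1$, $a_2\geqslant 2$, where the only repeated-size eigenvalue is $-s_2=\theta_2$ and no immediate contradiction appears. To finish that case, note that the partition into size-$s_1$ vertices and size-$s_2$ vertices is equitable with quotient matrix $\left[\begin{smallmatrix} 0 & a_2s_2\\ s_1 & (a_2-1)s_2\end{smallmatrix}\right]$, whose determinant $-a_2s_1s_2$ is negative, so by Lemma~\ref{lem:equitable} it contributes a negative eigenvalue of $\Gamma$; substituting $\lambda=-s_2$ into its characteristic polynomial $\lambda^2-(a_2-1)s_2\lambda-a_2s_1s_2$ gives $a_2s_2(s_2-s_1)\neq 0$, so this negative eigenvalue differs from $-s_2$, producing a fourth distinct eigenvalue --- a contradiction. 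With this two-line computation inserted (or, for your alternative finish via Theorem~\ref{thm:cheng}, with an analogous computation excluding the cone case), the proof is complete.
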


\subsection{Valencies and equitable partitions}

Let $\Gamma= (V,E)$ be a graph and let $\pi = \{\pi_1,\dots,\pi_t\}$ be a partition of the vertex set $V$.
We call $\pi$ \textbf{equitable} if, for each $i,j \in \{1,\dots,t\}$ there exists $n_{ij}$ such that each vertex in $\pi_i$ has $n_{ij}$ neighbours in $\pi_j$.
Denote by $Q_\pi(\Gamma)$ the \textbf{quotient matrix} $(n_{ij})_{i,j \in \{1,\dots,t\}}$ that corresponds to the vertex partition $\pi$.
\begin{lemma}[{\cite[Lemma 2.3.1]{Brouwer:SpectraGraphs}}]
\label{lem:equitable}
    Let $\Gamma$ be a graph with equitable partition $\pi$ of $V(\Gamma)$.
    Then $\operatorname{spec}(Q_\pi(\Gamma)) \subset \operatorname{spec}(\Gamma)$.
\end{lemma}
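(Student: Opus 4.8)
The plan is to relate the adjacency matrix $A = A(\Gamma)$ and the quotient matrix $Q = Q_\pi(\Gamma)$ through the \emph{characteristic matrix} of the partition. Writing $V(\Gamma) = \{\mathsf x_1, \dots, \mathsf x_n\}$ and $\pi = \{\pi_1, \dots, \pi_t\}$, I would define the $n \times t$ matrix $S$ whose $(\mathsf x, j)$-entry is $1$ if $\mathsf x \in \pi_j$ and $0$ otherwise, so that the columns of $S$ are the indicator vectors of the parts. The first step is to establish the intertwining relation $A S = S Q$. Indeed, the $(\mathsf x, j)$-entry of $AS$ counts the neighbours of $\mathsf x$ lying in $\pi_j$; if $\mathsf x \in \pi_i$, then equitability of $\pi$ makes this equal to $n_{ij}$, which is exactly the $(\mathsf x, j)$-entry of $SQ$, since the $\mathsf x$-row of $S$ simply selects row $i$ of $Q$.

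With $AS = SQ$ in hand, I would lift each eigenvector of $Q$ to one of $A$. If $Q \mathbf v = \mu \mathbf v$ for some nonzero $\mathbf v \in \mathbb{R}^t$, then $A(S\mathbf v) = (AS)\mathbf v = (SQ)\mathbf v = \mu (S \mathbf v)$, so $S\mathbf v$ is an eigenvector of $A$ with eigenvalue $\mu$, provided $S\mathbf v \ne 0$. This is the one place where the structure of $S$ is used: since the parts $\pi_1, \dots, \pi_t$ are nonempty and pairwise disjoint, the columns of $S$ are linearly independent, so $S$ has full column rank and $S\mathbf v = 0$ forces $\mathbf v = 0$. Hence $\mu \in \operatorname{spec}(\Gamma)$, and as $\mu$ was an arbitrary eigenvalue of $Q$, this gives $\operatorname{spec}(Q_\pi(\Gamma)) \subset \operatorname{spec}(\Gamma)$.

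There is no serious obstacle here; the only point needing a little care is that $Q$ need not be symmetric, so a priori its eigenvalues could be complex and its geometric and algebraic multiplicities could differ. The argument above sidesteps this: every eigenvalue $\mu$ of $Q$ is exhibited as an eigenvalue of the symmetric matrix $A$, so in fact all eigenvalues of $Q$ are automatically real. If one wants the containment to respect multiplicities, I would instead pass to the symmetrised quotient by conjugating $Q$ with the diagonal matrix of square-root part-sizes, $D = \operatorname{diag}(\sqrt{|\pi_1|}, \dots, \sqrt{|\pi_t|})$; then $D Q D^{-1}$ is symmetric and the columns of $S D^{-1}$ are orthonormal, which upgrades the set containment to a containment of multisets via the fact that the characteristic polynomial of $Q$ divides that of $A$. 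For the present application, the set-level statement suffices.
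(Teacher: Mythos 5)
Your proof is correct and is essentially the standard argument: the paper offers no proof of its own here, citing Brouwer--Haemers (Lemma 2.3.1), and that cited proof is exactly your characteristic-matrix intertwining relation $AS = SQ$ followed by lifting eigenvectors of $Q$ to eigenvectors of $A$ via the injectivity of $S$. The only cosmetic point is that to cover a priori complex eigenvalues of $Q$ you should take $\mathbf v \in \mathbb{C}^t$ rather than $\mathbb{R}^t$; the argument goes through verbatim, as you effectively note.
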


Suppose $\Gamma$ has degree sequence $\{ [k_1]^{n_1}, [k_2]^{n_2}, \dots, [k_d]^{n_d}\}$.
For each $i \in \{1,\dots,d\}$, let $V_i$ denote the subset of vertices whose degree is $k_i$.
Clearly, $\{V_1,\dots,V_d\}$ is a partition of $V(\Gamma)$.
We call this partition the \textbf{valency partition} of $\Gamma$ and we denote it by $\mathfrak D(\Gamma)$.

\begin{theorem}[{\cite[Section 4]{VANDAM1998101}}]
    \label{thm:equitableValency}
    Let $\Gamma \in \mathscr G_3(\theta_0,\theta_1,\theta_2)$ have at most three distinct valencies.
    Then the valency partition is equitable.
    Furthermore, $\theta_0$ is an eigenvalue of $Q_{\mathfrak D(\Gamma)}(\Gamma)$.
\end{theorem}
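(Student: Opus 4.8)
The plan is to prove that the subspace $W$ of $\mathbb{R}^{V(\Gamma)}$ spanned by the characteristic vectors $\chi_1,\dots,\chi_d$ of the valency classes $V_1,\dots,V_d$ is invariant under $A=A(\Gamma)$; since $(A\chi_j)_{\mathsf x}$ counts the neighbours of $\mathsf x$ in $V_j$, the condition $AW\subseteq W$ is precisely the statement that $\mathfrak D(\Gamma)$ is equitable. Write $\bm d=(d_{\mathsf x})_{\mathsf x\in V(\Gamma)}$ for the degree vector and set $a_i=\sqrt{k_i+\theta_1\theta_2}$, so that $a_i=\alpha_{\mathsf x}$ for every $\mathsf x\in V_i$. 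Each of $\mathbf 1=\sum_i\chi_i$, $\bm\alpha=\sum_i a_i\chi_i$, and $\bm d=\sum_i k_i\chi_i$ is constant on every valency class and hence lies in $W$. Because $k_i=a_i^2-\theta_1\theta_2$, the span of the class-profiles $(1,\dots,1)$, $(a_1,\dots,a_d)$, $(k_1,\dots,k_d)$ of these three vectors equals the span of $(1,\dots,1)$, $(a_1,\dots,a_d)$, $(a_1^2,\dots,a_d^2)$, whose associated Vandermonde matrix is nonsingular as the $a_i$ are pairwise distinct. For $d\le 3$ this shows that $\{\mathbf 1,\bm\alpha,\bm d\}$ already spans $W$, which is where the hypothesis of at most three valencies is used.

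It remains to verify that $A$ maps each of these three spanning vectors back into $W$. Two cases are immediate: $A\bm\alpha=\theta_0\bm\alpha\in W$ since $\bm\alpha$ is a $\theta_0$-eigenvector, and $A\mathbf 1=\bm d\in W$ by definition of the degree vector. The crux is the third vector: multiplying \eqref{eqn:3ev} on the right by $\mathbf 1$ and using $A\mathbf 1=\bm d$ yields
\[
A\bm d=A^2\mathbf 1=(\theta_1+\theta_2)\bm d-\theta_1\theta_2\mathbf 1+(\mathbf 1^\top\bm\alpha)\,\bm\alpha,
\]
which is again a combination of $\mathbf 1,\bm\alpha,\bm d$ and so lies in $W$. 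Hence $AW\subseteq W$ and $\mathfrak D(\Gamma)$ is equitable.

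For the final assertion, let $Q=Q_{\mathfrak D(\Gamma)}(\Gamma)=(n_{ij})$ and $\bm v=(a_1,\dots,a_d)^\top$. Evaluating $A\bm\alpha=\theta_0\bm\alpha$ at a vertex $\mathsf x\in V_i$ and grouping its neighbours by valency class gives $\sum_j n_{ij}a_j=\theta_0 a_i$, that is, $Q\bm v=\theta_0\bm v$; since $\bm v$ is a positive vector, $\theta_0$ is an eigenvalue of $Q$.

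I expect the only genuine obstacle to be the displayed identity $A\bm d=A^2\mathbf 1\in W$, which is what forces $W$ to be invariant once combined with the elementary fact that $\mathbf 1,\bm\alpha,\bm d$ span $W$ when $d\le 3$. It is worth emphasising that this spanning fails as soon as $d\ge 4$: then $\dim W=d>3$, the three vectors cannot span $W$, and the argument---correctly---no longer forces equitability.
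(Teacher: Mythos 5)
Your proof is correct. Note that the paper does not actually prove this statement---it is imported from Van Dam \cite[Section 4]{VANDAM1998101}---and your argument is essentially Van Dam's original one: the same three identities ($A\mathbf 1=\bm d$, $A\bm\alpha=\theta_0\bm\alpha$, and the expression for $A^2\mathbf 1$ obtained from \eqref{eqn:3ev}) together with the nonsingularity of the Vandermonde matrix in the distinct values $a_i=\sqrt{k_i+\theta_1\theta_2}$, which is exactly where $d\leqslant 3$ is needed; you merely phrase it as $A$-invariance of the subspace $W$ spanned by the class characteristic vectors rather than as solving a nonsingular linear system for the neighbour counts $n_{ij}$ of each individual vertex.
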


Biregular, non-bipartite graphs in $\mathscr G_3$ are subject to stronger conditions, which we list in Theorem~\ref{thm:bireg}.

\begin{theorem}[{\cite[Theorem 4.3]{Cheng_2016}}]
    \label{thm:bireg}
    Let $\Gamma$ be a non-bipartite biregular graph in $\mathscr G_3(\theta_0,\theta_1,\theta_2)$ with degree sequence $\{ [k_1]^{n_1}, [k_2]^{n_2}\}$ and $\mathfrak D(\Gamma) = \{V_1,V_2\}$. 
	Then the following conditions hold:
	\begin{enumerate}
		\item[(i)] All eigenvalues of $\Gamma$ are integers.
        \item[(ii)] The quotient matrix $Q_{\mathfrak D(\Gamma)}(\Gamma)$ has eigenvalues $\theta_0$ and $\theta$, where $\theta \in \{\theta_1,\theta_2\}$.
		\item[(iii)] $\sqrt{(k_1+\theta_1\theta_2)(k_2+\theta_1\theta_2)} = -\theta(\theta' +1)$ where $\{ \theta, \theta'\} = \{ \theta_1, \theta_2\}$.
            \item[(iv)] If $\Gamma[V_1]$ or $\Gamma[V_2]$ is empty then $\theta = \theta_2$.
  \end{enumerate}
\end{theorem}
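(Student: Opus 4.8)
The plan is to run the whole argument through the valency partition $\mathfrak D(\Gamma) = \{V_1,V_2\}$, which is equitable by Theorem~\ref{thm:equitableValency}, and its $2\times 2$ quotient matrix $Q = Q_{\mathfrak D(\Gamma)}(\Gamma) = \begin{pmatrix} n_{11}&n_{12}\\ n_{21}&n_{22}\end{pmatrix}$, where $n_{ij}$ counts the neighbours in $V_j$ of a vertex of $V_i$, so that $k_i = n_{i1}+n_{i2}$ and counting cross-edges gives $n_1 n_{12}=n_2 n_{21}$. Writing $\beta_i = \sqrt{k_i+\theta_1\theta_2}$ for the value the Perron eigenvector $\bm\alpha$ takes on $V_i$ (it is constant on each part), the vector $(\beta_1,\beta_2)^\top$ is a $\theta_0$-eigenvector of $Q$, i.e. $n_{i1}\beta_1+n_{i2}\beta_2=\theta_0\beta_i$. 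I would prove the four parts in the order (ii), (iii), (i), (iv). For (ii): since $n_1 n_{12}=n_2 n_{21}$, the matrix $D^{1/2}QD^{-1/2}$ with $D=\operatorname{diag}(n_1,n_2)$ is symmetric, so $Q$ is real-diagonalisable; by Theorem~\ref{thm:equitableValency} one eigenvalue is $\theta_0$, and by Lemma~\ref{lem:equitable} both lie in $\operatorname{spec}(\Gamma)=\{\theta_0,\theta_1,\theta_2\}$. Were the second eigenvalue also $\theta_0$, diagonalisability would force $Q=\theta_0 I$, hence $n_{12}=n_{21}=0$, contradicting connectedness; thus $Q$ has eigenvalues $\theta_0$ and $\theta\in\{\theta_1,\theta_2\}$.

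For (iii), set $t=\beta_2/\beta_1$, observing that $t\neq 1$ precisely because $\Gamma$ is biregular ($k_1\neq k_2$). The eigenvector equations give $n_{12}t=\theta_0-n_{11}$ and $n_{21}/t=\theta_0-n_{22}$; feeding these into $\beta_1\beta_2 = t\beta_1^2 = t(n_{11}+n_{12}+\theta_1\theta_2)$ and into $\beta_1\beta_2 = \beta_2^2/t = (n_{21}+n_{22}+\theta_1\theta_2)/t$ yields $\beta_1\beta_2 = \theta_0+n_{11}(t-1)+t\theta_1\theta_2$ and $\beta_1\beta_2 = \theta_0+n_{22}(1/t-1)+\theta_1\theta_2/t$. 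Eliminating $n_{11},n_{22}$ through the trace relation $n_{11}+n_{22}=\operatorname{tr}Q=\theta_0+\theta$ from (ii) and cancelling the factor $1-t$ collapses both to $\beta_1\beta_2 = -\theta-\theta_1\theta_2$. Since $\theta_1\theta_2=\theta\theta'$, this is exactly $\sqrt{(k_1+\theta_1\theta_2)(k_2+\theta_1\theta_2)}=-\theta(\theta'+1)$. The cancellation of $1-t$ is where biregularity is indispensable.

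For (i), all eigenvalues are algebraic integers, and by (ii) both $\theta_0$ and $\theta$ are roots of the monic integer quadratic $x^2-(\operatorname{tr}Q)x+\det Q$. If $\theta$ is rational it is an integer, so $\theta_0=\operatorname{tr}Q-\theta$ is too, and the remaining eigenvalue $\theta'$ cannot be an irrational algebraic integer since its Galois conjugate would have to be one of the other two, now-rational, eigenvalues; hence $\theta'\in\mathbb Z$ as well. If instead $\theta$ is irrational, the quadratic is its minimal polynomial, forcing its conjugate to be $\theta_0$, and the same conjugacy bookkeeping makes $\theta'$ rational, hence an integer. Now invoke (iii): $\beta_1\beta_2=-\theta(\theta'+1)$ is irrational because $\theta'\notin\{0,-1\}$ (using $\theta_1>0$ from non-bipartiteness via Theorem~\ref{thm:chengbipartite} and $\theta_2\leqslant-\sqrt2$). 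But for any $\mathsf x\in V_1,\mathsf y\in V_2$, Equation~\eqref{eqn:3ev} gives the integer $(A^2)_{\mathsf x\mathsf y}=(\theta_1+\theta_2)A_{\mathsf x\mathsf y}+\beta_1\beta_2$, which forces either $\beta_1\beta_2$ (if $\mathsf x\not\sim\mathsf y$) or $\theta_1+\theta_2+\beta_1\beta_2=\theta'(1-\theta)$ (if $\mathsf x\sim\mathsf y$) to be an integer; both are irrational, a contradiction. So $\theta$ is rational and every eigenvalue is an integer. I expect this irrational sub-case to be the main obstacle, as it is the one point where the purely spectral bookkeeping is insufficient and one must feed in the integrality of the common-neighbour counts supplied by \eqref{eqn:3ev}.

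For (iv), suppose $\Gamma[V_1]$ is empty, so $n_{11}=0$; connectedness forces $n_{12},n_{21}>0$, whence $\det Q=-n_{12}n_{21}=\theta_0\theta<0$. As $\theta_0>0$ this gives $\theta<0$, and since $\theta_1\geqslant 0$ we conclude $\theta=\theta_2$; the case $\Gamma[V_2]$ empty is identical.
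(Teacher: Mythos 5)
This theorem is not proved in the paper at all: it is imported verbatim from Cheng et al.~\cite[Theorem 4.3]{Cheng_2016}, so there is no internal proof to compare yours against, and your argument has to be judged on its own merits. It holds up. Everything you use is available in the paper: equitability of the valency partition and $\theta_0 \in \operatorname{spec}(Q_{\mathfrak D(\Gamma)}(\Gamma))$ (Theorem~\ref{thm:equitableValency}), $\operatorname{spec}(Q) \subset \operatorname{spec}(\Gamma)$ (Lemma~\ref{lem:equitable}), the rank-one identity \eqref{eqn:3ev} with $\alpha_{\mathsf x} = \sqrt{d_{\mathsf x}+\theta_1\theta_2}$, the inequalities \eqref{eqn:basicevineq}, and Theorem~\ref{thm:chengbipartite} to get $\theta_1 > 0$ from non-bipartiteness. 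The delicate points all check out: the similarity $D^{1/2}QD^{-1/2}$ with $D = \operatorname{diag}(n_1,n_2)$ is genuinely symmetric because $n_1 n_{12} = n_2 n_{21}$, so a double eigenvalue $\theta_0$ would force $Q = \theta_0 I$ and disconnect $\Gamma$, which settles (ii); the elimination in (iii) does collapse to $\beta_1\beta_2 = -\theta-\theta_1\theta_2 = -\theta(\theta'+1)$ after dividing by $1-t$, and biregularity ($k_1 \neq k_2$) is exactly what makes that division legal; in (i) your Case-2 contradiction correctly covers both the adjacent and the non-adjacent choice of $\mathsf x \in V_1$, $\mathsf y \in V_2$, and the needed non-degeneracy $\theta' \notin \{0,-1\}$ is secured by $\theta_1 > 0$ and $\theta_2 \leqslant -\sqrt{2}$, so that the integer $(A^2)_{\mathsf x\mathsf y}$ would have to equal one of the irrational quantities $\beta_1\beta_2$ or $\theta'(1-\theta)$; and (iv) via $\theta_0\theta = \det Q = -n_{12}n_{21} < 0$ is sound. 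Your ordering (ii) $\to$ (iii) $\to$ (i) $\to$ (iv) is also consistent, since (ii) and (iii) nowhere assume integrality. The one stylistic remark is that the integrality step is where a blind approach could have gone astray by trying purely Galois-theoretic bookkeeping; your insight that \eqref{eqn:3ev} must be fed in to supply integral common-neighbour counts is precisely the extra ingredient that closes the argument.
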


\section{Coherent configurations and algebras}
\label{sec:cr}
In this section, we define the coherent rank of a graph.

\subsection{The coherent rank}
Let $\mathfrak X$ be a finite set and let $\mathfrak R = \{R_1,\dots,R_{\mathnormal r}\}$ be a set of binary relations on $\mathfrak X$.
For each $R_i$ let $A_i \in \operatorname{Mat}_{\mathfrak X}(\{0,1\})$ be defined such that its $(\mathsf x,\mathsf y)$ entry is $1$ if $(\mathsf x,\mathsf y) \in R_i$ and $0$ otherwise.
Suppose that 
\begin{enumerate}
    \item[(CC1)] $\displaystyle \sum_{i=1}^{\mathnormal r} A_i = J$;
    \item[(CC2)] For each $i \in \{1,\dots,{\mathnormal r}\}$ there exists $j \in \{1,\dots,{\mathnormal r}\}$ such that $A_i^\transpose = A_j$;
    \item[(CC3)] There exists a subset $\Delta \subset \{1,\dots,{\mathnormal r}\}$ such that $\sum_{i \in \Delta} A_i = I$;
    \item[(CC4)] $\displaystyle A_iA_j = \sum_{k=1}^{\mathnormal r} p_{i,j}^k A_k$, for each $i,j \in \{1,\dots,{\mathnormal r}\}$.
\end{enumerate}
Then $({\mathfrak X},{\mathfrak R})$ is called a \textbf{coherent configuration} of \textbf{rank} ${\mathnormal r} = |{\mathfrak R}|$.
The set ${\mathfrak X}$ is called the \textbf{point-set} of the coherent configuration.

For each $i \in \Delta$, we call the subset ${\mathfrak X}_i := \{ \mathsf x \in {\mathfrak X} \; : \; (\mathsf x,\mathsf x) \in R_i\}$ a \textbf{fibre} of the coherent configuration.
Clearly, the fibres form a partition of the point-set $\mathfrak X$.
When $|\Delta| = 1$, the coherent configuration $({\mathfrak X},{\mathfrak R})$ is called an \textbf{association scheme}.
It follows from (CC4) that, for each $k \in \{1,\dots,{\mathnormal r}\}$, there exists $i$ and $j$ such that $R_k \subset {\mathfrak X}_i \times {\mathfrak X}_j$.
Thus, each subset $\Delta^\prime$ of $\Delta$ induces a coherent configuration with point-set $\bigcup_{i \in \Delta^\prime} {\mathfrak X}_i$.
The \textbf{type} of $({\mathfrak X},{\mathfrak R})$ is defined to be the matrix in $\operatorname{Mat}_\Delta(\mathbb N)$ whose $(i,j)$-entry $t_{ij}$ is equal to the cardinality $|\{ k \; : \; R_k \subset {\mathfrak X}_i \times {\mathfrak X}_j \}|$.
Note that the sum of the entries of the type matrix is equal to ${\mathnormal r}$.
Furthermore, since the type matrix must be symmetric, we omit the entries below the diagonal.
Higman~\cite{Higman1987} established the following restriction on the type matrix.
\begin{lemma}
\label{lem:HigmanType}
For each $i,j \in \Delta$,
    if $t_{ii} \leqslant 5$ and $t_{jj} \leqslant 5$ then $t_{ij} \leqslant \min(t_{ii},t_{jj})$.
\end{lemma}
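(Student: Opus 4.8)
The plan is to translate the combinatorial statement into the representation theory of the coherent algebra $\mathcal W := \operatorname{span}_{\mathbb C}\{A_1,\dots,A_{\mathnormal r}\}$ and reduce it to a short inequality between multiplicity vectors. By (CC2) the algebra $\mathcal W$ is closed under transpose, hence semisimple, and Wedderburn's theorem gives $\mathcal W \cong \bigoplus_s \operatorname{Mat}_{d_s}(\mathbb C)$. For each fibre $i \in \Delta$ let $E_i$ be the diagonal idempotent supported on $\mathfrak X_i$; these are mutually orthogonal with $\sum_i E_i = I$, and $E_i \mathcal W E_i$ is exactly the adjacency algebra of the homogeneous configuration induced on $\mathfrak X_i$, of dimension $t_{ii}$. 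Let $r_{s,i}$ denote the rank of the $s$-th Wedderburn component of $E_i$. Since the relation matrices with $R_k \subset \mathfrak X_i \times \mathfrak X_j$ form a basis of $E_i \mathcal W E_j$, and $E_i \operatorname{Mat}_{d_s}(\mathbb C) E_j \cong \operatorname{Mat}_{r_{s,i} \times r_{s,j}}(\mathbb C)$, I obtain the fundamental identity
\[
t_{ij} = \dim E_i \mathcal W E_j = \sum_s r_{s,i}\, r_{s,j},
\]
so that $t_{ii} = \sum_s r_{s,i}^2$ with each $r_{s,i} \in \mathbb Z_{\geqslant 0}$. The one external input I would invoke is the standard fact that the rank-one idempotent $\tfrac{1}{|\mathfrak X_i|} J_{\mathfrak X_i} \in E_i \mathcal W E_i$ lies, for every $i$, in a single common principal block $s_0$; equivalently $r_{s_0,i} = 1$ for all fibres $i$.

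Next I would classify the admissible profiles $\mathbf r_i = (r_{s,i})_s$ under the hypothesis $t_{ii} \leqslant 5$. Because $r_{s_0,i} = 1$, the remaining entries satisfy $\sum_{s \neq s_0} r_{s,i}^2 = t_{ii} - 1 \leqslant 4$; writing an integer at most $4$ as a sum of positive squares, one finds that $r_{s,i} \in \{0,1\}$ for all $s$ whenever $t_{ii} \leqslant 4$, and that the \emph{only} profile with an entry exceeding $1$ is the case $t_{ii} = 5$ carrying a single non-principal block of local multiplicity $2$ (so that $E_i \mathcal W E_i \cong \mathbb C \oplus \operatorname{Mat}_2(\mathbb C)$).

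Assume without loss of generality that $t_{ii} \leqslant t_{jj} \leqslant 5$. If neither fibre has the special profile, then $\mathbf r_i, \mathbf r_j$ are $0/1$ vectors and $t_{ij} = \mathbf r_i \cdot \mathbf r_j = |\operatorname{supp}\mathbf r_i \cap \operatorname{supp}\mathbf r_j| \leqslant \min(t_{ii}, t_{jj})$. Running through the pairings that involve the special profile $\mathbb C \oplus \operatorname{Mat}_2(\mathbb C)$, the bound $t_{ij} \leqslant \min(t_{ii},t_{jj})$ continues to hold in every case except one: $t_{ii} = 2$ (so $\mathfrak X_i$ carries the trivial scheme, profile $(1,1)$), $t_{jj} = 5$ of the special type, with the non-principal block of $\mathfrak X_i$ equal to the multiplicity-$2$ block of $\mathfrak X_j$. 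There the identity gives $t_{ij} = 1 + 1 \cdot 2 = 3 > 2$, so the entire lemma reduces to excluding this single configuration.

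Excluding it is the crux, and it cannot be settled by numerology: the data $(r_{s_0,i}, r_{t,i}) = (1,1)$ and $(r_{s_0,j}, r_{t,j}) = (1,2)$ is internally consistent and yields positive global multiplicities. Here I would restrict to the sub-coherent-configuration induced on $\mathfrak X_i \cup \mathfrak X_j$ and exploit that $\mathfrak X_i$ carries the trivial scheme: every product $M_a M_b^\top$ of between-fibre relation matrices then lies in $\langle I, J\rangle$, forcing the three between-fibre relations to cut $\mathfrak X_j$ into $|\mathfrak X_i|$ tripartitions whose classes from distinct points meet in constant numbers. The goal is to show that such a rigid, net-like incidence structure forces the scheme induced on $\mathfrak X_j$ to be commutative, contradicting $E_j \mathcal W E_j \cong \mathbb C \oplus \operatorname{Mat}_2(\mathbb C)$. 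I expect this realisability argument---precisely the ingredient that breaks down once $t_{ii}$ or $t_{jj}$ reaches $6$---to be the most delicate step.
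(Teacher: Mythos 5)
Your reduction is sound, and in fact it reconstructs the standard framework behind this lemma, which the paper itself does not prove but quotes from Higman's \emph{Coherent algebras} paper: the identity $t_{ij}=\sum_s r_{s,i}r_{s,j}$, the principal component with $r_{s_0,i}=1$ for every fibre, and the classification of profiles with $t_{ii}\leqslant 5$ (all entries $0/1$ except possibly a single profile $(1,2)$ when $t_{ii}=5$) are all correct. The problem is that your argument stops exactly where the lemma has content. What remains after the reduction --- ruling out a fibre $\mathfrak X_j$ with $E_j\mathcal W E_j\cong\mathbb C\oplus\operatorname{Mat}_2(\mathbb C)$, i.e.\ a noncommutative homogeneous configuration of rank $5$, linked through its $2$-dimensional block to a rank-$2$ fibre --- is precisely the theorem this lemma encodes: a homogeneous coherent configuration of rank at most $5$ is commutative, so the profile $(1,2)$ never occurs at all. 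You do not prove this; you state the goal and say you \emph{expect} the realisability argument to work. That expectation is the whole difficulty of the lemma, not a delicate finishing step, so as it stands the proposal is a correct reduction plus an unproven claim.

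Moreover, the sketch you offer for the exclusion cannot close as stated. With three between-fibre relations $M_1,M_2,M_3$ you do get $M_aM_b^\transpose=\alpha_{ab}I+\beta_{ab}J$ for all $a,b$, and the products $M_a^\transpose M_b$ span $E_j\mathcal W E_j$; but expanding commutators with these relations gives $[M_a^\transpose M_b,\,M_c^\transpose M_d]=\alpha_{bc}M_a^\transpose M_d-\alpha_{da}M_c^\transpose M_b+(\text{a multiple of }J)$, which has no visible reason to vanish. Indeed, the coherent configuration of $S_3$ acting on $\{1,2,3\}$ and on itself by multiplication satisfies every one of your ``net-like'' conditions --- a rank-$2$ fibre, $t_{ij}=3$, all products $M_aM_b^\transpose\in\langle I,J\rangle$ --- and yet its large fibre carries the noncommutative group scheme of $S_3$, of rank $6$; this is exactly the standard example showing the bound fails once a diagonal entry reaches $6$. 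So any correct completion must exploit $t_{jj}=5$ (the absence of a sixth relation, equivalently of the $1$-dimensional ``sign'' block) in an essential way; nothing in your sketch does so, and supplying that argument amounts to proving Higman's rank-$\leqslant 5$ commutativity theorem, which is the very result the paper is citing.
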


A \textbf{coherent algebra} is a matrix algebra $\mathcal A \subset \operatorname{Mat}_{\mathfrak X}(\mathbb C)$ that satisfies the following axioms.
\begin{itemize}
    \item[(A1)] $I, J \in \mathcal A$;
    \item[(A2)] $M^\transpose \in \mathcal A$ for each $M \in \mathcal A$;
    \item[(A3)] $MN \in \mathcal A$ and $M \circ N \in \mathcal A$ for each $M,N \in \mathcal A$, where $\circ$ denotes the entrywise product.
\end{itemize}
Each coherent algebra $\mathcal A$ has a unique basis of $\{0,1\}$-matrices $\{A_1,\dots,A_{\mathnormal r}\}$ that corresponds to a coherent configuration $({\mathfrak X}_{\mathcal A},{\mathfrak R}_{\mathcal A})$.
We denote by $\mathfrak F_{\mathcal A}$ the set of fibres of the coherent configuration $({\mathfrak X}_{\mathcal A},{\mathfrak R}_{\mathcal A})$ and we define \textbf{type} of $\mathcal A$ to be that of $({\mathfrak X}_{\mathcal A},{\mathfrak R}_{\mathcal A})$.
Clearly, the intersection of any two coherent algebras is itself a coherent algebra.
We can thus define the \textbf{coherent closure} $\mathcal {W}(\Gamma)$ of $\Gamma$ to be the minimal coherent algebra that contains the adjacency matrix $A(\Gamma)$ of $\Gamma$.
We write $\mathcal {W}(\Gamma) = \langle A_1,\dots,A_{\mathnormal r} \rangle$, where $\{A_1,\dots,A_{\mathnormal r}\}$ is the unique basis of $\{0,1\}$-matrices for $\mathcal {W}(\Gamma)$.

We summarise in the following corollary some straightforward consequences of the above definitions.

\begin{corollary}
\label{cor:allFib}
    Let $\Gamma$ be a graph with coherent closure $\mathcal {W}(\Gamma) = \langle A_1,\dots,A_{\mathnormal r}\rangle$.
    Then 
    \begin{itemize}
        \item[(i)] For each $\mathfrak f \in \mathfrak F_{\mathcal {W}(\Gamma)}$, the matrix $A(\Gamma)[\mathfrak f]$ belongs to the coherent algebra $\langle A_1[\mathfrak f],\dots,A_{\mathnormal r}[\mathfrak f]\rangle$.
        \item[(ii)] If $\mathsf x$ and $\mathsf y$ are in the same fibre of $\mathcal {W}(\Gamma)$ then $\mathsf x$ and $\mathsf y$ have the same degree.
        \item[(iii)] $|\mathfrak D(\Gamma)| \leqslant |\mathfrak F_{\mathcal {W}(\Gamma)}|$.
    Furthermore, in the case of equality, we have $\mathfrak D(\Gamma) = \mathfrak F_{\mathcal {W}(\Gamma)}$.
    \end{itemize}
\end{corollary}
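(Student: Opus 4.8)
The three statements are elementary consequences of the axioms, so the plan is to dispatch them in turn, with the one genuinely load-bearing observation being that the valency of each basis relation is constant along a fibre.

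For (i): since $\{A_1,\dots,A_{\mathnormal r}\}$ is a basis of $\mathcal{W}(\Gamma)$ consisting of $\{0,1\}$-matrices whose supports partition $\mathfrak X \times \mathfrak X$ by (CC1), and $A(\Gamma) \in \mathcal{W}(\Gamma)$ is itself a $\{0,1\}$-matrix, comparing entries relation-by-relation forces $A(\Gamma) = \sum_{k \in S} A_k$ for some $S \subseteq \{1,\dots,{\mathnormal r}\}$. Restricting to a fibre $\mathfrak f$ gives $A(\Gamma)[\mathfrak f] = \sum_{k \in S} A_k[\mathfrak f]$, a linear combination of the listed generators and hence an element of $\langle A_1[\mathfrak f],\dots,A_{\mathnormal r}[\mathfrak f]\rangle$. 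I would also invoke the remark preceding the corollary, that each subset of $\Delta$ (here a single fibre) induces a coherent configuration, so that this algebra is genuinely coherent and the claim is well-posed.

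For (ii): the plan is to show that for every basis matrix $A_k$ the out-valency $d_k(\mathsf x) := |\{\mathsf z : (\mathsf x,\mathsf z) \in R_k\}|$ depends only on $k$ and on the fibre containing $\mathsf x$. Using (CC2), write $A_k^\transpose = A_{k^*}$; then the diagonal entry $(A_k A_{k^*})_{\mathsf x \mathsf x} = \sum_{\mathsf z} A_k[\mathsf x,\mathsf z]^2 = d_k(\mathsf x)$, since the entries are $\{0,1\}$. Expanding via (CC4) as $\sum_\ell p_{k,k^*}^\ell A_\ell$ and reading off the $(\mathsf x,\mathsf x)$-entry, (CC3) ensures that only the terms with $\ell \in \Delta$ contribute to the diagonal, and exactly one such term, the one indexing the fibre of $\mathsf x$, equals $1$ there. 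Hence $d_k(\mathsf x) = p_{k,k^*}^{\ell}$, where $\ell \in \Delta$ indexes the fibre of $\mathsf x$, which depends only on $k$ and that fibre. Summing over the set $S$ from part (i), $\deg(\mathsf x) = \sum_{k \in S} d_k(\mathsf x)$ is determined by the fibre of $\mathsf x$, which yields (ii).

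For (iii): by (ii), each fibre is contained in a single valency class $V_i$, so $\mathfrak F_{\mathcal{W}(\Gamma)}$ refines $\mathfrak D(\Gamma)$; a refinement has at least as many parts, giving $|\mathfrak D(\Gamma)| \leqslant |\mathfrak F_{\mathcal{W}(\Gamma)}|$. If equality holds, then a refinement having the same number of parts as the partition it refines must coincide with it, so $\mathfrak D(\Gamma) = \mathfrak F_{\mathcal{W}(\Gamma)}$. The only slightly delicate point is the diagonal bookkeeping in (ii) that isolates the single contributing fibre index, but this is standard rather than deep.
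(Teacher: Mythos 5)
Your proof is correct. The paper offers no proof of this corollary, stating it as a summary of ``straightforward consequences of the above definitions,'' and your argument --- writing $A(\Gamma)$ as a sum of basis matrices via (CC1), showing each valency $d_k(\mathsf x)$ equals the intersection number $p_{k,k^*}^{\ell}$ attached to the fibre of $\mathsf x$ (so degrees are constant on fibres), and then observing that the fibre partition refines the valency partition so equality of cardinalities forces equality of partitions --- is exactly the standard filling-in of those details that the authors leave implicit.
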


Define the \textbf{coherent rank} of $\Gamma$ to be the rank of its coherent closure $\mathcal {W}(\Gamma)$.
The coherent rank of a complete or empty graph on at least $2$ vertices is $2$ and the coherent rank of a strongly regular graph is $3$.
Conversely, it is straightforward to verify the following proposition.

\begin{proposition}
\label{pro:smallRank}
    Let $\mathcal A$ be a coherent algebra of rank ${\mathnormal r}$.
    \begin{itemize}
        \item[(i)] If ${\mathnormal r}=1$ then $\mathcal A = \langle I_1 \rangle$.
        \item[(ii)] If ${\mathnormal r} = 2$ then $\mathcal A = \langle I, J-I \rangle$.
        \item[(iii)] If ${\mathnormal r}=3$ and $\mathcal A$ contains the adjacency matrix of a graph that is neither empty nor complete then $\mathcal A = \langle I, A, J-I-A \rangle$,
    where $A$ is the adjacency matrix of a strongly regular graph.
    \end{itemize}
\end{proposition}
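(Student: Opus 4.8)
The plan is to argue directly from the unique $\{0,1\}$-basis $\{A_1,\dots,A_r\}$ of $\mathcal A$ together with the axioms (CC1)--(CC4). The first thing I would record is a structural observation that drives everything: by (CC3) the basis matrices indexed by $\Delta$ are exactly the diagonal indicator matrices of the fibres $\mathfrak X_i$, and they partition $I$; consequently every basis matrix indexed outside $\Delta$ has zero diagonal. For part (i), rank $1$ forces $A_1 = J$ by (CC1); since $\Delta$ must be nonempty (as $I \neq O$) and $\Delta \subseteq \{1\}$, we get $A_1 = I$, hence $I = J$ and $\mathcal A = \langle I_1\rangle$. For part (ii), $\Delta$ is again nonempty, and $\Delta = \{1,2\}$ would make both basis matrices diagonal with $A_1 + A_2 = I = J$, collapsing the rank to $1$; so $|\Delta| = 1$, and after relabelling $A_1 = I$ and (CC1) gives $A_2 = J - I$.

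For part (iii) the crux is to show the rank-$3$ configuration is an association scheme, i.e.\ $|\Delta| = 1$. Each fibre contributes its own diagonal identity relation, so $t_{ii} \geqslant 1$ for every fibre, while the entries of the type matrix sum to $r = 3$. If there were three fibres, all three basis matrices would be diagonal, forcing $I = J$ and rank $1$. If there were two fibres $\mathfrak X_1, \mathfrak X_2$, then $t_{11} + t_{22} + 2t_{12} = 3$ with $t_{11}, t_{22} \geqslant 1$; this leaves only $t_{12} = 0$, which contradicts (CC1) since the off-diagonal block $\mathfrak X_1 \times \mathfrak X_2$ must be covered by some relation, or else $t_{11} + t_{22} = 1$, which is impossible. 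Hence $|\Delta| = 1$, so after relabelling $A_1 = I$ and $A_2 + A_3 = J - I$.

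It then remains to identify the given graph and establish strong regularity. Expressing its adjacency matrix $A$ in the basis, the fact that each position lies in exactly one relation forces $A$ to be a union of basis relations; its zero diagonal makes the coefficient of $I$ vanish, while the hypotheses that the graph is neither empty nor complete rule out $A = O$ and $A = J - I$. Thus exactly one of $A_2, A_3$ equals $A$; say $A = A_2$, so that $\mathcal A = \langle I, A, J - I - A\rangle$ with $A_3 = J - I - A$. Finally, closure of $\mathcal A$ under multiplication gives $A^2 = kI + aA + c(J - I - A)$ for some constants $k, a, c$; reading the diagonal shows every vertex has degree $k$, so $A$ is $k$-regular, and the coefficients $a$ and $c$ are exactly the common-neighbour counts of adjacent and non-adjacent pairs, so $A$ is the adjacency matrix of a strongly regular graph.

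The hard part will be the rank-$3$ fibre analysis in the second paragraph: the genuine content is excluding a two-fibre configuration, which forces me to combine the type-sum constraint with (CC1) to see that the inter-fibre block cannot be left uncovered. Parts (i) and (ii), the identification of $A$, and the passage to strong regularity are then all routine bookkeeping with the $\{0,1\}$-basis.
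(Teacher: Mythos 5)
The paper offers no proof of this proposition --- it is introduced with ``it is straightforward to verify'' --- and your argument is a correct and complete verification of exactly the intended kind. Your three steps (using the type-matrix sum together with (CC1) to force $|\Delta|=1$, identifying $A$ with a single basis relation via the disjoint supports of the $\{0,1\}$-basis, and reading off regularity and the common-neighbour counts from the structure constants in $A^2 = kI + aA + c(J-I-A)$) are all sound, so there is nothing to correct.
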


\subsection{Small coherent rank}

Muzychuk and Klin~\cite{MUZYCHUK1998191} initiated the study of graphs with three eigenvalues that have small coherent rank.
We summarise their findings in the following theorem.

\begin{theorem}[{\cite[Section 6]{MUZYCHUK1998191}}]
\label{thm:muzychuk}
    Let $\Gamma \in \mathscr G_3$.
    Then the coherent rank of $\Gamma$ is
    \begin{itemize}
        \item[(i)] 3 if and only if $\Gamma$ is a strongly regular graph;
        \item[(ii)] 5 if and only if $\Gamma \cong K_{1,\mathsf b}$ where $\mathsf b \geqslant 2$;
        \item[(iii)] 6 if and only if
        \begin{itemize}
            \item $\Gamma \cong K_{\mathsf a,\mathsf b}$ where $2 \leqslant \mathsf a < \mathsf b$; or
            \item $\Gamma$ is a cone over a strongly regular graph.
        \end{itemize}
    \end{itemize}
    Furthermore, the coherent rank of $\Gamma$ cannot be $4$.
\end{theorem}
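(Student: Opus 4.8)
The plan is to work entirely through the type matrix of the coherent closure $\mathcal W(\Gamma)$, using the observation that every entry is at least $1$: for each fibre $\mathfrak X_i$ the diagonal relation gives $t_{ii}\geqslant 1$, while for distinct fibres the nonempty block $\mathfrak X_i\times\mathfrak X_j$ is covered by basis relations, so $t_{ij}\geqslant 1$. Since the entries sum to the rank $r$,
\[
r \;=\; \sum_{i}t_{ii} + 2\sum_{i<j}t_{ij} \;\geqslant\; |\Delta| + |\Delta|(|\Delta|-1) \;=\; |\Delta|^2,
\]
so $|\Delta|\leqslant\sqrt r$. If $\Gamma$ is regular then it is strongly regular (Section~\ref{sec:basic}), whence $\mathcal W(\Gamma)=\langle I,A,J-I-A\rangle$ has rank $3$; this proves (i) and shows that any $\Gamma$ of rank $4$, $5$, or $6$ is non-regular, forcing $|\Delta|\geqslant 2$ since a single fibre would make $\Gamma$ regular (Corollary~\ref{cor:allFib}(ii)). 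As $\sqrt 6<3$, in every remaining case $|\Delta|=2$ and $\Gamma$ is biregular with type matrix $\left(\begin{smallmatrix}t_{11}&t_{12}\\ t_{12}&t_{22}\end{smallmatrix}\right)$.

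For each target rank I would enumerate the finitely many admissible type matrices with entries $\geqslant 1$ summing to $r$, prune them using Higman's restriction (Lemma~\ref{lem:HigmanType}), and then read off the graph. The translation dictionary is: a fibre with $t_{ii}=1$ is a singleton, since its induced rank-$1$ configuration is $\langle I_1\rangle$ by Proposition~\ref{pro:smallRank}(i); a fibre with $t_{ii}=2$ induces $\langle I,J-I\rangle$, so $\Gamma[\mathfrak X_i]$ is empty or complete; and a fibre with $t_{ii}=3$ induces a rank-$3$ scheme, so once $\Gamma[\mathfrak X_i]$ is known to be neither empty nor complete, Proposition~\ref{pro:smallRank}(iii) makes it strongly regular. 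An off-diagonal $t_{12}=1$ means the whole bipartite block between the two fibres is a single relation, so by connectedness every vertex of one fibre is adjacent to every vertex of the other.

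For rank $4$ the constraints force $t_{11}=t_{12}=t_{22}=1$, making both fibres singletons and $\Gamma$ a two-vertex graph, which cannot lie in $\mathscr G_3$; hence rank $4$ is impossible. For rank $5$ the only admissible type is $\left(\begin{smallmatrix}1&1\\ 1&2\end{smallmatrix}\right)$: a single vertex joined to an empty or complete fibre, where the complete option yields a complete graph, so $\Gamma=K_{1,\mathsf b}$ and non-regularity gives $\mathsf b\geqslant 2$. For rank $6$, Higman's restriction eliminates $\left(\begin{smallmatrix}1&2\\ 2&1\end{smallmatrix}\right)$ (there $t_{12}=2>\min(t_{11},t_{22})$), leaving $\left(\begin{smallmatrix}1&1\\ 1&3\end{smallmatrix}\right)$ and $\left(\begin{smallmatrix}2&1\\ 1&2\end{smallmatrix}\right)$. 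The former is a single vertex joined to a fibre inducing a rank-$3$ scheme; ruling out the empty/complete possibilities for that fibre (which would collapse the rank to $5$ or produce a complete graph) shows $\Gamma$ is a cone over a strongly regular graph. The latter joins two empty-or-complete fibres completely; the both-empty case gives $K_{\mathsf a,\mathsf b}$, which after ordering the fibres is $2\leqslant\mathsf a<\mathsf b$ by non-regularity.

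The step I expect to require the most care is excluding the mixed configurations in the type $\left(\begin{smallmatrix}2&1\\ 1&2\end{smallmatrix}\right)$ case, where one fibre is complete and the other empty (or both complete). Here $\Gamma=K_{\mathsf a}\vee\overline{K_{\mathsf b}}$ has a disconnected complement, so by Theorem~\ref{thm:cheng} it would have to be a cone or complete bipartite; it is visibly not bipartite, and as a biregular cone it would by Lemma~\ref{lem:higmanType7} be a cone over a strongly regular graph, forcing $K_{\mathsf a-1}\vee\overline{K_{\mathsf b}}$ to be regular, which fails for $\mathsf b\geqslant 2$. Alternatively, a direct spectral computation shows $K_{\mathsf a}\vee\overline{K_{\mathsf b}}$ has four distinct eigenvalues when $\mathsf a,\mathsf b\geqslant 2$. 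Finally, the converse directions follow by exhibiting the candidate coherent algebras explicitly and checking closure under multiplication and the Schur product: the five relations of $K_{1,\mathsf b}$, the six relations of $K_{\mathsf a,\mathsf b}$, and the six relations of a cone over a strongly regular graph each form a coherent configuration of the asserted rank.
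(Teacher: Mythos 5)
Your proposal is correct, but the comparison here is unusual: the paper does not prove Theorem~\ref{thm:muzychuk} at all --- it is quoted from Muzychuk and Klin~\cite{MUZYCHUK1998191} as a summary of their findings. So what you have produced is a self-contained proof of a result the paper only cites. Your route (bound $|\Delta|$ via the entrywise-positive type matrix, enumerate the admissible types for each rank, prune with Lemma~\ref{lem:HigmanType}, then translate diagonal entries $1$, $2$, $3$ into singleton, empty-or-complete, and strongly regular fibres via Corollary~\ref{cor:allFib} and Proposition~\ref{pro:smallRank}, and off-diagonal entry $1$ into a complete join by connectedness) is exactly the technique the paper itself deploys for coherent ranks $7$, $8$, and $9$ in Theorem~\ref{thm:wl7}, Theorem~\ref{thm:rank8}, and Theorem~\ref{thm:rank9}, so your argument retrofits the paper's own machinery onto the cited base case. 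The delicate points are handled correctly: Higman's restriction kills the type $\left[\begin{smallmatrix}1&2\\2&1\end{smallmatrix}\right]$, and your exclusion of $K_{\mathsf a}\vee\overline{K_{\mathsf b}}$ in the $\left[\begin{smallmatrix}2&1\\1&2\end{smallmatrix}\right]$ case is sound --- the Theorem~\ref{thm:cheng}/Lemma~\ref{lem:higmanType7} argument works because every admissible apex lies in the clique side, and your alternative spectral computation (four distinct eigenvalues when $\mathsf a,\mathsf b\geqslant 2$) is also right.

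Two small expository gaps, neither fatal. First, after showing that a regular graph in $\mathscr G_3$ has coherent rank $3$ you write ``this proves (i)'', but that is only the ``if'' direction; the ``only if'' direction (rank $3$ implies strongly regular) needs Proposition~\ref{pro:smallRank}(iii) applied to $\mathcal W(\Gamma)$: since $A(\Gamma)$ is a symmetric $\{0,1\}$-matrix with zero diagonal in $\langle I,A,J-I-A\rangle$ and $\Gamma$ is neither empty nor complete, $A(\Gamma)\in\{A,J-I-A\}$, both adjacency matrices of strongly regular graphs. Second, for the converses, exhibiting the explicit rank-$5$ and rank-$6$ coherent algebras only gives upper bounds on the coherent rank; exactness follows by combining these with your forward classification (the candidate graph is non-regular, so its rank is not $3$; rank $4$ is impossible; and the rank-$5$ and rank-$6$ graph classes are disjoint). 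You gesture at both points and have all the needed tools on the table, so these are presentation issues rather than missing ideas.
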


\begin{remark}
    Note that if we allow disconnected graphs then coherent rank $4$ is possible.
    For example, take the disjoint union of two copies of a strongly regular graph.
\end{remark}

In each case of Theorem~\ref{thm:muzychuk} the coherent closure of $\Gamma$ has type  $\left [ \begin{smallmatrix}3 \end{smallmatrix}\right ]$, $\left [ \begin{smallmatrix}1 & 1 \\  & 2\end{smallmatrix}\right ]$, or $\left [ \begin{smallmatrix}2 & 1 \\  & 2\end{smallmatrix}\right ]$, and $\left [ \begin{smallmatrix}1 & 1 \\  & 3\end{smallmatrix}\right ]$, respectively.
Muzychuk and Klin~\cite[Proposition 6.2]{MUZYCHUK1998191} further claimed without proof that a graph with three distinct eigenvalues cannot have coherent rank 7 or 8.
Contrary to their claim, we will show that coherent rank 8 is in fact possible.
First, we give a proof that a graph with three distinct eigenvalues cannot have coherent rank 7.

\begin{theorem}
\label{thm:wl7}
    Let $\Gamma \in \mathscr G_3$.
    Then the coherent rank of $\Gamma$ is not $7$.
\end{theorem}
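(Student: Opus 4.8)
The plan is to rule out coherent rank $7$ by a case analysis on the possible type matrices of the coherent closure $\mathcal{W}(\Gamma)$. Since $\Gamma \in \mathscr{G}_3$ is connected, by Theorem~\ref{thm:muzychuk} it is not strongly regular, not a star $K_{1,\mathsf b}$, not a complete bipartite graph $K_{\mathsf a, \mathsf b}$, and not a cone over a strongly regular graph, as all of those have coherent rank at most $6$. The type matrix $T = (t_{ij})$ is symmetric with nonnegative integer entries summing to the rank $\mathnormal r = 7$, and the number of fibres $|\Delta|$ equals the dimension of $T$. By Corollary~\ref{cor:allFib}(iii), $|\mathfrak D(\Gamma)| \leqslant |\Delta|$, so the number of distinct degrees is at most $|\Delta|$. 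The diagonal entries satisfy $t_{ii} \geqslant 1$ (each fibre contributes an identity-like block; in fact $t_{ii} \geqslant 2$ whenever the fibre has more than one point and is acted on nontrivially, and for a fibre carrying a nonempty, noncomplete induced graph we get $t_{ii} \geqslant 3$).

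**Enumerating the type matrices.**

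First I would bound the number of fibres: since the trace of $T$ is at least $|\Delta|$ and the total is $7$, we have $|\Delta| \leqslant 7$, but connectivity and the off-diagonal structure force $|\Delta| \in \{1,2,3\}$ in practice. The rank-$1$ fibre case ($|\Delta|=1$) is an association scheme of rank $7$; here I would invoke the constraint \eqref{eqn:3ev} together with the fact that an association scheme arising from a connected graph in $\mathscr{G}_3$ that is regular must be strongly regular (rank $3$), giving a contradiction with rank $7$. For $|\Delta|=2$, write $T = \left[\begin{smallmatrix} t_{11} & t_{12} \\ & t_{22} \end{smallmatrix}\right]$ with $t_{11}+2t_{12}+t_{22}=7$; Higman's Lemma~\ref{lem:HigmanType} constrains $t_{12} \leqslant \min(t_{11},t_{22})$ whenever both diagonal entries are at most $5$. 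Enumerating ordered pairs $(t_{11},t_{22})$ with $t_{11} \leqslant t_{22}$ and the parity constraint $t_{11}+t_{22}$ odd, I expect the surviving candidates to be types like $\left[\begin{smallmatrix} 1 & 1 \\ & 4 \end{smallmatrix}\right]$, $\left[\begin{smallmatrix} 1 & 2 \\ & 2 \end{smallmatrix}\right]$, $\left[\begin{smallmatrix} 2 & 1 \\ & 3 \end{smallmatrix}\right]$, and $\left[\begin{smallmatrix} 3 & 1 \\ & 2 \end{smallmatrix}\right]$, together with a few more. For $|\Delta|=3$ the trace is at least $3$, leaving at most $4$ units distributed among off-diagonal entries, which should be eliminated quickly by Higman's bound and the observation that a diagonal block of size $t_{ii}=1$ forces that fibre to be a single vertex.

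**Eliminating each surviving type.**

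The core of the argument is to show each candidate type is inconsistent with $\Gamma \in \mathscr{G}_3$. For a biregular partition ($|\Delta|=2$, $\mathfrak D(\Gamma)=\mathfrak F$), the induced subgraphs $\Gamma[V_1]$ and $\Gamma[V_2]$ each lie in the corresponding cellular subalgebra $\langle A_1[\mathfrak f],\dots,A_7[\mathfrak f]\rangle$ by Corollary~\ref{cor:allFib}(i), and $t_{ii}$ is exactly the rank of that subalgebra. A diagonal entry $t_{ii}=2$ forces $\Gamma[V_i]$ to be empty or complete (rank-$2$ algebra $\langle I, J-I\rangle$), and $t_{ii}=3$ forces $\Gamma[V_i]$ to be strongly regular (or empty/complete). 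Combining this with Theorem~\ref{thm:bireg}—integrality of eigenvalues, the quotient-matrix eigenvalue condition, and especially the quadratic relation $\sqrt{(k_1+\theta_1\theta_2)(k_2+\theta_1\theta_2)}=-\theta(\theta'+1)$—together with the valency equitability of Theorem~\ref{thm:equitableValency}, should yield numerical contradictions for each type. Types with a fibre of size $t_{ii}=1$ correspond to a cone-type vertex; here Theorem~\ref{thm:cheng} and Lemma~\ref{lem:higmanType7} (biregular cones are cones over strongly regular graphs, hence rank $\leqslant 6$) close the case.

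**The main obstacle.**

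I expect the hardest step to be the off-diagonal analysis for the genuinely biregular types where neither fibre is trivial and neither induced subgraph is forced to be empty—for instance type $\left[\begin{smallmatrix} 3 & 1 \\ & 2 \end{smallmatrix}\right]$ or $\left[\begin{smallmatrix} 2 & 1 \\ & 3 \end{smallmatrix}\right]$. A single off-diagonal relation ($t_{12}=1$) means the bipartite "link" between the two fibres is a single biregular relation, i.e. the incidence is a constant row/column-sum structure; I would need to show that this, combined with the internal structure of each fibre and the global relation \eqref{eqn:3ev}, forces either strong regularity of the whole graph (collapsing the rank) or a numerical impossibility in the feasibility conditions of Theorem~\ref{thm:bireg}. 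The delicate point is that the Weisfeiler–Leman closure being exactly rank $7$ (not smaller) must be reconciled with the rigidity imposed by having only three eigenvalues; the argument must rule out the closure stabilising at rank $7$ rather than refining further or collapsing, and I anticipate this requires carefully tracking how the products $A_iA_j$ in (CC4) generate new relations unless the parameters satisfy very restrictive Diophantine conditions that \eqref{eqn:basicevineq} and integrality then contradict.
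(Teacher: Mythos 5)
Your overall skeleton matches the paper's proof: rule out the regular case by Theorem~\ref{thm:muzychuk}, enumerate the possible types of $\mathcal{W}(\Gamma)$ via Lemma~\ref{lem:HigmanType} and Corollary~\ref{cor:allFib}~(iii), and invoke Theorem~\ref{thm:cheng} and Lemma~\ref{lem:higmanType7} to land on cones over strongly regular graphs. (A minor slip in the enumeration: the only admissible types are $\left[\begin{smallmatrix}1 & 1 \\ & 4\end{smallmatrix}\right]$ and $\left[\begin{smallmatrix}2 & 1 \\ & 3\end{smallmatrix}\right]$; your candidate $\left[\begin{smallmatrix}1 & 2 \\ & 2\end{smallmatrix}\right]$ is already excluded by Higman's bound, since $t_{11}=1$ forces $t_{12}\leqslant 1$, and there are no ``few more''. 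Three or more fibres die immediately because the type entries would sum to at least $9$.)

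The genuine gap is precisely in what you call the main obstacle. You read $t_{12}=1$ as saying only that the bipartite link between the two fibres is a single relation with constant row and column sums, and you then defer to an unexecuted numerical analysis via Theorem~\ref{thm:bireg} and (CC4). But $t_{12}=1$ says far more: the basis relations contained in $\mathfrak X_1 \times \mathfrak X_2$ partition $\mathfrak X_1 \times \mathfrak X_2$, so if there is only one of them it must equal all of $\mathfrak X_1 \times \mathfrak X_2$. Since $A(\Gamma) \in \mathcal{W}(\Gamma)$ is a sum of basis matrices, its off-diagonal block is therefore $O$ or $J$, and connectivity forces $J$: every vertex of one degree is adjacent to every vertex of the other, so $\overline{\Gamma}$ is disconnected. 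This single observation finishes \emph{all} surviving types uniformly --- by Theorem~\ref{thm:cheng}, Lemma~\ref{lem:higmanType7}, and Theorem~\ref{thm:muzychuk}, the graph $\Gamma$ is complete bipartite or a cone over a strongly regular graph, hence has coherent rank at most $6$, a contradiction. (Your treatment of the single-vertex-fibre type already tacitly uses this fact to see that the apex is adjacent to everything; the same reasoning applies verbatim to $\left[\begin{smallmatrix}2 & 1 \\ & 3\end{smallmatrix}\right]$.) Note also that the numerical route you sketch cannot succeed as stated: cones over strongly regular graphs \emph{are} feasible biregular graphs in $\mathscr G_3$ with a complete join between the degree classes, so the feasibility conditions of Theorem~\ref{thm:bireg} produce no Diophantine contradiction there. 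The contradiction is structural, not parametric: any graph forced into this shape has coherent rank $6$, not $7$.
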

\begin{proof}
    Suppose for a contradiction that $\Gamma\in \mathscr G_3$ be a graph with coherent rank $7$ and let $A$ be its adjacency matrix.
    By Theorem~\ref{thm:muzychuk}, the graph $\Gamma$ is not regular.
Using Lemma~\ref{lem:HigmanType} and Corollary~\ref{cor:allFib} (iii), we find that the coherent closure $\mathcal {W}(\Gamma)$ has type  $\left [ \begin{smallmatrix}1 & 1 \\  & 4\end{smallmatrix}\right ]$ or $\left [ \begin{smallmatrix}2 & 1 \\  & 3\end{smallmatrix}\right ]$.
By Corollary~\ref{cor:allFib} (iii), the vertices of $\Gamma$ have two distinct degrees $k_1$ and $k_2$ (say).
Furthermore, since $A \in \mathcal {W}(\Gamma)$ and $\Gamma$ is connected, it follows that each vertex of degree $k_1$ must be adjacent to every vertex of degree $k_2$.
Whence, the complement of $\Gamma$ is disconnected.
Thus, by Theorem~\ref{thm:cheng} and Lemma~\ref{lem:higmanType7} the graph $\Gamma$ is a cone over a strongly regular graph.
But, by Theorem~\ref{thm:muzychuk}, such a graph has coherent rank $6$, which is a contradiction.
\end{proof}

We conclude this section with a corollary of Theorem~\ref{thm:muzychuk} and Theorem~\ref{thm:chengbipartite}.

\begin{corollary}
\label{cor:gt0}
    Let $\Gamma \in \mathscr G_3(\theta_0,0,\theta_2)$.
    Then the coherent rank of $\Gamma$ is at most $6$.
\end{corollary}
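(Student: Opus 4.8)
The goal is to show that any $\Gamma \in \mathscr{G}_3(\theta_0, 0, \theta_2)$ has coherent rank at most $6$. The key hypothesis is that the second-largest eigenvalue is exactly $0$, so the plan is to split into the regular and non-regular cases and reduce each to a situation already handled by Theorem~\ref{thm:muzychuk}.

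First I would dispose of the regular case. If $\Gamma$ is regular, then being in $\mathscr{G}_3$ it is strongly regular, so by Theorem~\ref{thm:muzychuk}(i) its coherent rank is $3 \leqslant 6$, and we are done. So assume henceforth that $\Gamma$ is non-regular.

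Next, I would invoke Theorem~\ref{thm:chengbipartite}, the characterisation of non-regular complete bipartite graphs. Since $\Gamma$ is non-regular and $\theta_1 = 0$, condition~(iii) of that theorem holds, so the equivalence gives condition~(ii): $\Gamma$ is complete bipartite. Thus $\Gamma \cong K_{\mathsf a, \mathsf b}$ for some $\mathsf a, \mathsf b$. Because $\Gamma$ is non-regular we have $\mathsf a \neq \mathsf b$; writing $\mathsf a < \mathsf b$ without loss of generality, there are two possibilities: either $\mathsf a = 1$, in which case $\Gamma \cong K_{1,\mathsf b}$ with $\mathsf b \geqslant 2$, or $\mathsf a \geqslant 2$, in which case $\Gamma \cong K_{\mathsf a, \mathsf b}$ with $2 \leqslant \mathsf a < \mathsf b$.

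Finally I would read off the coherent rank in each subcase directly from Theorem~\ref{thm:muzychuk}. In the first subcase, part~(ii) gives coherent rank $5$; in the second subcase, the first bullet of part~(iii) gives coherent rank $6$. In either event the coherent rank is at most $6$, completing the proof. I do not anticipate any serious obstacle here: the entire argument is an assembly of the two cited results, and the only point requiring a moment's care is confirming that $\mathsf a \neq \mathsf b$ follows from non-regularity (a complete bipartite graph $K_{\mathsf a,\mathsf b}$ is regular precisely when $\mathsf a = \mathsf b$) so that the regular case is genuinely excluded before applying the complete-bipartite classification.
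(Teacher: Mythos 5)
Your proof is correct and follows exactly the route the paper intends: the corollary is stated there as an immediate consequence of Theorem~\ref{thm:muzychuk} and Theorem~\ref{thm:chengbipartite}, which is precisely your assembly (regular case via strong regularity and Theorem~\ref{thm:muzychuk}(i); non-regular case via the bipartite characterisation and then parts (ii)--(iii) of Theorem~\ref{thm:muzychuk}). No gaps.
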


\section{Coherent rank 8}
\label{sec:8}

In this section, we characterise the graphs $\Gamma \in \mathscr G_3$ that have coherent rank $8$.

\subsection{Symmetric 2-designs}

A \textbf{design} is a pair $(\mathcal P, \mathcal B)$ consisting of \textbf{point-set} $\mathcal P$ with cardinality $|\mathcal P| = v$ and $\mathcal B$ a family of $k$-sets from $\binom{\mathcal P}{k}$ for some $k \in \{1,\dots,v\}$.\footnote{We allow the case of repeated blocks, although we will show that only designs without repeated blocks are pertinent in our study. (See Corollary~\ref{cor:repeatedblocks}).}
Accordingly, the elements of $\mathcal P$ and $\mathcal B$ are called \textbf{points} and \textbf{blocks} respectively.
If there exists $t$ and $\lambda$ such that each set in $\binom{\mathcal P}{t}$ is contained in precisely $\lambda$ blocks then $(\mathcal P, \mathcal B)$ is called a $t$-$(v,k,\lambda)$ \textbf{design}.
We may write $t$-design in place of $t$-$(v,k,\lambda)$ design if the parameters are not required to be specified.
Suppose that every pair of blocks intersects in $x$ points.
Then the design $(\mathcal P,\mathcal B)$ is called a \textbf{symmetric} design.

Let $(\mathcal P, \mathcal B)$ be a $2$-$(v,k,\lambda)$ design.
Write $b = |\mathcal B|$ for the number of blocks and $r$ for the number of blocks that contain a particular point.
Standard double-counting arguments yield the following equations.
\begin{align}
    vr &= bk; \label{eqn:2des1} \\
    \lambda(v-1) &= r(k-1) \label{eqn:2des2}.
\end{align}
We also have the fundamental inequality due to Fisher~\cite{fisher}:
\begin{align}
    b &\geqslant v. \label{eqn:fisher}
\end{align}
The \textbf{incidence matrix} of $(\mathcal P, \mathcal B)$ is a $v \times b$ matrix whose $(i,j)$-entry is equal to $1$ if $i \in j$ and $0$ otherwise.
Let $M$ be the incidence matrix for $(\mathcal P, \mathcal B)$.
Then $M$ satisfies $MJ = rJ$, $M^\transpose J = kJ$, and 
\begin{align}
   MM^\transpose &= rI+\lambda(J-I). \label{eqn:2desmmt} 
\end{align}
Furthermore, if $(\mathcal P, \mathcal B)$ is a symmetric design then
\begin{equation}
\label{eqn:mtmsym}
    M^\transpose M = kI + x(J-I)
\end{equation}
Note that symmetric designs correspond to equality in Fisher's inequality \eqref{eqn:fisher}.
Moreover, we obtain
\begin{align}
    \lambda(v-1) &= k(k-1) \label{eqn:2desSYM}.
\end{align}
Now we state a tool that we will require below.

\begin{lemma}[{\cite[Lemma 2.9.2]{Brouwer:SpectraGraphs}}]
\label{lem:LGtoL}
	Let $M$ be an $n \times m$ real matrix.
	Then $M^\transpose M$ and $MM^\transpose$ have the same nonzero eigenvalues
	(including multiplicities).
\end{lemma}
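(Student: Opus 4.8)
The plan is to set up an explicit linear isomorphism between the $\lambda$-eigenspaces of $M^\transpose M$ and $MM^\transpose$ for each nonzero $\lambda$, which simultaneously shows that the two matrices share their nonzero eigenvalues and that the corresponding multiplicities agree. Both $M^\transpose M$ and $MM^\transpose$ are real symmetric, hence diagonalisable, so algebraic multiplicity coincides with geometric multiplicity and it suffices to compare dimensions of eigenspaces.

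First I would fix a nonzero real $\lambda$ and consider the map $\varphi \colon v \mapsto Mv$ on the $\lambda$-eigenspace $E_\lambda(M^\transpose M) \subseteq \mathbb R^m$. The key computation is that if $M^\transpose M v = \lambda v$ then $MM^\transpose(Mv) = M(M^\transpose M v) = \lambda(Mv)$, so $\varphi$ maps $E_\lambda(M^\transpose M)$ into $E_\lambda(MM^\transpose)$. I would then note that $\varphi$ is injective on this eigenspace: if $Mv = 0$ then $\lambda v = M^\transpose M v = 0$, forcing $v = 0$ since $\lambda \neq 0$.

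Next I would construct the candidate inverse $\psi \colon w \mapsto \tfrac{1}{\lambda} M^\transpose w$ from $E_\lambda(MM^\transpose)$ to $E_\lambda(M^\transpose M)$, using the symmetric computation $M^\transpose M(M^\transpose w) = M^\transpose(MM^\transpose w) = \lambda(M^\transpose w)$. A direct check shows that both composites are the identity; for instance $\psi(\varphi(v)) = \tfrac{1}{\lambda} M^\transpose M v = v$. Hence $\varphi$ is a linear isomorphism between the two eigenspaces, giving $\dim E_\lambda(M^\transpose M) = \dim E_\lambda(MM^\transpose)$ for every nonzero $\lambda$, which is precisely the assertion about nonzero eigenvalues and their multiplicities.

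There is no serious obstacle; the only point needing care is the bookkeeping of multiplicities, which the explicit isomorphism handles cleanly, together with the remark that symmetry guarantees the two notions of multiplicity coincide. An alternative route I could take is the determinant identity $\det(\lambda I_n - MM^\transpose) = \lambda^{n-m}\det(\lambda I_m - M^\transpose M)$, derived from Sylvester's theorem $\det(I_n + AB) = \det(I_m + BA)$; since the prefactor $\lambda^{n-m}$ alters only the eigenvalue $0$, the nonzero spectra match with multiplicities. I would favour the eigenspace argument, as it is self-contained and avoids invoking further determinant identities.
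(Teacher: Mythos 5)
Your proof is correct: the intertwining maps $v \mapsto Mv$ and $w \mapsto \tfrac{1}{\lambda}M^\transpose w$ do give mutually inverse linear isomorphisms between the $\lambda$-eigenspaces for each $\lambda \neq 0$, and since both matrices are real symmetric (hence diagonalisable with real spectrum), matching eigenspace dimensions is exactly what is needed. Note that the paper itself offers no proof of this lemma --- it is quoted directly from Brouwer and Haemers \cite[Lemma 2.9.2]{Brouwer:SpectraGraphs} --- so there is nothing in the paper to compare against; your eigenspace argument (and the Sylvester determinant identity you mention as an alternative) is the standard proof of the cited result.
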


\subsection{Graphs with three eigenvalues and coherent rank 8}

Next, we show that graphs in $\mathscr G_3$ whose coherent closure has type $\left [ \begin{smallmatrix}2 & 2 \\  & 2 \end{smallmatrix}\right ]$ correspond to certain symmetric designs.

\begin{proposition}
\label{pro:rank8}
Let $\Gamma \in \mathscr G_3$ such that $\mathcal {W}(\Gamma)$ has type $\left [ \begin{smallmatrix}2 & 2 \\  & 2 \end{smallmatrix}\right ]$.
Then the adjacency matrix of $\Gamma$ has the form $\left [ \begin{smallmatrix} O & M \\ M^\transpose & J-I \end{smallmatrix}\right ]$ where $M$ is the incidence matrix of a symmetric $2$-$(\lambda^3-\lambda+1,\lambda^2,\lambda)$ design for some $\lambda \in \mathbb N$.
\end{proposition}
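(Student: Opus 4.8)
The plan is to read off the block form of $A=A(\Gamma)$ from the two fibres of $\mathcal W(\Gamma)$, to show that the off-diagonal block is the incidence matrix of a symmetric design using only the coherence axioms, and then to use the three-eigenvalue identity \eqref{eqn:3ev} both to fix the diagonal blocks and to compute the parameters. The type $\left[\begin{smallmatrix}2&2\\&2\end{smallmatrix}\right]$ gives two fibres $V_1,V_2$. Restricted to $V_i\times V_i$ there are exactly two basis relations, one being the fibre-identity, so by (CC1) the other is its complement and the induced algebra is $\langle I,J-I\rangle$; by Corollary~\ref{cor:allFib}(i) the diagonal block $A_{ii}=A[V_i]$ lies in it and has zero diagonal, whence $A_{ii}\in\{O,J-I\}$. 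On $V_1\times V_2$ there are two relations $N$ and $J-N$; since $A\in\mathcal W(\Gamma)$ its off-diagonal block $M$ is a union of these, and connectedness ($M\neq O$) together with $t_{12}=2$ ($M\neq J$) forces $M=N$, a single proper relation whose row and column sums are constant, say $\rho$ and $\sigma$. Thus $A=\left[\begin{smallmatrix}A_{11}&M\\ M^{\transpose}&A_{22}\end{smallmatrix}\right]$.

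The decisive observation is that the design structure is forced by coherence alone. By (CC4) the product $MM^{\transpose}=NN^{\transpose}$ lies in $\langle I,J-I\rangle$ on $V_1$, and since its diagonal equals the constant row sum $\rho$ we get $MM^{\transpose}=\rho I+\mu(J-I)$ for some integer $\mu$ — exactly \eqref{eqn:2desmmt}, so $M$ is a $2$-design incidence matrix; the same argument on $V_2$ gives $M^{\transpose}M=\sigma I+\mu'(J-I)$. By Lemma~\ref{lem:LGtoL} these share nonzero spectra, and as their only repeated eigenvalues are $\rho-\mu$ (multiplicity $|V_1|-1$) and $\sigma-\mu'$ (multiplicity $|V_2|-1$), these must coincide, forcing $|V_1|=|V_2|=:v$; comparing traces then gives $\rho=\sigma=:k$ and hence $\mu=\mu'=:\lambda$. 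Thus $M$ is the incidence matrix of a symmetric design with $M^{\transpose}M=MM^{\transpose}=kI+\lambda(J-I)$, as in \eqref{eqn:mtmsym}. (The nondegeneracy $\rho>\mu$ used here, which rules out $M$ having rank $1$, follows from connectedness.)

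To fix the diagonal blocks I would use non-regularity: having coherent rank $8$, $\Gamma$ is not strongly regular, hence not regular. A vertex of $V_i$ has degree equal to its within-block degree plus $k$, so the configurations $A_{11}=A_{22}=O$ and $A_{11}=A_{22}=J-I$ give every vertex the common degree $k$, respectively $v-1+k$, contradicting non-regularity. (The case $A_{11}=A_{22}=O$ is also excluded directly, since the off-diagonal block of \eqref{eqn:3ev} would read $-(\theta_1+\theta_2)M=\alpha_1\alpha_2 J$, impossible for a proper $M$.) Exactly one diagonal block is therefore $O$ and the other $J-I$, and after swapping the two fibres we may assume $A_{11}=O$ and $A_{22}=J-I$, the claimed form.

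Finally I would read off the parameters from \eqref{eqn:3ev}, noting that the Perron eigenvector $\bm\alpha$ is constant on each fibre (Corollary~\ref{cor:allFib}(ii)), say $\alpha_1$ on $V_1$ and $\alpha_2$ on $V_2$. The $V_1\times V_2$ block yields $\theta_1+\theta_2=-1$ and $\alpha_1\alpha_2=k$; the $(1,1)$ block yields $\alpha_1^2=\lambda$ and $\theta_1\theta_2=\lambda-k$; and the $(2,2)$ block yields $\alpha_2^2=v-1+\lambda$. Substituting $(\alpha_1\alpha_2)^2=k^2$ and the symmetric-design identity \eqref{eqn:2desSYM}, namely $\lambda(v-1)=k(k-1)$, gives $k=\lambda^2$ and then $v=\lambda^3-\lambda+1$ (with $\{\theta_1,\theta_2\}=\{\lambda-1,-\lambda\}$), completing the identification of the $2$-$(\lambda^3-\lambda+1,\lambda^2,\lambda)$ design. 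I expect the main obstacle to lie in the second step: verifying that the coherence-forced products genuinely yield a nondegenerate symmetric design (squareness via Lemma~\ref{lem:LGtoL}, and exclusion of the degenerate cases $\mu=0$ or $\rho=\mu$), together with the care needed in the diagonal-block case analysis, where \eqref{eqn:3ev} must be balanced against non-regularity.
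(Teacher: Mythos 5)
Your argument is correct in substance and takes a genuinely different route from the paper's. The paper extracts the Gram identities for $MM^\transpose$ and $M^\transpose M$ from the diagonal blocks of \eqref{eqn:3ev}, rules out the two ``equal diagonal blocks'' configurations separately (both empty via Theorem~\ref{thm:chengbipartite}, both complete via a regularity contradiction), obtains $\theta_1+\theta_2=-1$ from Lemma~\ref{lem:LGtoL}, and then computes the parameters from the cubic identity \eqref{eqn:3ev3}, whose top-left and top-right blocks give $\theta_0=k^2/\lambda$ and $\theta_0=v+\lambda-1$. You instead derive the Gram identities from the coherence axiom (CC4) alone, dispose of both bad diagonal configurations with a single regularity argument (possible because you prove $\rho=\sigma$ first), obtain $\theta_1+\theta_2=-1$ by forcing the coefficient of $M$ to vanish in the off-diagonal block of \eqref{eqn:3ev}, and get the parameters from the Perron-vector consistency $k^2=(\alpha_1\alpha_2)^2=\alpha_1^2\alpha_2^2=\lambda(v+\lambda-1)$. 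That last identity is exactly the paper's equation ``$k^2/\lambda=v+\lambda-1$'', so your computation is equivalent but bypasses the cubic entirely, which is a genuine simplification.

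Two loose ends, both of which you flag yourself, must still be closed, and both close easily by importing one step of the paper's proof. First, your claim that $t_{12}=2$ by itself forces $M\ne J$ needs an argument; the clean one is minimality: if $M=J$ then, using $A_{ii}\in\{O,J-I\}$, the matrix $A$ lies in the rank-$6$ coherent algebra spanned by $I$ and $J-I$ on each fibre together with the two all-ones blocks between the fibres, contradicting the hypothesis that $\mathcal W(\Gamma)$ has rank $8$. Second, and more seriously, your Lemma~\ref{lem:LGtoL} step needs $\rho-\mu\ne 0$ and $\mu\ne 0$ (otherwise the ``repeated eigenvalue'' reading of the spectra fails), and implicitly $n_i\geqslant 3$; connectedness alone does not deliver these, and deferring the case $\mu=0$ is mildly circular, since $\lambda=\mu\geqslant 1$ is only confirmed at the very end via $\alpha_1^2=\lambda$. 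The fix is to apply \eqref{eqn:3ev} to the diagonal blocks \emph{before} invoking Lemma~\ref{lem:LGtoL}, exactly as the paper does in \eqref{eqn:bbt2222} and \eqref{eqn:btb2222}: the $(1,1)$ and $(2,2)$ blocks show that $\rho-\mu$ and $\sigma-\mu'$ equal $-\theta_1\theta_2$ or $-(\theta_1+1)(\theta_2+1)$ according as the corresponding diagonal block is $O$ or $J-I$, and both quantities are strictly positive, since $\theta_1>0$ by Corollary~\ref{cor:gt0} (the rank exceeds $6$) and $\theta_2<-1$ by \eqref{eqn:basicevineq}. Hence both Gram matrices are positive definite, so $n_1=\operatorname{rank}(M)=n_2$, giving squareness with no multiplicity bookkeeping at all; your trace and Perron-value comparisons then yield $\rho=\sigma$ and $\mu=\mu'$ exactly as you wrote, and the rest of your proof goes through unchanged.
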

\begin{proof}
Suppose $A=A(\Gamma)$ is the adjacency matrix of $\Gamma$.
By Theorem~\ref{thm:muzychuk} and Corollary~\ref{cor:allFib} (iii), the graph $\Gamma$ must be biregular with degrees $k_1$ and $k_2$ (say).
    Suppose $\mathfrak D(\Gamma) = \{V_1,V_2\}$.
    By Corollary~\ref{cor:allFib} (iii), we can write $A = \left [\begin{smallmatrix}
    A(\Gamma[V_1]) & M \\
    M^\transpose & A(\Gamma[V_2])
\end{smallmatrix}\right ]$ and
    \[
    A^2 =\begin{bmatrix}
    A(\Gamma[V_1])^2+MM^\transpose & A(\Gamma[V_1]) M+MA(\Gamma[V_2]) \\
    M^\transpose A(\Gamma[V_1])+A(\Gamma[V_2])M^\transpose & M^\transpose M +A(\Gamma[V_2])^2
\end{bmatrix}.
\]
Apply \eqref{eqn:3ev} to obtain
\begin{equation}
\label{eqn:biregGen2222}
A^2
-(\theta_1+\theta_2)A
+\theta_1\theta_2 I = 
\begin{bmatrix}
    (k_1+\theta_1\theta_2) J & \sqrt{(k_1+\theta_1\theta_2)(k_2+\theta_1\theta_2)} J \\
    \sqrt{(k_1+\theta_1\theta_2)(k_2+\theta_1\theta_2)} J & (k_2+\theta_1\theta_2) J
\end{bmatrix}.
\end{equation}
By Corollary~\ref{cor:allFib} (i) and Proposition~\ref{pro:smallRank}, the subgraph $\Gamma[V_1] = K_{n_1}$ or $\overline K_{n_1}$, and the subgraph $\Gamma[V_2] = K_{n_2}$ or $\overline K_{n_2}$, that is, the matrix $A(\Gamma[V_1]) = \varepsilon_{11}(J-I)$ and the matrix $A(\Gamma[V_2]) = \varepsilon_{22}(J-I)$ for some $\varepsilon_{11}, \varepsilon_{22} \in \{0,1\}$.
From the top left block of \eqref{eqn:biregGen2222}, we obtain
\begin{equation}
\label{eqn:bbt2222}
    MM^\transpose = \begin{cases}
 (k_{12}+\theta_1\theta_2)J_{n_1}-\theta_1\theta_2I, & \text{ if $\Gamma[V_1] = \overline K_{n_1}$};  \\
(k_{12}+(\theta_1+1)(\theta_2+1))J_{n_1}-(\theta_1+1)(\theta_2+1)I, & \text{ if $\Gamma[V_1] =  K_{n_1}$}.
\end{cases}
\end{equation}
Similar to the above, we can use the bottom-right block of \eqref{eqn:biregGen2222} to deduce that 
\begin{equation}
\label{eqn:btb2222}
    M^\transpose M = \begin{cases}
 (k_{21}+\theta_1\theta_2)J_{n_2}-\theta_1\theta_2I, & \text{ if $\Gamma[V_2] = \overline K_{n_2}$};  \\
(k_{21}+(\theta_1+1)(\theta_2+1))J_{n_2}-(\theta_1+1)(\theta_2+1)I, & \text{ if $\Gamma[V_2] =  K_{n_2}$}.
\end{cases}
\end{equation}

Next, we show that, without loss of generality, we can assume that $\Gamma[V_1]$ is empty and $\Gamma[V_2]$ is complete.
By Theorem~\ref{thm:chengbipartite}, $\Gamma[V_1]$ and $\Gamma[V_2]$ cannot both be empty.
We further claim that $\Gamma[V_1]$ and $\Gamma[V_2]$ cannot both be complete.
Suppose (for a contradiction) that both $\Gamma[V_1]$ and $\Gamma[V_2]$ are complete.
Using \eqref{eqn:bbt2222} and \eqref{eqn:btb2222} together with Lemma~\ref{lem:LGtoL}, we find that $n_1 = n_2$ and $k_{12} = k_{21}$.
This implies that $\Gamma$ is regular, which gives a contradiction.

Hence, we may assume that $\Gamma[V_1]$ is empty and $\Gamma[V_2]$ is complete.
Apply Lemma~\ref{lem:LGtoL} to find that $\theta_1\theta_2 = (\theta_1+1)(\theta_2+1)$.
This implies that $\theta_1 + \theta_2 = -1$.
Furthermore, $M$ must be the incidence matrix of a symmetric $2$-design $\mathcal D$.
Suppose $\mathcal D$ has parameters $(v,k,\lambda)$.
Then $v = b = n_1 = n_2$ and $k = r = \lambda(v-1)/(k-1)$.

Now, we can write
\[
A^2 =  \begin{bmatrix}
(k-\lambda)I + \lambda J & kJ - M \\ kJ - M^\transpose & (k-\lambda+1) I + (v+\lambda-2)J 
\end{bmatrix}.
\]
Observe that the off-diagonal entries and the diagonal entries of the diagonal blocks of \eqref{eqn:biregGen2222} are equal.
Equating the diagonal and off-diagonal entries of the top-left block yields $\theta_1\theta_2 = \lambda - k$.
Lastly, we use the equation
\begin{equation}
\label{eqn:3ev3}
    (A-\theta_0 I)(A-\theta_1 I)(A-\theta_2 I) = O.
\end{equation}
The top-left block of $A^3$ is equal to $(k^2-\lambda)J + (\lambda - k)I$.
Thus, the top-left block of \eqref{eqn:3ev3} is
\[
(k^2-\lambda)J + (\lambda - k)I - (\theta_0+\theta_1+\theta_2)((k-\lambda)I + \lambda J)-\theta_0\theta_1\theta_2I=O.
\]
Using $\theta_1+\theta_2=-1$ together with the off-diagonal entries of the above yields $\theta_0 = k^2/\lambda$.
The top-right block of $A^3$ is equal to $(k-\lambda + 1) M + k(\lambda + v - 2) J$.
Thus, the top-right block of \eqref{eqn:3ev3} is
\[
(k-\lambda + 1) M + k(\lambda + v - 2) J-(\theta_0+\theta_1+\theta_2)(kJ-M)+(\theta_0\theta_1+\theta_0\theta_2+\theta_1\theta_2)M=O.
\]
Combining the coefficients of $J$ yields $k(\lambda+v-2)=k(\theta_0-1)$, which simplifies to the equation $\theta_0 = v+\lambda-1$.
The two expressions for $\theta_0$ combined with the equation $\lambda(v-1)=k(k-1)$ yields $k = \lambda^2$.
Furthermore, $v = \lambda^3-\lambda+1$.
\end{proof}

We can strengthen the above result to show that if a graph in $\mathscr G_3$ has coherent rank 8 then its coherent closure has type $\left [ \begin{smallmatrix}2 & 2 \\  & 2 \end{smallmatrix}\right ]$.

\begin{theorem}
\label{thm:rank8}
    Let $\Gamma \in \mathscr G_3$ with coherent rank $8$.
    Then the adjacency matrix of $\Gamma$ has the form $\left [ \begin{smallmatrix} O & M \\ M^\transpose & J-I \end{smallmatrix}\right ]$ where $M$ is the incidence matrix of a symmetric $2$-$(\lambda^3-\lambda+1,\lambda^2,\lambda)$ design for some $\lambda \in \mathbb N$.
\end{theorem}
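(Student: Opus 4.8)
The plan is to upgrade Proposition~\ref{pro:rank8} by showing that coherent rank $8$ \emph{forces} $\mathcal{W}(\Gamma)$ to have type $\left[\begin{smallmatrix}2 & 2 \\ & 2\end{smallmatrix}\right]$, after which Proposition~\ref{pro:rank8} applies verbatim. Write $r=8$ and recall that the entries of the type matrix sum to $r$. Since coherent rank $3$ is equivalent to being strongly regular (Theorem~\ref{thm:muzychuk}) and strongly regular graphs are exactly the regular graphs in $\mathscr G_3$, the hypothesis rank $8$ makes $\Gamma$ non-regular, so $|\mathfrak D(\Gamma)|\geqslant 2$ and hence by Corollary~\ref{cor:allFib}(iii) there are at least two fibres.

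First I would bound the number of fibres $|\Delta|$. Each diagonal entry satisfies $t_{ii}\geqslant 1$ (the fibre carries its own loop relation coming from (CC3)), and each off-diagonal entry satisfies $t_{ij}\geqslant 1$ because (CC1) forces some relation to cover $\mathfrak X_i\times\mathfrak X_j$. If $|\Delta|\geqslant 3$ then the type matrix has at least three diagonal entries and at least three off-diagonal pairs, so its entries sum to at least $3+2\cdot 3=9>8$, a contradiction. Hence $|\Delta|=2$.

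Next I would enumerate the admissible type matrices $\left[\begin{smallmatrix}t_{11} & t_{12} \\ & t_{22}\end{smallmatrix}\right]$ subject to $t_{11}+t_{22}+2t_{12}=8$, all entries at least $1$, and Higman's restriction $t_{12}\leqslant\min(t_{11},t_{22})$ whenever $t_{11},t_{22}\leqslant 5$ (Lemma~\ref{lem:HigmanType}). A short check leaves exactly four possibilities, namely $\left[\begin{smallmatrix}1 & 1 \\ & 5\end{smallmatrix}\right]$, $\left[\begin{smallmatrix}2 & 1 \\ & 4\end{smallmatrix}\right]$, $\left[\begin{smallmatrix}3 & 1 \\ & 3\end{smallmatrix}\right]$, and $\left[\begin{smallmatrix}2 & 2 \\ & 2\end{smallmatrix}\right]$: the value $t_{12}=3$ would force $t_{11}=t_{22}=1$ and violate Higman, while $t_{12}=2$ forces $t_{11}=t_{22}=2$, and $t_{12}\geqslant 4$ is impossible since then $t_{11}+t_{22}\leqslant 0$.

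The crux is to eliminate the three types with $t_{12}=1$, which I would handle uniformly. When $t_{12}=1$ there is a unique basis relation with support in $\mathfrak X_1\times\mathfrak X_2$, and its matrix restricts to the all-ones block there; since $A(\Gamma)$ is a sum of basis matrices of $\mathcal W(\Gamma)$, its off-diagonal block $M$ must equal $O$ or $J$. As $\Gamma$ is connected, $M=O$ is impossible, so $M=J$ and every vertex of the first fibre is adjacent to every vertex of the second. Then $\overline{\Gamma}$ is disconnected, so by Theorem~\ref{thm:cheng} the non-regular graph $\Gamma$ is a cone or complete bipartite. A complete bipartite graph has coherent rank at most $6$ by Theorem~\ref{thm:muzychuk}; and a cone with at most two fibres has at most two distinct degrees by Corollary~\ref{cor:allFib}(iii), hence is not triregular, so Lemma~\ref{lem:VDcone} (equivalently Lemma~\ref{lem:higmanType7}) makes it a cone over a strongly regular graph, again of coherent rank $6$ by Theorem~\ref{thm:muzychuk}. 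Either way this contradicts rank $8$, so the only surviving type is $\left[\begin{smallmatrix}2 & 2 \\ & 2\end{smallmatrix}\right]$, and Proposition~\ref{pro:rank8} delivers the stated form of $A(\Gamma)$. I expect the main obstacle to be the careful justification that $t_{12}=1$ pins $M$ down to $\{O,J\}$ through the basis structure of $\mathcal W(\Gamma)$; the remaining steps are bookkeeping on the type matrix together with appeals to the cited results.
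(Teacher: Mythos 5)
Your proposal is correct and follows essentially the same route as the paper's own proof: non-regularity via Theorem~\ref{thm:muzychuk}, reduction to the four admissible type matrices via Lemma~\ref{lem:HigmanType} and Corollary~\ref{cor:allFib}~(iii), elimination of the three types with $t_{12}=1$ by observing the complement is disconnected and invoking Theorem~\ref{thm:cheng}, Lemma~\ref{lem:higmanType7}, and Theorem~\ref{thm:muzychuk}, and finally Proposition~\ref{pro:rank8} for the type $\left[\begin{smallmatrix}2 & 2 \\ & 2\end{smallmatrix}\right]$. You supply details the paper leaves implicit (the fibre-count bound and the argument that $t_{12}=1$ forces the off-diagonal block of $A(\Gamma)$ to be $J$), but the logical skeleton is identical.
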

\begin{proof}
By Theorem~\ref{thm:muzychuk}, the graph $\Gamma$ is not regular.
If $\mathcal {W}(\Gamma)$ has type $\left [ \begin{smallmatrix}2 & 2 \\  & 2 \end{smallmatrix}\right ]$ then the conclusion follows from Proposition~\ref{pro:rank8}.
    Otherwise, by Lemma~\ref{lem:HigmanType} and Corollary~\ref{cor:allFib} (iii), the coherent closure $\mathcal {W}(\Gamma)$ has type $\left [ \begin{smallmatrix}1 & 1 \\  & 5 \end{smallmatrix}\right ]$, $\left [ \begin{smallmatrix}2 & 1 \\  & 4 \end{smallmatrix}\right ]$, or $\left [ \begin{smallmatrix}3 & 1 \\  & 3 \end{smallmatrix}\right ]$.
    In each case, the complement of $\Gamma$ is disconnected.
    By Theorem~\ref{thm:cheng} together with Lemma~\ref{lem:higmanType7} and Theorem~\ref{thm:muzychuk}, such a graph has coherent rank at most $6$, a contradiction.
\end{proof}

\begin{example}
    \label{ex:rank8}
    Van Dam~\cite[Section 2.3]{VANDAM1998101} provided an infinite family of graphs corresponding to Theorem~\ref{thm:rank8}.
\end{example}

The proof of the next proposition can be obtained by the same techniques used in the proof of Proposition~\ref{pro:rank8}.

\begin{proposition}
    \label{pro:22}
    Let $\Gamma \in \mathscr G_3$ such that $A(\Gamma) = \left [ \begin{smallmatrix} \varepsilon_{1}(J-I) & M \\ M^\transpose & \varepsilon_{2}(J-I) \end{smallmatrix}\right ]$ for some matrix $M$ with $M\mathbf 1 = r\mathbf 1$, $M^\transpose \mathbf 1 = k\mathbf 1$, and $\varepsilon_{1}, \varepsilon_{2} \in \{0,1\}$.
    Then the coherent rank of $\Gamma$ is at most $8$.
\end{proposition}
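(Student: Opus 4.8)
The plan is to exhibit an explicit coherent algebra $\mathcal A$ of dimension at most $8$ that contains $A(\Gamma)$; since the coherent closure $\mathcal W(\Gamma)$ is the minimal coherent algebra containing $A(\Gamma)$, we will then have $\mathcal W(\Gamma)\subseteq\mathcal A$, and hence the coherent rank of $\Gamma$, which equals $\dim\mathcal W(\Gamma)$, is at most $\dim\mathcal A\leqslant 8$. Write $n_1$ and $n_2$ for the numbers of rows of the two diagonal blocks, and identify each block matrix below with its embedding in $\operatorname{Mat}_{n_1+n_2}(\mathbb C)$. Let $\mathcal A$ be the linear span of the eight matrices
\[
I_{n_1},\ J_{n_1}-I_{n_1},\ M,\ J-M,\ M^\transpose,\ J-M^\transpose,\ I_{n_2},\ J_{n_2}-I_{n_2},
\]
where $M,\,J-M$ are supported on the $V_1\times V_2$ block and $M^\transpose,\,J-M^\transpose$ on the $V_2\times V_1$ block. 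Since $A(\Gamma)$ is a $\{0,1\}$-matrix, so is $M$, and $\mathcal A$, being spanned by eight matrices, satisfies $\dim\mathcal A\leqslant 8$.

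First I would dispatch the easy axioms. We have $I=I_{n_1}+I_{n_2}\in\mathcal A$, and $J$ is the sum of the eight generators, so (A1) holds. The transpose map permutes the generating set, fixing the four diagonal generators and swapping $M\leftrightarrow M^\transpose$ and $J-M\leftrightarrow J-M^\transpose$, giving (A2). For the Hadamard part of (A3), generators supported on different blocks multiply entrywise to $O$, while the two generators within a common block are complementary $\{0,1\}$-matrices, so each is Hadamard-idempotent and their entrywise product is $O$; hence $\mathcal A$ is closed under $\circ$. Finally $A(\Gamma)=\varepsilon_1(J_{n_1}-I_{n_1})+M+M^\transpose+\varepsilon_2(J_{n_2}-I_{n_2})\in\mathcal A$.

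The substantive step is closure under ordinary matrix multiplication. Products among the diagonal generators $I_{n_i},\,J_{n_i}-I_{n_i}$ are standard and stay within $\langle I_{n_i},J_{n_i}-I_{n_i}\rangle$. Products mixing a diagonal generator with an off-diagonal one, together with products of an off-diagonal generator against an all-ones block, are handled by the row- and column-sum hypotheses $M\mathbf 1=r\mathbf 1$ and $M^\transpose\mathbf 1=k\mathbf 1$; these yield, on the appropriate blocks, $J_{n_1}M=kJ$ and $MJ=rJ$, each lying in $\mathcal A$. The crux is $MM^\transpose$ and $M^\transpose M$, and here I would invoke the three-eigenvalue equation \eqref{eqn:3ev}. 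The degrees are constant on each block, namely $\varepsilon_1(n_1-1)+r$ on $V_1$ and $k+\varepsilon_2(n_2-1)$ on $V_2$, so by $\alpha_{\mathsf x}=\sqrt{d_{\mathsf x}+\theta_1\theta_2}$ the Perron eigenvector $\bm\alpha$ is constant on each block and $\bm\alpha\bm\alpha^\transpose$ has constant blocks. Reading off the top-left block of \eqref{eqn:3ev} and substituting $A(\Gamma[V_1])=\varepsilon_1(J_{n_1}-I_{n_1})$ solves for $MM^\transpose$ as a linear combination of $I_{n_1}$ and $J_{n_1}$, exactly as in \eqref{eqn:bbt2222}, so that $MM^\transpose\in\mathcal A$; the bottom-right block gives $M^\transpose M\in\mathcal A$ symmetrically.

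With every product of generators shown to lie in $\mathcal A$, bilinearity yields closure of $\mathcal A$ under multiplication, so $\mathcal A$ is a coherent algebra containing $A(\Gamma)$, whence the coherent rank of $\Gamma$ is at most $8$. I expect the only genuine bookkeeping obstacle to be organising the full multiplication table and confirming that the degenerate cases, such as a block of size $1$ or $M\in\{O,J\}$, merely lower $\dim\mathcal A$ without breaking closure; the conceptual heart of the argument, that $MM^\transpose$ and $M^\transpose M$ collapse into $\langle I,J\rangle$, is precisely what the three-eigenvalue hypothesis supplies through \eqref{eqn:3ev}, just as in the proof of Proposition~\ref{pro:rank8}.
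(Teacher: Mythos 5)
Your proposal is correct and follows essentially the same route as the paper: both exhibit the same eight block generators and use the top-left and bottom-right blocks of \eqref{eqn:3ev} to show that $MM^\transpose$ and $M^\transpose M$ collapse into $\langle I, J\rangle$, so that $\mathcal{W}(\Gamma)$ is contained in a coherent algebra of rank at most $8$. The only cosmetic differences are that the paper dispatches the regular case separately (coherent rank $3$) and leaves the verification of the coherent-algebra axioms implicit, whereas you check them explicitly.
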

\begin{proof}
Suppose that $M$ is an $n_1 \times n_2$ matrix.
Clearly, $\Gamma$ has at most two distinct degrees.
If $\Gamma$ is regular then $\Gamma$ has coherent rank $3$.
Otherwise, we can assume that $\Gamma$ is biregular with degrees $k_1$ and $k_2$ (say).
The top-left block of \eqref{eqn:3ev} yields 
$$MM^\transpose = (k_1+\theta_1\theta_2-\varepsilon_1(n_1-2-\theta_1-\theta_2))J-(\varepsilon_1(1+\theta_1+\theta_2)+\theta_1\theta_2)I.$$
The bottom-right block of \eqref{eqn:3ev} yields 
$$M^\transpose M = (k_2+\theta_1\theta_2-\varepsilon_2(n_2-2-\theta_1-\theta_2))J-(\varepsilon_2(1+\theta_1+\theta_2)+\theta_1\theta_2)I.$$
Hence, the coherent closure $\mathcal {W}(\Gamma)$ must be a subalgebra of the coherent algebra
    \[
    \langle \left [\begin{smallmatrix}
        I & O \\ O & O
    \end{smallmatrix} \right ], \left [\begin{smallmatrix}
        J-I & O \\ O & O
    \end{smallmatrix} \right ], \left [\begin{smallmatrix}
        O & M \\ O & O
    \end{smallmatrix} \right ], \left [\begin{smallmatrix}
        O & J-M \\ O & O
    \end{smallmatrix} \right ], \left [\begin{smallmatrix}
        O & O \\ M^\transpose & O
    \end{smallmatrix} \right ], \left [\begin{smallmatrix}
        O & O \\ J-M^\transpose & O
    \end{smallmatrix} \right ],
    \left [\begin{smallmatrix}
        O & O \\ O & I
    \end{smallmatrix} \right ], \left [\begin{smallmatrix}
        O & O \\ O & J-I
    \end{smallmatrix} \right ]\rangle,
    \]
    which has rank at most $8$.
\end{proof}

\section{Coherent rank 9}
\label{sec:9}

In this section, we characterise the graphs $\Gamma \in \mathscr G_3$ that have coherent rank $9$.

\subsection{The total graph and whole graph of a QSD}

Let $(\mathcal P, \mathcal B)$ be a $2$-$(v,k,\lambda)$ design.
Suppose that every pair of blocks in $\mathcal B$ intersects in either $x$ or $y$ points and both occur.
If $x \ne y$ then $(\mathcal P,\mathcal B)$ is called a \textbf{quasi-symmetric} design.
The numbers $x$ and $y$ are called \textbf{intersection numbers}.
Let $M$ be the incidence matrix of a quasi-symmetric $2$-$(v,k,\lambda)$ design $\mathcal Q$ with intersection numbers $x$ and $y$.
Then
\begin{equation}
\label{eqn:mtmquasisym}
    M^\transpose M = kI + xX + y(J-I-X),
\end{equation}
where $X$ is the $\{0,1\}$-matrix indexed by $\mathcal B$ whose $(i,j)$-entry is $1$ precisely when $|i \cap j| = x$.

The graph whose adjacency matrix is $X$ is called the $x$-\textbf{block graph} of $\mathcal Q$ and is denoted by $\mathsf B_x(\mathcal Q)$.
Note that $\mathsf B_y(\mathcal Q) = \overline{\mathsf B_x(\mathcal Q)}$.
For the sake of brevity, we refer to $\mathcal Q$ as a \textbf{QSD} with parameters $(v,k,\lambda;b,r,\{x,y\})$; recall that $r$ and $b$ can be recovered from $v$, $k$, and $\lambda$ via \eqref{eqn:2des1} and \eqref{eqn:2des2}.
The $x$\textbf{-total graph} of $\mathcal Q$, which we denote by $\mathsf T_x(\mathcal Q)$, is defined to be the graph with adjacency matrix
\[
\begin{bmatrix}
    O & M \\
    M^\transpose & A(\mathsf B_x(\mathcal Q))
\end{bmatrix}.
\]
We define the $x$\textbf{-whole graph} of $\mathcal Q$, which we denote by $\mathsf W_x(\mathcal Q)$, to be the graph with adjacency matrix
\[
\begin{bmatrix}
    J-I & M \\
    M^\transpose & A(\mathsf B_x(\mathcal Q))
\end{bmatrix}.
\]

The block graph of a QSD $\mathcal Q$ is a strongly regular graph~\cite[Theorem 48.10]{shrikhandeHCD} and its eigenvalues can be expressed in terms of the parameters of $\mathcal Q$:

\begin{theorem}
\label{thm:blockspec}
The $x$-block graph $\mathsf B_x(\mathcal Q)$ of a QSD with parameters $(v,k,\lambda;b,r,\{x,y\})$ has spectrum
        $$\operatorname{spec}\left (\mathsf B_x(\mathcal Q) \right ) = \left \{ \left [\frac{k(r-1) - y(b-1)}{x-y} \right ]^1, \left [\frac{r-\lambda-k + y}{x-y}\right ]^{v-1}, \left [\frac{y-k}{x-y} \right ]^{b-v} \right\}.$$
\end{theorem}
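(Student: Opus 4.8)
The plan is to compute the spectrum of $\mathsf B_x(\mathcal Q)$ directly from the quadratic relation its adjacency matrix satisfies, which we extract from the two Gram matrices $M^\transpose M$ and $MM^\transpose$. First I would record the structural equations available for a QSD with parameters $(v,k,\lambda;b,r,\{x,y\})$: from \eqref{eqn:2desmmt} we have $MM^\transpose = rI + \lambda(J_v - I)$, and from \eqref{eqn:mtmquasisym} we have $M^\transpose M = kI + xX + y(J_b - I - X)$, where $X = A(\mathsf B_x(\mathcal Q))$. Since $\mathsf B_x(\mathcal Q)$ is strongly regular (hence regular) by \cite[Theorem 48.10]{shrikhandeHCD}, its largest eigenvalue is its valency, and the all-ones vector $\mathbf 1_b$ is the corresponding Perron eigenvector. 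The degree count is a routine double-count: fixing one block, each of the $k$ points it contains lies on $r-1$ further blocks, overlaps being weighted by $x-1$ or $y-1$; equivalently one reads the valency off the row sums of $M^\transpose M$. This yields the first listed eigenvalue $\bigl(k(r-1) - y(b-1)\bigr)/(x-y)$.

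For the remaining two eigenvalues, the key observation is that the nonzero eigenvalues of $M^\transpose M$ coincide with those of $MM^\transpose$ by Lemma~\ref{lem:LGtoL}. The matrix $MM^\transpose = (r-\lambda)I + \lambda J_v$ has eigenvalue $r + \lambda(v-1) = rk$ on $\mathbf 1_v$ (using \eqref{eqn:2des1}--\eqref{eqn:2des2}) with multiplicity $1$, and eigenvalue $r - \lambda$ with multiplicity $v-1$. Hence $M^\transpose M$ has eigenvalue $rk$ once, eigenvalue $r-\lambda$ with multiplicity $v-1$, and eigenvalue $0$ with multiplicity $b - v$ (Fisher's inequality \eqref{eqn:fisher} guarantees $b \geqslant v$, so this is well-defined). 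Rearranging \eqref{eqn:mtmquasisym} gives $X = \bigl(M^\transpose M - kI - y(J_b - I)\bigr)/(x - y)$, so on any eigenvector of $M^\transpose M$ orthogonal to $\mathbf 1_b$ (on which $J_b$ acts as $0$), the matrix $X$ acts by the linear map $\mu \mapsto (\mu - k + y)/(x-y)$ applied to the eigenvalue $\mu$ of $M^\transpose M$. Substituting $\mu = r - \lambda$ gives the second eigenvalue $(r - \lambda - k + y)/(x-y)$ and $\mu = 0$ gives the third eigenvalue $(y - k)/(x-y)$.

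The one point requiring care is the multiplicity bookkeeping, which is where the main (minor) obstacle lies: I must verify that the eigenspaces of $M^\transpose M$ lying in $\mathbf 1_b^\perp$ are exactly the eigenspaces of $X$ for the two non-Perron eigenvalues, and that the Perron eigenvector $\mathbf 1_b$ is not double-counted. Concretely, the $rk$-eigenvector of $M^\transpose M$ is $\mathbf 1_b$ (since $M^\transpose \mathbf 1_v = k\mathbf 1_b$ and $M\mathbf 1_b = r\mathbf 1_v$, so $M^\transpose M \mathbf 1_b = rk\mathbf 1_b$), which is exactly the Perron direction carrying the valency eigenvalue of $X$; this accounts for the multiplicity-$1$ term. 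The eigenvalue $r-\lambda$ occupies the remaining $v-1$ dimensions of the column space of $M^\transpose$, and the eigenvalue $0$ occupies the $b-v$ dimensional kernel of $M^\transpose M$ — both orthogonal to $\mathbf 1_b$, giving multiplicities $v-1$ and $b-v$ respectively. Since these three eigenspaces are mutually orthogonal and their dimensions sum to $1 + (v-1) + (b-v) = b$, they exhaust $\mathbb R^b$, so no eigenvalue of $X$ is missed. This confirms the stated spectrum.
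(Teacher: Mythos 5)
Your proof is correct, but there is nothing in the paper to compare it against: the paper states Theorem~\ref{thm:blockspec} as a known fact, citing \cite[Theorem 48.10]{shrikhandeHCD} for strong regularity of the block graph, and gives no proof of the spectrum formula. Your argument is the standard one from the design-theory literature, and it is complete: the identities $MM^\transpose = rI+\lambda(J-I)$ and $M^\transpose M = kI+xX+y(J-I-X)$, the transfer of nonzero eigenvalues via Lemma~\ref{lem:LGtoL}, and the observation that $X$ is an affine function of $M^\transpose M$ and $J_b$ on $\mathbf 1_b^\perp$ do yield exactly the three listed eigenvalues with multiplicities $1$, $v-1$, $b-v$. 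Two small refinements worth noting. First, you do not actually need the external citation for regularity: your own rearrangement $X = \bigl(M^\transpose M - kI - y(J_b-I)\bigr)/(x-y)$ gives $X\mathbf 1_b = e_0 \mathbf 1_b$ directly, since $M^\transpose M \mathbf 1_b = rk\,\mathbf 1_b$, so constancy of row sums is immediate and the argument is self-contained. Second, your multiplicity bookkeeping is sound precisely because the map $\mu \mapsto (\mu-k+y)/(x-y)$ is injective ($x \neq y$ by the definition of a QSD) and because the $rk$-eigenspace of $M^\transpose M$ is exactly the span of $\mathbf 1_b$ (as $rk > r-\lambda > 0$, using $r>\lambda$, which follows from \eqref{eqn:2des2} and $k<v$); note that your conclusion correctly does not assert the three values are pairwise distinct, which is consistent with Remark~\ref{rem:blockGraph3ev} of the paper.
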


To avoid potential confusion from Theorem~\ref{thm:blockspec}, we make a clarifying remark.

\begin{remark}
    \label{rem:blockGraph3ev}
    Note that Theorem~\ref{thm:blockspec} does not claim that the $x$-block graph of a QSD is necessarily connected or that it has precisely three distinct eigenvalues.
    For example, it is not necessary that 
    \[
    \frac{k(r-1) - y(b-1)}{x-y} \ne \frac{r-\lambda-k + y}{x-y}.
    \]
Indeed, one can obtain a QSD by pooling together $m \geqslant 2$ copies of the blocks of a symmetric $2$-$(v,k,\lambda)$ design.
The resulting QSD $\mathcal Q$ has parameters $(v,k,m\lambda; mv, mk, \{k,\lambda\})$.
Its block graph $\mathsf B_k(\mathcal Q)$ is the disjoint union of $v$ copies of $K_m$, which is disconnected and has just two distinct eigenvalues.
\end{remark}

Next, we show that if the total graph or the whole graph of a QSD has three distinct eigenvalues and is not regular then it must have coherent rank $9$.

\begin{lemma}
\label{lem:nonregtotal}
    Let $\mathcal Q$ be a QSD with parameters $(v,k,\lambda; b, r, \{x,y\})$.
    Then 
    \begin{itemize}
        \item[(i)] if $\mathsf T_x(\mathcal Q) \in \mathscr G_3$ is not regular then $\mathsf T_x(\mathcal Q)$ has coherent rank 9;
        \item[(ii)] if $\mathsf W_x(\mathcal Q) \in \mathscr G_3$ is not regular then $\mathsf W_x(\mathcal Q)$ has coherent rank 9.
    \end{itemize}
\end{lemma}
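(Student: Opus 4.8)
The plan is to show that both graphs are biregular with a coherent closure of type $\left[\begin{smallmatrix}2 & 2 \\ & 3\end{smallmatrix}\right]$, which has rank $9$. First I would establish biregularity. Since $\mathcal Q$ is a QSD, its incidence matrix $M$ satisfies $M\mathbf 1 = r\mathbf 1$ and $M^\transpose\mathbf 1 = k\mathbf 1$, so every point lies in $r$ blocks and every block contains $k$ points. For $\mathsf T_x(\mathcal Q)$, a point-vertex has degree $r$ and a block-vertex has degree $k + d$, where $d = (k(r-1)-y(b-1))/(x-y)$ is the (constant) valency of the strongly regular graph $\mathsf B_x(\mathcal Q)$ given by Theorem~\ref{thm:blockspec}; the analogous computation for $\mathsf W_x(\mathcal Q)$ gives degrees $v-1+r$ and $k+d$. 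In either case there are exactly two candidate degree values, and since we assume the graph is not regular, these must be distinct, so the graph is biregular with valency partition $\{\mathcal P,\mathcal B\}$.

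Next I would pin down the coherent closure. By Theorem~\ref{thm:muzychuk} the graph is not strongly regular, so its coherent rank is at least $5$; by Corollary~\ref{cor:allFib}~(iii), the two fibres of $\mathcal W(\cdot)$ coincide with the valency partition $\{\mathcal P,\mathcal B\}$, so the type matrix is a symmetric $2\times 2$ matrix $\left[\begin{smallmatrix}t_{11} & t_{12} \\ & t_{22}\end{smallmatrix}\right]$. The diagonal fibre-restrictions are forced by Corollary~\ref{cor:allFib}~(i): restricted to $\mathcal P$ the adjacency matrix is either $J-I$ or $O$, so that fibre carries a coherent algebra $\langle I,J-I\rangle$ of rank $2$, giving $t_{11}=2$; restricted to $\mathcal B$ the adjacency matrix is $A(\mathsf B_x(\mathcal Q))$, a strongly regular graph, so by Proposition~\ref{pro:smallRank}~(iii) that fibre carries $\langle I, X, J-I-X\rangle$ of rank $3$, giving $t_{22}=3$. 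For the off-diagonal count, the relations inside $\mathcal P\times\mathcal B$ are generated by the partition of ordered point-block pairs induced by incidence, namely $M$ and $J-M$; I must argue that the Weisfeiler--Leman refinement does not split either of these further, so $t_{12}=2$. This yields type $\left[\begin{smallmatrix}2 & 2 \\ & 3\end{smallmatrix}\right]$ of rank $2+2\cdot 2+3=9$.

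The key calculation supporting $t_{12}=2$, and the step I expect to be the main obstacle, is verifying that $\langle I, J-I, M, J-M, M^\transpose, J-M^\transpose, I, X, J-I-X\rangle$ (with the obvious block placements) is already closed under the three coherent-algebra operations, so that $\mathcal W(\Gamma)$ cannot be strictly larger than this rank-$9$ algebra. Closure under transpose and Hadamard product is immediate from the list; the real work is matrix multiplication, and the products that matter are $M M^\transpose$, $M^\transpose M$, $M X$, and $X M^\transpose$. The first two are handled by \eqref{eqn:2desmmt} and \eqref{eqn:mtmquasisym}, which express $MM^\transpose$ and $M^\transpose M$ as linear combinations of $I$, $J$, and $X$. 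For $MX$ and $XM^\transpose$ I would invoke the fact that $\mathsf B_x(\mathcal Q)$ is strongly regular together with the QSD structure, showing that the number of blocks meeting a given block in $x$ points and containing a given point is constant in the appropriate sense; equivalently, one checks that $MX$ lies in the span of $M$ and $J-M = J - M$. Having established closure, the rank is at most $9$; combined with the lower bound $\geqslant 9$ forced by the type being $\left[\begin{smallmatrix}2 & 2 \\ & 3\end{smallmatrix}\right]$ (the only admissible type with $t_{11}=2$, $t_{22}=3$ after ruling out $t_{12}<2$ via connectedness of $\Gamma$, which prevents $M$ from being $O$ or $J$), we conclude the coherent rank is exactly $9$.
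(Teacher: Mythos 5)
Your proposal is correct in substance, and its upper-bound half is essentially the paper's: both exhibit the explicit rank-$9$ coherent algebra spanned by $I$ and $J-I$ on the point fibre, $M$, $J-M$ and their transposes between fibres, and $I$, $X$, $J-I-X$ on the block fibre, with closure under multiplication resting on \eqref{eqn:2desmmt}, \eqref{eqn:mtmquasisym}, strong regularity of $\mathsf B_x(\mathcal Q)$, and the identity expressing $MX$ as a combination of $M$ and $J$ (which the paper records later as \eqref{eqn:BA2}). Where you genuinely diverge is the lower bound. The paper rules out smaller ranks by quoting its classification results: rank at most $7$ would make the graph complete bipartite or a cone over a strongly regular graph (Theorem~\ref{thm:muzychuk}), and rank $8$ would force the block-side induced subgraph to be complete (Theorem~\ref{thm:rank8}); each is impossible because both intersection numbers of a QSD occur, so $\mathsf B_x(\mathcal Q)$ is neither empty nor complete, and $v \geqslant 2$. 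You instead bound the type entries directly ($t_{11}\geqslant 2$; $t_{22}\geqslant 3$ via Corollary~\ref{cor:allFib}~(i) and Proposition~\ref{pro:smallRank}; $t_{12}\geqslant 2$ via connectedness and $x \ne y$), which is more self-contained and in fact foreshadows how the paper later proves Theorem~\ref{thm:rank9nec}; the paper's route is shorter given the machinery already in place.

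One step in your write-up is misordered and, as cited, not justified: you claim that ``by Corollary~\ref{cor:allFib}~(iii) the two fibres of $\mathcal{W}(\Gamma)$ coincide with the valency partition.'' That corollary only gives $|\mathfrak D(\Gamma)| \leqslant |\mathfrak F_{\mathcal{W}(\Gamma)}|$; in general Weisfeiler--Leman refinement may split a degree class, so at that point in your argument you cannot conclude that there are exactly two fibres. The repair is available inside your own proof: first establish the containment of $\mathcal{W}(\Gamma)$ in the rank-$9$ algebra (your ``key calculation''), whence every fibre of $\mathcal{W}(\Gamma)$ is a union of the two fibres of that algebra, so $|\mathfrak F_{\mathcal{W}(\Gamma)}| \leqslant 2$, and then the equality case of Corollary~\ref{cor:allFib}~(iii) identifies the fibres with the valency partition; alternatively, note that if there were three or more fibres, then every type entry being at least $1$ already forces rank at least $9$. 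With that reordering your argument is complete.
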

\begin{proof}
    We give a proof of (i).
    The proof for (ii) can be obtained mutatis mutandis.
    Suppose $\mathsf T_x(\mathcal Q) \in \mathscr G_3$ is not regular.
    Let $A = A(\Gamma)$ be the adjacency matrix of $\Gamma$, $M$ be the incidence matrix of $\mathcal Q$, and $X$ be the adjacency matrix of $\mathsf B_x(\mathcal Q)$.
    Since
        \[
    A \in \langle \left [\begin{smallmatrix}
        I & O \\ O & O
    \end{smallmatrix} \right ], \left [\begin{smallmatrix}
        J-I & O \\ O & O
    \end{smallmatrix} \right ], \left [\begin{smallmatrix}
        O & M \\ O & O
    \end{smallmatrix} \right ], \left [\begin{smallmatrix}
        O & J-M \\ O & O
    \end{smallmatrix} \right ], \left [\begin{smallmatrix}
        O & O \\ M^\transpose & O
    \end{smallmatrix} \right ], \left [\begin{smallmatrix}
        O & O \\ J-M^\transpose & O
    \end{smallmatrix} \right ],
    \left [\begin{smallmatrix}
        O & O \\ O & I
    \end{smallmatrix} \right ], \left [\begin{smallmatrix}
        O & O \\ O & X
    \end{smallmatrix} \right ],
    \left [\begin{smallmatrix}
        O & O \\ O & J-X-I
    \end{smallmatrix} \right ]\rangle,
    \]
    it is clear that the coherent closure $\mathcal {W}(\Gamma)$ has rank at most $9$.
    Observe that $\Gamma$ is biregular with valency partition $\mathfrak D(\Gamma) = \{V_1,V_2\}$ where $\{\Gamma[V_1],\Gamma[V_2]\} = \{\overline{K_{v}}, \mathsf B_x(\mathcal Q) \}$.
    By Theorem~\ref{thm:rank8}, if $\Gamma$ had coherent rank 8 then $\{\Gamma[V_1],\Gamma[V_2]\} = \{\overline{K_{v}}, K_{b} \}$, which is impossible since $\mathsf B_x(\mathcal Q)$ cannot be a complete graph.
   
    Lastly, by Theorem~\ref{thm:muzychuk}, if $\Gamma$ had coherent rank at most $7$ then $\Gamma$ would be either complete bipartite or a cone over a strongly regular graph.
    If $\Gamma$ is a complete bipartite graph then $\{\Gamma[V_1],\Gamma[V_2]\} = \{\overline{K_{v}}, \overline{K_{b}}\}$, which is impossible since $\mathsf B_x(\mathcal Q)$ cannot be an empty graph.
    Otherwise, if $\Gamma$ is a cone over a strongly regular graph then $\{\Gamma[V_1],\Gamma[V_2]\} = \{\overline{K_{1}}, \mathsf B_x(\mathcal Q)\}$, which is impossible since we must have $v \geqslant 2$.
\end{proof}

The last two results of this subsection are well-known properties of strongly regular graphs and QSDs that will be used below.
We call an eigenvalue \textbf{restricted} if it has an eigenvector perpendicular to the all-ones vector $\mathbf 1$.

\begin{lemma}[{\cite[Section 1.1.1]{srg}}]
\label{lem:SRGparamev}
    Let $\Gamma \in \operatorname{SRG}(n,k,a,c)$ have restricted eigenvalues $e_1 > e_2$.
    Then $a - c = e_1 + e_2$ and $c-k = e_1e_2$.
\end{lemma}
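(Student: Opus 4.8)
The final statement is Lemma~\ref{lem:SRGparamev}, a standard fact about strongly regular graphs, so the plan is simply to recover the two symmetric-function relations from the defining quadratic equation of an $\operatorname{SRG}$. First I would recall that for $\Gamma \in \operatorname{SRG}(n,k,a,c)$ with adjacency matrix $A$ we have the identity $A^2 = kI + aA + c(J-I-A)$, established earlier in the excerpt, which rearranges to
\[
A^2 - (a-c)A - (k-c)I = cJ.
\]

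The key observation is that a restricted eigenvalue $e$ has an eigenvector $\bm v \perp \mathbf 1$, so $J\bm v = \mathbf 0$. Applying the displayed identity to such a $\bm v$ gives $A^2\bm v = e^2 \bm v$ and $J \bm v = \mathbf 0$, whence
\[
e^2 - (a-c)e - (k-c) = 0.
\]
Thus both restricted eigenvalues $e_1$ and $e_2$ are roots of the quadratic $t^2 - (a-c)t - (k-c) = 0$. I would then invoke Vieta's formulas: the sum of the roots equals the negative of the linear coefficient, giving $e_1 + e_2 = a - c$, and the product of the roots equals the constant term, giving $e_1 e_2 = -(k-c) = c - k$. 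This is exactly the claimed pair of relations $a - c = e_1 + e_2$ and $c - k = e_1 e_2$.

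The only point requiring a word of care is that $e_1$ and $e_2$ really are the \emph{two distinct} roots of this quadratic, rather than both coinciding or one of them being the Perron eigenvalue $k$. Since $\Gamma$ is a connected strongly regular graph lying in $\mathscr G_3$, it has exactly three distinct eigenvalues $k > e_1 > e_2$, and the restricted eigenvalues are precisely $e_1$ and $e_2$, each with an eigenvector orthogonal to $\mathbf 1$ (the all-ones vector spans the $k$-eigenspace). Hence the quadratic has the two distinct roots $e_1, e_2$, and Vieta applies without ambiguity. I do not anticipate any genuine obstacle here; the entire content is the translation of the $\operatorname{SRG}$ quadratic into root--coefficient relations, and the proof is a two-line computation.
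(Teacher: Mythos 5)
Your proof is correct, and there is nothing in the paper to compare it against: the paper states Lemma~\ref{lem:SRGparamev} as a known fact, citing \cite[Section 1.1.1]{srg}, and gives no proof; your argument (apply $A^2 = kI + aA + c(J-I-A)$ to an eigenvector orthogonal to $\mathbf 1$, obtain the quadratic $t^2-(a-c)t-(k-c)=0$, then use Vieta) is exactly the standard one from that source. One small remark: your closing appeal to connectivity and to $\Gamma \in \mathscr G_3$ is neither granted by the hypotheses (the paper's definition of $\operatorname{SRG}(n,k,a,c)$ does not require connectivity) nor needed, since the hypothesis $e_1 > e_2$ already guarantees that the two restricted eigenvalues are distinct roots of the quadratic, which is all Vieta requires.
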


One more ingredient that we require is a further restriction on the parameters of a QSD.
The following necessary condition is due to Calderbank~\cite{CALDERBANK1988}.

\begin{theorem}[{\cite[Proposition 8.5.4]{srg}}]
    \label{thm:CHNineq}
    Let $\mathcal Q$ be a QSD with parameters $(v,k,\lambda;b,r,\{x,y\})$.
    Then
    \[
    (v-1)(v-2)(k-x)(k-y) - k(v-k)(v-2)(2k-x-y)+k(v-k)(k(v-k)-1) \geqslant 0,
    \]
    with equality, if and only if $\mathcal Q$ is a $3$-design.
\end{theorem}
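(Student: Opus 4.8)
The plan is to quantify how far $\mathcal Q$ is from being a $3$-design by studying, for each $3$-subset $T$ of the point-set $\mathcal P$, the number $\lambda_3(T)$ of blocks containing $T$, and then to apply the variance (Cauchy--Schwarz) inequality $\binom{v}{3}\sum_{T}\lambda_3(T)^2 \geqslant \bigl(\sum_{T}\lambda_3(T)\bigr)^2$. The whole point of this choice is that equality holds in this inequality precisely when $\lambda_3(T)$ is constant over all triples $T$, which is exactly the condition that $\mathcal Q$ be a $3$-design; this is what will produce the ``if and only if'' in the statement. So the task reduces to evaluating the first two moments of $\lambda_3$ and rewriting the resulting inequality in terms of $v,k,x,y$.

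First I would compute the two moments by double counting. Counting incident pairs $(T,B)$ with $T \subset B$ gives the first moment $\sum_{T}\lambda_3(T) = b\binom{k}{3}$. For the second moment, interchanging the order of summation yields $\sum_{T}\lambda_3(T)^2 = \sum_{B,B'}\binom{|B\cap B'|}{3}$, where the sum is over all ordered pairs of blocks. Splitting off the diagonal $B=B'$ and using the quasi-symmetric property, so that $|B\cap B'| \in \{x,y\}$ for $B \neq B'$ (cf.\ \eqref{eqn:mtmquasisym}), this becomes $b\binom{k}{3} + n_x\binom{x}{3} + n_y\binom{y}{3}$, where $n_x$ and $n_y$ are the numbers of ordered pairs of distinct blocks meeting in $x$ and in $y$ points, respectively. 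These two unknowns are pinned down by the two linear relations $n_x + n_y = b(b-1)$ and $xn_x + yn_y = vr(r-1)$, the latter obtained by counting, for each point, the ordered pairs of distinct blocks through it; since $x \neq y$ this system has a unique solution for $n_x$ and $n_y$.

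Next I would substitute these values into the variance inequality and simplify. Using the standard $2$-design identities \eqref{eqn:2des1} and \eqref{eqn:2des2} (together with \eqref{eqn:2desSYM} in the bookkeeping) one eliminates $b$, $r$, and $\lambda$; after clearing a common positive factor the inequality collapses to the claimed polynomial in $v,k,x,y$. The equality clause is then inherited directly from the equality condition of Cauchy--Schwarz: equality forces $\lambda_3(T)$ to be independent of $T$, that is, $\mathcal Q$ is a $3$-design, and conversely a $3$-design has constant $\lambda_3$ and hence equality.

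The conceptual steps are short; the part I expect to be the main obstacle is the final algebraic reduction. A priori the moments involve $b$, $r$, and $\lambda$ and are of far higher degree in $v$ than the target expression, so the crux is to verify that, after substituting $n_x,n_y$ and applying the design relations, all dependence on $b,r,\lambda$ cancels and what remains is exactly a positive scalar multiple of $(v-1)(v-2)(k-x)(k-y) - k(v-k)(v-2)(2k-x-y)+k(v-k)(k(v-k)-1)$. I would carry this out by writing every quantity in terms of $v,k,\lambda,x,y$, factoring out the common positive multiple, and matching coefficients; keeping track of this cancellation carefully is where the real work lies.
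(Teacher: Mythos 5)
The paper itself offers no proof of this statement---it is quoted directly from \cite[Proposition 8.5.4]{srg}---so your proposal can only be judged on its own merits. Your overall strategy (Cauchy--Schwarz applied to the triple function $\lambda_3$, with equality exactly at $3$-designs) is the natural and standard route to this inequality, and your moment computations are correct: $\sum_T\lambda_3(T)=b\binom{k}{3}$, $\sum_T\lambda_3(T)^2=b\binom{k}{3}+n_x\binom{x}{3}+n_y\binom{y}{3}$, and $n_x,n_y$ are indeed determined by $n_x+n_y=b(b-1)$ and $xn_x+yn_y=vr(r-1)$.

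The genuine gap is in the elimination step. The identities \eqref{eqn:2des1} and \eqref{eqn:2des2} only let you trade $b$ and $r$ for $v,k,\lambda$; they cannot remove $\lambda$. And \eqref{eqn:2desSYM} is the relation $\lambda(v-1)=k(k-1)$ for \emph{symmetric} designs; it never holds for a quasi-symmetric design (two distinct intersection numbers force $b>v$), so it cannot enter the bookkeeping at all. After substituting $n_x,n_y$ and using only \eqref{eqn:2des1}--\eqref{eqn:2des2}, the quantity $D:=\binom{v}{3}\sum_T\lambda_3(T)^2-\bigl(\sum_T\lambda_3(T)\bigr)^2$ retains genuine $\lambda$-dependence, so it cannot collapse to a positive multiple of the stated $\lambda$-free expression $L$. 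Concretely, for $(v,k,x,y)=(9,3,0,1)$ one finds
\begin{equation*}
D \;=\; 144\,\lambda(7-\lambda),
\end{equation*}
which is negative for $\lambda>7$, whereas $L=12>0$ independently of $\lambda$; so no identity of the form $D=(\text{positive})\cdot L$ can hold at the level of generality you are working at. What is missing is the third standard double count for a QSD,
\begin{equation*}
\binom{x}{2}n_x+\binom{y}{2}n_y \;=\; \binom{v}{2}\lambda(\lambda-1),
\end{equation*}
equivalently the per-block relation $m_x\binom{x}{2}+m_y\binom{y}{2}=\binom{k}{2}(\lambda-1)$ that makes the block graph well defined. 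Since $n_x,n_y$ are already pinned down by your two relations, this third count is a nontrivial \emph{constraint} tying $\lambda$ to $(v,k,x,y)$ (in the example above it forces $\lambda=1$, whence $D=864=72\cdot L$). Only after imposing it can $\lambda$ be eliminated and the variance expression be shown to equal a positive multiple of $L$; with that ingredient added, and the final (nontrivial) algebraic verification you already flag carried out, your plan goes through.
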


\subsection{Underlying quasi-symmetric designs}

First, we show that if a graph in $\mathscr G_3$ has coherent rank $9$ then its coherent closure has type $\left [ \begin{smallmatrix}2 & 2 \\  & 3 \end{smallmatrix}\right ]$.

\begin{theorem}
    \label{thm:rank9}
    Let $\Gamma \in \mathscr G_3$ have coherent rank $9$.
    Then $\mathcal {W}(\Gamma)$ has type $\left [ \begin{smallmatrix}2 & 2 \\  & 3 \end{smallmatrix}\right ]$.
\end{theorem}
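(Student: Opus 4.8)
The plan is to mimic the elimination strategy of Theorems~\ref{thm:wl7} and~\ref{thm:rank8}: since the sum of the entries of the type matrix equals the coherent rank, I will enumerate all symmetric type matrices whose entries sum to $9$ and rule out every possibility except $\left[\begin{smallmatrix}2&2\\&3\end{smallmatrix}\right]$. First I would record two cheap constraints on the number of fibres $f = |\mathfrak F_{\mathcal W(\Gamma)}|$. On one hand, Theorem~\ref{thm:muzychuk} forces $\Gamma$ to be non-regular (rank $9 \ne 3$), so $|\mathfrak D(\Gamma)| \geqslant 2$ and hence $f \geqslant 2$ by Corollary~\ref{cor:allFib}(iii). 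On the other hand, every diagonal entry $t_{ii}$ and every off-diagonal entry $t_{ij}$ of the type matrix is at least $1$ (a fibre carries an identity-type relation, and $\mathfrak X_i \times \mathfrak X_j$ is nonempty), so the rank is at least $f + 2\binom{f}{2} = f^2$; thus $f^2 \leqslant 9$ gives $f \leqslant 3$. This leaves only $f \in \{2,3\}$.

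For $f = 3$, the bound $r \geqslant f^2$ is tight, forcing \emph{every} entry of the type matrix to equal $1$. In particular every $t_{ii} = 1$, so each fibre is a single point and $\Gamma$ has exactly three vertices. The only connected three-vertex graph in $\mathscr G_3$ is $P_3 \cong K_{1,2}$, which by Theorem~\ref{thm:muzychuk}(ii) has coherent rank $5$, contradicting the hypothesis. Hence $f = 2$ and, by Corollary~\ref{cor:allFib}(iii), $\Gamma$ is biregular with $\mathfrak D(\Gamma) = \mathfrak F_{\mathcal W(\Gamma)} = \{V_1, V_2\}$.

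For $f = 2$ the type matrix is $\left[\begin{smallmatrix}t_{11}&t_{12}\\&t_{22}\end{smallmatrix}\right]$ with $t_{11} + t_{22} + 2t_{12} = 9$ and all entries $\geqslant 1$, so $t_{12} \in \{1,2,3\}$. When $t_{12} = 3$, or when $t_{12} = 2$ with $\{t_{11},t_{22}\} = \{1,4\}$, the minimum of the two diagonal entries is strictly less than $t_{12}$ while both diagonals are at most $5$, so Lemma~\ref{lem:HigmanType} is violated and these cases are discarded immediately. When $t_{12} = 2$ with $\{t_{11},t_{22}\} = \{2,3\}$ we obtain exactly the target type $\left[\begin{smallmatrix}2&2\\&3\end{smallmatrix}\right]$, which survives.

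The remaining cases all have $t_{12} = 1$, and I expect these to be the main obstacle, since Higman's inequality gives no information once a diagonal entry exceeds $5$ (for instance it says nothing about $\left[\begin{smallmatrix}1&1\\&6\end{smallmatrix}\right]$). Here I would argue structurally exactly as in the proof of Theorem~\ref{thm:wl7}: a single relation covers $V_1 \times V_2$, so the off-diagonal block of $A(\Gamma)$ is $O$ or $J$, and connectivity rules out $O$; hence every vertex of $V_1$ is adjacent to every vertex of $V_2$ and $\overline\Gamma$ is disconnected. Theorem~\ref{thm:cheng} then makes $\Gamma$ a cone or complete bipartite, and since $\Gamma$ is biregular, Lemma~\ref{lem:higmanType7} together with Theorem~\ref{thm:muzychuk} pins its coherent rank at most $6$, a contradiction. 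Eliminating every alternative leaves $\left[\begin{smallmatrix}2&2\\&3\end{smallmatrix}\right]$ as the only admissible type.
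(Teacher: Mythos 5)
Your proof is correct and takes essentially the same route as the paper: enumerate the admissible type matrices using Lemma~\ref{lem:HigmanType} and Corollary~\ref{cor:allFib}(iii), dispose of the three-fibre case by noting it forces a three-vertex graph (necessarily $P_3$, of coherent rank $5$), and eliminate the remaining two-fibre types $\left[\begin{smallmatrix}1&1\\ &6\end{smallmatrix}\right]$, $\left[\begin{smallmatrix}2&1\\ &5\end{smallmatrix}\right]$, $\left[\begin{smallmatrix}3&1\\ &4\end{smallmatrix}\right]$ by the disconnected-complement argument already used for Theorems~\ref{thm:wl7} and~\ref{thm:rank8}. The only difference is presentational: you spell out the counting bound ($r \geqslant f^2$, hence at most three fibres) and the individual Higman checks that the paper leaves implicit.
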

\begin{proof}
    By Theorem~\ref{thm:muzychuk}, the graph $\Gamma$ must not be regular.
    Using Lemma~\ref{lem:HigmanType} and Corollary~\ref{cor:allFib} (iii), we find that the coherent closure $\mathcal {W}(\Gamma)$ has type $\left [ \begin{smallmatrix}1 & 1 \\  & 6 \end{smallmatrix}\right ]$, $\left [ \begin{smallmatrix}2 & 1 \\  & 5 \end{smallmatrix}\right ]$, $\left [ \begin{smallmatrix}2 & 2 \\  & 3 \end{smallmatrix}\right ]$, $\left [ \begin{smallmatrix}3 & 1 \\  & 4 \end{smallmatrix}\right ]$, or $\left [ \begin{smallmatrix}1 & 1 & 1 \\  & 1 & 1 \\ & & 1 \end{smallmatrix}\right ]$.
    Except for $\left [ \begin{smallmatrix}2 & 2 \\  & 3 \end{smallmatrix}\right ]$, for each case with two fibres, the complement of $\Gamma$ is disconnected.
    The case with three fibres must correspond to a graph with just three vertices.
    There is just one connected graph on three vertices with three distinct eigenvalues and its coherent rank is $5$.
\end{proof}

We now show that each graph in $\mathscr G_3$ whose coherent closure has type $\left [ \begin{smallmatrix}2 & 2 \\  & 3 \end{smallmatrix}\right ]$ corresponds to a QSD.

\begin{theorem}
\label{thm:rank9nec}
Let $\Gamma \in \mathscr G_3(\theta_0,\theta_1,\theta_2)$ such that $\mathcal {W}(\Gamma)$ has type $\left [ \begin{smallmatrix}2 & 2 \\  & 3 \end{smallmatrix}\right ]$.
Then
\begin{itemize}
    \item[(i)] $\Gamma$ is biregular with degrees $k_1 \ne k_2$ and valency partition $\{V_1,V_2\}$, which is equitable with quotient matrix $\left [\begin{smallmatrix}
        k_{11} & k_{12} \\
        k_{21} & k_{22}
    \end{smallmatrix} \right ]$ and $|V_i| = n_i$ for $i \in \{1,2\}$;
    \item[(ii)] $A(\Gamma)$ has the form $\left [\begin{smallmatrix}
    \varepsilon(J-I) & M \\
    M^\transpose & A(\Gamma[V_2])
\end{smallmatrix}\right ]$,
where $\varepsilon \in \{0,1\}$ and $M$ is the incidence matrix of a QSD $\mathcal Q$ with parameters $(n_1,k_{21},\lambda;n_2,k_{12},\{x,y\})$ and $\Gamma[V_2] = \mathsf B_x(\mathcal Q)$.
\end{itemize}
Suppose that $\mathsf B_x(\mathcal Q)$ has restricted eigenvalues $e_1 > e_2$.
Then either $\theta_1 = e_1 = \frac{y-k_{21}}{x-y}$ or $\theta_2 = e_2 = \frac{y-k_{21}}{x-y}$.
Furthermore,
\begin{align}
    \label{eqn:xy}
(x,y) &= (k_{21}+(\theta_1+1)(\theta_2+1) - (e_1+1)(e_2+1),k_{21} + \theta_1\theta_2 - e_1e_2);
\end{align}
\begin{align}
 \label{eqn:rminuslam}
    \lambda-k_{12} &= \begin{cases}
        \theta_1\theta_2, & \text{ if $\Gamma[V_1]$ is empty; } \\
        (\theta_1+1)(\theta_2+1), & \text{ if $\Gamma[V_1]$ is complete; }
    \end{cases} \\
    \label{eqn:t1pt2}
   \frac{k_{12}-\lambda+y-k_{21}}{x-y}  &= \begin{cases}
        \theta_1+\theta_2, & \text{ if $\Gamma[V_1]$ is empty; } \\
        \theta_1+\theta_2+1, & \text{ if $\Gamma[V_1]$ is complete; }
    \end{cases} \\
    \label{eqn:alpha12}
    \sqrt{(k_1+\theta_1\theta_2)(k_2+\theta_1\theta_2)} &= \begin{cases}
        \frac{\lambda k_{21} - yk_{12}}{x-y}, & \text{ if $\Gamma[V_1]$ is empty; } \\
         k_{21}+\frac{\lambda k_{21} - yk_{12}}{x-y}, & \text{ if $\Gamma[V_1]$ is complete. } 
    \end{cases}
\end{align}
\end{theorem}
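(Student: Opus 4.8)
The plan is to read off the block form forced by the type, extract the design data block-by-block from \eqref{eqn:3ev}, and then determine the eigenvalue relations by feeding eigenvectors of the block graph into the off-diagonal block. For (i), since the type $\left[\begin{smallmatrix}2&2\\&3\end{smallmatrix}\right]$ has exactly two fibres, Corollary~\ref{cor:allFib}(iii) gives $|\mathfrak D(\Gamma)|\le 2$, and $\Gamma$ is non-regular by Theorem~\ref{thm:muzychuk}; hence $\Gamma$ is biregular and, by the equality clause of Corollary~\ref{cor:allFib}(iii), the fibres are exactly the valency classes $\{V_1,V_2\}$, which Theorem~\ref{thm:equitableValency} makes equitable with the stated quotient matrix. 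For the block form, $t_{11}=2$ forces the induced algebra on $V_1$ to be $\langle I,J-I\rangle$ (Proposition~\ref{pro:smallRank}(ii)), so $A(\Gamma[V_1])=\varepsilon(J-I)$ with $\varepsilon\in\{0,1\}$; the graph $\Gamma[V_2]=:B$ is neither empty nor complete, for otherwise Proposition~\ref{pro:22} would bound the coherent rank by $8$, so Proposition~\ref{pro:smallRank}(iii) forces $B$ to be strongly regular; and $t_{12}=2$ means the $V_1\times V_2$ block is a single $\{0,1\}$-relation $M$, giving $A(\Gamma)=\left[\begin{smallmatrix}\varepsilon(J-I)&M\\M^\top&B\end{smallmatrix}\right]$ with $M$ of constant row sum $k_{12}$ and column sum $k_{21}$ by equitability.

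Next I would substitute this form into \eqref{eqn:3ev}, noting that $\bm\alpha$ is constant on fibres so the right-hand side is blockwise constant. The top-left block yields $MM^\top=(k_{12}-\lambda)I+\lambda J$, identifying $M$ as the incidence matrix of a $2$-$(n_1,k_{21},\lambda)$ design and giving \eqref{eqn:rminuslam}, the cases $\varepsilon=0,1$ differing by the correction $(J-I)^2=(n_1-2)J+I$ that turns $\theta_1\theta_2$ into $(\theta_1+1)(\theta_2+1)$. The bottom-right block, together with the strong regularity of $B$ and Lemma~\ref{lem:SRGparamev}, yields $M^\top M=k_{21}I+xB+y(J-I-B)$; since $B$ is a proper strongly regular graph the coefficient $x-y=(\theta_1+\theta_2)-(e_1+e_2)$ is nonzero, so $\mathcal Q$ is a genuine QSD with $B=\mathsf B_x(\mathcal Q)$, and matching with \eqref{eqn:mtmquasisym} via Lemma~\ref{lem:SRGparamev} (so $c=k_{22}+e_1e_2$ and $a-c=e_1+e_2$) produces \eqref{eqn:xy}.

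The crux is the eigenvalue relation. From $M^\top M=k_{21}I+xB+y(J-I-B)$ and $M\mathbf 1\ne 0$, the kernel of $M$ lies in $\mathbf 1^\perp$, and there $B$ acts as $\tfrac{y-k_{21}}{x-y}I$; moreover $\dim\ker M=b-v>0$ by Lemma~\ref{lem:LGtoL}. Hence for $u\in\ker M$ the vector $\left[\begin{smallmatrix}0\\u\end{smallmatrix}\right]$ is an eigenvector of $A(\Gamma)$ for $e:=\tfrac{y-k_{21}}{x-y}$ and is orthogonal to $\bm\alpha$, so $e\in\{\theta_1,\theta_2\}$, while by Theorem~\ref{thm:blockspec} $e$ is also a restricted eigenvalue of $B$. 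Interlacing (Theorem~\ref{thm:interlacing}) applied to the principal submatrix $B$ gives $\theta_2\le e_2<e_1\le\theta_1$, which forces the pairing $e=\theta_1\Rightarrow e=e_1$ and $e=\theta_2\Rightarrow e=e_2$. I expect this pairing to be the main obstacle, as it is the one place requiring Lemma~\ref{lem:LGtoL}, Theorem~\ref{thm:blockspec}, and interlacing to be combined in order to exclude a crossed identification.

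Finally I would exploit the off-diagonal block of \eqref{eqn:3ev}, namely $\varepsilon(J-I)M+MB-(\theta_1+\theta_2)M=\sqrt{(k_1+\theta_1\theta_2)(k_2+\theta_1\theta_2)}\,J$. Feeding it a restricted eigenvector $w$ of $B$ for the other eigenvalue $e'$ (so $w\notin\ker M$, $Mw\ne 0$, $Mw\perp\mathbf 1$) collapses it to $e'=\theta_1+\theta_2$ if $\varepsilon=0$ and to $e'=\theta_1+\theta_2+1$ if $\varepsilon=1$; since $e'$ equals the left-hand side of \eqref{eqn:t1pt2} by Theorem~\ref{thm:blockspec}, this is exactly \eqref{eqn:t1pt2}. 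For \eqref{eqn:alpha12}, comparing $(p,\beta)$-entries with $p\notin\beta$ and double-counting the blocks through $p$ meeting $\beta$ (giving $x a_x+y a_y=\lambda k_{21}$ and $a_x+a_y=k_{12}$, hence $a_x=\tfrac{\lambda k_{21}-yk_{12}}{x-y}$) identifies the constant $\sqrt{(k_1+\theta_1\theta_2)(k_2+\theta_1\theta_2)}$ with $a_x$, the extra summand $k_{21}$ in the complete case arising from the $(J-I)M$ term.
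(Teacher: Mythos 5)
Your proposal is correct and proves every claim of the theorem, but the decisive steps are carried out by genuinely different arguments from the paper's. The structural part coincides: both proofs get biregularity from Theorem~\ref{thm:muzychuk} and Corollary~\ref{cor:allFib}(iii), force $A(\Gamma[V_1])=\varepsilon(J-I)$ via Proposition~\ref{pro:smallRank}, force $\Gamma[V_2]$ to be strongly regular via Proposition~\ref{pro:22}, and read \eqref{eqn:bbt3} and \eqref{eqn:btb3e} off the diagonal blocks of \eqref{eqn:biregGen}. The divergence is in the eigenvalue identification and in \eqref{eqn:t1pt2}--\eqref{eqn:alpha12}. The paper computes the full restricted spectrum of $M^\transpose M$ from \eqref{eqn:btb3e} (the values $(\theta_1-e_i)(e_i-\theta_2)$), matches multiplicities against those of $MM^\transpose$ via Lemma~\ref{lem:LGtoL}, and then excludes the crossed identifications $\theta_1=e_2$ or $\theta_2=e_1$ using signs: $\theta_1>0>\theta_2$ from Corollary~\ref{cor:gt0} together with $e_1\geqslant 0>e_2$ for a strongly regular graph. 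You instead lift kernel vectors of $M$ to eigenvectors $\left[\begin{smallmatrix}0\\u\end{smallmatrix}\right]$ of $A(\Gamma)$, which puts $\frac{y-k_{21}}{x-y}$ in $\{\theta_1,\theta_2\}$ directly, and you exclude the crossed pairing by interlacing, $\theta_2\leqslant e_2<e_1\leqslant\theta_1$; this avoids the multiplicity bookkeeping entirely. Similarly, the paper gets \eqref{eqn:t1pt2} and \eqref{eqn:alpha12} at one stroke by deriving the matrix identity \eqref{eqn:BA2} for $MX$ and comparing coefficients of $M$ and $J$ in the top-right block, whereas you obtain \eqref{eqn:t1pt2} by feeding a restricted eigenvector of the non-kernel eigenvalue into that block (your computation $e'=\theta_1+\theta_2+\varepsilon$ is right, using $JMw=0$ and $Mw\neq 0$), and \eqref{eqn:alpha12} by entrywise double counting of $2$-walks from a point $p$ to a block $\beta$ with $p\notin\beta$, which is a clean combinatorial substitute for the coefficient comparison. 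Both routes are valid; yours is more conceptual, the paper's yields the eigenvalue multiplicities as a by-product.

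One caveat, which you share with the paper rather than introduce: both proofs silently assume $b>v$, equivalently $x\neq y$. Your justification that $x-y=(\theta_1+\theta_2)-(e_1+e_2)\neq 0$ ``since $B$ is a proper strongly regular graph'' is not a proof --- properness of $B$ does not by itself exclude $\theta_1+\theta_2=e_1+e_2$ --- just as the paper's ``Observe that $M$ is the incidence matrix of a quasi-symmetric $2$-design'' is an unproved assertion, and the paper's claim $\{m_1,m_2\}=\{v-1,b-v\}$ presupposes the same thing. The gap is genuine but easily closed: if $x=y$ then Lemma~\ref{lem:LGtoL} forces $b=v$, so $M$ is invertible; but the top-right block of \eqref{eqn:biregGen} gives $MX=cJ+(\varepsilon+\theta_1+\theta_2)M$ for a constant $c$, so every restricted eigenvector $u$ of $X$ satisfies $(e_i-\varepsilon-\theta_1-\theta_2)Mu=0$, whence $e_1=e_2=\varepsilon+\theta_1+\theta_2$, contradicting $e_1>e_2$. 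Since this omission is the paper's as much as yours, it does not count against your proposal.
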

\begin{proof}
    Suppose $A=A(\Gamma)$ is the adjacency matrix of $\Gamma$.
By Theorem~\ref{thm:muzychuk} and Corollary~\ref{cor:allFib} (iii), the graph $\Gamma$ must be biregular with valencies $k_1$ and $k_2$ (say).
    By Corollary~\ref{cor:allFib} (iii), we can write $A = \left [\begin{smallmatrix}
    A(\Gamma[V_1]) & M \\
    M^\transpose & A(\Gamma[V_2])
\end{smallmatrix}\right ]$ and
    \[
    A^2 =\begin{bmatrix}
    A(\Gamma[V_1])^2+MM^\transpose & A(\Gamma[V_1]) M+MA(\Gamma[V_2]) \\
    M^\transpose A(\Gamma[V_1])+A(\Gamma[V_2])M^\transpose & M^\transpose M +A(\Gamma[V_2])^2
\end{bmatrix}.
\]
Apply \eqref{eqn:3ev} to obtain
\begin{equation}
\label{eqn:biregGen}
A^2
-(\theta_1+\theta_2)A
+\theta_1\theta_2 I = 
\begin{bmatrix}
    (k_1+\theta_1\theta_2) J & \sqrt{(k_1+\theta_1\theta_2)(k_2+\theta_1\theta_2)} J \\
    \sqrt{(k_1+\theta_1\theta_2)(k_2+\theta_1\theta_2)} J & (k_2+\theta_1\theta_2) J
\end{bmatrix}.
\end{equation}
By Corollary~\ref{cor:allFib} (i) and Proposition~\ref{pro:smallRank}, the subgraph $\Gamma[V_1] = K_{n_1}$ or $\overline K_{n_1}$, i.e., the matrix $A(\Gamma[V_1]) = \varepsilon_{11}(J-I)$ for some $\varepsilon_{11} \in \{0,1\}$.
Moreover, by Proposition~\ref{pro:22}, we have $\Gamma[V_2] \in \operatorname{SRG}(n_2,k_{22},a,c)$, that is, $A(\Gamma[V_2]) = X$ where
\begin{equation}
    \label{eqn:srg}
    X^2 = k_{22}I+aX+c(J-I-X).
\end{equation}

From the top-left block of \eqref{eqn:biregGen}, we obtain
\begin{equation}
\label{eqn:bbt3}
    MM^\transpose = \begin{cases}
 (k_{12}+\theta_1\theta_2)J_{n_1}-\theta_1\theta_2I, & \text{ if $\Gamma[V_1] = \overline K_{n_1}$};  \\
(k_{12}+(\theta_1+1)(\theta_2+1))J_{n_1}-(\theta_1+1)(\theta_2+1)I, & \text{ if $\Gamma[V_1] =  K_{n_1}$}.
\end{cases}
\end{equation}
Similar to the above, we can use the bottom-right block of \eqref{eqn:biregGen} together with \eqref{eqn:srg} to deduce that 
 \begin{equation*}
    M^\transpose M = 
        (k_{2}+\theta_1\theta_2-c)J_{n_2}-(k_{22}+\theta_1\theta_2-c)I+(\theta_1+\theta_2+c-a)X,
\end{equation*}
which, by Lemma~\ref{lem:SRGparamev}, becomes
\begin{equation}
\label{eqn:btb3e}
    M^\transpose M = 
        k_{21}I+(k_{21}+\theta_1\theta_2-e_1e_2)(J_{n_2}-I-X)+(k_{21}+(\theta_1+1)(\theta_2+1)-(e_1+1)(e_2+1))X,
\end{equation}
where $e_1 > e_2$ are the restricted eigenvalues of $X$.
Observe that $M$ is the incidence matrix of a quasi-symmetric $2$-design $\mathcal Q$.
Suppose $\mathcal Q$ has parameters $(v,k,\lambda;b,r,\{x,y\})$ then $v = n_1$, $k=k_{21}$, $b=n_2$, $r=k_{12}$, and $\lambda(n_1-1)=n_2(k_{21}-1)$.
Without loss of generality, we can assume that $X = A(\mathsf B_x(\mathcal Q))$.

Using \eqref{eqn:2desmmt}, we find that the eigenvalues of $MM^\transpose$ are $rk$ and $r-\lambda$ with multiplicities $1$ and $v - 1$ respectively.
Using \eqref{eqn:btb3e}, we find the eigenvalues of $M^\transpose M$ are
\begin{align*}
    & (e_1e_2-\theta_1\theta_2)+b(k+\theta_1\theta_2-e_1e_2)+e_0(\theta_1+\theta_2-e_1-e_2), \\
    & (\theta_1-e_1)(e_1-\theta_2), \text{ and } \\
    & (\theta_1-e_2)(e_2-\theta_2),
\end{align*}
with multiplicities $1$, $m_1$, and $m_2$ respectively, where $\{m_1,m_2\} = \{v-1,b-v\}$.
By Lemma~\ref{lem:LGtoL}, we must have
\begin{align*}
    0 &= \begin{cases}
        (\theta_1-e_1)(e_1-\theta_2), & \text{ if $m_1 = b-v$; } \\
    (\theta_1-e_2)(e_2-\theta_2), & \text{ if $m_2 = b-v$. }
    \end{cases}
\end{align*}
By Corollary~\ref{cor:gt0},  $\theta_1 > 0$ and $\theta_2 < 0$, whence we have either $\theta_1 = e_1 = \frac{y-k}{x-y}$ or $\theta_2 = e_2 = \frac{y-k}{x-y}$.

Comparing coefficients of \eqref{eqn:btb3e} and \eqref{eqn:mtmquasisym} yields \eqref{eqn:xy}.
Comparing coefficients of \eqref{eqn:2desmmt} and \eqref{eqn:bbt3}, we obtain \eqref{eqn:rminuslam}.
Using \eqref{eqn:mtmquasisym} together with \eqref{eqn:2desmmt}, we find that
\begin{equation}
    \label{eqn:BA2}
    MX =
    \frac{(\lambda k-yr)J  + (r-\lambda+y-k)M}{x-y}.
\end{equation}
Combining coefficients for $M$ and $J$ in the top-right block of \eqref{eqn:biregGen} yields \eqref{eqn:t1pt2} and \eqref{eqn:alpha12}.
\end{proof}

Given a graph $\Gamma \in \mathscr G_3$ such that $\mathcal {W}(\Gamma)$ has type $\left [ \begin{smallmatrix}2 & 2 \\  & 3 \end{smallmatrix}\right ]$, we call the QSD $\mathcal Q$ from the conclusion of Theorem~\ref{thm:rank9nec} the \textbf{underlying QSD} of $\Gamma$.

\begin{corollary}
\label{cor:repeatedblocks}
    Let $\Gamma \in \mathscr G_3(\theta_0,\theta_1,\theta_2)$ such that $\mathcal {W}(\Gamma)$ has type $\left [ \begin{smallmatrix}2 & 2 \\  & 3 \end{smallmatrix}\right ]$.
    Suppose that $\mathcal Q$ is the underlying QSD of $\Gamma$.
    Then $\mathcal Q$ cannot have repeated blocks.
    Furthermore, the block graph $\mathsf B_x(\mathcal Q)$ is connected.
\end{corollary}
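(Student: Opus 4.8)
The plan is to extract the structure supplied by Theorem~\ref{thm:rank9nec} and then dispatch the two assertions in turn. Write $X = A(\Gamma[V_2]) = A(\mathsf B_x(\mathcal Q))$, a strongly regular graph with restricted eigenvalues $e_1 > e_2$, and recall that exactly one of $\theta_1 = e_1$ and $\theta_2 = e_2$ holds: at least one does by Theorem~\ref{thm:rank9nec}, and if both did then \eqref{eqn:xy} would give $x = y$, contradicting that $\mathcal Q$ is quasi-symmetric. Throughout I will use that $X$ is a principal submatrix of $A(\Gamma)$, so Theorem~\ref{thm:interlacing} yields $e_2 \geqslant \theta_2$ and $e_1 \leqslant \theta_1$; since only one coincidence holds, the other inequality is strict, i.e.\ $e_2 > \theta_2$ when $\theta_1 = e_1$, and $e_1 < \theta_1$ when $\theta_2 = e_2$.

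For the repeated-block claim, note that $\mathcal Q$ has a repeated block exactly when two blocks coincide as sets, i.e.\ when some intersection number equals $k_{21}$; since any two blocks meet in at most $k_{21}$ points, it suffices to prove $x < k_{21}$ and $y < k_{21}$. Substituting $\theta_1 = e_1$ into \eqref{eqn:xy} and factoring gives $y - k_{21} = e_1(\theta_2 - e_2)$ and $x - k_{21} = (e_1 + 1)(\theta_2 - e_2)$, while substituting $\theta_2 = e_2$ gives $y - k_{21} = e_2(\theta_1 - e_1)$ and $x - k_{21} = (e_2 + 1)(\theta_1 - e_1)$. In the first case $e_1 = \theta_1 > 0$ (Corollary~\ref{cor:gt0}) and $\theta_2 - e_2 < 0$; in the second case $e_2 = \theta_2 < 0$, $e_2 + 1 = \theta_2 + 1 < 0$ by \eqref{eqn:basicevineq}, and $\theta_1 - e_1 > 0$. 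Either way all four quantities are negative, so $x, y < k_{21}$ and $\mathcal Q$ has no repeated block.

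For connectivity I will show that $X$ cannot be disconnected. A disconnected strongly regular graph is a disjoint union of at least two complete graphs and hence has least eigenvalue $-1$; so if $\theta_2 = e_2$ then $e_2 = \theta_2 \leqslant -\sqrt 2 < -1$ already forces $X$ to be connected. The only remaining possibility is $\theta_1 = e_1$ with $X$ a disjoint union of cliques of size $m' = k_{22} + 1$, where $k_{22} = e_1 = \theta_1$. I rule this out by counting common neighbours. Choose a non-incident point-block pair $(\mathsf p, B)$ (possible since $k_{21} < n_1$); when $\Gamma[V_1]$ is empty every common neighbour of $\mathsf p$ and $B$ is a block lying in the clique-component of $B$ and containing $\mathsf p$, so there are at most $m' - 1 = \theta_1$ of them. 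But \eqref{eqn:3ev} forces the number of common neighbours of two non-adjacent vertices to be the constant $\alpha_{\mathsf p}\alpha_B = \sqrt{(k_1+\theta_1\theta_2)(k_2+\theta_1\theta_2)}$, which by Theorem~\ref{thm:bireg}(iii)--(iv) equals $-\theta_2(\theta_1 + 1)$ when $\Gamma[V_1]$ is empty; since $-\theta_2 \geqslant \sqrt 2 > 1$ this exceeds $\theta_1$, a contradiction.

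The case in which $\Gamma[V_1]$ is complete is where I expect the real difficulty, since the clique on $V_1$ adds exactly $k_{21}$ common neighbours to each non-incident pair and the crude bound only gives $\alpha_{\mathsf p}\alpha_B \leqslant k_{21} + \theta_1$, which is not yet contradictory. To finish I will first pin down $\theta = \theta_2$: a short computation with \eqref{eqn:t1pt2} gives $\theta_1 + \theta_2 = -2$ here, after which $\theta = \theta_1$ is excluded because it would force $\alpha_{\mathsf p}\alpha_B = \theta_1(\theta_1+1)$, below the forced lower bound $\alpha_{\mathsf p}\alpha_B \geqslant k_{21}$. Next I exploit that \eqref{eqn:3ev} makes all such common-neighbour counts constant, which forces each point to lie in a fixed number $\ell$ of blocks of each clique-component, giving $r = t\ell$ and $n_1\ell = m'k_{21}$ for $t$ the number of components. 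Combining these regularity identities with the standard QSD relations and the constraint $y = k_{21}^2/n_1$ (forced by the block-graph valency equalling $e_1$) produces two expressions for $k_{21}$ in terms of $\theta_1$ and $\ell$, namely $k_{21} = (\theta_1+1)(\theta_1+2) - \ell$ and $k_{21} = \theta_1(\theta_1+1)^2/(\theta_1+1-\ell)$; comparing them bounds $\ell \leqslant 1$, and then $\ell = 1$ collapses to $\theta_1 = 0$, contradicting $\theta_1 > 0$. The conceptual ingredients are all short, so I anticipate that the bookkeeping in this final Diophantine elimination, rather than any new idea, is the main hurdle.
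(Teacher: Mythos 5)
Your first half (no repeated blocks) is correct and is essentially the paper's own argument in different clothing: the paper observes that a repeated block forces $k \in \{x,y\}$, hence $\frac{y-k}{x-y} \in \{-1,0\}$, and then Theorem~\ref{thm:rank9nec} forces $\theta_2 = -1$ or $\theta_1 = 0$, contradicting \eqref{eqn:basicevineq} and Corollary~\ref{cor:gt0}; your version reaches the same contradiction by factoring \eqref{eqn:xy} and invoking interlacing to get $x, y < k_{21}$, which is fine. Where you genuinely part ways with the paper is connectivity: the paper disposes of it in one line by citing Neumaier \cite[Theorem Q]{NeumaierRegSets}, which yields exactly that a QSD without repeated blocks has a connected block graph, whereas you attempt a self-contained proof. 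Your cases ``$\theta_2 = e_2$'' and ``$\Gamma[V_1]$ empty with $\theta_1 = e_1$'' are complete and correct.

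The gap is the remaining case ($\Gamma[V_1]$ complete, $\theta_1 = e_1$), which you present as a plan rather than a proof, and two of its steps do not hold as stated. First, the regularity identities $r = t\ell$ and $n_1\ell = m'k_{21}$ do not follow merely from the constancy of common-neighbour counts: that constancy pins down the number of blocks of a clique-component containing a point $\mathsf p$ only when some block of that component omits $\mathsf p$; a point lying in \emph{all} blocks of a component is controlled by no non-incident pair. (This is repairable: for such a point, counting common neighbours of an adjacent pair gives $\alpha_{\mathsf p}\alpha_B = k_{21}+m'$, while non-incident pairs give $\alpha_{\mathsf p}\alpha_B = k_{21}+\ell$ with $\ell \leqslant m'-1$, a contradiction --- but this must be said.) Second, your exclusion of $\theta = \theta_1$ tacitly needs $y>0$ (true here because $x-y = -(\theta_1+1)<0$ and $x \geqslant 0$, but unstated), and the concluding Diophantine elimination is asserted, not carried out. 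For what it is worth, the deferred arithmetic does close: your two expressions for $k_{21}$ combine to $2(\theta_1+1)^2 = \ell\bigl((\theta_1+1)(\theta_1+3)-\ell\bigr)$, which together with $1 \leqslant \ell \leqslant \theta_1+1$ forces $\theta_1 = 0$, a contradiction. So your route is viable, but as written the connectivity claim is not proved; patching the two steps above, or simply citing Neumaier as the paper does, is required to finish.
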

\begin{proof}
    Suppose that $\mathcal Q$ has repeated blocks.
    Then either $k = x$ or $k = y$.
    First, suppose that $k = x$.
    Then we must have $\frac{y-k}{x-y} = -1$.
    By Theorem~\ref{thm:rank9nec}, we must have $\theta_2 = -1$, which contradicts \eqref{eqn:basicevineq}.
    Otherwise, if $k=y$ then, in a similar fashion, we find that $\theta_1 = 0$, which contradicts Corollary~\ref{cor:gt0}.
    Hence, $\mathcal Q$ does not have repeated blocks.
    Furthermore, by \cite[Theorem Q]{NeumaierRegSets}, we can deduce that the block graph $\mathsf B_x(\mathcal Q)$ is connected, as required.
\end{proof}

Corollary~\ref{cor:repeatedblocks} allows us to use the absolute bound (see Theorem~\ref{thm:abs}) to restrict our search for parametric QSDs in the next section.

\begin{theorem}
    \label{thm:converse}
    Let $\theta_1$ and $\theta_2$ be integers such that $\theta_1 \geqslant 1$ and $\theta_2 \leqslant -2$.
    Suppose $\mathcal Q$ is a QSD with parameters $(v,k,\lambda; b,r,\{x,y\})$ where $\mathsf B_x(\mathcal Q) \in \mathscr G_3(e_0,e_1,e_2)$ and
    \[
    (x,y) = \left (k+(\theta_1+1)(\theta_2+1) - (e_1+1)(e_2+1),k + \theta_1\theta_2 - e_1e_2 \right).
    \]
    \begin{itemize}
        \item[(i)] If $\lambda = r +\theta_1\theta_2$, $\frac{r-\lambda+y-k}{x-y} = \theta_1+\theta_2$, and $\frac{\lambda k-y r}{x-y} = \sqrt{\lambda(k+e_0+\theta_1\theta_2)}$ then
        \[
        \mathsf T_x(\mathcal Q) \in \mathscr G_3 \left (e_0-\theta_2,\theta_1,\theta_2 \right ).
        \]
        Furthermore, if $r \ne k+e_0$ then $\mathsf T_x(\mathcal Q)$ has coherent rank $9$.
        \item[(ii)] If $\lambda = r +(\theta_1+1)(\theta_2+1)$, $\frac{r-\lambda+y-k}{x-y} = \theta_1+\theta_2+1$, and 
        \[
        k+\frac{\lambda k-y r}{x-y} = \sqrt{(v-1+r+\theta_1\theta_2)(k+e_0+\theta_1\theta_2)}
        \]
        then 
        \[
        \mathsf W_x(\mathcal Q) \in \mathscr G_3 \left (\frac{v-1+e_0+\sqrt{(v-1-e_0)^2+4rk}}{2},\theta_1,\theta_2 \right ).
        \]
    Furthermore, if $v-1+r \ne k+e_0$ then $\mathsf W_x(\mathcal Q)$ has coherent rank $9$.
    \end{itemize}
\end{theorem}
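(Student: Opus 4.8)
The statement is essentially a converse to Theorem~\ref{thm:rank9nec}: given a QSD whose block graph is in $\mathscr G_3$ and whose parameters satisfy certain algebraic identities, I must verify that the total graph (case (i)) or whole graph (case (ii)) actually lies in $\mathscr G_3$ with the claimed eigenvalues, and then deduce the coherent rank. The natural strategy is to verify the defining equation \eqref{eqn:3ev} directly: show that $A(\Gamma)^2 - (\theta_1+\theta_2)A(\Gamma) + \theta_1\theta_2 I = \bm\alpha\bm\alpha^\top$ for a suitable positive vector $\bm\alpha$, then invoke the converse half of the characterisation of $\mathscr G_3$ stated after \eqref{eqn:3ev}. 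Concretely, for case (i) I would write $A = \left[\begin{smallmatrix} O & M \\ M^\top & X \end{smallmatrix}\right]$ with $X = A(\mathsf B_x(\mathcal Q))$, compute $A^2$ blockwise, and check each of the four blocks of the target matrix $\bm\alpha\bm\alpha^\top$, where $\bm\alpha$ has the block form $(\alpha_1\mathbf 1, \alpha_2\mathbf 1)$ with $\alpha_i = \sqrt{k_i + \theta_1\theta_2}$ (using that degrees in $\Gamma$ are $k$ in $V_1$ and $r + e_0$-type expressions in $V_2$).

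First I would handle the diagonal blocks. The top-left block of $A^2$ is $MM^\top$, which equals $rk\,$-type structure via \eqref{eqn:2desmmt}; using the hypothesis $\lambda = r + \theta_1\theta_2$ this matches $(k + \theta_1\theta_2)J - \theta_1\theta_2 I$ after accounting for the linear and identity terms, giving the $(k_1+\theta_1\theta_2)J$ block of \eqref{eqn:3ev}. The bottom-right block is $M^\top M + X^2$; here I substitute \eqref{eqn:mtmquasisym} for $M^\top M$ and the strong-regularity relation for $X^2$, then use $\mathsf B_x(\mathcal Q) \in \mathscr G_3(e_0,e_1,e_2)$ together with Lemma~\ref{lem:SRGparamev} and the stated formula for $(x,y)$ to collapse everything into $(k_2 + \theta_1\theta_2)J$ modulo the linear term $(\theta_1+\theta_2)X$ and the $\theta_1\theta_2 I$ term. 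The off-diagonal block is $MX$, computed via \eqref{eqn:BA2}; the hypotheses $\frac{r-\lambda+y-k}{x-y} = \theta_1+\theta_2$ and $\frac{\lambda k - yr}{x-y} = \sqrt{\lambda(k+e_0+\theta_1\theta_2)}$ are precisely what is needed to force this block, after subtracting $(\theta_1+\theta_2)M$, to equal $\alpha_1\alpha_2 J = \sqrt{(k_1+\theta_1\theta_2)(k_2+\theta_1\theta_2)}\,J$. Once \eqref{eqn:3ev} is confirmed with a positive $\bm\alpha$, connectedness of $\mathsf T_x(\mathcal Q)$ (which follows from connectedness of $\mathsf B_x(\mathcal Q)$, itself guaranteed by the QSD being genuine) gives membership in $\mathscr G_3$; identifying $\theta_0 = e_0 - \theta_2$ comes from reading off the Perron eigenvalue, e.g.\ via the equitable quotient matrix of Theorem~\ref{thm:equitableValency} and Lemma~\ref{lem:equitable}.

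For the coherent rank, once $\mathsf T_x(\mathcal Q) \in \mathscr G_3$ is established, I would argue that it is non-regular exactly when $r \ne k + e_0$, since regularity forces the two degrees $k_1 = r$ (degree in $V_1$) and $k_2 = k + e_0$ (degree in $V_2$) to coincide. Under non-regularity, Lemma~\ref{lem:nonregtotal}(i) immediately yields coherent rank $9$. Case (ii) for $\mathsf W_x(\mathcal Q)$ proceeds mutatis mutandis, replacing the empty diagonal block $O$ by $J - I$, which shifts the relevant identities from the $\theta_1\theta_2$ versions to the $(\theta_1+1)(\theta_2+1)$ versions exactly as recorded in \eqref{eqn:rminuslam}--\eqref{eqn:alpha12}, with the Perron eigenvalue obtained from the $2\times 2$ quotient matrix $\left[\begin{smallmatrix} v-1 & r \\ k & e_0 \end{smallmatrix}\right]$; the non-regularity criterion becomes $v - 1 + r \ne k + e_0$, after which Lemma~\ref{lem:nonregtotal}(ii) delivers coherent rank $9$.

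The main obstacle I anticipate is the bottom-right block verification: one must simultaneously juggle the QSD relation \eqref{eqn:mtmquasisym}, the SRG quadratic for $X$, the substitution of $(x,y)$ in terms of $e_1,e_2,\theta_1,\theta_2$, and Lemma~\ref{lem:SRGparamev}, and check that all the $X$-coefficients, $J$-coefficients, and $I$-coefficients separately reconcile with the target $(k_2+\theta_1\theta_2)J + (\theta_1+\theta_2)X - \theta_1\theta_2 I$. This is where the precise form of the hypothesised identities is used most delicately, and where a sign error or a misapplication of the $e_1 \leftrightarrow e_2$ ordering (tied to the dichotomy $\theta_1 = e_1$ versus $\theta_2 = e_2$ from Theorem~\ref{thm:rank9nec}) would derail the computation; I would therefore carry out this block first in the case $\theta_1 = e_1$ and then note that the other case is symmetric.
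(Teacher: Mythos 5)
Your proposal is correct and follows essentially the same route as the paper: a blockwise verification of \eqref{eqn:3ev} using \eqref{eqn:2desmmt}, \eqref{eqn:mtmquasisym}, the strong-regularity relation for $X$ (with Lemma~\ref{lem:SRGparamev} and the hypothesised $(x,y)$ making the bottom-right block collapse), the formula \eqref{eqn:BA2} for $MX$ in the off-diagonal block, identification of $\theta_0$ via the equitable quotient matrix, and Lemma~\ref{lem:nonregtotal} for the coherent rank under the non-regularity condition. The paper's proof is exactly this computation carried out explicitly, with case (ii) likewise handled mutatis mutandis.
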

\begin{proof}
We give a proof for (i).
    The proof for (ii) can be obtained mutatis mutandis.
    Let $A$ be the adjacency matrix of $\mathsf T_x(\mathcal Q)$, $M$ be the incidence matrix of $\mathcal Q$, and $X$ be the adjacency matrix of $\mathsf B_x(\mathcal Q)$.
    Then, we can write
    \[
A = \begin{bmatrix}
    O & M \\
    M^\transpose & X
\end{bmatrix} \quad \text{ and } \quad 
    A^2 =\begin{bmatrix}
    MM^\transpose & MX \\
    XM^\transpose & M^\transpose M +X^2
\end{bmatrix}.
\]
Next, we find
\[
X^2 = \frac{k(r-1) - y(b-1)}{x-y}I +(e_1+e_2)X + \left (\frac{k(r-1) - y(b-1)}{x-y}+e_1e_2 \right )(J-I).
\]
Let $e_0 = \frac{k(r-1) - y(b-1)}{x-y}$.
Hence, using \eqref{eqn:mtmquasisym} and \eqref{eqn:2desmmt}, we have 
    \[
    A^2 =\begin{bmatrix}
    (r-\lambda)I+\lambda J & \frac{(\lambda k-yr)J  + (r-\lambda+y-k)M}{x-y} \\
     \frac{(\lambda k-yr)J  + (r-\lambda+y-k)M^\transpose}{x-y} & (k-y-e_1e_2)I + (x-y+e_1+e_2)X+\left (e_0+e_1e_2 + y \right )J
\end{bmatrix}.
    \]
Suppose that $\lambda = r +\theta_1\theta_2$, $\frac{r-\lambda+y-k}{x-y} = \theta_1+\theta_2$, and $\frac{\lambda k-y r}{x-y} = \sqrt{\lambda(k+e_0+\theta_1\theta_2)}$.
Then 
\[
A^2 - (\theta_1+\theta_2)A + \theta_1\theta_2 I = \begin{bmatrix}
    \lambda J & \frac{r-\lambda+y-k}{x-y} J \\
    \frac{r-\lambda+y-k}{x-y} J & (k+e_0+\theta_1\theta_2)J
\end{bmatrix}.
\]
Let $\sigma = \mathfrak D(\mathsf T_x(\mathcal Q))$.
Hence, $\mathsf T_x(\mathcal Q) \in \mathscr G_3(\theta_0, \theta_1,\theta_2)$, for some $\theta_0$, which one can determine by finding the eigenvalues of $Q = Q_{\sigma}(\mathsf T_x(\mathcal Q)) = \left [ \begin{smallmatrix}
    0 & r \\
    k & e_0
\end{smallmatrix} \right ]$.
Since $\theta_0 \geqslant e_0$, the eigenvalues of $Q$ must be $\theta_0$ and $\theta_2$.
The degree sequence is $\left \{ [r]^v, [k+e_0]^b \right \}$.
Thus, if $r \ne k+e_0$ then, by Lemma~\ref{lem:nonregtotal}, $\mathsf T_x(\mathcal Q)$ has coherent rank $9$.
\end{proof}

\subsection{Parametric quasi-symmetric designs}
\label{sec:param9}

Now we apply Theorem~\ref{thm:rank9nec} to find parametrisations for quasi-symmetric designs that are the underlying QSD of a graph in $\mathscr G_3$ of coherent rank $9$.
Suppose that $\mathscr G_3$ has coherent rank $9$.
Let $\mathcal Q$ be the underlying QSD with parameters $(v,k,\lambda; b,r,\{x,y\})$ and let $e_1>e_2$ be the restricted values of $\mathsf B_x(\mathcal Q)$.
By Theorem~\ref{thm:rank9nec}, $\Gamma$ is biregular with degree sequence $\{[k_1]^{n_1},[k_2]^{n_2}\}$.
According to Theorem~\ref{thm:rank9nec}, there are four cases to consider depending on whether $\Gamma[V_1] = \overline{K_{n_1}}$ or $\Gamma[V_1] = K_{n_1}$ and $\theta_1 = e_1= \frac{y-k}{x-y}$ or $\theta_2 = e_2= \frac{y-k}{x-y}$.
We split our consideration according to each case, which we refer to as \textit{classes} as follows.

\begin{itemize}
    \item \textbf{Class 1:} $\Gamma[V_1] = \overline{K_{n_1}}$ and $\theta_2 = e_2= \frac{y-k}{x-y}$;
    \item \textbf{Class 2:} $\Gamma[V_1] = \overline{K_{n_1}}$ and $\theta_1 = e_1= \frac{y-k}{x-y}$;
    \item \textbf{Class 3:} $\Gamma[V_1] = {K_{n_1}}$ and $\theta_2 = e_2= \frac{y-k}{x-y}$;
    \item \textbf{Class 4:} $\Gamma[V_1] = {K_{n_1}}$ and $\theta_1 = e_1= \frac{y-k}{x-y}$.
\end{itemize}

\subsubsection{Class 1}

\begin{lemma}
\label{lem:case1}
    Let $\Gamma \in \mathscr G_3(\theta_0,\theta_1,\theta_2)$ with coherent rank $9$.
    Suppose that the underlying QSD $\mathcal Q$ of $\Gamma$ has parameters $(v,k,\lambda; b,r,\{x,y\})$ and $\mathsf B_x(\mathcal Q)$ has eigenvalues $e_0 > e_1 > e_2$.
    Suppose that $\Gamma[V_1]$ is empty and $\theta_2 = e_2 = \frac{y-k}{x-y}$.
    Then $\theta_0 = \theta_2(\theta_1\theta_2-\theta_1-1)$, $e_0 = \theta_1\theta_2(\theta_2-1)$, and $e_1 = \theta_1+\theta_2$.
    Furthermore, the parameters of $\mathcal Q$ can be written in terms of $\theta_1$ and $\theta_2$ according to Table~\ref{tab:case1}.
    \begin{table}[h!tbp]
        \centering
        \begin{tabular}{c|c|c|c|c|c|c}
        $v$ & $k$ & $\lambda$ & $b$ & $r$ & $x$ & $y$  \\
        \hline
        $\frac{\theta_2(\theta_2+\theta_1+\theta_1\theta_2-\theta_2^2\theta_1)}{\theta_1+1}$ & $\theta_2^2$ & $\theta_1+1$ & $\frac{(1+\theta_1-\theta_2\theta_1)(\theta_2+\theta_1+\theta_1\theta_2-\theta_2^2\theta_1)}{\theta_2(\theta_1+1)}$ & $\theta_1(1-\theta_2)+1$ & $-\theta_2$ & $0$
    \end{tabular}
        \caption{Expressions for parameters from Lemma~\ref{lem:case1}.}
        \label{tab:case1}
    \end{table}
    Conversely, suppose that, for some integers $\theta_1$ and $\theta_2$ satisfying $\theta_1 \geqslant 1$ and $\theta_2 \leqslant -2$, there exists a QSD $\mathcal Q$ whose parameters $(v,k,\lambda; b,r,\{x,y\})$ can be expressed as above in terms of $\theta_1$ and $\theta_2$.
    Then $\mathsf T_{x}(\mathcal Q) \in \mathscr G_3(\theta_2(\theta_1\theta_2-\theta_1-1),\theta_1,\theta_2)$ and $\mathsf T_x(\mathcal Q)$ has coherent rank $9$.
\end{lemma}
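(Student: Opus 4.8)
The plan is to handle the two directions separately: in the forward direction I would extract all the parameter relations by combining the identities of Theorem~\ref{thm:rank9nec} with the equitable quotient of the valency partition, and in the converse direction I would verify the hypotheses of Theorem~\ref{thm:converse}(i). For the forward direction, I first record the consequences of the Class~1 hypotheses. Since $\theta_2 = e_2 = \frac{y-k}{x-y}$, substituting into \eqref{eqn:t1pt2} (empty case, using $r-\lambda = -\theta_1\theta_2$ from \eqref{eqn:rminuslam} and $\theta_1 > 0$ from Corollary~\ref{cor:gt0}) gives $x-y = -\theta_2$, whence $k-y = \theta_2^2$. Feeding these into the block-graph spectrum of Theorem~\ref{thm:blockspec} then yields $e_1 = \frac{r-\lambda-k+y}{x-y} = \theta_1+\theta_2$. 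At this stage $e_1$, $e_2$, $x-y$ and $k-y$ are determined, but $k$ (equivalently $y$) is still free, so the crux is to pin down $y$.

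To fix $y$, I would invoke Theorem~\ref{thm:equitableValency} and Lemma~\ref{lem:equitable}: the valency partition $\{V_1,V_2\}$ is equitable with quotient matrix $\left[\begin{smallmatrix} 0 & r \\ k & e_0\end{smallmatrix}\right]$, since $\Gamma[V_1]$ is empty (so $k_{11}=0$) and $\Gamma[V_2]=\mathsf B_x(\mathcal Q)$ is $e_0$-regular. Its determinant $-rk$ is negative, so its two eigenvalues have opposite signs; the positive one is $\theta_0$ by Perron--Frobenius, and the negative one, being an eigenvalue of $\Gamma$ with $\theta_1>0$, must equal $\theta_2$. Reading off the trace and determinant gives $\theta_0+\theta_2 = e_0$ and $\theta_0\theta_2 = -rk$. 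Eliminating $\theta_0$ and then substituting the valency formula $e_0 = \frac{k(r-1)-y(b-1)}{x-y}$ of Theorem~\ref{thm:blockspec}, together with $x-y=-\theta_2$ and $k-y=\theta_2^2$, collapses the identity to $y(b-1) = -y$, i.e.\ $yb=0$; since $b>0$ this forces $y=0$, hence $k=\theta_2^2$ and $x=-\theta_2$. I expect this elimination to be the main obstacle, being the one genuinely nonobvious cancellation of the argument. The rest is bookkeeping: computing $e_0$ both from the quotient relations and from \eqref{eqn:alpha12} yields $r = \theta_1(1-\theta_2)+1$, so $\lambda = r+\theta_1\theta_2 = \theta_1+1$, and $v$, $b$ follow from the design equations \eqref{eqn:2des1} and \eqref{eqn:2des2}. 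Finally $\theta_0 = -r\theta_2 = \theta_2(\theta_1\theta_2-\theta_1-1)$ and $e_0 = \theta_0+\theta_2 = \theta_1\theta_2(\theta_2-1)$, which completes Table~\ref{tab:case1}.

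For the converse I would simply substitute the tabulated parameters into Theorem~\ref{thm:converse}(i). First I compute the block-graph spectrum from Theorem~\ref{thm:blockspec}: with $k=\theta_2^2$, $\lambda=\theta_1+1$, $r=\theta_1(1-\theta_2)+1$, $x=-\theta_2$, $y=0$ one obtains $e_2=\theta_2$, $e_1=\theta_1+\theta_2$, and $e_0=\theta_1\theta_2(\theta_2-1)$, which are pairwise distinct for $\theta_1\geqslant 1$, $\theta_2\leqslant -2$, so that $\mathsf B_x(\mathcal Q)\in\mathscr G_3(e_0,e_1,e_2)$ (connectivity being automatic since $e_0$ is a simple eigenvalue of the strongly regular block graph, and $b>v$ by Fisher's inequality~\eqref{eqn:fisher} for a genuine QSD). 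I then check the numerical hypotheses of Theorem~\ref{thm:converse}(i) — the prescribed form of $(x,y)$, the identity $\lambda=r+\theta_1\theta_2$, the relation $\frac{r-\lambda+y-k}{x-y}=\theta_1+\theta_2$, and $\frac{\lambda k-yr}{x-y}=\sqrt{\lambda(k+e_0+\theta_1\theta_2)}$ — each of which reduces to a polynomial identity in $\theta_1,\theta_2$. Theorem~\ref{thm:converse}(i) then delivers $\mathsf T_x(\mathcal Q)\in\mathscr G_3(e_0-\theta_2,\theta_1,\theta_2)=\mathscr G_3\!\left(\theta_2(\theta_1\theta_2-\theta_1-1),\theta_1,\theta_2\right)$, and coherent rank $9$ follows from the same theorem after confirming $r\neq k+e_0$, which holds because $k+e_0-r=(\theta_1+1)(\theta_2^2-1)>0$ on the given range.
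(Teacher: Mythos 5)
Your proposal is correct and takes essentially the same route as the paper: both derive $e_1=\frac{r-\lambda-k+y}{x-y}$ from Theorem~\ref{thm:blockspec}, use the quotient matrix of the valency partition (via Theorem~\ref{thm:equitableValency} and Lemma~\ref{lem:equitable}) to get $e_0=\theta_0+\theta_2$ and $\theta_0\theta_2=-rk$, solve the resulting system using \eqref{eqn:rminuslam}, \eqref{eqn:t1pt2}, \eqref{eqn:alpha12} and the design equations, and obtain the converse from Theorem~\ref{thm:converse}(i). Your write-up merely makes explicit the elimination (the $yb=0$ step) and the verification of the hypotheses of Theorem~\ref{thm:converse}(i), including the nonregularity check $k+e_0-r=(\theta_1+1)(\theta_2^2-1)>0$, which the paper leaves implicit.
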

\begin{proof}
    Since $e_2 = \frac{y-k}{x-y}$, using Theorem~\ref{thm:blockspec}, we must have $e_1 = \frac{r-\lambda-k+y}{x-y}$.
    We use the equations \eqref{eqn:2des1}, \eqref{eqn:2des2}, \eqref{eqn:xy}, \eqref{eqn:rminuslam}, \eqref{eqn:t1pt2}, \eqref{eqn:alpha12} together with 
    Theorem~\ref{thm:blockspec} and Theorem~\ref{thm:bireg} (iii).
    Let $Q$ be the quotient matrix of the valency partition of the vertex set of $\Gamma$.
    Since the trace of $Q$ is $e_0$, which, by interlacing (Theorem~\ref{thm:interlacing}), is at most $\theta_0$, we find that $\theta_2$ is an eigenvalue of $Q$.
    Hence $e_0 = \theta_0 +\theta_2$ and $\theta_0\theta_2 = -rk$.
    The statement of the lemma follows from the above list of equations and the converse follows from Theorem~\ref{thm:converse}.
\end{proof}

Suppose that $\Gamma$ satisfies the assumptions of Lemma~\ref{lem:case1}.
The parameters of the underlying QSD of $\Gamma$ can each be expressed in terms of $\theta_1$ and $\theta_2$.
First, we consider the subfamily of QSD parameters that satisfies the additional constraint $\theta_1 = -\theta_2$.
With this additional constraint, each QSD parameter can be expressed in terms of a single variable $\theta_1$.
Indeed, each such QSD has parameters $(\theta_1^3,\theta_1^2,\theta_1+1; \theta_1(\theta_1^2+\theta_1+1), \theta_1^2+\theta_1+1, \{0,\theta_1\})$.
\begin{example}
\label{ex:rank9}
    The biregular graphs $\mathsf T_x(\mathcal Q)$ of Lemma~\ref{lem:case1} whose underlying QSD $\mathcal Q$ satisfies the additional constraint $\theta_1 = -\theta_2$ was discovered by Van Dam~\cite[Section 2.3]{VANDAM1998101}.
\end{example}

Next, instead, consider the additional constraint $\theta_1+1 = \theta_2(\theta_2-1)$.
The resulting one-parameter family violates Theorem~\ref{thm:CHNineq}.
Indeed, we have $v = -\theta_2^3+\theta_2^2+2\theta_2+1$, $k = \theta_2^2$, $\lambda = \theta_2^2+\theta_2$, $x = -\theta_2$ and $y = 0$. 
Thus, the left-hand side of the inequality of Theorem~\ref{thm:CHNineq} equals $-(\theta_2+1)^2\theta_2^2(1-\theta_2)$, which is clearly negative.

It remains to consider the QSD parameters of Lemma~\ref{lem:case1} that are not captured by the two infinite parametric families above.
By Theorem~\ref{thm:chengbipartite}, we can assume that $\theta_0 > \theta_1 \geqslant 1$ and by Theorem~\ref{thm:interlacing}, we can assume that $\theta_2 \leqslant -2$.
Furthermore, since $e_1 \geqslant 0$, we have that $\theta_1 \geqslant -\theta_2$.
In Table~\ref{tab:paramC1}, we list those for which $\theta_1 \leqslant 100$.

\subsubsection{Class 2}

Define the function $f_2(w,z)$ by
\[
f_2(w,z) := \frac{w(w^2+w+1)-2z(w+1) + w\sqrt{(w^2+w+1)^2-4wz(w+1)}}{2}.
\]

\begin{lemma}
\label{lem:case2}
    Let $\Gamma \in \mathscr G_3(\theta_0,\theta_1,\theta_2)$ with coherent rank $9$.
    Suppose that the underlying QSD $\mathcal Q$ of $\Gamma$ has parameters $(v,k,\lambda; b,r,\{x,y\})$ and $\mathsf B_x(\mathcal Q)$ has eigenvalues $e_0 > e_1 > e_2$.
    Suppose that $\Gamma[V_1]$ is empty and $\theta_1 = e_1 = \frac{y-k}{x-y}$.
    Then $\theta_0 = f_2(\theta_1,\theta_2)$, 
    $e_0 = \theta_0+\theta_2$, and $e_2 = \theta_1+\theta_2$.
    Furthermore, the parameters of $\mathcal Q$ can be written in terms of $\theta_1$ and $\theta_2$ according to Table~\ref{tab:case2}.
    \begin{table}[h!tbp]
        \centering
        \begin{tabular}{c|c|c|c|c|c|c}
        $v$ & $k$ & $\lambda$ & $b$ & $r$ & $x$ & $y$  \\
        \hline
        $\frac{-\theta_2(\theta_0-\theta_1)}{\lambda}$ & $y+\theta_1^2$ & $\frac{-\theta_2(x-1)}{\theta_1}$ & $r-k+e_0+1$ & $\lambda-\theta_1\theta_2$ & $y-\theta_1$ & $\frac{e_0-\theta_1^3+\theta_1\theta_2}{\theta_1}$
    \end{tabular}
        \caption{Expressions for parameters from Lemma~\ref{lem:case2}.}
        \label{tab:case2}
    \end{table}
    Conversely, suppose that, for some integers $\theta_1$ and $\theta_2$ satisfying $\theta_1 \geqslant 1$ and $\theta_2 \leqslant -2$, there exists a QSD $\mathcal Q$ whose parameters $(v,k,\lambda; b,r,\{x,y\})$ can be expressed as above in terms of $\theta_1$ and $\theta_2$.
    Then $\mathsf T_{x}(\mathcal Q) \in \mathscr G_3(f_2(\theta_1,\theta_2),\theta_1,\theta_2)$.
    Furthermore, if $r \ne k+f_2(\theta_1,\theta_2)+\theta_2$ then $\mathsf T_x(\mathcal Q)$ has coherent rank $9$.
\end{lemma}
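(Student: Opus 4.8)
The plan is to follow the template of the proof of Lemma~\ref{lem:case1}, adjusting the eigenvalue bookkeeping to the present class. Since $\Gamma[V_1]$ is empty and $\theta_1 = e_1 = \frac{y-k}{x-y}$, Theorem~\ref{thm:blockspec} identifies the remaining restricted eigenvalue of $\mathsf B_x(\mathcal Q)$ as $e_2 = \frac{r-\lambda-k+y}{x-y}$, which together with the empty-$\Gamma[V_1]$ case of \eqref{eqn:t1pt2} gives $e_2 = \theta_1+\theta_2$. Feeding $e_1 = \theta_1$ and $e_2 = \theta_1+\theta_2$ into \eqref{eqn:xy} yields $x-y = -\theta_1$ and $k = y+\theta_1^2$, while \eqref{eqn:rminuslam} gives $r = \lambda-\theta_1\theta_2$; these already supply the $k$, $r$, $x$, and $y$ columns of Table~\ref{tab:case2}.

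To locate $\theta_0$ I would pass to the quotient matrix $Q = \left[\begin{smallmatrix} 0 & r \\ k & e_0 \end{smallmatrix}\right]$ of the (equitable) valency partition, whose bottom-right entry is the degree $e_0$ of the regular graph $\Gamma[V_2] = \mathsf B_x(\mathcal Q)$. By Theorem~\ref{thm:equitableValency}, $\theta_0$ is an eigenvalue of $Q$, and interlacing (Theorem~\ref{thm:interlacing}) gives $e_0 \leqslant \theta_0$; since the trace of $Q$ equals $e_0 = \theta_0 + (\text{other eigenvalue})$ and $\theta_1 > 0$ by Corollary~\ref{cor:gt0}, the other eigenvalue is $\leqslant 0$ and hence must be $\theta_2$. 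Thus $e_0 = \theta_0 + \theta_2$, and since $\det Q = -rk$ equals the eigenvalue product $\theta_0\theta_2$, we obtain $rk = -\theta_0\theta_2$.

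The computational heart is then to eliminate $r$ and $k$. As the second eigenvalue of $Q$ is $\theta_2$, Theorem~\ref{thm:bireg}(iii) reads $\sqrt{(k_1+\theta_1\theta_2)(k_2+\theta_1\theta_2)} = -\theta_2(\theta_1+1)$, and comparing this with \eqref{eqn:alpha12} after substituting $y = k-\theta_1^2$ and $\lambda = r+\theta_1\theta_2$ produces the linear relation $\theta_1 r + \theta_2 k = \theta_2(\theta_1+1)$. Combined with $rk = -\theta_0\theta_2$, this gives the quadratic $k^2 - (\theta_1+1)k - \theta_0\theta_1 = 0$. A second relation, $\theta_1 k = \theta_0 + \theta_2(\theta_1+1)$, arises by equating two expressions for $v$: one from \eqref{eqn:2des2} (namely $\lambda v = rk + \theta_1\theta_2$, giving the table value $v = \frac{-\theta_2(\theta_0-\theta_1)}{\lambda}$) and one obtained from the trace identity $e_0 + \theta_1(b-v) + (\theta_1+\theta_2)(v-1) = 0$ for $\mathsf B_x(\mathcal Q)$ together with \eqref{eqn:2des1}. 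Eliminating $k$ between these two relations yields a monic quadratic in $\theta_0$ whose larger root (selected via $\theta_0 > \theta_1$) is exactly $f_2(\theta_1,\theta_2)$; back-substitution then recovers $e_0 = \theta_1(k-\theta_2)$ and the remaining entries ($\lambda$, $b$, and the displayed form of $y$) of Table~\ref{tab:case2}. I expect this elimination, together with verifying that the admissible roots of the intermediate quadratics are forced by $\theta_0 > \theta_1 \geqslant 1$, $\theta_2 \leqslant -2$, and the positivity of the design parameters, to be the only delicate point; the remaining identities are routine substitutions.

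For the converse I would check that the tabulated parameters satisfy the hypotheses of Theorem~\ref{thm:converse}(i): with $e_1 = \theta_1$ and $e_2 = \theta_1+\theta_2$ the stated $(x,y)$ equals $\big(k+(\theta_1+1)(\theta_2+1)-(e_1+1)(e_2+1),\,k+\theta_1\theta_2-e_1e_2\big)$, and the three conditions $\lambda = r+\theta_1\theta_2$, $\frac{r-\lambda+y-k}{x-y} = \theta_1+\theta_2$, and $\frac{\lambda k - yr}{x-y} = \sqrt{\lambda(k+e_0+\theta_1\theta_2)}$ follow directly from Table~\ref{tab:case2}. Theorem~\ref{thm:converse}(i) then delivers $\mathsf T_x(\mathcal Q) \in \mathscr G_3(e_0-\theta_2,\theta_1,\theta_2) = \mathscr G_3(f_2(\theta_1,\theta_2),\theta_1,\theta_2)$, with coherent rank $9$ exactly when $r \neq k+e_0 = k+f_2(\theta_1,\theta_2)+\theta_2$.
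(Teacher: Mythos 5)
Your derivation tracks the paper's own proof almost step for step: the identification $e_2=\theta_1+\theta_2$ via Theorem~\ref{thm:blockspec} and \eqref{eqn:t1pt2}; the consequences $x-y=-\theta_1$, $k=y+\theta_1^2$, $r=\lambda-\theta_1\theta_2$ of \eqref{eqn:xy} and \eqref{eqn:rminuslam}; the quotient-matrix trace/determinant argument giving $e_0=\theta_0+\theta_2$ and $rk=-\theta_0\theta_2$; the elimination down to a quadratic in $\theta_0$; and the appeal to Theorem~\ref{thm:converse} for the converse are all exactly the paper's route. Your intermediate identities, e.g.\ $\theta_1 r+\theta_2 k=\theta_2(\theta_1+1)$, $k^2-(\theta_1+1)k-\theta_0\theta_1=0$, and $\theta_1 k=\theta_0+\theta_2(\theta_1+1)$, are correct; they check against the entry $(121,46,69;484,184,\{16,21\})$ with $(\theta_0,\theta_1,\theta_2)=(368,5,-23)$ of Table~\ref{tab:paramC2}.

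The one genuine gap is the root selection. You claim the larger root of the quadratic in $\theta_0$ is ``selected via $\theta_0>\theta_1$'', but that condition does not distinguish the roots: for $(\theta_1,\theta_2)=(5,-23)$ the quadratic is $\theta_0^2-431\theta_0+23184=0$, whose roots are $368$ and $63$, and \emph{both} exceed $\theta_1=5$. (In general both roots are positive, since their sum and their product are positive for $\theta_1\geqslant 1$, $\theta_2\leqslant -2$.) This is precisely the point on which the paper spends its effort: it rules out the smaller root $r_2(\theta_1,\theta_2)$ using nonnegativity of the intersection number $x$. Concretely, $x=k-\theta_1^2-\theta_1\geqslant 0$ together with $\theta_1 k=\theta_0+\theta_2(\theta_1+1)$ forces $\theta_0\geqslant \theta_1^3+\theta_1^2-\theta_1\theta_2-\theta_2=(\theta_1+1)(\theta_1^2-\theta_2)$, whereas one shows $r_2(\theta_1,\theta_2)\leqslant -\theta_2(\theta_1+1)$, which is strictly smaller; hence $\theta_0$ must be the larger root $f_2(\theta_1,\theta_2)$. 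Your closing remark that the admissible roots should be ``forced by \dots the positivity of the design parameters'' gestures at the right ingredient, but you neither identify which parameter does the work (it is $x\geqslant 0$) nor carry out the two bounds, and the explicit criterion you do give ($\theta_0>\theta_1$) is false. As written, the proposal therefore does not establish $\theta_0=f_2(\theta_1,\theta_2)$, which is the heart of the forward direction.
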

\begin{proof}
Since $e_1 = \frac{y-k}{x-y}$, using Theorem~\ref{thm:blockspec}, we must have $e_2 = \frac{r-\lambda-k+y}{x-y}$.
    We use the equations \eqref{eqn:2des1}, \eqref{eqn:2des2}, \eqref{eqn:xy}, \eqref{eqn:rminuslam}, \eqref{eqn:t1pt2}, \eqref{eqn:alpha12} together with 
    Theorem~\ref{thm:blockspec} and Theorem~\ref{thm:bireg} (iii).
    Let $Q$ be the quotient matrix of the valency partition of the vertex set of $\Gamma$.
    Since the trace of $Q$ is $e_0$, which, by interlacing (Theorem~\ref{thm:interlacing}), is at most $\theta_0$, we find that $\theta_2$ is an eigenvalue of $Q$.
    Hence $e_0 = \theta_0 +\theta_2$ and $\theta_0\theta_2 = -rk$.
    The expressions for the parameters of the underlying QSD $\mathcal Q$ together with $e_0$ and $e_2$ follow from the above equations.
    We also obtain the equation
    \[
    \theta_0^2 - \theta_0(\theta_1^3 - \theta_1^2 + 2\theta_1\theta_2 - \theta_1 + 2\theta_2) - \theta_1^3\theta_2 + \theta_1^2\theta_2^2 - 2\theta_1^2\theta_2 + 2\theta_1\theta_2^2 - \theta_1\theta_2 + \theta_2^2 = 0.
    \]
    Think of the above equation as a quadratic univariate polynomial equation in $\theta_0$ with coefficients in $\mathbb Z[\theta_1,\theta_2]$ and let $r_1(\theta_1,\theta_2) \geqslant r_2(\theta_1,\theta_2)$ be its roots.
    Note that we have $r_1(\theta_1,\theta_2) = f_2(\theta_1,\theta_2)$.
    Using the fact that $x \geqslant 0$, we can rule out the possibility of $\theta_0 = r_2(\theta_1,\theta_2)$.
    Indeed, $x \geqslant 0$ implies that $\theta_0 \geqslant \theta_1^3+\theta_1^2-\theta_1\theta_2-\theta_2$.
    On the other hand, $r_2(\theta_1,\theta_2) \leqslant -\theta_2(\theta_1+1)$.
    Hence, $\theta_0 = f_2(\theta_1,\theta_2)$, as required.
    
    The converse follows from Theorem~\ref{thm:converse}.
\end{proof}

Suppose that $\Gamma$ satisfies the assumptions of Lemma~\ref{lem:case2}.
The parameters of the underlying QSD of $\Gamma$ can each be expressed in terms of $\theta_1$ and $\theta_2$.
First, we consider the subfamily of QSD parameters that satisfies the additional constraint $\theta_2 = -\theta_1^2(\theta_1^2+1)$.
With this additional constraint, each QSD parameter can be expressed in terms of a single variable $\theta_1$.

When $\theta_1 = 1$, we find that $v=b$, which implies the corresponding design is symmetric, and hence not quasi-symmetric.
When $\theta_1 = 2$, we obtain a new graph: 
\begin{example}
\label{ex:new}
    The parameters $(v,k,\lambda; b, r, \{x,y\}) = (22,15,80;176,120,\{9,11\})$ correspond to a QSD $\mathcal Q$.
    Indeed, take $\mathcal Q$ to be the complement of the QSD with parameters $(22,7,16; 176, 56, \{1,3\})$ from \cite{GS70}.
    The block graph $\mathsf B_9(\mathcal Q)$ has eigenvalues $e_0 = 70$, $e_1=2$, and $e_2 = -18$ and the total graph $\mathsf T_9(\mathcal Q)$ is a biregular graph in $\mathscr G_3(90,2,-20)$.
    Moreover,
    \[
    \operatorname{spec}\left (T_9(\mathcal Q)\right) = \left \{ \left [ 90 \right ]^1, \left [ 2 \right ]^{175}, \left [ -20 \right ]^{22} \right \}.
    \]
\end{example}

When $\theta_1 \geqslant 3$, Theorem~\ref{thm:CHNineq} is violated.
Indeed, we have 
\begin{align*}
    v&=\theta_1^3+2\theta_1^2+2\theta_1+2;  & k &= \theta_1^3+\theta_1^2+\theta_1+1; &
    x&=1+\theta_1^3; & y &= 1+\theta_1+\theta_1^3.
\end{align*}
Thus, the left-hand side of the inequality of Theorem~\ref{thm:CHNineq} is equal to 
$$-(\theta_1-2)\theta_1(\theta_1+1)^2(\theta_1^2+\theta_1+1),$$
which is clearly negative when $\theta_1 \geqslant 3$.

Next, instead, consider the additional constraint $\theta_2 = -(\theta_1^4+2\theta_1^3+\theta_1^2+\theta_1)$.
When $\theta_1 = 1$, we obtain the QSD parameters
$$(v,k,\lambda; b,r,\{x,y\}) = (7,5,10;21,15,\{3,4\})$$
and the total graph $\mathsf T_{3}(\mathcal Q) \in \operatorname{SRG}(28,15,6,10)$.
Note that the nonregularity condition $r \ne k + f_2(\theta_1,\theta_2) + \theta_2$ is violated in this case.

When $\theta_1 \geqslant 2$, Theorem~\ref{thm:CHNineq} is, again, violated.
Indeed, we have 
\begin{align*}
    v&=\theta_1^3+3\theta_1^2+2\theta_1+1;  & k &= \theta_1^3+2\theta_1^2+\theta_1+1; &
    x&=\theta_1^3+\theta_1^2+1; & y &=(\theta_1+1)(\theta_1^2+1).
\end{align*}
Thus, the left hand side of the inequality of Theorem~\ref{thm:CHNineq} equals $-\theta_1(\theta_1+1)^2(\theta_1^3-2\theta_1+1)$, which is clearly negative when $\theta_1 \geqslant 2$.

It remains to consider QSD parameters of Lemma~\ref{lem:case2} that are not captured by the two infinite parametric families above.
By Theorem~\ref{thm:chengbipartite}, we can assume that $\theta_0 > \theta_1 \geqslant 1$ and by Theorem~\ref{thm:interlacing}, we can assume that $\theta_2 \leqslant -2$.
Since $\theta_1+\theta_2 = e_2 \leqslant -2$ and $\theta_1 \geqslant 1$, we find that $1 \leqslant \theta_1 \leqslant -\theta_2-2$.
In Table~\ref{tab:paramC2}, we list those for which $\theta_2 \geqslant -100$.

\begin{remark}
\label{rem:srgs}
    Note that, if the nonregularity condition $r \ne k + f_2(\theta_1,\theta_2) + \theta_2$ is not imposed then Lemma~\ref{lem:case2} can produce QSDs whose total graph is strongly regular.
    These strongly regular graphs have an \emph{improper strongly regular decomposition} in the sense of Haemers and Higman~\cite{srdecomp}.
\end{remark}

\subsubsection{Class 3}

We need to split into two further subcases according to which of $\theta_1$ or $\theta_2$ is an eigenvalue of the quotient matrix $Q_{\mathfrak D(\Gamma)}(\Gamma)$.

Define the functions $f_3(w,z)$ and $g_3(w,z)$ by
\begin{align*}
    f_3(w,z) &:= \frac{wz(w+2)(z+2) + 2z^2 + 4z + 1  + (1-wz)\sqrt{g_3(w,z)}}{-2(z+1)(w+z+2)} \\
    g_3(w,z) &:= 5(wz+1)^2 + 4z^3(w+1) + 4wz(w+4z) +  4(z+1)(3z-1).
\end{align*}

\begin{lemma}
\label{lem:case3a}
    Let $\Gamma \in \mathscr G_3(\theta_0,\theta_1,\theta_2)$ with coherent rank $9$.
    Suppose that the underlying QSD $\mathcal Q$ of $\Gamma$ has parameters $(v,k,\lambda; b,r,\{x,y\})$ and $\mathsf B_x(\mathcal Q)$ has eigenvalues $e_0 > e_1 > e_2$.
    Suppose that $\Gamma[V_1]$ is complete, $\theta_2 = e_2 = \frac{y-k}{x-y}$, and $\theta_2$ is an eigenvalue of $Q_{\mathfrak D(\Gamma)}(\Gamma)$.
    Then $\theta_0 = f_3(\theta_1,\theta_2)$, $e_1 = \theta_1+\theta_2+1$, and
    \[
    e_0 = -\frac{\theta_2(\theta_1+1)}{(\theta_2+1)}\left ( 1 +\frac{\theta_2(1-\theta_1\theta_2)}{\theta_0+\theta_1\theta_2}\right ).
    \]
    Furthermore, the parameters of $\mathcal Q$ can be written in terms of $\theta_1$ and $\theta_2$ as follows.
    \begin{align*}
        v&= \theta_0 + \theta_2 -e_0 + 1; & b &= \frac{\theta_1(v-1)  + \theta_0}{-\theta_2}; \\
        k&= y+\theta_2(\theta_2+1); & r &= \lambda - (\theta_1+1)(\theta_2 +1); \\
        \lambda &= (\theta_1+1)\left (1+\frac{ \theta_0e_1-\theta_1e_0}{\theta_0-\theta_1}\right ); & x &= y-\theta_2-1; \\
        & & y &= \frac{-\theta_2(\lambda-e_1-1)}{\theta_1}.
    \end{align*}
    Conversely, suppose that, for some integers $\theta_1$ and $\theta_2$ satisfying $\theta_1 \geqslant 1$ and $\theta_2 \leqslant -2$, there exists a QSD $\mathcal Q$ whose parameters $(v,k,\lambda; b,r,\{x,y\})$ can be expressed as above in terms of $\theta_1$ and $\theta_2$.
    Then $\mathsf W_{x}(\mathcal Q) \in \mathscr G_3(f_3(\theta_1,\theta_2),\theta_1,\theta_2)$ and $\mathsf W_x(\mathcal Q)$ has coherent rank $9$.
\end{lemma}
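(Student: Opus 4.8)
The plan is to follow the template established in Lemmas~\ref{lem:case1} and~\ref{lem:case2}, specialised to the complete case $\Gamma[V_1]=K_{n_1}$ with $\theta_2=e_2=\frac{y-k}{x-y}$. First I would invoke Theorem~\ref{thm:rank9nec} to record that $\Gamma$ is biregular, that $M$ is the incidence matrix of the underlying QSD $\mathcal Q$, that $\Gamma[V_2]=\mathsf B_x(\mathcal Q)$, and that the identities \eqref{eqn:xy}, \eqref{eqn:rminuslam}, \eqref{eqn:t1pt2}, and \eqref{eqn:alpha12} hold in their ``complete'' forms. Since $\theta_2=e_2$, Theorem~\ref{thm:blockspec} forces $e_1=\frac{r-\lambda-k+y}{x-y}$. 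I would then extract the linear consequences: substituting $e_2=\theta_2$ into \eqref{eqn:xy} gives $x-y=\theta_1-e_1$ and $y-k=\theta_2(x-y)$, and feeding the latter into \eqref{eqn:t1pt2} while comparing with \eqref{eqn:rminuslam} yields $e_1=\theta_1+\theta_2+1$, whence $x-y=-\theta_2-1$, $x=y-\theta_2-1$, and $k=y+\theta_2(\theta_2+1)$. This already pins down $x$, $k$, and $e_1$ in the claimed forms.

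To reach $\theta_0$, $e_0$, $\lambda$, $v$, $r$, and $b$, I would bring in the quotient matrix $Q=Q_{\mathfrak D(\Gamma)}(\Gamma)=\left[\begin{smallmatrix} v-1 & r \\ k & e_0\end{smallmatrix}\right]$. By the standing hypothesis and Theorem~\ref{thm:bireg}(ii) its eigenvalues are $\theta_0$ and $\theta_2$, so its trace and determinant give $v-1+e_0=\theta_0+\theta_2$ and $(v-1)e_0-rk=\theta_0\theta_2$. Because $\theta_2$ is the non-Perron $Q$-eigenvalue, Theorem~\ref{thm:bireg}(iii) combined with the complete case of \eqref{eqn:alpha12} reads $k+\frac{\lambda k-yr}{x-y}=-\theta_2(\theta_1+1)$. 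Assembling these with the design identities \eqref{eqn:2des1} and \eqref{eqn:2des2} together with the trace-zero relation $e_0+(v-1)e_1+(b-v)e_2=0$ coming from Theorem~\ref{thm:blockspec} produces the stated expressions and, after elimination, a single quadratic in $\theta_0$ with coefficients in $\mathbb Z[\theta_1,\theta_2]$.

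The main obstacle is precisely this elimination: the quadratic is the one whose discriminant is $g_3(\theta_1,\theta_2)$, and one must select the correct root. As in the proof of Lemma~\ref{lem:case2}, I expect the spurious root to be ruled out by a positivity constraint, most naturally $x\geqslant 0$ (equivalently a bound coming from the Perron ordering $\theta_0>\theta_1$), leaving $\theta_0=f_3(\theta_1,\theta_2)$; back-substitution into the relations above then delivers the displayed formulas for $e_0$ and $\lambda$ and the remaining parameters $v$, $r$, $b$. The bulk of the work is keeping the large simultaneous polynomial system under control until it collapses to this single quadratic.

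For the converse I would verify the three hypotheses of Theorem~\ref{thm:converse}(ii) by substituting the parametric expressions. The first two, $\lambda=r+(\theta_1+1)(\theta_2+1)$ and $\frac{r-\lambda+y-k}{x-y}=\theta_1+\theta_2+1$, are exactly the linear relations already obtained; the third, $k+\frac{\lambda k-yr}{x-y}=\sqrt{(v-1+r+\theta_1\theta_2)(k+e_0+\theta_1\theta_2)}$, follows since the left side equals $-\theta_2(\theta_1+1)$ by the computation above, while the right side is the degree product $\sqrt{(k_1+\theta_1\theta_2)(k_2+\theta_1\theta_2)}$ identified in Theorem~\ref{thm:bireg}(iii). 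Theorem~\ref{thm:converse}(ii) then yields $\mathsf W_x(\mathcal Q)\in\mathscr G_3(\theta_0,\theta_1,\theta_2)$ with $\theta_0=\frac{v-1+e_0+\sqrt{(v-1-e_0)^2+4rk}}{2}$, which I would match to $f_3(\theta_1,\theta_2)$ using the trace and determinant of $Q$. To upgrade to coherent rank $9$ I would confirm the non-regularity condition $v-1+r\ne k+e_0$ directly from the parametric expressions (a graph failing this would be strongly regular and hence excluded), so that Theorem~\ref{thm:converse}(ii) guarantees coherent rank $9$.
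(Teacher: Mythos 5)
Your proposal is correct and follows essentially the same route as the paper's proof: extract the linear relations from Theorem~\ref{thm:rank9nec} and Theorem~\ref{thm:blockspec}, use the trace and determinant of $Q_{\mathfrak D(\Gamma)}(\Gamma)$ with $\theta_2$ as the non-Perron eigenvalue, eliminate down to a quadratic in $\theta_0$, discard the spurious root by positivity, and obtain the converse from Theorem~\ref{thm:converse}. The one step you leave unexecuted is the one the paper spends most of its effort on: it excludes the smaller root $s_2(\theta_1,\theta_2)$ not via $x\geqslant 0$ (that was the device in Lemma~\ref{lem:case2}) but by a sign analysis driven by $\theta_0>\theta_1\geqslant 0$, $e_1=\theta_1+\theta_2+1\geqslant 0$, and $\theta_2\leqslant -2$ --- squaring the resulting inequality and factoring the difference --- so your parenthetical suggestion that $x\geqslant 0$ is ``equivalently'' the Perron bound should not be taken at face value, though the tool you name (the Perron ordering) is indeed the one that works.
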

\begin{proof}
Since $e_2 = \frac{y-k}{x-y}$, using Theorem~\ref{thm:blockspec}, we must have $e_1 = \frac{r-\lambda-k+y}{x-y}$.
    We use the equations \eqref{eqn:2des1}, \eqref{eqn:2des2}, \eqref{eqn:xy}, \eqref{eqn:rminuslam}, \eqref{eqn:t1pt2}, \eqref{eqn:alpha12} together with 
    Theorem~\ref{thm:blockspec} and Theorem~\ref{thm:bireg} (iii).
    Using the trace and determinant of $Q_{\mathfrak D(\Gamma)}(\Gamma)$, we obtain the equations
    $v-1+e_0 =\theta_0 +\theta_2$ and $\theta_0\theta_2 = (v-1)e_0-rk$.
    The expressions for the parameters of the underlying QSD $\mathcal Q$ together with $e_0$ and $e_1$ follow from the above equations.
    We also obtain the equation
    \[
    \xi_2(\theta_1,\theta_2)\theta_0^2 + \xi_1(\theta_1,\theta_2)\theta_0 +\xi_0(\theta_1,\theta_2) = 0,
    \]
    where
    \begin{align*}
\xi_2(\theta_1,\theta_2)&:=\theta_1\theta_2+\theta_1+\theta_2^2+3\theta_2+2; \\
    \xi_1(\theta_1,\theta_2)&:=\theta_1^2\theta_1^2 + 2\theta_1^2\theta_2 + 2\theta_1\theta_2^2 + 4\theta_1\theta_2+2\theta_2^2+4\theta_2+1; \\
        \xi_0(\theta_1,\theta_2)&:=- \theta_1^3\theta_2^3 + \theta_1^3\theta_2^2 - \theta_1^2\theta_2^3 + 4\theta_1^2\theta_2^2 + 4\theta_1\theta_2^2 + \theta_2^2.
    \end{align*}
    Think of the above equation as a quadratic univariate polynomial equation in $\theta_0$ with coefficients in $\mathbb Z[\theta_1,\theta_2]$ and let $s_1(\theta_1,\theta_2) \geqslant s_2(\theta_1,\theta_2)$ be its roots.
    Note that we have $s_1(\theta_1,\theta_2) = f_3(\theta_1,\theta_2)$.
    Using the fact that $\theta_0 > \theta_1 \geqslant 0$, we can rule out the possibility of $\theta_0 = s_2(\theta_1,\theta_2)$.
    Suppose to the contrary that $\theta_0 = s_2(\theta_1,\theta_2)$.
    Then $\theta_0 > \theta_1 \geqslant 0$ and $\theta_2 < 0$ implies that 
    \[
    \theta_1\theta_2(\theta_1+2)(\theta_2+2) + 2\theta_2^2 + 4\theta_2 + 1  - (1-\theta_1\theta_2)\sqrt{g_3(\theta_1,\theta_2)} > -2(\theta_2+1)(\theta_1+\theta_2+2)\theta_1,
    \]
    which implies
    \begin{align*}
        & (\theta_1\theta_2(\theta_1+2)(\theta_2+2) + 2\theta_2^2 + 4\theta_2 + 1 + 2(\theta_2+1)(\theta_1+\theta_2+2)\theta_1)^2-(1-\theta_1\theta_2)^2 g_3(\theta_1,\theta_2)\\
        = & -4(\theta_2+1)(\theta_1+1)(\theta_1+\theta_2+2)(\theta_2\theta_1^2(\theta_2-3)(\theta_2+1)-\theta_1\theta_2(5\theta_2+4)-\theta_2^2-\theta_1^2-\theta_1)
    \end{align*}
    is positive.
    Now, $-4(\theta_2+1)(\theta_1+1)(\theta_1+\theta_2+2)$ is positive since $e_1=\theta_1+\theta_2+1 \geqslant 0$, $\theta_2 \leqslant -2$, and $\theta_1\geqslant 0$.
    However, it also follows that $\theta_2\theta_1^2(\theta_2-3)(\theta_2+1)-\theta_1\theta_2(5\theta_2+4)-\theta_2^2-\theta_1^2-\theta_1$ is negative, which contradicts our supposition.
    Hence, $\theta_0 = f_3(\theta_1,\theta_2)$, as required.

    The converse follows from Theorem~\ref{thm:converse}.
\end{proof}

Suppose that $\Gamma$ satisfies the assumptions of Lemma~\ref{lem:case3a}.
By Theorem~\ref{thm:chengbipartite}, we can assume that $\theta_0 > \theta_1 \geqslant 1$ and by Theorem~\ref{thm:interlacing}, we can assume that $\theta_2 \leqslant -2$.
Since $\theta_1+\theta_2+1 = e_1 \geqslant 0$ and $\theta_2 \leqslant -2$, we find that $-\theta_1-1 \leqslant \theta_2 \leqslant -2$.
In Table~\ref{tab:paramC3}, we list all the parameters for QSDs corresponding to Lemma~\ref{lem:case3a} for $\theta_1 \leqslant 100$.
\begin{example}
\label{ex:MK}
    When $\theta_1 = 5$ and $\theta_2 = -2$, using the expressions from Lemma~\ref{lem:case3a}, we obtain the QSD parameters $(8,6,15; 28,21,\{4,5\})$.
    A QSD $\mathcal Q$ having such parameters exists whose blocks are all $6$-sets of $\{1,2,\dots,8\}$.
    The block graph $\mathsf B_5(\mathcal Q)$ has eigenvalues $(e_0,e_1,e_2) = (12,4,-2)$ and the whole graph $\mathsf W_5(\mathcal Q)$ is a biregular graph in $\mathscr G_3(21,5,-2)$.
    This graph was found by Muzychuk and Klin~\cite{MUZYCHUK1998191}.
\end{example}

Define the polynomial $h(u,w,z)$ as
\[
h(u,w,z) := C_3(w,z) u^3+C_2(w,z) u^2 + C_1(w,z) u+C_0(w,z),
    \]
    where
    \begin{align*}
        C_3(w,z)&= (w+z)(w+1)(z+1); \\
        C_2(w,z) &= w^4 + w^3(z+2)^2 + w^2(z^2 + 7z + 5) - w(2z^3 + 3z^2 - z - 1) - 2z^2(z+1); \\
        C_1(w,z) &= (2w^5+z^3)(z+1) - w^4(z^3 + z^2 - 4z - 6) - w^3(z^3 - 6z - 8) \\
        &\ \ \ - 3w^2(z^3 + 3z^2 + z+1)  +  wz(z^3 - 5z - 3); \\
        C_0(w,z) &= w(w+1)(w-z)(w^3(z + 1)^2 + 2w^2(z+1) + (w-z)(z+1)(z+2)-wz).
    \end{align*}

\begin{lemma}
\label{lem:case3b}
    Let $\Gamma \in \mathscr G_3(\theta_0,\theta_1,\theta_2)$ with coherent rank $9$.
    Suppose that the underlying QSD $\mathcal Q$ has parameters $(v,k,\lambda; b,r,\{x,y\})$ and $\mathsf B_x(\mathcal Q)$ has eigenvalues $e_0 > e_1 > e_2$.
    Suppose that $\Gamma[V_1]$ is complete, $\theta_2 = e_2= \frac{y-k}{x-y}$, and $\theta_1$ is an eigenvalue of $Q_{\mathfrak D(\Gamma)}(\Gamma)$.
    Then $\theta_0$ is the positive zero of the (univariate) polynomial $p(u):= h(u,\theta_1,\theta_2)$.
    Furthermore,
    \[
    e_1 = \theta_1 + \theta_2+1, \text{ and } e_0 = \frac{(\theta_2+1)(\theta_0(\theta_1 e_1 -1) +\theta_1^2(\theta_2+1))}{\theta_0(\theta_1\theta_2 + e_1) + \theta_1^3(\theta_2+1)+2\theta_1(\theta_1+1)-\theta_2(\theta_2+2)}.
    \]
    The parameters of $\mathcal Q$ can be written in terms of $\theta_0$, $\theta_1$ and $\theta_2$ as follows.
    \begin{align*}
        v&= \theta_0 + \theta_1 -e_0 + 1; & b &= \frac{\theta_1v  - \theta_0+\theta_2}{\theta_2}; \\
        k&= y+\theta_2(\theta_2+1); & r &= \lambda - (\theta_1+1)(\theta_2 +1); \\
        \lambda &= \theta_2+1-\frac{\theta_1(y-e_1-1)}{\theta_2}; & x &= y-\theta_2-1; \\
        & & y &= (\theta_2+1)\frac{ \theta_0(e_1-1)-\theta_1(e_0-t_1-1)-\theta_2^2}{\theta_0-\theta_2}.
    \end{align*}
    Conversely, suppose that $\theta_0$ is an integer solution to $p(u) = 0$ for some integers $\theta_1$ and $\theta_2$ satisfying $\theta_0>\theta_1 \geqslant 1$ and $\theta_2 \leqslant -2$.
    If there exists a QSD $\mathcal Q$ whose parameters $(v,k,\lambda; b,r,\{x,y\})$ can be expressed as above in terms of $\theta_0$, $\theta_1$, and $\theta_2$ then $\mathsf W_{x}(\mathcal Q) \in \mathscr G_3(\theta_0,\theta_1,\theta_2)$.
    Suppose $\mathsf B_x(\mathcal Q)$ has degree $e_0$.
    If $v-1+r \ne k+e_0$ then $\mathsf W_x(\mathcal Q)$ has coherent rank $9$.
\end{lemma}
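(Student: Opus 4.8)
The plan is to follow the same template as Lemma~\ref{lem:case3a}, adjusting only for the fact that now $\theta_1$, rather than $\theta_2$, is the eigenvalue of the quotient matrix. First I would invoke Theorem~\ref{thm:rank9nec} to record that $\Gamma$ is biregular with $\Gamma[V_1] = K_{n_1}$ (the complete branch) and $\Gamma[V_2] = \mathsf B_x(\mathcal Q)$, where $M$ is the incidence matrix of the underlying QSD $\mathcal Q$. Since $\theta_2 = e_2 = \frac{y-k}{x-y}$, Theorem~\ref{thm:blockspec} forces the remaining restricted eigenvalue of the block graph to be $e_1 = \frac{r-\lambda-k+y}{x-y}$, and the complete branch of \eqref{eqn:t1pt2} then gives $e_1 = \theta_1+\theta_2+1$. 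Because $\Gamma[V_1]$ is complete and $\Gamma[V_2]$ is $e_0$-regular, the quotient matrix $Q_{\mathfrak D(\Gamma)}(\Gamma)$ has diagonal $(v-1,\,e_0)$ and hence trace $v-1+e_0$; since $\theta_1$ is \emph{assumed} to be its second eigenvalue (the subcase split via Theorem~\ref{thm:bireg}(ii)), comparing trace and determinant yields
\[
v - 1 + e_0 = \theta_0 + \theta_1 \qquad \text{and} \qquad \theta_0\theta_1 = (v-1)e_0 - rk.
\]

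Next I would treat \eqref{eqn:2des1}, \eqref{eqn:2des2}, \eqref{eqn:xy}, the complete branches of \eqref{eqn:rminuslam} and \eqref{eqn:alpha12}, Theorem~\ref{thm:bireg}(iii), and the two displayed relations as a single system in the QSD parameters. Solving it expresses $v,k,\lambda,b,r,x,y$ (and $e_0,e_1$) rationally in $\theta_0,\theta_1,\theta_2$, giving the table. The genuine novelty relative to Lemma~\ref{lem:case3a} surfaces in the final elimination: here $e_0$ is a ratio of two expressions each linear in $\theta_0$, and substituting it into the determinant relation $\theta_0\theta_1 = (v-1)e_0 - rk = (\theta_0+\theta_1-e_0)e_0 - rk$ introduces an $e_0^2$ term. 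Clearing the common denominator therefore produces a \emph{cubic} $p(u) = h(u,\theta_1,\theta_2)=0$ satisfied by $\theta_0$, in contrast with the quadratic obtained in the $\theta_2$-eigenvalue case; this explains why the lemma states $\theta_0$ only implicitly, as a zero of $p$.

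The main obstacle is to single out $\theta_0$ among the (up to three) real roots of $p$. I would show that under the standing constraints $\theta_0 > \theta_1 \geqslant 1$, $\theta_2 \leqslant -2$, $e_1 = \theta_1+\theta_2+1 \geqslant 0$, $e_0 > e_1$, and $x \geqslant 0$, the cubic has a unique positive real root, which must then equal $\theta_0$. As in Lemma~\ref{lem:case3a} this reduces to a sign analysis: one factors out the known linear factors of $C_3,\dots,C_0$ (noting in particular that the leading coefficient $C_3 = (\theta_1+\theta_2)(\theta_1+1)(\theta_2+1)$ changes sign according to $\theta_1+\theta_2$, with $\theta_1+\theta_2=-1$ degenerating $p$ to a quadratic), evaluates $p$ at the relevant endpoints, and bounds the residual factors to pin down the sign changes of $p$ on $(0,\infty)$, thereby excluding any spurious second positive root. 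Finally, the converse is immediate from the template: given a QSD with the tabulated parameters, one verifies that the complete-branch hypotheses of Theorem~\ref{thm:converse}(ii) hold, which places $\mathsf W_x(\mathcal Q)$ in $\mathscr G_3(\theta_0,\theta_1,\theta_2)$, and the nonregularity condition $v-1+r \ne k+e_0$ of that theorem upgrades this to coherent rank $9$ (ultimately via Lemma~\ref{lem:nonregtotal}(ii)).
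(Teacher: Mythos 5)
Your route is the same as the paper's: reduce via Theorem~\ref{thm:rank9nec} and Theorem~\ref{thm:blockspec}, get $e_1=\theta_1+\theta_2+1$ from the complete branch of \eqref{eqn:t1pt2}, extract the trace and determinant relations $v-1+e_0=\theta_0+\theta_1$ and $\theta_0\theta_1=(v-1)e_0-rk$ from $Q_{\mathfrak D(\Gamma)}(\Gamma)$, solve the combined system for the tabulated parameters, eliminate to get the cubic $p(u)=h(u,\theta_1,\theta_2)$, and deduce the converse from Theorem~\ref{thm:converse}. Where you and the paper differ is only in the root-counting step, and there your sketch contains a concrete error.

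You assert that $\theta_1+\theta_2=-1$ degenerates $p$ to a quadratic. It does not: since $C_3(\theta_1,\theta_2)=(\theta_1+\theta_2)(\theta_1+1)(\theta_2+1)$ and $(\theta_1+1)(\theta_2+1)\neq 0$ under the standing constraints $\theta_1\geqslant 1$, $\theta_2\leqslant -2$, the leading coefficient vanishes precisely when $\theta_1+\theta_2=0$, not $-1$. When $\theta_1+\theta_2=-1$ (equivalently $e_1=0$), $p$ is an honest cubic with $C_3>0$, and the relevant fact — which the paper checks separately — is that $p$ then has \emph{no} positive roots at all; since $\theta_0>0$ must be a root, this case is thereby excluded, rather than yielding a unique positive root. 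For $\theta_1>-\theta_2-1$ the paper does not evaluate $p$ at endpoints but applies Descartes' rule of signs: the coefficients satisfy $C_3\leqslant 0$, $C_2<0$, $C_1<0$, $C_0>0$ (the paper's ``$C_3>0$'' in the last inequality is a typo for $C_0$), so the sign sequence has exactly one change and $p$ has exactly one positive zero, which must be $\theta_0$. Your endpoint-and-residual-factor analysis could in principle be pushed through, but as organized — with the case split pivoting on the wrong degeneration condition — it would mishandle both the $\theta_1+\theta_2=-1$ case (where the needed conclusion is nonexistence of a positive root, not uniqueness) and the genuine degenerate case $\theta_1+\theta_2=0$ (where any argument relying on the sign of the leading coefficient of a cubic breaks down and one must work with the resulting quadratic). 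Fixing the case split and replacing the endpoint analysis by the coefficient-sign count makes your argument coincide with the paper's.
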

\begin{proof}
Since $e_2 = \frac{y-k}{x-y}$, using Theorem~\ref{thm:blockspec}, we must have $e_1 = \frac{r-\lambda-k+y}{x-y}$.
    We use the equations \eqref{eqn:2des1}, \eqref{eqn:2des2}, \eqref{eqn:xy}, \eqref{eqn:rminuslam}, \eqref{eqn:t1pt2}, \eqref{eqn:alpha12} together with 
    Theorem~\ref{thm:blockspec} and Theorem~\ref{thm:bireg} (iii).
    Using the trace and determinant of $Q_{\mathfrak D(\Gamma)}(\Gamma)$, we obtain the equations
    $v-1+e_0 =\theta_0 +\theta_1$ and $\theta_0\theta_1 = (v-1)e_0-rk$.
    The expressions for the parameters of the underlying QSD $\mathcal Q$ together with $e_0$ and $e_1$ follow from the above equations.
    Now we show that the polynomial $p(u)$ has precisely one real zero.
    Fix $\theta_1 \geqslant 1$ and $\theta_2 \leqslant -2$.
    If $\theta_1 = -\theta_2-1$ then it is easy to check that $p(u) = 0$ has no positive roots. 
    Otherwise, we can assume that $\theta_1 > -\theta_2-1$.
    Since $C_3(\theta_1,\theta_2) < 0$, $C_2(\theta_1,\theta_2) < 0$, $C_1(\theta_1,\theta_2) < 0$, and $C_3(\theta_1,\theta_2) > 0$, by Descartes' rule of signs, we have that $p(u)$ has just one positive zero.

    The converse follows from Theorem~\ref{thm:converse}.
\end{proof}

There do not appear to be any QSD parameters that satisfy Lemma~\ref{lem:case3b}.
However, if we do not impose the nonregularity condition $v-1+r \ne k+e_0$ then one can obtain QSDs whose whole graph is strongly regular.
These strongly regular graphs correspond to the complement of those of Remark~\ref{rem:srgs}.

\subsubsection{Class 4}

\begin{lemma}
\label{lem:case4a}
    Let $\Gamma \in \mathscr G_3(\theta_0,\theta_1,\theta_2)$ with coherent rank $9$.
    Suppose that the underlying QSD $\mathcal Q$ has parameters $(v,k,\lambda; b,r,\{x,y\})$ and $\mathsf B_x(\mathcal Q)$ has eigenvalues $e_0 > e_1 > e_2$.
    Suppose that $\Gamma[V_1]$ is complete, $\theta_1 = e_1= \frac{y-k}{x-y}$, and $\theta_2$ is an eigenvalue of $Q_{\mathfrak D(\Gamma)}(\Gamma)$.
    Then $\theta_0$ is a zero of the (univariate) polynomial $q(u) := h(u,\theta_2,\theta_1)$.
    Furthermore,
    \[
    e_2 = \theta_1 + \theta_2+1, \text{ and } e_0 = \frac{(\theta_1+1)(\theta_0(\theta_2 e_2 -1) +\theta_2^2(\theta_1+1))}{\theta_0(\theta_1\theta_2 + e_2) + \theta_2^3(\theta_1+1)+2\theta_2(\theta_2+1)-\theta_1(\theta_1+2)}.
    \]
    The parameters of $\mathcal Q$ can be written in terms of $\theta_0$, $\theta_1$, and $\theta_2$ as follows.
    \begin{align*}
        v&= \theta_0 + \theta_2 -e_0 + 1; & b &= \frac{\theta_2v  - \theta_0+\theta_1}{\theta_1}; \\
        k&= y+\theta_1(\theta_1+1); & r &= \lambda - (\theta_1+1)(\theta_2 +1); \\
        \lambda &= \theta_1+1-\frac{\theta_2(y-e_2-1)}{\theta_1}; & x &= y-\theta_1-1; \\
        & & y &= (\theta_1+1)\frac{ \theta_0(e_2-1)-\theta_1(e_0-t_2-1)-\theta_1^2}{\theta_0-\theta_1}.
    \end{align*}
    Conversely, suppose that $\theta_0$ is an integer solution to $q(u) = 0$ for some integers $\theta_1$ and $\theta_2$ satisfying $\theta_0>\theta_1> 0$ and $\theta_2 \leqslant -2$.
    If there exists a QSD $\mathcal Q$ whose parameters $(v,k,\lambda; b,r,\{x,y\})$ can be expressed as above in terms of $\theta_0$, $\theta_1$, and $\theta_2$ then $\mathsf W_{x}(\mathcal Q) \in \mathscr G_3(\theta_0,\theta_1,\theta_2)$.
    Suppose $\mathsf B_x(\mathcal Q)$ has degree $e_0$.
    If $v-1+r \ne k+e_0$ then $\mathsf W_x(\mathcal Q)$ has coherent rank $9$.
\end{lemma}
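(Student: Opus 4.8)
The plan is to run the same elimination as in the proof of Lemma~\ref{lem:case3b}, exploiting a formal duality between Class~3 and Class~4. I would first note that the Class~4 constraint $\theta_1 = e_1 = \frac{y-k}{x-y}$ forces, via Theorem~\ref{thm:blockspec}, the remaining restricted eigenvalue to be $e_2 = \frac{r-\lambda-k+y}{x-y}$. I would then assemble the standard system \eqref{eqn:2des1}, \eqref{eqn:2des2}, \eqref{eqn:xy}, \eqref{eqn:rminuslam}, \eqref{eqn:t1pt2}, \eqref{eqn:alpha12}, together with Theorem~\ref{thm:blockspec} and Theorem~\ref{thm:bireg}~(iii), taking the ``complete'' branches of \eqref{eqn:rminuslam}, \eqref{eqn:t1pt2}, and \eqref{eqn:alpha12} since $\Gamma[V_1]$ is complete. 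Because $\theta_2$ is the second eigenvalue of the quotient matrix $Q_{\mathfrak D(\Gamma)}(\Gamma) = \left[\begin{smallmatrix} v-1 & r \\ k & e_0\end{smallmatrix}\right]$, its trace and determinant supply $v-1+e_0 = \theta_0+\theta_2$ and $\theta_0\theta_2 = (v-1)e_0-rk$.

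The crux is a formal duality between the two classes. The quantities $\theta_1\theta_2$, $(\theta_1+1)(\theta_2+1)$, $e_1e_2$, and $(e_1+1)(e_2+1)$ occurring in \eqref{eqn:xy}, \eqref{eqn:rminuslam}, \eqref{eqn:t1pt2}, and \eqref{eqn:alpha12} are symmetric in the pairs $(\theta_1,e_1)$ and $(\theta_2,e_2)$, so those four relations are invariant under the interchange $(\theta_1,e_1) \leftrightarrow (\theta_2,e_2)$. Meanwhile the Class~4 constraint and the quotient relations $v-1+e_0=\theta_0+\theta_2$, $\theta_0\theta_2=(v-1)e_0-rk$ are exactly the images under this interchange of their Class~3b counterparts. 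Hence solving the Class~4 system is formally identical to solving the Class~3b system with $\theta_1$ and $\theta_2$ relabeled: the displayed expressions for $e_0,v,b,k,r,\lambda,x,y$ are obtained from those of Lemma~\ref{lem:case3b} by swapping $\theta_1$ and $\theta_2$ (and $e_1,e_2$), and the cubic relation for $\theta_0$ becomes $h(\theta_0,\theta_2,\theta_1)=0$, that is $q(\theta_0)=0$.

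The one place the duality breaks, and the step I would treat most carefully, is the root analysis. The uniqueness of the positive root of $p$ in Lemma~\ref{lem:case3b} rested on a Descartes sign computation that used $\theta_1\geqslant 1$ and $\theta_2\leqslant -2$; these inequalities are not symmetric, so evaluating the coefficients $C_3,C_2,C_1,C_0$ at $(\theta_2,\theta_1)$ instead of $(\theta_1,\theta_2)$ gives a different sign pattern from which no single-positive-root conclusion follows. This is exactly why the statement asserts only that $\theta_0$ is \emph{a} zero of $q$, and I would deliberately omit any uniqueness argument. For the converse I would verify the three displayed hypotheses of Theorem~\ref{thm:converse}~(ii) directly from the parametrisation and invoke that result, which yields $\mathsf W_x(\mathcal Q)\in\mathscr G_3(\theta_0,\theta_1,\theta_2)$ and, under the nonregularity condition $v-1+r\ne k+e_0$, coherent rank exactly $9$. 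Thus the main obstacle is not the (routine but lengthy) elimination but the bookkeeping needed to confirm that the asymmetric Class~4 hypotheses genuinely emerge from the symmetric system, and to isolate root-uniqueness as the sole true departure from Lemma~\ref{lem:case3b}.
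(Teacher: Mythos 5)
Your proposal is correct and follows essentially the same route as the paper: derive $e_2=\frac{r-\lambda-k+y}{x-y}$ from Theorem~\ref{thm:blockspec}, combine \eqref{eqn:2des1}--\eqref{eqn:2des2}, \eqref{eqn:xy}, \eqref{eqn:rminuslam}, \eqref{eqn:t1pt2}, \eqref{eqn:alpha12} with the trace/determinant relations $v-1+e_0=\theta_0+\theta_2$ and $\theta_0\theta_2=(v-1)e_0-rk$, observe that the resulting system is the Lemma~\ref{lem:case3b} system under the interchange of $\theta_1$ with $\theta_2$ and $e_1$ with $e_2$, and obtain the converse from Theorem~\ref{thm:converse}. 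Your extra remarks --- justifying why the interchange is legitimate and why the Descartes-rule uniqueness of the root does not transfer (hence the weaker ``a zero'' claim) --- are accurate refinements of what the paper leaves implicit.
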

\begin{proof}
    Since $e_1 = \frac{y-k}{x-y}$, using Theorem~\ref{thm:blockspec}, we must have $e_2 = \frac{r-\lambda-k+y}{x-y}$.
    We use the equations \eqref{eqn:2des1}, \eqref{eqn:2des2}, \eqref{eqn:xy}, \eqref{eqn:rminuslam}, \eqref{eqn:t1pt2}, \eqref{eqn:alpha12} together with 
    Theorem~\ref{thm:blockspec} and Theorem~\ref{thm:bireg} (iii).
    Using the trace and determinant of $Q_{\mathfrak D(\Gamma)}(\Gamma)$, we obtain the equations
    $v-1+e_0 =\theta_0 +\theta_2$ and $\theta_0\theta_2 = (v-1)e_0-rk$.
    The system of equations can be obtained from that of the proof of Lemma~\ref{lem:case3b} by interchanging $\theta_1$ with $\theta_2$ and $e_1$ with $e_2$.
    
    The converse follows from Theorem~\ref{thm:converse}.
\end{proof}

We are not aware of any QSD parameters that satisfy Lemma~\ref{lem:case4a}.

\begin{lemma}
\label{lem:case4b}
    Let $\Gamma \in \mathscr G_3(\theta_0,\theta_1,\theta_2)$ with coherent rank $9$.
    Suppose that the underlying QSD $\mathcal Q$ of $\Gamma$ has parameters $(v,k,\lambda; b,r,\{x,y\})$ and $\mathsf B_x(\mathcal Q)$ has eigenvalues $e_0 > e_1 > e_2$.
    Suppose that $\Gamma[V_1]$ is complete, $\theta_1 = e_1 = \frac{y-k}{x-y}$, and $\theta_1$ is an eigenvalue of $Q_{\mathfrak D(\Gamma)}(\Gamma)$.
    Then $\theta_0 = f_3(\theta_2,\theta_1)$, $e_2 = \theta_1+\theta_2+1$, and
    \[
    e_0 = \frac{\theta_1(\theta_2+1)}{(\theta_1+1)}\left ( 1 +\frac{\theta_1(1-\theta_1\theta_2)}{\theta_0+\theta_1\theta_2}\right ).
    \]
    Furthermore, the parameters of $\mathcal Q$ can be written in terms of $\theta_1$ and $\theta_2$ as follows.
    \begin{align*}
        v&= \theta_0 + \theta_1 -e_0 + 1; & b &= \frac{\theta_2(v-1)  + \theta_0}{-\theta_1}; \\
        k&= y+\theta_1(\theta_1+1); & r &= \lambda - (\theta_1+1)(\theta_2 +1); \\
        \lambda &= (\theta_2+1)\left (1+\frac{ \theta_0e_2-\theta_2e_0}{\theta_0-\theta_2}\right ); & x &= y-\theta_1-1; \\
        & & y &= \frac{-\theta_1(\lambda-e_2-1)}{\theta_2}.
    \end{align*}
    Conversely, suppose that, for some integers $\theta_1$ and $\theta_2$ satisfying $\theta_1 \geqslant 1$ and $\theta_2 \leqslant -2$, there exists a QSD $\mathcal Q$ whose parameters $(v,k,\lambda; b,r,\{x,y\})$ can be expressed as above in terms of $\theta_1$ and $\theta_2$.
    Then $\mathsf W_{x}(\mathcal Q) \in \mathscr G_3(f_3(\theta_2,\theta_1),\theta_1,\theta_2)$ and $\mathsf W_x(\mathcal Q)$ has coherent rank $9$.
\end{lemma}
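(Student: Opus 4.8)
The plan is to run the same elimination as in the proof of Lemma~\ref{lem:case3a}, exploiting the fact that the hypotheses here are exactly the image of those of Lemma~\ref{lem:case3a} under the interchange $(\theta_1,e_1)\leftrightarrow(\theta_2,e_2)$. This is the same device that produced Lemma~\ref{lem:case4a} from Lemma~\ref{lem:case3b}. First I would record the one change at the input stage: since now $e_1=\frac{y-k}{x-y}$ rather than $e_2$, Theorem~\ref{thm:blockspec} forces $e_2=\frac{r-\lambda-k+y}{x-y}$ (the multiplicity-$(b-v)$ eigenvalue of $\mathsf B_x(\mathcal Q)$ is the \emph{larger} restricted eigenvalue). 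I would then feed the usual toolkit---\eqref{eqn:2des1}, \eqref{eqn:2des2}, \eqref{eqn:xy}, \eqref{eqn:rminuslam}, \eqref{eqn:t1pt2}, \eqref{eqn:alpha12}, together with Theorem~\ref{thm:blockspec} and Theorem~\ref{thm:bireg}~(iii)---into the elimination. Because $\Gamma[V_1]$ is complete (as in Lemma~\ref{lem:case3a}), the complete branches of \eqref{eqn:rminuslam}, \eqref{eqn:t1pt2}, \eqref{eqn:alpha12} are used, and each of these relations, as well as \eqref{eqn:xy}, is invariant under $(\theta_1,e_1)\leftrightarrow(\theta_2,e_2)$; the relation from Theorem~\ref{thm:bireg}~(iii) transforms into its counterpart with $\theta=\theta_1$.

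The only datum that is genuinely asymmetric is the choice of the second eigenvalue of the quotient matrix. Here $\theta_1\in\operatorname{spec}(Q_{\mathfrak D(\Gamma)}(\Gamma))$, so reading off the trace and determinant of the $2\times2$ quotient matrix yields $v-1+e_0=\theta_0+\theta_1$ and $\theta_0\theta_1=(v-1)e_0-rk$; these are precisely the images, under $\theta_1\leftrightarrow\theta_2$, of the two relations $v-1+e_0=\theta_0+\theta_2$ and $\theta_0\theta_2=(v-1)e_0-rk$ used in Lemma~\ref{lem:case3a}. Consequently the whole system coincides with that of Lemma~\ref{lem:case3a} after interchanging $\theta_1$ with $\theta_2$ and $e_1$ with $e_2$, so the displayed formulas for $v,b,k,r,\lambda,x,y$ and for $e_2$ and $e_0$ are obtained from those of Lemma~\ref{lem:case3a} by the same interchange. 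Eliminating the parameters produces a quadratic in $\theta_0$ whose coefficients are the interchanged coefficients $\xi_2(\theta_2,\theta_1),\xi_1(\theta_2,\theta_1),\xi_0(\theta_2,\theta_1)$ of Lemma~\ref{lem:case3a}, and one of its roots is $f_3(\theta_2,\theta_1)$.

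The step that does \emph{not} transport for free---and that I expect to be the main obstacle---is pinning $\theta_0$ to $f_3(\theta_2,\theta_1)$, i.e.\ ruling out the other root. The reason is that the standing inequalities $\theta_1\geqslant1$ and $\theta_2\leqslant-2$ are not symmetric in $\theta_1,\theta_2$, so the inequality chain of Lemma~\ref{lem:case3a} cannot be copied verbatim; moreover the sign regime has shifted, since here $e_2=\theta_1+\theta_2+1$ is the \emph{smaller} restricted eigenvalue (forcing $\theta_1+\theta_2+1<0$), whereas in Lemma~\ref{lem:case3a} the quantity $\theta_1+\theta_2+1=e_1$ was nonnegative. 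I would therefore re-run the argument from scratch: assume $\theta_0$ is the smaller root, use the Perron dominance $\theta_0>\theta_1\geqslant1$ together with $\theta_2\leqslant-2$ to isolate the radical $\sqrt{g_3(\theta_2,\theta_1)}$, square, and factor the resulting difference, then verify directly that the surviving polynomial factor keeps a fixed sign under the present constraints, contradicting the assumption and leaving $\theta_0=f_3(\theta_2,\theta_1)$. The converse is then immediate: the displayed parameters satisfy the three hypotheses of Theorem~\ref{thm:converse}~(ii), so $\mathsf W_x(\mathcal Q)\in\mathscr G_3(f_3(\theta_2,\theta_1),\theta_1,\theta_2)$, and the nonregularity clause $v-1+r\neq k+e_0$ holds in this regime, giving coherent rank $9$ via Theorem~\ref{thm:converse}~(ii) and Lemma~\ref{lem:nonregtotal}.
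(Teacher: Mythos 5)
Your proposal is correct and follows essentially the same route as the paper: derive $e_2=\frac{r-\lambda-k+y}{x-y}$ from Theorem~\ref{thm:blockspec}, feed \eqref{eqn:2des1}, \eqref{eqn:2des2}, \eqref{eqn:xy}, \eqref{eqn:rminuslam}, \eqref{eqn:t1pt2}, \eqref{eqn:alpha12} and the trace/determinant relations $v-1+e_0=\theta_0+\theta_1$, $\theta_0\theta_1=(v-1)e_0-rk$ into the elimination, observe that the system is the $(\theta_1,e_1)\leftrightarrow(\theta_2,e_2)$ image of that of Lemma~\ref{lem:case3a} with quadratic $\xi_2(\theta_2,\theta_1)\theta_0^2+\xi_1(\theta_2,\theta_1)\theta_0+\xi_0(\theta_2,\theta_1)=0$, and appeal to Theorem~\ref{thm:converse} for the converse. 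You also correctly identified the one step the paper must (and does) redo rather than transport: ruling out the smaller root $s_2(\theta_2,\theta_1)$ requires a fresh sign analysis, since now $e_2=\theta_1+\theta_2+1\leqslant-1$ rather than $e_1=\theta_1+\theta_2+1\geqslant0$, and the paper executes exactly your isolate-the-radical, square, factor, fixed-sign argument, with the factor $-4\theta_1(\theta_1+1)(\theta_1+\theta_2+2)$ positive and the remaining polynomial factor negative.
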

\begin{proof}
Since $e_1 = \frac{y-k}{x-y}$, using Theorem~\ref{thm:blockspec}, we must have $e_2 = \frac{r-\lambda-k+y}{x-y}$.
    We use the equations \eqref{eqn:2des1}, \eqref{eqn:2des2}, \eqref{eqn:xy}, \eqref{eqn:rminuslam}, \eqref{eqn:t1pt2}, \eqref{eqn:alpha12} together with 
    Theorem~\ref{thm:blockspec} and Theorem~\ref{thm:bireg} (iii).
    Using the trace and determinant of $Q_{\mathfrak D(\Gamma)}(\Gamma)$, we obtain the equations
    $v-1+e_0 =\theta_0 +\theta_1$ and $\theta_0\theta_1 = (v-1)e_0-rk$.
    The system of equations can be obtained from that of the proof of Lemma~\ref{lem:case3a} by interchanging $\theta_1$ with $\theta_2$ and $e_1$ with $e_2$.
    In particular, we obtain the equation
    \[
    \xi_2(\theta_2,\theta_1)\theta_0^2 + \xi_1(\theta_2,\theta_1)\theta_0 +\xi_0(\theta_2,\theta_1) = 0,
    \]
    where $\xi_0$, $\xi_1$, and $\xi_2$ are as defined in the proof of Lemma~\ref{lem:case3a}.
    Think of the above equation as a quadratic univariate polynomial equation in $\theta_0$ with coefficients in $\mathbb Z[\theta_1,\theta_2]$ and let $s_1(\theta_2,\theta_1) \geqslant s_2(\theta_2,\theta_1)$ be its roots.
    Note that $s_1(\theta_2,\theta_1) = f_3(\theta_2,\theta_1)$.
    Using the fact that $\theta_0 > \theta_1 \geqslant 0$, we can rule out the possibility of $\theta_0 = s_2(\theta_2,\theta_1)$.
    Suppose to the contrary that $\theta_0 = s_2(\theta_2,\theta_1)$.
    Then $\theta_0 > \theta_1 \geqslant 0$ and $\theta_2 < 0$ implies that 
    \[
    \theta_1\theta_2(\theta_1+2)(\theta_2+2) + 2\theta_1^2 + 4\theta_1 + 1  - (1-\theta_1\theta_2)\sqrt{g_3(\theta_2,\theta_1)} > -2(\theta_1+1)(\theta_1+\theta_2+2)\theta_1,
    \]
    which implies
    \begin{align*}
        & (\theta_1\theta_2(\theta_1+2)(\theta_2+2) + 2\theta_1^2 + 4\theta_1 + 1 + 2(\theta_1+1)(\theta_1+\theta_2+2)\theta_1)^2-(1-\theta_1\theta_2)^2 g_3(\theta_2,\theta_1)\\
        = & -4\theta_1(\theta_1+1)(\theta_1+\theta_2+2)(\theta_1\theta_2(\theta_2^2(\theta_1-1)-3(\theta_1+2\theta_2+3))-\theta_1^3-5\theta_1^2-7\theta_1-1)
    \end{align*}
    is positive.
    Now, $-4\theta_1(\theta_1+1)(\theta_1+\theta_2+2)$ is positive since $\theta_1\geqslant 0$ and $e_2=\theta_1+\theta_2+1 \leqslant -1$.
    However, together with the condition that $\theta_2 \leqslant -2$, it also follows that 
    $$\theta_1\theta_2(\theta_2^2(\theta_1-1)-3(\theta_1+2\theta_2+3))-\theta_1^3-5\theta_1^2-7\theta_1-1$$ 
    is negative, which contradicts our supposition.
    Hence, $\theta_0 = f_3(\theta_2,\theta_1)$, as required.
        
    The converse follows from Theorem~\ref{thm:converse}.
\end{proof}

Suppose that $\Gamma$ satisfies the assumptions of Lemma~\ref{lem:case4b}.
By Theorem~\ref{thm:chengbipartite}, we can assume that $\theta_0 > \theta_1 \geqslant 1$ and by Theorem~\ref{thm:interlacing}, we can assume that $\theta_2 \leqslant -2$.
Since $\theta_1+\theta_2 +1 = e_2 \leqslant -2$ and $\theta_1 \geqslant 1$, we find that $1 \leqslant \theta_1 \leqslant -\theta_2-3$.
In Table~\ref{tab:paramC4}, we list all the parameters for QSDs corresponding to Lemma~\ref{lem:case3a} for $\theta_2 \geqslant -100$.

\section{Triregular graphs with small coherent rank}

\label{sec:tri}

In this section, we establish a lower bound for the coherent rank of a triregular graph in $\mathscr G_3$.
This bound has the potential to be sharp (see Example~\ref{ex:14}).

\begin{theorem}
\label{thm:rank14lb}
    Let $\Gamma \in \mathscr G_3$ be a triregular graph.
    Then the coherent closure $\mathcal {W}(\Gamma)$ has rank at least $14$.
\end{theorem}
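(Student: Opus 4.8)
The plan is to bound the coherent rank through the type matrix of $\mathcal{W}(\Gamma)$, first reducing to the case of three fibres and then excluding every type whose entries sum to at most $13$. I would begin with the general observation that if $\mathcal{W}(\Gamma)$ has $f$ fibres then its rank is at least $f^2$: for each pair $(i,j)$ the relations contained in the nonempty set $\mathfrak{X}_i\times\mathfrak{X}_j$ partition it, so every entry of the $f\times f$ type matrix is at least $1$, and the rank is the sum of all entries. Since $\Gamma$ is triregular, Corollary~\ref{cor:allFib}(iii) gives $f\geqslant 3$, and $f\geqslant 4$ already forces rank $\geqslant 16$. So it remains to treat $f=3$, where the equality case of Corollary~\ref{cor:allFib}(iii) identifies the three fibres with the valency classes $V_1,V_2,V_3$.

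Writing the symmetric type matrix as $(t_{ij})$, the rank is $\sum_i t_{ii}+2\sum_{i<j}t_{ij}$, and I want this to be at least $14$. Each diagonal block is an association scheme on $V_i$ containing $A(\Gamma)[V_i]$, so $t_{ii}\geqslant 2$ unless $|V_i|=1$; each off-diagonal block is a union of relations partitioning $V_i\times V_j$, so $t_{ij}\geqslant 1$, and $t_{ij}\geqslant 2$ unless the bipartite block $M_{ij}$ equals $O$ or $J$ (call such a pair \emph{proper} otherwise). Connectivity of $\Gamma$ forbids two blocks $M_{ij}=O$, and the three fibres cannot all be singletons. Feeding these inequalities into Higman's restriction (Lemma~\ref{lem:HigmanType}, which forces $t_{ij}\leqslant\min(t_{ii},t_{jj})$ for the small diagonals occurring here, so that a singleton fibre has only trivial blocks) shows that the rank is at least $14$ in every case except two families: (a) no pair is proper, i.e.\ every $M_{ij}\in\{O,J\}$; and (b) exactly one fibre is a singleton, the other two induce cliques or cocliques, and they are joined by a proper block, giving the rank-$13$ type $\left[\begin{smallmatrix}1&1&1\\&2&2\\&&2\end{smallmatrix}\right]$.

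For family (a) every off-diagonal block is complete or empty bipartite, so a suitable bipartition of the three fibres makes $\overline{\Gamma}$ disconnected; by Theorem~\ref{thm:cheng}, $\Gamma$ is then complete bipartite or a cone. The former is biregular and hence excluded, so $\Gamma$ is a cone, necessarily triregular by Lemma~\ref{lem:VDcone}. Its apex must be a single vertex, since two universal vertices would induce a $K_2$ complete to the rest, contributing the restricted eigenvalue $-1$; but $-1\in(\theta_2,\theta_1)$ cannot be an eigenvalue, because $\theta_2\leqslant-\sqrt2$ by \eqref{eqn:basicevineq} while $\theta_1>0$ by Theorem~\ref{thm:chengbipartite} (as $\Gamma$ is triregular, hence non-regular and not complete bipartite). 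In both remaining families, therefore, $\Gamma$ is a cone (or, in the one non-cone subcase of (b), a near-cone) whose apex is a single vertex attached to a biregular graph $\Gamma'$ of coherent rank at most $8$: the blocks of $\Gamma'$ satisfy the rank-$8$ identities of Proposition~\ref{pro:rank8}, so $\Gamma'$ is built from a symmetric $2$-design.

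The crux — and the step I expect to be hardest — is to show that these specific cones do not lie in $\mathscr{G}_3$, so that the assumed closure of rank at most $13$ cannot arise. I would apply the three-eigenvalue identity \eqref{eqn:3ev} to the cone blockwise, combine the resulting block equations with the design relations \eqref{eqn:2des1}, \eqref{eqn:2desmmt} and the valency relation of Lemma~\ref{lem:VDcone}, and obtain a system in the design parameters and the putative $\theta_1,\theta_2$ that turns out to be inconsistent precisely when the base has rank at most $8$. This is exactly where the threshold $14$ is forced: the analogous computation becomes consistent one level higher, where the apex sits over a rank-$9$ base coming from a genuine quasi-symmetric design (type $\left[\begin{smallmatrix}1&1&1\\&2&2\\&&3\end{smallmatrix}\right]$, rank $14$, as in Example~\ref{ex:14} and governed by Theorem~\ref{thm:rank9nec}). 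The delicate part is thus to separate the impossible small cones over symmetric designs from the realisable rank-$14$ cone over a QSD.
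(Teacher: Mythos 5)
Your reduction is correct and is essentially the paper's own skeleton: with three fibres, Higman's restriction plus the rank count leaves exactly your families (a) and (b), which the paper eliminates with Lemma~\ref{lem:type211}/Lemma~\ref{lem:type31113} and Lemma~\ref{lem:conesd} respectively, and your $(-1)$-eigenvalue argument for uniqueness of the apex is a valid substitute for the paper's computation that a complete apex fibre of size at least two forces $(\theta_1+1)(\theta_2+1)=0$. The first genuine gap comes right after that point, in family (a). Once the apex is a single vertex, what remains there is a cone over a join or a disjoint union of two strongly regular graphs: every bipartite block is $O$ or $J$, so there is no incidence structure at all, and your claim that the base ``satisfies the rank-$8$ identities of Proposition~\ref{pro:rank8}'' and is ``built from a symmetric $2$-design'' is wrong --- Proposition~\ref{pro:rank8} describes the opposite configuration (trivial diagonal fibres, nontrivial bipartite block). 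Killing this case needs its own argument, which the design relations \eqref{eqn:2des1} and \eqref{eqn:2desmmt} cannot supply: for instance, in the disjoint-union case the $(2,3)$-block of \eqref{eqn:3ev} forces $\alpha_{\mathsf u}\alpha_{\mathsf w}=1$ for $\mathsf u\in V_2$, $\mathsf w\in V_3$, which combined with Lemma~\ref{lem:VDcone} gives $(\theta_1+1)(\theta_2+1)=-1$, and one must then rule this out by integrality/conference-graph considerations; nothing in your plan produces this.

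The second, larger gap is that the step you yourself call the crux is never carried out. For family (b) --- which is the entire content of the theorem once the enumeration is done --- the paper's Lemma~\ref{lem:conesd} shows that the proper block $M$ is the incidence matrix of a symmetric design and then runs a genuine case analysis: $\Gamma[V_2]$ and $\Gamma[V_3]$ both empty (killed by a determinant/quotient-matrix argument), both complete, or mixed, and in the mixed case a further split on how the singleton vertex attaches ($k_{31}\in\{0,1\}$); each surviving case yields a system of trace/determinant equations for $Q_{\mathfrak D(\Gamma)}(\Gamma)$ together with \eqref{eqn:2desSYM} and the inequalities $\theta_0>\theta_1\geqslant 1$, $\theta_2\leqslant -2$, which must be shown to admit no solutions. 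Your proposal replaces all of this with the assertion that the system ``turns out to be inconsistent precisely when the base has rank at most $8$''. That is not a proof, and it cannot be waved through on general grounds: as you yourself observe, the nearly identical system one level up (Theorem~\ref{thm:converse14}, Example~\ref{ex:14}) \emph{is} consistent, so the inconsistency at rank at most $13$ is a fact about these particular Diophantine systems that has to be established case by case, and that verification is precisely where the paper's proof lives.
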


We will prove Theorem~\ref{thm:rank14lb} after we state and prove a series of three lemmas that restrict the entries of the type matrix.

\begin{lemma}
\label{lem:type211}
Let $\Gamma \in \mathscr G_3$ be a triregular graph whose coherent closure $\mathcal {W}(\Gamma)$ has type $ \left [\begin{smallmatrix} 2 & t_{12} & t_{13} \\  & t_{22} & t_{23} \\ &  & t_{33} \end{smallmatrix} \right ]$.
Then $t_{12}t_{13} > 1$.
\end{lemma}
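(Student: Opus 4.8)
The plan is to argue by contradiction. Since each of the blocks $V_1\times V_2$ and $V_1\times V_3$ is nonempty and must be covered by at least one basis relation, we have $t_{12},t_{13}\geq 1$, so the negation of $t_{12}t_{13}>1$ is exactly $t_{12}=t_{13}=1$; I will show this cannot happen. Throughout I identify the three fibres of $\mathcal W(\Gamma)$ with the valency classes $V_1,V_2,V_3$: this is legitimate because $\Gamma$ is triregular, so $|\mathfrak D(\Gamma)|=3=|\mathfrak F_{\mathcal W(\Gamma)}|$, and Corollary~\ref{cor:allFib}(iii) then forces $\mathfrak D(\Gamma)=\mathfrak F_{\mathcal W(\Gamma)}$. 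Let $V_1$ be the fibre with $t_{11}=2$; a rank-$2$ fibre has at least two points, so $|V_1|\geq 2$.

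The first substantive step is to read off the rigid adjacency structure around $V_1$. Because $t_{11}=2$, Corollary~\ref{cor:allFib}(i) together with Proposition~\ref{pro:smallRank}(ii) gives $A(\Gamma)[V_1]\in\langle I,J-I\rangle$, so $\Gamma[V_1]$ is complete or empty; let $\varepsilon_1\in\{0,1\}$ be the corresponding indicator. Next, $t_{12}=1$ means there is a unique basis relation contained in $V_1\times V_2$; since every basis relation is supported on a single fibre-block and $A(\Gamma)$ is a $\{0,1\}$-sum of basis matrices (hence constant on each relation), the $V_1\times V_2$ block of $A(\Gamma)$ is all-zero or all-one. The same reasoning with $t_{13}=1$ treats the $V_1\times V_3$ block. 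Writing $\delta_{12},\delta_{13}\in\{0,1\}$ for these two indicators, every vertex of $V_1$ has identical adjacencies to $V_2$, to $V_3$, and (all or none) within $V_1$; that is, any two vertices of $V_1$ are twins.

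The heart of the argument is then a single off-diagonal entry of the defining identity \eqref{eqn:3ev}. For distinct $\mathsf x,\mathsf x'\in V_1$ (available since $|V_1|\geq 2$), the eigenvector is constant on $V_1$ with $\alpha_1^2=k_{V_1}+\theta_1\theta_2$ where $k_{V_1}=\varepsilon_1(|V_1|-1)+\delta_{12}|V_2|+\delta_{13}|V_3|$ is the common degree, while $A_{\mathsf x\mathsf x'}=\varepsilon_1$ and the count of common neighbours is $(A^2)_{\mathsf x\mathsf x'}=\varepsilon_1(|V_1|-2)+\delta_{12}|V_2|+\delta_{13}|V_3|=k_{V_1}-\varepsilon_1$. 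Substituting into the $(\mathsf x,\mathsf x')$-entry of \eqref{eqn:3ev} collapses the dependence on $\delta_{12},\delta_{13}$ and yields
\[
\theta_1\theta_2+\varepsilon_1(\theta_1+\theta_2+1)=0.
\]
I finish by splitting on $\varepsilon_1$. If $\varepsilon_1=0$ this reads $\theta_1\theta_2=0$, forcing $\theta_1=0$ since $\theta_2\leq-\sqrt 2$, whence Theorem~\ref{thm:chengbipartite} makes $\Gamma$ complete bipartite, contradicting triregularity. If $\varepsilon_1=1$ it reads $(\theta_1+1)(\theta_2+1)=0$, which is impossible because $\theta_1\geq 0$ and $\theta_2\leq-\sqrt 2$ by \eqref{eqn:basicevineq}. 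In both cases we reach a contradiction, so $t_{12}=t_{13}=1$ is untenable and $t_{12}t_{13}>1$.

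I expect the only delicate point to be the bookkeeping in the second step, namely the claim that $t_{1j}=1$ forces the $V_1\times V_j$ block of $A(\Gamma)$ to be constant. This rests on two facts already available: each basis relation lies within a single block $\mathfrak X_i\times\mathfrak X_j$, and $A(\Gamma)$, being a $\{0,1\}$-matrix in $\mathcal W(\Gamma)$, takes a single $\{0,1\}$-value on each relation. Once the twin structure has been extracted, the remaining eigenvalue computation is routine and the sign constraints of \eqref{eqn:basicevineq} close both cases at once.
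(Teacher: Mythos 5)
Your proposal is correct and follows essentially the same route as the paper: assuming $t_{12}=t_{13}=1$, the rank conditions force $\Gamma[V_1]$ to be complete or empty and the $V_1\times V_2$, $V_1\times V_3$ blocks of $A(\Gamma)$ to be constant, and then the off-diagonal entries of the $V_1\times V_1$ block of \eqref{eqn:3ev} yield $\theta_1\theta_2=0$ (empty case, excluded by Theorem~\ref{thm:chengbipartite} and triregularity) or $(\theta_1+1)(\theta_2+1)=0$ (complete case, excluded by \eqref{eqn:basicevineq}). Your only cosmetic deviations are working entrywise with a pair of twin vertices rather than with the full block equation, and packaging both cases into the single identity $\theta_1\theta_2+\varepsilon_1(\theta_1+\theta_2+1)=0$ before splitting.
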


    \begin{proof}
Suppose, for a contradiction, that the coherent closure $\mathcal {W}(\Gamma)$ has type $ \left [\begin{smallmatrix} 2 & 1 & 1 \\  & t_{22} & t_{23} \\  &  & t_{33} \end{smallmatrix} \right ]$ and suppose that $\mathfrak D(\Gamma) = \{V_1,V_2,V_3\}$.
Then, by Corollary~\ref{cor:allFib} (iii), the adjacency matrix $A$ of $\Gamma$ has the form  
\[
A = \begin{bmatrix}
A(\Gamma[V_1]) & \varepsilon_{2} J & \varepsilon_{3} J  \\
\varepsilon_{2} J & A(\Gamma[V_2]) & \star \\
\varepsilon_{3} J & \star & A(\Gamma[V_3])
\end{bmatrix},
\]
where $\varepsilon_1,\varepsilon_2,\varepsilon_3 \in \{0,1\}$.
Suppose that $X = A(\Gamma[V_1]) = \varepsilon_1(J-I)$ has order $n_1$ and $\Gamma[V_2]$ and $\Gamma[V_3]$ have orders $n_2$ and $n_3$, respectively.
Then
\[
A^2 = \begin{bmatrix}
X^2+\varepsilon_{2}n_2 J +\varepsilon_{3}n_3J & \star & \star  \\
\star & \star & \star \\
\star & \star & \star
\end{bmatrix}.
\]
    Now, the top-left block of \eqref{eqn:3ev} becomes
\begin{align*}
    X^2+\varepsilon_{2}n_2J +\varepsilon_{3}n_3J  - (\theta_1+\theta_2)X+\theta_1\theta_2I &= (\varepsilon_{1}(n_1-1)+n_2+n_3+\theta_1\theta_2)J.
\end{align*}

In the case where $X = O$, we have $\theta_1\theta_2 = 0$, but this means that $\theta_1 = 0$ which, by Theorem~\ref{thm:chengbipartite}, implies that $\Gamma$ is a complete bipartite graph, which is a contradiction.

In the case where $X = J-I$, we have \begin{align*}
(J-I)^2 - (\theta_1+\theta_2) (J-I) + \theta_1\theta_2 I &= O \\
J^2 - 2J -(\theta_1+\theta_2)J+ I + (\theta_1+\theta_2) I + \theta_1\theta_2 I &= O
\end{align*}
which means $(\theta_1+1)(\theta_2 + 1)  = 0$, but this contradicts \eqref{eqn:basicevineq}.
\end{proof}

Next, we restrict the product of the off-diagonal entries of the type matrix.

\begin{lemma}
\label{lem:type31113}
Let $\Gamma \in \mathscr G_3(\theta_0,\theta_1,\theta_2)$ be a triregular graph whose coherent closure $\mathcal {W}(\Gamma)$ has type $ \left [\begin{smallmatrix} t_{11} & t_{12} & t_{13} \\  & t_{22} & t_{23} \\ &  & t_{33} \end{smallmatrix} \right ]$.
Then $t_{12}t_{13}t_{23} > 1$.
\end{lemma}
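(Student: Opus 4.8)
The plan is to assume for contradiction that $t_{12}=t_{13}=t_{23}=1$ and to extract enough structure to contradict \eqref{eqn:3ev}. Since the type matrix is $3\times 3$ we have $|\mathfrak F_{\mathcal W(\Gamma)}|=3=|\mathfrak D(\Gamma)|$, so by Corollary~\ref{cor:allFib} the fibres are exactly the valency classes $V_1,V_2,V_3$, with $|V_i|=n_i$ and degrees $k_i$. As $t_{ij}=1$ for $i\ne j$, the only basis relation inside $\mathfrak X_i\times\mathfrak X_j$ is the full block, so $A(\Gamma)$ has off-diagonal blocks $\varepsilon_{ij}J$ with $\varepsilon_{ij}\in\{0,1\}$; hence every vertex of $V_i$ has the same number $\varepsilon_{ij}n_j$ of neighbours in $V_j$, and each $\Gamma[V_i]$ is regular of some degree $d_i$. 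Reading off the $(i,i)$-block of \eqref{eqn:3ev} and cancelling the cross terms $\sum_{\ell\ne i}\varepsilon_{i\ell}n_\ell J$ yields $A(\Gamma[V_i])^2-(\theta_1+\theta_2)A(\Gamma[V_i])+\theta_1\theta_2 I=(d_i+\theta_1\theta_2)J$. The right-hand side has rank at most $1$, forcing $d_i$ to be a simple eigenvalue, so $\Gamma[V_i]$ is connected; for $n_i\ge 2$ it is therefore strongly regular with restricted eigenvalues $\theta_1,\theta_2$ (it cannot be complete, since completeness forces $(\theta_1+1)(\theta_2+1)=0$, contradicting \eqref{eqn:basicevineq}). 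I will record two consequences: $c_i:=d_i+\theta_1\theta_2\ge 1$ (a connected, non-complete strongly regular graph has $c\ge 1$), and, via the integrality/conference dichotomy for strongly regular graphs together with $\theta_1>0$ (which holds by Theorem~\ref{thm:chengbipartite}, as a triregular graph is not complete bipartite), $\theta_1\theta_2\le -1$.

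Writing $\alpha_i=\sqrt{k_i+\theta_1\theta_2}$ for the common value of the Perron eigenvector on $V_i$, so the $(i,j)$-block of $\bm\alpha\bm\alpha^\top$ is $\alpha_i\alpha_jJ$, the $(i,j)$-block of \eqref{eqn:3ev} (using $A(\Gamma[V_i])J=d_iJ$) gives
\[
\alpha_i\alpha_j=\varepsilon_{ij}(d_i+d_j-\theta_1-\theta_2)+\varepsilon_{ik}\varepsilon_{jk}n_k,
\]
where $k$ is the third index. Since $\Gamma$ is connected, the reduced graph on $\{1,2,3\}$ with edge set $\{\,\{i,j\}:\varepsilon_{ij}=1\,\}$ is connected, so up to relabelling either all $\varepsilon_{ij}=1$ (the \emph{triangle} case) or exactly one vanishes, say $\varepsilon_{23}=0$ (the \emph{path} case). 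In the triangle case $\overline\Gamma$ has no edges between distinct fibres, and in the path case $\varepsilon_{12}=\varepsilon_{13}=1$ means $\overline\Gamma$ has no edge from $V_1$ to $V_2\cup V_3$; either way $\overline\Gamma$ is disconnected. By Theorem~\ref{thm:cheng}, $\Gamma$ is a cone or complete bipartite, and being triregular it is a cone. Every vertex of degree $n-1$ is adjacent to all others, so such vertices form a single fibre inducing a complete graph, which by the first paragraph must be a singleton; relabelling, $V_1=\{\mathsf c\}$, $n_1=1$, $d_1=0$, and $k_1=n-1$ is the strictly largest degree.

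In the triangle case I would invoke Lemma~\ref{lem:VDcone} to obtain $\alpha_1=\alpha_2+\alpha_3$. Multiplying by $\alpha_1$ and using the $(1,2)$- and $(1,3)$-block equations gives $d_2+d_3=\theta_1\theta_2+2\theta_1+2\theta_2$, whereas squaring and using the $(2,3)$-block equation with $\alpha_i^2=k_i+\theta_1\theta_2$ gives $3(d_2+d_3)=-\theta_1\theta_2+2\theta_1+2\theta_2-4$. Eliminating $d_2+d_3$ collapses these to $(\theta_1+1)(\theta_2+1)=0$, contradicting \eqref{eqn:basicevineq}. In the path case the $(2,3)$-block equation with $\varepsilon_{23}=0$ and $n_1=1$ reads simply $\alpha_2\alpha_3=1$. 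Here $n_2=n_3=1$ is impossible ($n=3$ is never triregular), so at least one of $V_2,V_3$ is strongly regular and $\theta_1\theta_2\le -1$. If both $n_2,n_3\ge 2$ then $\alpha_i^2=c_i+1\ge 2$, whence $\alpha_2^2\alpha_3^2\ge 4>1$, a contradiction; if exactly one is a singleton, say $n_3=1$, then $\alpha_3^2=k_3+\theta_1\theta_2=1+\theta_1\theta_2\le 0$, contradicting $\alpha_3>0$. This exhausts all cases, so $t_{12}t_{13}t_{23}>1$.

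I expect the main obstacle to be the structural preparation rather than the final arithmetic: one must prove carefully that each non-singleton fibre induces a strongly regular graph with restricted eigenvalues $\theta_1,\theta_2$ (to extract $c_i\ge 1$ and, via strong-regularity of the eigenvalues, $\theta_1\theta_2\le -1$), and that the cone vertex forces a singleton fibre. The triangle case is delicate in that it relies on the exact eigenvector relation of Lemma~\ref{lem:VDcone}, while the path case instead hinges on the inequality $\theta_1\theta_2\le -1$, so the subcases where a fibre degenerates to a single vertex (and the strongly regular structure disappears) need separate, careful treatment.
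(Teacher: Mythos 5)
Your proof is correct, and up to the production of the cone it coincides with the paper's argument: the same forced block form of $A(\Gamma)$ with off-diagonal blocks $\varepsilon_{ij}J$, the same triangle/path split on the $\varepsilon_{ij}$, the same disconnected-complement argument via Theorem~\ref{thm:cheng}, and the same coefficient-of-$I$ computation showing that no fibre with $n_i\geqslant 2$ can induce a complete graph. The genuine difference is what happens when the cone vertex sits alone in its fibre. The paper's proof ends by asserting $\Gamma[V_1]=K_{n_1}$ and deducing $(\theta_1+1)(\theta_2+1)=0$ by equating coefficients of $I$ in the $(1,1)$-block of \eqref{eqn:3ev}; that step is a contradiction only when $n_1\geqslant 2$, since for $n_1=1$ the block is $1\times 1$, $I=J$ there, and the identity holds vacuously. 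Your proof is the one that actually closes this case: in the triangle configuration you combine Lemma~\ref{lem:VDcone} ($\alpha_1=\alpha_2+\alpha_3$) with the off-diagonal block identities $\alpha_i\alpha_j=\varepsilon_{ij}(d_i+d_j-\theta_1-\theta_2)+\varepsilon_{ik}\varepsilon_{jk}n_k$ — I checked the elimination, and it does collapse to $(\theta_1+1)(\theta_2+1)=0$ — while in the path configuration you play $\alpha_2\alpha_3=\varepsilon_{21}\varepsilon_{31}n_1=1$ against $\alpha_i^2=c_i+1\geqslant 2$ when $n_2,n_3\geqslant 2$, or against $\alpha_3^2=1+\theta_1\theta_2\leqslant 0$ when a second fibre degenerates. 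So your route costs the extra structural preparation (connectivity and strong regularity of the non-singleton fibres, $c_i\geqslant 1$, $\theta_1\theta_2\leqslant -1$) but buys a proof that covers a configuration the paper's argument passes over. Two small points to tighten: the inference "rank-one right-hand side forces $d_i$ simple" also needs $d_i\notin\{\theta_1,\theta_2\}$, which follows by evaluating your displayed per-fibre identity at $\mathbf 1$ and using $\theta_1>0$ and $\theta_2\leqslant-\sqrt 2$; and in the path case no relabelling is available or needed, since the cone vertex automatically lies in $V_1$ (a vertex of $V_2$ or $V_3$ is non-adjacent to all of the opposite class because $\varepsilon_{23}=0$).
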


\begin{proof}
Suppose, for a contradiction, that $\mathcal {W}(\Gamma)$ has type $ \left [\begin{smallmatrix} t_{11} & 1 & 1 \\  & t_{22} & 1 \\  &  & t_{33} \end{smallmatrix} \right ]$ and suppose that $\mathfrak D(\Gamma) = \{V_1,V_2,V_3\}$.
Then, by Corollary~\ref{cor:allFib} (iii), the adjacency matrix $A$ of $\Gamma$ has the form  
\[
A = \begin{bmatrix}
A(\Gamma[V_1]) & \varepsilon_{12} J &  \varepsilon_{13} J  \\
 \varepsilon_{12} J  & A(\Gamma[V_2]) & \varepsilon_{23} J  \\
\varepsilon_{13} J  & \varepsilon_{23} J  & A(\Gamma[V_3])
\end{bmatrix},
\]
where $\varepsilon_{12},\varepsilon_{13},\varepsilon_{23} \in \{0,1\}$ and $\Gamma[V_1]$, $\Gamma[V_2]$, and $\Gamma[V_3]$ are regular graphs orders $n_1$, $n_2$, and $n_3$, respectively.
First, suppose that $\varepsilon_{12}\varepsilon_{13}\varepsilon_{23} = 0$.
Without loss of generality, we can assume that $\varepsilon_{23} = 0$.
Since $\Gamma$ is connected, we must have $\varepsilon_{12}=\varepsilon_{13} = 1$.
Thus, the complement of $\Gamma$ is disconnected.
By Theorem~\ref{thm:cheng}, $\Gamma$ must be a cone.
Hence, $\Gamma[V_1] = K_{n_1}$ and by equating coefficients of $I$ in the $(1,1)$-block of \eqref{eqn:3ev} yields $(\theta_1+1)(\theta_2 + 1)  = 0$, which contradicts \eqref{eqn:basicevineq}.

Lastly, suppose $\varepsilon_{12}=\varepsilon_{13}=\varepsilon_{23}=1$.
The same argument establishes a contradiction, as required.
\end{proof}

Our final lemma rules out the possibility of the type matrix $ \left [\begin{smallmatrix} 1 & 1 & 1 \\  & 2 & 2 \\ &  & 2 \end{smallmatrix} \right ]$.

\begin{lemma}
\label{lem:conesd}
Let $\Gamma \in \mathscr G_3(\theta_0,\theta_1,\theta_2)$ be a triregular graph whose coherent closure $\mathcal {W}(\Gamma)$ has type $ \left [\begin{smallmatrix} t_{11} & 1 & 1 \\  & 2 & 2 \\ &  & 2 \end{smallmatrix} \right ]$.
Then $t_{11} > 1$.
\end{lemma}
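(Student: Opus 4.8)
The plan is to assume $t_{11}=1$ and derive a contradiction. First I would observe that $t_{11}=1$ forces $|V_1|=1$: the diagonal of the fibre $V_1$ is a single basis relation contained in $V_1\times V_1$, so if it is the only relation there it must fill all of $V_1\times V_1$, which happens only when $V_1=\{\mathsf u\}$ is a single vertex. Since $\Gamma$ is triregular with three fibres, Corollary~\ref{cor:allFib}(iii) gives $\mathfrak D(\Gamma)=\mathfrak F_{\mathcal W(\Gamma)}=\{V_1,V_2,V_3\}$, so $\{V_1,V_2,V_3\}$ is the equitable valency partition. As $t_{12}=t_{13}=1$, the vertex $\mathsf u$ is completely joined or completely non-adjacent to each of $V_2,V_3$; write $\varepsilon_{12},\varepsilon_{13}\in\{0,1\}$ for these, with $(\varepsilon_{12},\varepsilon_{13})\ne(0,0)$ by connectivity. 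From $t_{22}=t_{33}=2$, Corollary~\ref{cor:allFib}(i) and Proposition~\ref{pro:smallRank}(ii) give $A(\Gamma[V_2])=\varepsilon_{22}(J-I)$ and $A(\Gamma[V_3])=\varepsilon_{33}(J-I)$ for some $\varepsilon_{22},\varepsilon_{33}\in\{0,1\}$, while $t_{23}=2$ makes the $V_2$--$V_3$ block a $\{0,1\}$-matrix $M\notin\{O,J\}$.

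Next I would substitute this block form into \eqref{eqn:3ev} and read off blocks. In the $(V_2,V_3)$-block the left side is a combination of $J$ and $M$ and the right side is a multiple of $J$; since $M$ is not a scalar multiple of $J$, its coefficient vanishes, giving $\theta_1+\theta_2=-(\varepsilon_{22}+\varepsilon_{33})$, and the residual coefficient of $J$ expresses $\alpha_2\alpha_3$ through the equitable counts, where $\alpha_i$ denotes the common value of the Perron eigenvector $\bm\alpha$ on $V_i$. The $(V_2,V_2)$- and $(V_3,V_3)$-blocks yield $MM^\transpose=\mu_2J+\nu I$ and $M^\transpose M=\mu_3J+\nu I$ with the same $\nu=\varepsilon_{22}\varepsilon_{33}-\theta_1\theta_2$; since $\theta_1>0>\theta_2$ (by \eqref{eqn:basicevineq} and Corollary~\ref{cor:gt0}) we have $\nu>0$, so both products are nonsingular and Lemma~\ref{lem:LGtoL} forces $n_2=n_3$. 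The common row and column sums of $M$ then agree, say equal to $\delta$; the diagonal identity gives $\mu_2=\delta-\nu$, and the crucial point is that $\mu_2$, being an off-diagonal entry of $MM^\transpose$ and hence a count of common neighbours, satisfies $\mu_2\ge 0$.

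Then I would split on whether $\mathsf u$ is universal. If $\varepsilon_{12}=\varepsilon_{13}=1$ then $\mathsf u$ is adjacent to everything, so $\Gamma$ is a triregular cone and Lemma~\ref{lem:VDcone} gives $\alpha_1=\alpha_2+\alpha_3$ (equivalently, the two $(V_1,V_i)$-blocks give $\alpha_i(\alpha_1-\alpha_i)=-(\theta_1+1)(\theta_2+1)$, and $\alpha_2\ne\alpha_3$ forces the cone relation). Hence $\alpha_2\alpha_3=-(\theta_1+1)(\theta_2+1)$, and comparing with the $J$-coefficient expresses $\mu_2$ purely through $\delta$ and $s:=\varepsilon_{22}+\varepsilon_{33}$: one finds $\mu_2=-1$ when $s=1$ and $\mu_2=-\delta-1$ when $s=2$, both contradicting $\mu_2\ge0$, while $s=0$ forces $k_2=k_3$, contradicting triregularity. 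If instead $\mathsf u$ is joined to exactly one class, say $V_2$, then the $(V_1,V_3)$-block gives $\alpha_1\alpha_3=\delta$ and the $J$-coefficient gives $\alpha_2\alpha_3=s\delta$, so $\alpha_2=s\alpha_1$; now $s=0$ gives $\alpha_2=0$ and $s=1$ gives $\alpha_1=\alpha_2$ (against $k_1\ne k_2$), both impossible.

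The hard part will be the remaining case $s=2$ with $\mathsf u$ joined to a single class, since here no single sign or equality closes it. For this I would feed the degree relations $\alpha_i^2=k_i+\theta_1\theta_2$ into $\alpha_2=2\alpha_1$: computing $k_1$ and $k_2$ from the block form yields $\delta=3\alpha_1^2$, whence $\alpha_1\alpha_3=\delta$ gives $\alpha_3=3\alpha_1>2\alpha_1=\alpha_2$; but the explicit degrees give $k_2=k_3+1>k_3$, forcing $\alpha_2>\alpha_3$, a contradiction. Having exhausted every case, we conclude $t_{11}\ne1$, i.e.\ $t_{11}>1$. Throughout, the recurring engine is the nonnegativity of the off-diagonal entry $\mu_2$ of $MM^\transpose$ together with the sign information $\theta_1>0>\theta_2$ from \eqref{eqn:basicevineq}.
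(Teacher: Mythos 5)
Your proof is correct, and it diverges from the paper's own proof exactly where the work gets hard, so it is worth comparing the two. The shared skeleton is identical: $t_{11}=1$ forces $|V_1|=1$; Corollary~\ref{cor:allFib} and Proposition~\ref{pro:smallRank} give the block form with $\Gamma[V_2]$, $\Gamma[V_3]$ complete or empty; the diagonal blocks of \eqref{eqn:3ev} give $MM^\transpose$ and $M^\transpose M$ of the form $\mu J+\nu I$ with a common $\nu>0$; and Lemma~\ref{lem:LGtoL} forces $n_2=n_3$ and $k_{23}=k_{32}=\delta$. From there the paper splits on whether $\Gamma[V_2]$ and $\Gamma[V_3]$ are empty or complete, dispatches the easy configurations by triregularity and quotient-matrix determinant arguments (Lemma~\ref{lem:equitable}, Theorem~\ref{thm:equitableValency}), and in the remaining configurations identifies $M$ as the incidence matrix of a symmetric $2$-design and asserts that the resulting Diophantine systems built on \eqref{eqn:2desSYM} ``result in no solutions,'' without displaying those eliminations. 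You instead prove $\theta_1+\theta_2=-(\varepsilon_{22}+\varepsilon_{33})$ once and for all by reading off the coefficient of $M$ in the $(V_2,V_3)$-block of \eqref{eqn:3ev}, organise the cases by the attachment pattern of the apex vertex rather than by the induced subgraphs, and close every case with an elementary sign argument: nonnegativity of the off-diagonal entry $\mu_2$ of $MM^\transpose$ (a common-neighbour count), positivity of the Perron entries $\alpha_i$, and the monotone link $\alpha_i^2=k_i+\theta_1\theta_2$. I checked all six cases and they close as you claim: in the cone case $s=1$ gives $\mu_2=-1$ and $s=2$ gives $\mu_2=-\delta-1$; in the single-attachment case $\alpha_2=s\alpha_1$ is immediately fatal for $s\in\{0,1\}$, and for $s=2$ the forced $\alpha_3=3\alpha_1>2\alpha_1=\alpha_2$ contradicts $k_2=k_3+1$. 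Two small points deserve a sentence each in a write-up: (a) $M\notin\{O,J\}$ does follow from $t_{23}=2$, but only via minimality of the coherent closure --- if $M\in\{O,J\}$ then $A(\Gamma)$ lies in the rank-$11$ coherent algebra in which $V_2\times V_3$ is a single relation, whence $t_{23}\leqslant 1$; and (b) $n_2,n_3\geqslant 2$, needed for $\mu_2$ to exist as an off-diagonal entry, follows from $t_{22}=t_{33}=2$. What each approach buys: yours is self-contained and verifiable line by line, replacing the paper's suppressed integer-infeasibility computations with clean contradictions; the paper's route keeps the lemma inside the design-theoretic and quotient-matrix machinery used throughout Section~\ref{sec:tri}, at the cost of asking the reader to trust the unshown eliminations.
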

\begin{proof}
    Suppose, for a contradiction, that the coherent closure $\mathcal {W}(\Gamma)$ has type $\left [ \begin{smallmatrix}1 & 1 & 1 \\  & 2 & 2 \\  &  & 2 \end{smallmatrix}\right ]$ and suppose that $\mathfrak D(\Gamma) = \{V_1,V_2,V_3\}$.
Then, by Proposition~\ref{pro:smallRank} and Corollary~\ref{cor:allFib} (iii), the adjacency matrix $A$ of $\Gamma$ has the form  
\[
A = \begin{bmatrix}
    0 &  k_{21}\mathbf 1^\transpose & k_{31}\mathbf 1^\transpose \\
    k_{21}\mathbf 1 & A(\Gamma[V_2]) & M \\
    k_{31}\mathbf 1 & M^\transpose & A(\Gamma[V_3])
\end{bmatrix},
\]
where $\Gamma[V_1] = K_1$ and $\Gamma[V_2]$ and $\Gamma[V_3]$ are both empty or complete graphs of orders $n_2$ and $n_3$, respectively.
Let $A_2 = A(\Gamma[V_2])$, $A_3 = A(\Gamma[V_3])$, let $k_{12} = k_{21}n_2$, $k_{13}=k_{31}n_3$, and let $k_{22}$, $k_{23}$, $k_{32}$, and $k_{33}$ satisfy $A_2 \mathbf 1 = k_{22} \mathbf 1$, $M \mathbf 1 = k_{23} \mathbf 1$, $M^\transpose \mathbf 1 = k_{32} \mathbf 1$, and $A_3 \mathbf 1 = k_{33} \mathbf 1$.
Then
\[
A^2 =     \begin{bmatrix}
     k_{12}  + k_{13} & (k_{21}k_{22} + k_{31}k_{23})\mathbf 1^\transpose & (k_{21}k_{32} + k_{31}k_{33}) \mathbf 1^\transpose \\
    (k_{21} k_{22} + k_{31}k_{23}) \mathbf 1 & k_{21}J + A_2^2+MM^\transpose & k_{21}k_{31}J + A_2 M+MA_3 \\
    (k_{21} k_{32} + k_{31}k_{33}) \mathbf 1 & k_{21}k_{31}J + M^\transpose A_2+A_3M^\transpose & k_{31}J  + M^\transpose M +A_3^2
\end{bmatrix}.
\]
Apply \eqref{eqn:3ev} to obtain
\begin{equation}
\label{eqn:triregGen}
    A^2
-(\theta_1+\theta_2)A
+\theta_1\theta_2 I = 
\begin{bmatrix}
    \alpha_1^2 J  & \alpha_1 \alpha_2 J & \alpha_1 \alpha_3 J \\
    \alpha_1 \alpha_2 J & \alpha_2^2 J & \alpha_2 \alpha_3 J \\
    \alpha_1 \alpha_3 J &  \alpha_2 \alpha_3 J &  \alpha_3^2 J
\end{bmatrix}, \text{ where }
\end{equation}
$\alpha_1 = \sqrt{k_{12}  + k_{13}+\theta_1\theta_2}$, $\alpha_2 = \sqrt{k_{21}  + k_{22}+ k_{23}+\theta_1\theta_2}$, and $\alpha_3=\sqrt{k_{31}  + k_{32}+ k_{33}+\theta_1\theta_2}$.

From the centre block of \eqref{eqn:triregGen}, we obtain
\begin{equation}
\label{eqn:bbt22}
    MM^\transpose = \begin{cases}
 (k_{23}+\theta_1\theta_2)J_{n_2}-\theta_1\theta_2I, & \text{ if $\Gamma[V_2] = \overline K_{n_2}$};  \\
(k_{23}+(\theta_1+1)(\theta_2+1))J_{n_2}-(\theta_1+1)(\theta_2+1)I, & \text{ if $\Gamma[V_2] =  K_{n_2}$}.
\end{cases}
\end{equation}
Similarly, we can use the bottom-right block of \eqref{eqn:triregGen} to deduce that 
\begin{equation}
\label{eqn:bbt33}
    M^\transpose M = \begin{cases}
 (k_{32}+\theta_1\theta_2)J_{n_3}-\theta_1\theta_2I, & \text{ if $\Gamma[V_3] = \overline K_{n_3}$};  \\
(k_{32}+(\theta_1+1)(\theta_2+1))J_{n_3}-(\theta_1+1)(\theta_2+1)I, & \text{ if $\Gamma[V_3] =  K_{n_3}$}.
\end{cases}
\end{equation}

Using \eqref{eqn:bbt22} and \eqref{eqn:bbt33} together with Lemma~\ref{lem:LGtoL}, we find that $n_2 = n_3$.
By double counting edges between $V_2$ and $V_3$, we find that $k_{23} = k_{32}$.

First, we assume that both $\Gamma[V_2]$ and $\Gamma[V_3]$ are empty, that is, $k_{22} = 0$ and $k_{33} = 0$.
Since $\Gamma$ is triregular, we cannot have both $k_{21} = k_{31} = 1$.
Without loss of generality, since $\Gamma$ is connected, we assume that $k_{21}=1$ and $k_{31}=0$.
Then the valency-partition $\mathfrak D(\Gamma)$, which, by Theorem~\ref{thm:equitableValency}, is equitable, has quotient matrix
\[
Q = Q_{\mathfrak D(\Gamma)}(\Gamma) = \begin{bmatrix}
    0 & n_1 & 0 \\
    1 & 0 & k_{23} \\
    0 & k_{32} & 0
\end{bmatrix}.
\]
However, $\det Q = 0$, which is impossible, by Lemma~\ref{lem:equitable} and Corollary~\ref{cor:gt0}.

Next, we assume that both $\Gamma[V_2]$ and $\Gamma[V_3]$ are complete, that is, $k_{22} = n_2-1$ and $k_{33} = n_3-1$.
Since $\Gamma$ is triregular, we cannot have both $k_{21} = k_{31} = 1$.
Without loss of generality, since $\Gamma$ is connected, we assume that $k_{21}=1$ and $k_{31}=0$.
Furthermore, using \eqref{eqn:bbt22} and \eqref{eqn:bbt33}, we deduce that $M$ is the incidence matrix of a symmetric design with parameters $(v,k,\lambda)$, where $v = n_2 = n_3$, $k=k_{23}$, and $\lambda = k+(\theta_1+1)(\theta_2+1)$.
Thus, we obtain
\begin{align}
    (\theta_1+1)(\theta_2+1) & = \lambda-k. \label{eqn:t1mt23vcc}
\end{align}
The valency-partition $\mathfrak D(\Gamma)$, which, by Theorem~\ref{thm:equitableValency}, is equitable, has quotient matrix
\[
Q = Q_{\mathfrak D(\Gamma)}(\Gamma) = \begin{bmatrix}
    0 & v & 0 \\
    1 & v-1 & k \\
    0 & k & v-1
\end{bmatrix}.
\]
In this case $\det (xI-Q) = x^3-2(v-1)x^2+(v^2-k^2-3v+1)x+v(v-1)$.
Since $\det (Q) < 0$, using Lemma~\ref{lem:equitable}, it follows that $\operatorname{spec}Q = \{[\theta_0]^1,[\theta_1]^1,[\theta_2]^1 \}$.
Thus, we obtain
\begin{align}
    \theta_0 + \theta_1+\theta_2 &= 2(v-1);  \label{eqn:t0pt1pt2cc}\\
    \theta_0(\theta_1 + \theta_2)+\theta_1\theta_2 &= v^2-k^2-3v+1;  \label{eqn:et21cc}\\
    \theta_0 \theta_1 \theta_2 & = -v(v-1). \label{eqn:t0mt1mt2cc}
\end{align}
Putting \eqref{eqn:t1mt23vcc}, \eqref{eqn:t0pt1pt2cc}, \eqref{eqn:et21cc}, and \eqref{eqn:t0mt1mt2cc} together with \eqref{eqn:2desSYM} and the inequalities $\theta_0 > \theta_1 \geqslant 1$, and $\theta_2 \leqslant -2$ results in no solutions.

Lastly, we assume that $\Gamma[V_2]$ is empty and $\Gamma[V_3]$ is complete, i.e., $k_{22} = 0$ and $k_{33} = n_3-1$.
Furthermore, again using \eqref{eqn:bbt22} and \eqref{eqn:bbt33}, we deduce that $M$ is the incidence matrix of a symmetric design with parameters $(v,k,\lambda)$, where $v = n_2 = n_3$, $k=k_{23}$, and $\lambda = k+\theta_1\theta_2 = k+(\theta_1+1)(\theta_2+1)$.
Thus, we obtain
\begin{align}
    \theta_1+\theta_2 &= -1;  \label{eqn:t1pt23v}\\
    \theta_1 \theta_2 & = \lambda-k. \label{eqn:t1mt23v}
\end{align}

The valency-partition $\mathfrak D(\Gamma)$, which, by Theorem~\ref{thm:equitableValency}, is equitable, with quotient matrix
\[
Q = Q_{\mathfrak D(\Gamma)}(\Gamma) = \begin{bmatrix}
    0 & k_{21}v & k_{31}v \\
    k_{21} & 0 & k \\
    k_{31} & k & v-1
\end{bmatrix}.
\]
Since $\det Q = k_{21}v(1-v+2k_{31}k)$, by Lemma~\ref{lem:equitable} and Theorem~\ref{thm:chengbipartite}, we must have $k_{21} = 1$.
There remain two cases to consider: $k_{31} = 0$ and $k_{31} = 1$.

Consider the case $k_{31} = 0$, i.e.,
\[
Q = \begin{bmatrix}
    0 & v & 0 \\
    1 & 0 & k \\
    0 & k & v-1
\end{bmatrix}.
\]
Since $\det Q = -v(v-1) < 0$, by Lemma~\ref{lem:equitable}, we must have that $\theta_0$, $\theta_1$, and $\theta_2$ are eigenvalues of $Q$.
Thus, we obtain
\begin{align}
    \theta_0 + \theta_1+\theta_2 &= v-1;  \label{eqn:t0pt1pt}\\
    \theta_0 \theta_1 \theta_2 & = -v(v-1). \label{eqn:t0mt1mt}
\end{align}
Putting equations \eqref{eqn:t1pt23v}, \eqref{eqn:t1mt23v}, \eqref{eqn:t0pt1pt}, and \eqref{eqn:t0mt1mt} together with equation \eqref{eqn:2desSYM} and the inequalities $\theta_0 > \theta_1 \geqslant 1$, and $\theta_2 \leqslant -2$ results in no solutions.

Now, consider the case $k_{31} = 1$, i.e.,
\[
Q = \begin{bmatrix}
    0 & v & v \\
    1 & 0 & k \\
    1 & k & v-1
\end{bmatrix}.
\]
In this case $\det (xI-Q) = x^3-(v-1)x^2-(2v+k^2)x+v(v-1-2k)$.
Since $2v+k > 0$, using Lemma~\ref{lem:equitable}, it follows that either $\operatorname{spec}Q = \{[\theta_0]^1,[\theta_1]^1,[\theta_2]^1 \}$ or $\operatorname{spec}Q = \{[\theta_0]^1,[\theta_2]^2 \}$.
Thus, we obtain
\begin{align}
    \theta_0 + \theta^\prime+\theta_2 &= v-1;  \label{eqn:t0pt1pt2}\\
    \theta_0(\theta^\prime + \theta_2)+\theta^\prime\theta_2 &= -k^2-2v;  \label{eqn:et21}\\
    \theta_0 \theta^\prime \theta_2 & = v(2k-v+1) \label{eqn:t0mt1mt2}
\end{align}
corresponding to the two possible spectra of $Q$, corresponding to $\theta^\prime \in \{\theta_1,\theta_2\}$.
We can check for integer solutions for both of the resulting systems of equations.
For $\theta^\prime = \theta_1$, putting \eqref{eqn:t1pt23v}, \eqref{eqn:t1mt23v}, \eqref{eqn:t0pt1pt2}, \eqref{eqn:et21}, and \eqref{eqn:t0mt1mt2} together with \eqref{eqn:2desSYM} and the inequalities $\theta_0 > \theta_1 \geqslant 1$, and $\theta_2 \leqslant -2$ results in no solutions.
Finally, For $\theta^\prime = \theta_2$, putting \eqref{eqn:t1pt23v}, \eqref{eqn:t1mt23v}, \eqref{eqn:t0pt1pt2}, \eqref{eqn:et21}, and \eqref{eqn:t0mt1mt2} together with \eqref{eqn:2desSYM} and the inequalities $\theta_0 > \theta_1 \geqslant 1$, and $\theta_2 \leqslant -2$ results in no solutions.
\end{proof}

Now we are ready to prove Theorem~\ref{thm:rank14lb}.

\begin{proof}[Proof of Theorem~\ref{thm:rank14lb}]
    Suppose the rank of $\mathcal {W}(\Gamma)$ has rank less than $14$.
    By Lemma~\ref{lem:HigmanType} together with Lemma~\ref{lem:type211}, Lemma~\ref{lem:type31113}, and Lemma~\ref{lem:conesd}, $\mathcal {W}(\Gamma)$ must have type $ \left [\begin{smallmatrix} 1 & 1 & 1 \\  & 1 & 1 \\ &  & 1 \end{smallmatrix} \right ]$.
    It is straightforward to check that this is not possible.
\end{proof}

\begin{table}[htbp]
    \centering
    \begin{tabular}{c|c|c|c}
       Valencies & Spectrum & Coherent rank & Reference \\
       \hline
     {$\left \{[45]^1, [25]^{18}, [13]^{27} \right \}$}  & {$\left \{[21]^1,[3]^{19},[-3]^{26} \right \}$} & 16 & \cite{BM81}  \\
         {$\left \{[15]^{4}, [10]^{16}, [7]^{4} \right \}$}  & {$\left \{[11]^1,[3]^{7},[-2]^{16} \right \}$} & 18 &  \cite{VANDAM1998101}\\
    {$\left \{[96]^1, [61]^{64}, [21]^{32} \right \}$}  & {$\left \{[56]^1,[4]^{41},[-4]^{55} \right \}$} & 20 & \cite{BM81} \\
     $\left \{[24]^{18}, [14]^{9}, [8]^{9} \right \}$  & $\left \{[20]^1,[2]^{17},[-3]^{18} \right \}$ & 29 & \cite{vDddC05}\\
     {$\left \{[24]^{18}, [14]^{9}, [8]^{9} \right \}$}  & {$\left \{[20]^1,[2]^{17},[-3]^{18} \right \}$} & 240 & \cite{Cheng_2016} \\
     $\left \{[35]^{1}, [26]^{7}, [19]^{35} \right \}$  & $\left \{[21]^1,[\frac{-1\pm \sqrt{41}}{2}]^{21} \right \}$ & 949 & \cite{vDddC05}\\
     $\left \{[35]^{1}, [26]^{7}, [19]^{35}  \right \}$  & $\left \{[21]^1,[\frac{-1\pm \sqrt{41}}{2}]^{21} \right \}$ & 1849 & \cite{vDddC05}\\
  \end{tabular}
    \caption{List of known connected graphs that have three distinct eigenvalues and three distinct valencies.}
    \label{tab:3distinctValencies}
\end{table}

In Table~\ref{tab:3distinctValencies}, we list all currently known examples of connected graphs that have precisely three distinct eigenvalues and three distinct valencies.
We note that, among these graphs, the smallest coherent rank is $16$, which invites the question of whether the lower bound of Theorem~\ref{thm:rank14lb} can be improved.

In view of the following result, we denote by $\widehat{\Gamma}$ the cone over the graph $\Gamma$.

\begin{theorem}
    \label{thm:converse14}
    Let $\theta_1$ and $\theta_2$ be integers such that $\theta_1 \geqslant 1$ and $\theta_2 \leqslant -2$.
    Suppose $\mathcal Q$ is a QSD with parameters $(v,k,\lambda; b,r,\{x,y\})$ where $\mathsf B_x(\mathcal Q) \in \mathscr G_3(e_0,e_1,e_2)$ and
    \[
    (x,y) = (k+(\theta_1+1)(\theta_2+1) - (e_1+1)(e_2+1),k + \theta_1\theta_2 - e_1e_2).
    \]
    \begin{itemize}
        \item[(i)] If $\lambda = r + \theta_1\theta_2$, $\frac{r-\lambda+y-k}{x-y} = \theta_1+\theta_2$, and
        \begin{align*}
            k+e_0-\theta_1-\theta_2 &= \sqrt{(v+b+\theta_1\theta_2)(1+k+e_0+\theta_1\theta_2)} \\
            r-\theta_1-\theta_2 &= \sqrt{(v+b+\theta_1\theta_2)(1+\lambda)} \\
            1+k+\frac{\lambda k -yr}{x-y} &= \sqrt{(1+\lambda)(1+k+e_0+\theta_1\theta_2)}
        \end{align*}
        then $\widehat{\mathsf T_x(\mathcal Q)} \in \mathscr G_3 \left (\theta_0,\theta_1,\theta_2 \right )$ where $\theta_0$ is the largest eigenvalue of $\left [\begin{smallmatrix}
        0 & v & b \\ 1 & 0 & r \\ 1 & k & e_0
    \end{smallmatrix} \right ]$.
        Furthermore, if $|\{v+b,1+r, 1+k+e_0\}|=3$ then $\widehat{\mathsf T_x(\mathcal Q)}$ has coherent rank $14$.
        \item[(ii)] If $\lambda = r +(\theta_1+1)(\theta_2+1)$, $\frac{r-\lambda+y-k}{x-y} = \theta_1+\theta_2+1$, and
        \begin{align*}
            k+e_0-\theta_1-\theta_2 &= \sqrt{(v+b+\theta_1\theta_2)(1+k+e_0+\theta_1\theta_2)} \\
            v-1+r-\theta_1-\theta_2 &=  \sqrt{(v+b+\theta_1\theta_2)(v+r+\theta_1\theta_2)} \\
            1+k+\frac{\lambda k -yr}{x-y} &= \sqrt{(v+r+\theta_1\theta_2)(1+k+e_0+\theta_1\theta_2)}
        \end{align*}
         then $\widehat{\mathsf W_x(\mathcal Q)} \in \mathscr G_3 \left (\theta_0,\theta_1,\theta_2 \right )$, where $\theta_0$ is the largest eigenvalue of $\left [\begin{smallmatrix}
        0 & v & b \\ 1 & v-1 & r \\ 1 & k & e_0
    \end{smallmatrix} \right ]$.
    Furthermore, if $|\{v+b,v+r, 1+k+e_0\}|=3$ then $\widehat{\mathsf W_x(\mathcal Q)}$ has coherent rank $14$.
    \end{itemize}
\end{theorem}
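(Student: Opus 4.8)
The plan is to establish (i) in full and to obtain (ii) \emph{mutatis mutandis}, following the template of the proof of Theorem~\ref{thm:converse} but now carrying a third fibre for the apex of the cone. Order the vertices of $\widehat{\mathsf T_x(\mathcal Q)}$ with the apex first, the $v$ points second, and the $b$ blocks third, so that its adjacency matrix is
\[
A = \begin{bmatrix} 0 & \mathbf 1^\transpose & \mathbf 1^\transpose \\ \mathbf 1 & O & M \\ \mathbf 1 & M^\transpose & X \end{bmatrix},
\]
where $M$ is the incidence matrix of $\mathcal Q$ and $X = A(\mathsf B_x(\mathcal Q))$. First I would form $A^2$ and assemble $A^2 - (\theta_1+\theta_2)A + \theta_1\theta_2 I$ block by block, substituting $MM^\transpose = rI+\lambda(J-I)$ from \eqref{eqn:2desmmt}, $M^\transpose M = kI + xX + y(J-I-X)$ from \eqref{eqn:mtmquasisym}, the identity \eqref{eqn:BA2} for $MX$, and the strongly regular expansion of $X^2$ used in the proof of Theorem~\ref{thm:converse}.

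The aim is to exhibit this matrix as $\bm\alpha\bm\alpha^\transpose$, where $\bm\alpha$ is constant on each fibre with values $\alpha_i = \sqrt{d_i + \theta_1\theta_2}$ and $(d_0,d_1,d_2) = (v+b,\,1+r,\,1+k+e_0)$ are the three degrees read off from the quotient matrix. Each of the six blocks produces one scalar condition. The two diagonal blocks over the points and the blocks force the non-$J$ parts to cancel: the points block does so exactly when $\lambda = r + \theta_1\theta_2$, and the blocks block does so, through the $X^2$ and $M^\transpose M$ expansions, exactly by virtue of the given formula for $(x,y)$. In the points--blocks block the coefficient of $M$ vanishes precisely when $\tfrac{r-\lambda+y-k}{x-y} = \theta_1+\theta_2$, leaving a multiple of $J$; the three off-diagonal blocks then each contribute a $J$-coefficient (with an extra $+1$ coming from the apex through the $\mathbf 1\mathbf 1^\transpose$ terms), and matching these against $\alpha_0\alpha_1$, $\alpha_0\alpha_2$, and $\alpha_1\alpha_2$ returns the three square-root identities. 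Since the apex dominates every vertex, $\widehat{\mathsf T_x(\mathcal Q)}$ is connected, so the converse half of \eqref{eqn:3ev} places it in $\mathscr G_3(\theta_0,\theta_1,\theta_2)$; as its valency partition is equitable (Theorem~\ref{thm:equitableValency}) with quotient matrix $\left[\begin{smallmatrix} 0 & v & b \\ 1 & 0 & r \\ 1 & k & e_0\end{smallmatrix}\right]$, the Perron value $\theta_0$ is the largest eigenvalue of that matrix.

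For the coherent rank I would note that $A$ lies in the span of the fourteen $\{0,1\}$-matrices coming from the block decomposition: the three diagonal identities, the apex-incidences $\mathbf 1\mathbf 1^\transpose$ to the points and to the blocks together with their transposes, the off-diagonal $J-I$ on the points, the pair $\{M,\,J-M\}$ between points and blocks together with their transposes, and $\{X,\,J-I-X\}$ on the blocks. The relations above show that products of these generators remain in their span, so they span a coherent algebra of rank at most $14$, whence the coherent closure of $\widehat{\mathsf T_x(\mathcal Q)}$ has rank at most $14$. The hypothesis $|\{v+b,\,1+r,\,1+k+e_0\}| = 3$ is exactly what certifies that the three degrees are pairwise distinct, so the graph is genuinely triregular and Theorem~\ref{thm:rank14lb} furnishes the matching lower bound $14$. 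Part (ii) runs identically except that the points induce $J-I$ in place of $O$; this replaces $MM^\transpose$ by $(J-I)^2 + MM^\transpose$ in the points block and inserts a $(J-I)M = kJ-M$ term into the points--blocks block, which is precisely the source of the shifts $\lambda = r + (\theta_1+1)(\theta_2+1)$, $\tfrac{r-\lambda+y-k}{x-y} = \theta_1+\theta_2+1$, and the additional $k$ appearing in the third square-root identity of (ii).

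I expect the delicate point to be the points--blocks off-diagonal block, where \eqref{eqn:BA2} must be combined with the $M$-coefficient-vanishing condition and the apex contribution to isolate the correct multiple of $J$; this is what pins down the last square-root identity. The only genuinely non-computational step is the rank bound: the upper bound rests on checking multiplicative closure of the fourteen generators, while the lower bound is not a calculation but an invocation of Theorem~\ref{thm:rank14lb}, available only after the distinct-degree hypothesis has secured triregularity.
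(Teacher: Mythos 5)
Your proposal is correct and takes essentially the same route as the paper: the paper likewise treats the three-eigenvalue property as a modification of the proof of Theorem~\ref{thm:converse} (left to the reader), derives triregularity from the hypotheses, invokes Theorem~\ref{thm:rank14lb} for the lower bound, and establishes the upper bound by exhibiting precisely your fourteen block matrices as a rank-$14$ coherent algebra containing the adjacency matrix of $\widehat{\mathsf T_x(\mathcal Q)}$. One observation your block-by-block verification would surface: in part (i) the points--blocks block actually yields $1+\frac{\lambda k-yr}{x-y}=\sqrt{(1+\lambda)(1+k+e_0+\theta_1\theta_2)}$, without the extra $k$ in the paper's stated third identity; as you yourself note, that $k$ arises only in part (ii) from the term $(J-I)M=kJ-M$, so the stated identity in (i) appears to carry a typographical slip that carrying out your computation would correct.
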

\begin{proof}
The proofs that $\widehat{\mathsf T_x(\mathcal Q)}$ and  $\widehat{\mathsf W_x(\mathcal Q)}$ each have precisely three distinct eigenvalues is a modification of the proof of Theorem~\ref{thm:converse}, which we leave to the reader.
The assumptions also imply that $\widehat{\mathsf T_x(\mathcal Q)}$ and $\widehat{\mathsf W_x(\mathcal Q)}$ are triregular.
    By Theorem~\ref{thm:rank14lb}, the coherent rank of $\Gamma$ is at least $14$.
    Let $M$ be the incidence matrix of $\mathcal Q$.
      The upper bound of $14$ follows since $\mathcal {W}(\Gamma)$ is a subalgebra of the algebra
      \[
       \left \langle \begin{array}{c}
           \left [ \begin{smallmatrix}
          1 & \mathbf 0^\transpose & \mathbf 0^\transpose \\
          \mathbf 0 & O & O \\
          \mathbf 0 & O & O
      \end{smallmatrix} \right ], 
      \left [ \begin{smallmatrix}
          0 & \mathbf 1^\transpose & \mathbf 0^\transpose \\
          \mathbf 0 & O & O \\
          \mathbf 0 & O & O
      \end{smallmatrix} \right ],
       \left [ \begin{smallmatrix}
          0 & \mathbf 0^\transpose & \mathbf 1^\transpose \\
          \mathbf 0 & O & O \\
          \mathbf 0 & O & O
      \end{smallmatrix} \right ],
       \left [ \begin{smallmatrix}
          0 & \mathbf 0^\transpose & \mathbf 0^\transpose \\
          \mathbf 1 & O & O \\
          \mathbf 0 & O & O
      \end{smallmatrix} \right ],  \left [ \begin{smallmatrix}
          0 & \mathbf 0^\transpose & \mathbf 0^\transpose \\
          \mathbf 0 & O & O \\
          \mathbf 1 & O & O
      \end{smallmatrix} \right ],  \\
         \left [ \begin{smallmatrix}
          0 & \mathbf 0^\transpose & \mathbf 0^\transpose \\
          \mathbf 0 & I & O \\
          \mathbf 0 & O & O
      \end{smallmatrix} \right ], \left [ \begin{smallmatrix}
          0 & \mathbf 0^\transpose & \mathbf 0^\transpose \\
          \mathbf 0 & J-I & O \\
          \mathbf 0 & O & O
      \end{smallmatrix} \right ], 
      \left [ \begin{smallmatrix}
          0 & \mathbf 0^\transpose & \mathbf 0^\transpose \\
          \mathbf 0 & O & M \\
          \mathbf 0 & O & O
      \end{smallmatrix} \right ], \left [ \begin{smallmatrix}
          0 & \mathbf 0^\transpose & \mathbf 0^\transpose \\
          \mathbf 0 & O & J-M \\
          \mathbf 0 & O & O
      \end{smallmatrix} \right ], \left [ \begin{smallmatrix}
          0 & \mathbf 0^\transpose & \mathbf 0^\transpose \\
          \mathbf 0 & O & O \\
          \mathbf 0 & M^\transpose & O
      \end{smallmatrix} \right ], 
      \left [ \begin{smallmatrix}
          0 & \mathbf 0^\transpose & \mathbf 0^\transpose \\
          \mathbf 0 & O & O \\
          \mathbf 0 & J-M^\transpose & O
      \end{smallmatrix} \right ], \\
      \left [ \begin{smallmatrix}
          0 & \mathbf 0^\transpose & \mathbf 0^\transpose \\
          \mathbf 0 & O & O \\
          \mathbf 0 & O & I
      \end{smallmatrix} \right ],  \left [ \begin{smallmatrix}
          0 & \mathbf 0^\transpose & \mathbf 0^\transpose \\
          \mathbf 0 & O & O \\
          \mathbf 0 & O & A(\mathsf B_x(\mathcal Q))
      \end{smallmatrix} \right ],  \left [ \begin{smallmatrix}
          0 & \mathbf 0^\transpose & \mathbf 0^\transpose \\
          \mathbf 0 & O & O \\
          \mathbf 0 & O & J-I-A(\mathsf B_x(\mathcal Q))
      \end{smallmatrix} \right ]
      \end{array}\right \rangle,
      \]
      which is a coherent algebra of rank $14$.
\end{proof}

A cursory search of possible QSD parameters that satisfy Theorem~\ref{thm:converse14} resulted in just one possibility.

  \begin{example}
  \label{ex:14}
      Suppose $\mathcal  Q$ is a QSD with parameters $(85,35,34; 204,84,\{10,15\})$.
      Then $\mathcal Q$ satisfies Theorem~\ref{thm:converse14} with $\theta_1 = 4$ and $\theta_2 = -11$.
      Furthermore, by Theorem~\ref{thm:converse14}, we have $\widehat{\mathsf W_{10}(\mathcal Q)} \in \mathscr G_3(119,4,-11)$.
      In particular, $\widehat{\mathsf W_{10}(\mathcal Q)}$ has spectrum $\{[119]^1,[4]^{204},[-11]^{85}\}$, its degree sequence  is $\{[289]^1,[169]^{85},[64]^{204}\}$ and its coherent rank is $14$.
  \end{example}

  The existence of a QSD corresponding to Example~\ref{ex:14} is an open problem (see Question~\ref{q:14}).

\section{Large coherent rank}
\label{sec:latin}

In the previous sections, we have been concerned with graphs in $\mathscr G_3$ that have small coherent rank.
We now turn our attention to large coherent rank.
Before our work, the largest known coherent rank of a graph in $\mathscr G_3$ has coherent rank at most $19^6$.
This graph, which was discovered by Van Dam~\cite[Section 2.2]{VANDAM1998101} was obtained by taking a strongly regular graph on $19^4$ vertices and interchanging some of its edges with nonedges (see \emph{switching}, below).
Furthermore, all known infinite families of graphs in $\mathscr G_3$ have a fixed coherent rank.

In this section, we exhibit a conjecturally infinite family of biregular graphs in $\mathscr G_3$ (see Remark~\ref{rem:infinitefamily}) whose coherent ranks can conjecturally become arbitrarily large (see Question
~\ref{qn:largerank}).

\subsection{Switching strongly regular graphs}

Let $\Gamma = (V,E)$ be a graph and let $\sigma = \{U,V-U\}$ be a partition of the vertex set $V$.
We can write the adjacency matrix of $\Gamma$ in block form as
\[
A(\Gamma) = \begin{bmatrix}
    A(\Gamma[U]) & M \\
    M^\transpose & A(\Gamma[V-U])
\end{bmatrix}.
\]
Define the (switched) graph $\Gamma^\sigma$ via its adjacency matrix as
\[
A(\Gamma^\sigma) = \begin{bmatrix}
    A(\Gamma[U]) & J-M \\
    J-M^\transpose & A(\Gamma[V-U])
\end{bmatrix}.
\]
We say that the graph $\Gamma^\sigma$ was obtained by \textbf{switching} the graph $\Gamma$ with respect to the vertex-partition $\sigma$.
We refer to $\Gamma^\sigma$ as the \textbf{switched} graph.
Muzychuk and Klin~\cite{MUZYCHUK1998191} used the following tool to study the spectrum of a graph obtained by switching some other graph.

\begin{proposition}[{\cite[Corollary 3.2]{MUZYCHUK1998191}}]
\label{pro:specSwitch}
    Let $\sigma$ be an equitable $2$-partition of a graph $\Gamma$.
    Then
    \[
    \operatorname{spec} \left (A(\Gamma^{\sigma}) \right ) =  \operatorname{spec}(A(\Gamma)) \cup \operatorname{spec}(Q_\sigma(\Gamma^{\sigma})) - \operatorname{spec}(Q_\sigma(\Gamma)) .
    \]
\end{proposition}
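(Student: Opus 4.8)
The plan is to exploit the orthogonal decomposition of $\mathbb R^{V}$ induced by the equitable partition $\sigma = \{U, V-U\}$ and to observe that switching acts on the two summands in a controlled way. Write the adjacency matrices in the block form of the statement, put $n_1 = |U|$ and $n_2 = |V-U|$, and let $S$ be the $V \times 2$ matrix whose two columns are the indicator vectors of $U$ and $V-U$. First I would record that, since $\sigma$ is equitable for $\Gamma$, the off-diagonal block $M$ has constant row sums and constant column sums; hence $J-M$ does too, so $\sigma$ is equitable for $\Gamma^\sigma$ as well, and $Q_\sigma(\Gamma^\sigma)$ is obtained from $Q_\sigma(\Gamma)$ by replacing its off-diagonal entries with their complements (the diagonal blocks, and hence the diagonal entries of the quotient matrix, are untouched by switching).

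Next I would invoke the standard identities $A(\Gamma)S = S\,Q_\sigma(\Gamma)$ and $A(\Gamma^\sigma)S = S\,Q_\sigma(\Gamma^\sigma)$. These show that $W := \operatorname{col}(S)$ is invariant under both adjacency matrices, with induced action (in the basis given by the columns of $S$) equal to the corresponding quotient matrix; in particular the spectrum of each adjacency matrix restricted to $W$ equals the spectrum of the corresponding quotient matrix. Since both adjacency matrices are symmetric, the orthogonal complement $W^\perp$ — the vectors summing to zero on each part — is invariant too. Thus each of the two spectra splits as a multiset union of the spectrum of the relevant quotient matrix and the spectrum of the restriction to $W^\perp$.

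The key step is to show that $A(\Gamma)$ and $A(\Gamma^\sigma)$ induce the same spectrum on $W^\perp$. Here I would introduce the diagonal sign matrix $T = \left[\begin{smallmatrix} I_{n_1} & O \\ O & -I_{n_2}\end{smallmatrix}\right]$, an orthogonal involution that preserves both $W$ and $W^\perp$. Conjugation by $T$ negates the off-diagonal blocks, giving $T A(\Gamma) T = \left[\begin{smallmatrix} A(\Gamma[U]) & -M \\ -M^\transpose & A(\Gamma[V-U])\end{smallmatrix}\right]$, so that $A(\Gamma^\sigma) - T A(\Gamma) T = \left[\begin{smallmatrix} O & J \\ J & O\end{smallmatrix}\right]$. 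The crucial observation is that these all-ones off-diagonal blocks annihilate $W^\perp$: for $v = (v_1,v_2) \in W^\perp$ one has $J v_2 = (\mathbf 1^\transpose v_2)\mathbf 1 = \mathbf 0$ and likewise the bottom block sends $v_1$ to $(\mathbf 1^\transpose v_1)\mathbf 1 = \mathbf 0$, because $v_1$ and $v_2$ each sum to zero. Hence $A(\Gamma^\sigma) v = T A(\Gamma) T v$ for every $v \in W^\perp$, so $A(\Gamma^\sigma)$ and $T A(\Gamma) T$ agree as operators on $W^\perp$; the latter equals $(T|_{W^\perp})\,(A(\Gamma)|_{W^\perp})\,(T|_{W^\perp})$, a conjugate of $A(\Gamma)|_{W^\perp}$ by the invertible involution $T|_{W^\perp}$. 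Consequently the two restrictions to $W^\perp$ are similar and share the same spectrum.

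Assembling the pieces finishes the proof: from $\operatorname{spec}(A(\Gamma)) = \operatorname{spec}(Q_\sigma(\Gamma)) \cup \operatorname{spec}(A(\Gamma)|_{W^\perp})$ I read off $\operatorname{spec}(A(\Gamma)|_{W^\perp}) = \operatorname{spec}(A(\Gamma)) - \operatorname{spec}(Q_\sigma(\Gamma))$ as multisets, and substituting this into $\operatorname{spec}(A(\Gamma^\sigma)) = \operatorname{spec}(Q_\sigma(\Gamma^\sigma)) \cup \operatorname{spec}(A(\Gamma^\sigma)|_{W^\perp}) = \operatorname{spec}(Q_\sigma(\Gamma^\sigma)) \cup \operatorname{spec}(A(\Gamma)|_{W^\perp})$ gives precisely the stated identity. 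I expect the main obstacle to be the middle paragraph: recognising that, exactly on the zero-sum subspace $W^\perp$, switching coincides with the sign conjugation $T$ because the all-ones blocks vanish there. Everything else is routine bookkeeping with the invariant-subspace decomposition furnished by the equitable partition.
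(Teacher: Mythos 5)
Your proof is correct. Note, however, that the paper does not prove this proposition at all: it is imported verbatim from Muzychuk and Klin \cite[Corollary 3.2]{MUZYCHUK1998191}, so there is no internal proof to compare against. Your argument is essentially the standard one behind that cited result: decompose $\mathbb R^{V}$ into $W=\operatorname{col}(S)$ and $W^{\perp}$, identify the action on $W$ with the quotient matrices via $A(\Gamma)S=S\,Q_\sigma(\Gamma)$ and $A(\Gamma^{\sigma})S=S\,Q_\sigma(\Gamma^{\sigma})$, and then observe that on $W^{\perp}$ switching coincides with conjugation by the sign involution $T=\left[\begin{smallmatrix} I & O \\ O & -I\end{smallmatrix}\right]$ because the all-ones blocks kill zero-sum vectors; equivalently, if $v=(v_1,v_2)\in W^{\perp}$ is a $\lambda$-eigenvector of $A(\Gamma)$, then $Tv$ is a $\lambda$-eigenvector of $A(\Gamma^{\sigma})$. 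All the supporting details you supply check out: $\sigma$ remains equitable for $\Gamma^{\sigma}$, both invariant-subspace decompositions are legitimate since the matrices are symmetric and $T$ preserves $W$ and $W^{\perp}$, and the final multiset bookkeeping is valid because $\operatorname{spec}(Q_\sigma(\Gamma))$ is a sub-multiset of $\operatorname{spec}(A(\Gamma))$ by your own splitting, so the union-then-difference in the statement agrees with your difference-then-union. In short: a complete, self-contained proof of a result the paper only cites.
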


In view of Proposition~\ref{pro:specSwitch}, to find a graph with three distinct eigenvalues, it suffices to find a strongly regular graph $\Gamma = (V,E)$ and an equitable partition $\sigma = \{U,V-U\}$ such that $\operatorname{spec}(Q_\sigma(\Gamma^\sigma))$ shares an eigenvalue with $\Gamma$.
This is how we proceed with a focus on the family of block graphs of orthogonal arrays.

\subsection{Block graphs of orthogonal arrays}

For $m \geqslant 2$ and $n \geqslant 1$, an \textbf{orthogonal array} $\operatorname{OA}(m,n)$ is an $m \times n^2$ matrix $M$ with entries from the set $\{1,\dots,n\}$ such that the $n^2$ columns of each $2 \times n^2$ submatrix of $M$ contain all $n^2$ ordered pairs of elements of $\{1,\dots,n\}$.

\begin{theorem}[{\cite[Theorems 6.39 and 6.40]{stinson}}]
    \label{thm:existence}
    Let $n$ be a prime power and $2 \leqslant m \leqslant n+1$.
    Then there exists an $\operatorname{OA}(m,n)$.
\end{theorem}

We will only consider the construction implicit from Theorem~\ref{thm:existence} (see also~\cite[Construction 3.29]{handbookCD}).  
However, there exist other constructions of orthogonal arrays \cite[Section 6.5]{stinson}. 
We denote by $\mathsf{OA}(m,n)$ the graph whose vertices are the columns of $\operatorname{OA}(m,n)$ where two columns are adjacent if there is a row where they share the same entry.
The graph $\mathsf{OA}(m,n)$ is commonly known as the \textbf{block graph of an orthogonal array} and is known as the \emph{Latin squares graph} in \cite{VANDAM1998101}, therein denoted by $L_m(n)$.

\begin{theorem}[{\cite[Section 5.5]{GM}}]
\label{thm:OAeigs}
    Let  $m \geqslant 2$ and $n \geqslant 1$ be integers satisfying $m \leqslant n+1$.
    Then the graph $\mathsf {OA}(m,n)$ has spectrum
    \[
    \left \{ [m(n-1)]^1, [n-m]^{m(n-1)}, [-m]^{(n-1)(n+1-m)} \right \}.
    \]
\end{theorem}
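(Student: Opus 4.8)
The plan is to show that $\mathsf{OA}(m,n)$ is a strongly regular graph and then read off its spectrum using Lemma~\ref{lem:SRGparamev}. First I would record the combinatorial structure underlying the construction. The vertex set consists of the $n^2$ columns of the array, and since any two rows of an $\operatorname{OA}(m,n)$ realise every ordered pair from $\{1,\dots,n\}$ exactly once, two distinct columns can agree in at most one row. For each row $i$ and each symbol $a$, the set of columns carrying $a$ in row $i$ is a ``line'' of size $n$; the $n$ lines coming from a fixed row partition the vertex set, and---again by the defining property---two lines from different rows meet in exactly one column. Two columns are adjacent precisely when they lie on a common line, and each column lies on exactly one line per row; hence every vertex has degree $m(n-1)$, and since the lines through a vertex in distinct rows meet only at that vertex, these $m(n-1)$ neighbours are distinct. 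As $m \geqslant 2$ the graph will turn out to have positive $\mu$ below and is therefore connected.

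Next I would compute the two intersection numbers $a$ and $c$. Consider first two adjacent columns $\mathsf c_1, \mathsf c_2$, meeting on a unique line $\ell$ in row $i_0$. Their common neighbours split into the $n-2$ remaining points of $\ell$, together with, for each ordered choice of distinct rows $j,j' \ne i_0$, the unique intersection of the line through $\mathsf c_1$ in row $j$ with the line through $\mathsf c_2$ in row $j'$; a short check using ``agreement in at most one row'' confirms these $(m-1)(m-2)$ points are pairwise distinct, distinct from the points of $\ell$, and distinct from $\mathsf c_1,\mathsf c_2$. This gives $a = (m-1)(m-2)+n-2$. For two non-adjacent columns the common neighbours are the $m(m-1)$ intersections of a line through $\mathsf c_1$ (in row $j$) with a line through $\mathsf c_2$ (in row $j' \ne j$), and the same bookkeeping shows they are distinct and differ from $\mathsf c_1,\mathsf c_2$, so $c = m(m-1)$. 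Thus $\mathsf{OA}(m,n) \in \operatorname{SRG}\bigl(n^2,\, m(n-1),\, (m-1)(m-2)+n-2,\, m(m-1)\bigr)$.

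Finally, I would extract the eigenvalues. By Lemma~\ref{lem:SRGparamev} the restricted eigenvalues $e_1 > e_2$ satisfy $e_1+e_2 = a-c = n-2m$ and $e_1 e_2 = c-k = -m(n-m)$, whose solutions are $e_1 = n-m$ and $e_2 = -m$. Writing the spectrum as $\{[m(n-1)]^1, [n-m]^f, [-m]^g\}$ (the degree $m(n-1)$ is a simple eigenvalue by Perron-Frobenius, as the graph is connected), the two conditions $1+f+g = n^2$ and $m(n-1)+f(n-m)-gm = 0$ (zero trace) solve to $f = m(n-1)$ and $g = (n-1)(n+1-m)$, which is the claimed spectrum. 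The main obstacle is the intersection-number computation of the previous paragraph: one must argue carefully that in each case the enumerated common neighbours are genuinely distinct and avoid the two starting vertices, which is exactly where the ``two columns agree in at most one row'' property is invoked repeatedly. I note that the formula degrades gracefully at the extreme $m = n+1$, where $g = 0$ and $\mathsf{OA}(n+1,n) = K_{n^2}$.
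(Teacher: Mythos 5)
Your proof is correct. Note that the paper does not prove this statement at all—it is imported as a known result, cited to Godsil and Meagher~\cite{GM}—and your argument is essentially the standard one given in such references: verify that two columns of an $\operatorname{OA}(m,n)$ agree in at most one row, deduce that $\mathsf{OA}(m,n) \in \operatorname{SRG}\bigl(n^2,\, m(n-1),\, (m-1)(m-2)+n-2,\, m(m-1)\bigr)$ by the line-intersection bookkeeping you describe, and then obtain the eigenvalues from Lemma~\ref{lem:SRGparamev} and the multiplicities from the trace and connectivity (which your observation $c = m(m-1) > 0$ for $m \geqslant 2$ justifies, since it forces diameter at most $2$). All of your intermediate computations check out, including the distinctness arguments for common neighbours and the degenerate case $m = n+1$.
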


Given an orthogonal array $\operatorname{OA}(m,n)$ and $i \in \{1,\dots,n\}$, let $C_i$ be the set of columns of $\operatorname{OA}(m,n)$ that have the entry $1$ in the $i$th row.
Then $C_i$ is a clique of order $n$ in the graph $\mathsf{OA}(m,n)$.
Furthermore, the cliques $C_i$ and $C_j$ are vertex disjoint for any $i \ne j$.

\subsection{A new family of biregular graphs with three distinct eigenvalues}

For $m \geqslant 2$ and $n \geqslant 1$, let $\Gamma = \mathsf{OA}(m,n)$ and let $\sigma = \{U,V-U\}$ be a partition of the vertex set $V(\Gamma)$ where $U = C_1 \cup \dots \cup C_N$.
Denote by $\mathsf S_N(m,n)$ the switched graph $\Gamma^\sigma$.

\begin{lemma}
\label{lem:N}
    Let $n \in \mathbb N$ and $2 \leqslant m \leqslant n+1$.
    Suppose $1 \leqslant N \leqslant n-1$.
    Then $\mathsf S_N(m,n)$ has three distinct eigenvalues if and only if
    \[
    N = \frac{n}{2} \pm \frac{\sqrt{(n-2m)(n-2m+2)(n+2)n}}{2(n-2m+2)}.
    \]
    Moreover, $\mathsf S_N(m,n)$ has spectrum
\[
    \left \{ [n+m(n-1)]^1, [n-m]^{m(n-1)-1}, [-m]^{(n-1)(n+1-m)+1} \right \}.
    \]
\end{lemma}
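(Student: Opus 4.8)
The plan is to apply Proposition~\ref{pro:specSwitch}, for which I first need to confirm that $\sigma=\{U,V-U\}$ is equitable and to compute the two relevant quotient matrices. Write $\Gamma=\mathsf{OA}(m,n)$, recall that $\Gamma$ is $m(n-1)$-regular, and recall the defining strength-$2$ property: any two distinct columns agree in at most one row, and, for any row $j$ and any fixed value, the $n$ columns carrying that value in row $j$ have pairwise distinct first-row entries, so these entries range over all of $\{1,\dots,n\}$. Using the cliques $C_1,\dots,C_n$ coming from the first row and $U=C_1\cup\dots\cup C_N$ (so $|U|=Nn$), I would count, for a vertex $x$ with first-row value $a\le N$: its $n-1$ neighbours sharing the first-row value all lie in $U$, while for each of the remaining $m-1$ rows exactly $N-1$ of the $n-1$ neighbours via that row lie in $U$ (by the permutation property). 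This yields the quotient matrix
\[
Q_\sigma(\Gamma)=\begin{bmatrix}(n-1)+(m-1)(N-1) & (m-1)(n-N)\\ (m-1)N & (n-1)+(m-1)(n-N-1)\end{bmatrix},
\]
confirming $\sigma$ is equitable; its row sums are $m(n-1)$ and its trace is $n+m(n-2)$, so by regularity its eigenvalues are $m(n-1)$ and $n-m$.

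Next I would record that switching replaces the off-diagonal block $M$ by $J-M$ while fixing the diagonal blocks, so the switched quotient matrix is obtained from $Q_\sigma(\Gamma)$ by replacing each off-diagonal row-sum entry $e$ by (size of the other part) minus $e$:
\[
Q_\sigma(\Gamma^\sigma)=\begin{bmatrix}(n-1)+(m-1)(N-1) & (n-N)(n-m+1)\\ N(n-m+1) & (n-1)+(m-1)(n-N-1)\end{bmatrix}.
\]
Its trace is again $n+m(n-2)$ and its eigenvalues $\lambda_1\ge\lambda_2$ satisfy
\[
\lambda_{1,2}=\tfrac12\left(n+m(n-2)\pm\sqrt{D}\right),\qquad D=(m-1)^2(2N-n)^2+4N(n-N)(n-m+1)^2 .
\]
Proposition~\ref{pro:specSwitch} then gives, as multisets,
\[
\operatorname{spec}(\Gamma^\sigma)=\left\{[n-m]^{m(n-1)-1},\,[-m]^{(n-1)(n+1-m)},\,\lambda_1,\,\lambda_2\right\},
\]
since the operation removes one copy each of $m(n-1)$ and $n-m$ from $\operatorname{spec}(\Gamma)$ and inserts $\lambda_1,\lambda_2$.

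I would then determine exactly when this multiset has three distinct values. The base part already contributes the two distinct values $n-m$ and $-m$ (distinct since $n>0$, each of positive multiplicity when $m\le n$), so three distinct eigenvalues forces $\{\lambda_1,\lambda_2\}$ to introduce at most one further value. Since $D>0$ in the relevant range $2\le m\le n$, $1\le N\le n-1$ (the second summand of $D$ vanishes only when $m=n+1$), the two quotient eigenvalues are distinct, so one of them must equal $n-m$ or $-m$. As $m(n-1)\ge n-m$, the value $n-m$ can only be the smaller root, forcing $\lambda_2=n-m$, $\lambda_1=m(n-1)$ and hence $D=n^2(m-1)^2$, which simplifies to $(2N-n)^2=n^2$, i.e. $N\in\{0,n\}$ — excluded by $1\le N\le n-1$. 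Therefore three distinct eigenvalues is equivalent to $\lambda_2=-m$, i.e. $D=n^2(m+1)^2$; in that case the trace forces $\lambda_1=n+m(n-1)$, giving precisely the claimed spectrum with $-m$ gaining one extra multiplicity.

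Finally I would solve $D=n^2(m+1)^2$ for $N$. Grouping via difference of squares, using $(m-1)^2-(n-m+1)^2=n(2m-n-2)$ and $(m+1)^2-(n-m+1)^2=(2m-n)(n+2)$, the equation collapses to
\[
(2N-n)^2\,n(2m-n-2)=n^2(2m-n)(n+2),
\]
whence $(2N-n)^2=\dfrac{n(n-2m)(n+2)}{n-2m+2}$, and solving the resulting quadratic in $N$ yields the stated formula. The main obstacle is the first step: getting both quotient matrices right, which hinges on the careful use of the strength-$2$ orthogonal-array property to count the cross-edges; once the quotient eigenvalues are in hand, the case analysis and the closing algebraic simplification are routine. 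I would also treat the degenerate boundary cases separately (namely $m=n+1$, where $\Gamma$ is complete, and the solution $N=n/2$ that arises when $n=2m$), where $\Gamma^\sigma$ is regular rather than biregular.
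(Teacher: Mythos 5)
Your proposal follows essentially the same route as the paper's proof: the same two quotient matrices, the same application of Proposition~\ref{pro:specSwitch}, and the same reduction to deciding when an eigenvalue of $Q_\sigma(\Gamma^\sigma)$ coincides with $-m$. In fact you supply more detail than the paper does: the paper states the quotient matrices without derivation, asserts that three distinct eigenvalues ``requires $-m$ to be an eigenvalue of $Q_\sigma(\Gamma^\sigma)$'', and declares the formula for $N$ ``straightforward to check'', whereas you justify equitability via the strength-$2$ property of the orthogonal array, make the case analysis explicit, and carry out the algebra (your difference-of-squares reduction of $D=n^2(m+1)^2$ to the stated formula is correct).

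That extra care, however, exposes one slip, and it occurs exactly at the step the paper waves through. When you rule out $\{\lambda_1,\lambda_2\}=\{m(n-1),\,n-m\}$, you claim that $D=n^2(m-1)^2$ ``simplifies to $(2N-n)^2=n^2$''. Using $4N(n-N)=n^2-(2N-n)^2$, the equation $D=n^2(m-1)^2$ actually collapses to
\[
4N(n-N)\bigl[(n-m+1)^2-(m-1)^2\bigr]=0,
\]
so the conclusion is $N\in\{0,n\}$ \emph{or} $n=2m-2$; your simplification silently divides by $(m-1)^2-(n-m+1)^2=n(2m-n-2)$. When $n=2m-2$ one has $D=n^2(m-1)^2$ for every $N$, hence $\operatorname{spec}(\mathsf S_N(m,n))=\operatorname{spec}(\mathsf{OA}(m,n))$, which has three distinct values for all $m\geqslant 2$; for instance $m=2$, $n=2$, $N=1$ switches $C_4$ back to $C_4$. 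So the asserted equivalence ``three distinct eigenvalues iff $\lambda_2=-m$'' fails at $n=2m-2$, where no $N$ satisfies the lemma's formula (which there degenerates to $0/0$). This is as much a defect of the statement as of your argument --- the paper's proof elides the point entirely, and it is harmless for the paper's applications, which all have $n\geqslant 2m$. If you add the hypothesis $n\ne 2m-2$ (equivalently, that the displayed formula is meaningful), the division is legitimate and your proof is complete and correct.
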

\begin{proof}
The quotient matrix
\[
Q_\sigma(\Gamma) = 
\begin{bmatrix}
    n-1 + (N-1)(m-1) & (n-N)(m-1) \\ N(m-1) & n-1 + (n-N-1)(m-1)
\end{bmatrix}
\]
has eigenvalues $m(n-1)$ and $n-m$.
Using Theorem~\ref{thm:OAeigs} and Proposition~\ref{pro:specSwitch}, for $\mathsf S_N(m,n)$ to have three distinct eigenvalues, we require $-m$ to be an eigenvalue of the quotient matrix 
\[
Q_\sigma(\Gamma^\sigma) = 
\begin{bmatrix}
    n-1 + (N-1)(m-1) & (n-N)(n-m+1) \\ N(n-m+1) & n-1 + (n-N-1)(m-1)
\end{bmatrix}.
\]
It is straightforward to check that $-m$ is a eigenvalue of $Q_\sigma(\Gamma^\sigma)$ if and only if
\[
N = \frac{n}{2} \pm \frac{\sqrt{(n-2m)(n-2m+2)(n+2)n}}{2(n-2m+2)}.
\]
For such $N$, the quotient matrix $Q_\sigma(\Gamma^\sigma)$ has eigenvalues $n+m(n-1)$ and $-m$.
\end{proof}

In view of Lemma~\ref{lem:N}, for $m \geqslant 2$ and $n \geqslant 1$ where $N = \frac{n}{2} - \frac{\sqrt{(n-2m)(n-2m+2)(n+2)n}}{2(n-2m+2)}$ is a positive integer less than $n$, we define the graph $\mathsf G(m,n) := \mathsf S_N(m,n)$.
Clearly, in order for $\mathsf G(m,n)$ to exist, we require the existence of an orthogonal array $\operatorname{OA}(m,n)$.
We now aim to find integers $m$ and $n$ such that $\mathsf G(m,n)$ exists.

\begin{lemma}
    Suppose that $\mathsf G(m,n)$ exists for some $n \geqslant 2$ and $m$.
    Then $\frac{n}{3} < m \leqslant \frac{n}{2}$.
\end{lemma}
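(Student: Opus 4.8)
The plan is to derive both inequalities directly from the closed form for $N$, using only that $\mathsf G(m,n)=\mathsf S_N(m,n)$ requires $N$ to be an integer with $1\le N\le n-1$. Set $D:=(n-2m)(n-2m+2)(n+2)n$, so that $N=\tfrac{n}{2}-\tfrac{\sqrt D}{2(n-2m+2)}$; the whole argument is an analysis of this expression together with the constraints $N\ge 1$ and $N\le n-1$.

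First I would establish $m\le n/2$ by ruling out $m>n/2$. Since $N$ is real we have $D\ge 0$, and as $n(n+2)>0$ this forces $(n-2m)(n-2m+2)\ge 0$. If $m>n/2$ then $n-2m<0$, so nonnegativity of the product forces $n-2m+2\le 0$; the value $n-2m+2=0$ leaves $N$ undefined, so in fact $n-2m+2<0$. With a negative denominator I would show that $N<n$ is equivalent to $m<0$: clearing the (negative) denominator reverses the inequality, and after squaring the surviving inequality simplifies to $-4m>0$. Since $m\ge 2$ this fails, so $N\ge n$, contradicting $N\le n-1$. Hence $m\le n/2$, and from here on $n-2m+2>0$.

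Next I would use $N\ge 1$ to obtain the lower bound. With a positive denominator, $N\ge 1$ rearranges to $\sqrt D\le (n-2)(n-2m+2)$; squaring (both sides are nonnegative, as $n\ge 2$) and cancelling the factor $n-2m+2>0$ yields $(n-2m)(n+2)n\le (n-2)^2(n-2m+2)$. Writing $a:=n-2m\ge 0$ and using $n(n+2)-(n-2)^2=6n-4$, this collapses to $a\le \tfrac{(n-2)^2}{3n-2}$. Finally, the elementary estimate $\tfrac{(n-2)^2}{3n-2}<\tfrac{n}{3}$ for $n\ge 2$ (the difference equals $\tfrac{12-10n}{3(3n-2)}<0$) gives $n-2m<n/3$, that is, $m>n/3$, completing the claim $\tfrac{n}{3}<m\le\tfrac{n}{2}$.

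The two squaring-and-simplifying steps are routine, so the only genuine obstacle is the sign bookkeeping for the denominator $n-2m+2$: it is negative in the case $m>n/2$ but positive in the lower-bound argument, and clearing or squaring it flips inequalities accordingly. Keeping track of this sign—and isolating the degenerate case $n-2m+2=0$ where $N$ is undefined—is the one place where care is needed; everything else is elementary algebra.
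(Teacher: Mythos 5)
Your proof is correct and follows essentially the same route as the paper's: both rule out a negative denominator $n-2m+2<0$ by showing it forces $N\geqslant n$ (via the identity $D-n^2(n-2m+2)^2=-4mn(n-2m+2)$), use $D\geqslant 0$ to conclude $n-2m\geqslant 0$, and extract the lower bound $m>n/3$ from $N\geqslant 1$. The only differences are cosmetic: you order the case analysis slightly differently, make the degenerate case $n-2m+2=0$ explicit, and phrase the final bound as $n-2m\leqslant\frac{(n-2)^2}{3n-2}<\frac{n}{3}$ rather than the paper's equivalent $3m\geqslant n+\frac{5n-6}{3n-2}$.
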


\begin{proof}
The proof follows from the condition that $N = \frac{n}{2} - \frac{\sqrt{(n-2m)(n-2m+2)(n+2)n}}{2(n-2m+2)}$ is a positive integer less than $n$.
Suppose $n - 2m < -2$.
Then 
\[
-n(n-2m+2) < \sqrt{(n-2m)(n-2m+2)(n+2)n},
\]
which implies that $N \geqslant n$, a contradiction.
Hence, $n - 2m > -2$.
Furthermore, since $(n-2m)(n-2m+2)(n+2)n \geqslant 0$, we must have $n - 2m \geqslant 0$.
Now, using the requirement that $N \geqslant 1$, we obtain $3m \geqslant n + \frac{5n-6}{3n-2}$, as required.
\end{proof}

Note that when $n = 2m$, the graph $\mathsf G(m,n)$ is regular, and hence, strongly regular.
Now, we will consider the special case when $n = 2m+1$.
In this case, using Theorem~\ref{thm:existence}, to show that $\mathsf G(\frac{n-1}{2},n)$ exists, it suffices to take $n$ to be a prime power such that $n/2 - \sqrt{3n(n + 2)}/6$ is an integer.
Let $(a_k)_{k \in \mathbb N}$ be the recurrence sequence defined by $a_k = 4a_{k-1} - a_{k-2}$ with initial conditions $a_0=1$, $a_1 = 5$.

\begin{lemma}
\label{lem:oddSquare}
    For each $k \in \mathbb N$, we have that $3(a_k^2+2) = (a_{k+1}-2a_k)^2$ is an odd square.
\end{lemma}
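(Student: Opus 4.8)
The plan is to split the statement into two independent parts: the algebraic identity $3(a_k^2+2) = (a_{k+1}-2a_k)^2$, and the assertion that this common value is an \emph{odd} square. Since the right-hand side is manifestly a perfect square, for the second part it suffices to prove that $a_{k+1}-2a_k$ is odd, and for this I would track the parity of the sequence itself.

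First I would handle parity. Reducing the recurrence $a_k = 4a_{k-1} - a_{k-2}$ modulo $2$ gives $a_k \equiv a_{k-2} \pmod 2$, so the parity of $a_k$ depends only on $k \bmod 2$. As $a_0 = 1$ and $a_1 = 5$ are both odd, a trivial induction shows that every $a_k$ is odd. Consequently $a_{k+1}-2a_k$ is odd, and hence $(a_{k+1}-2a_k)^2$ is the square of an odd integer, which settles the ``odd square'' claim once the identity is established.

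The heart of the argument is the identity, which I would prove by exhibiting a conserved quantity for the recurrence. Define
\[
I_k := a_{k+1}^2 - 4\,a_{k+1}a_k + a_k^2.
\]
Using the recurrence in the form $a_{k+1} + a_{k-1} = 4a_k$, a short computation factors the difference of consecutive terms as
\[
I_k - I_{k-1} = (a_{k+1}-a_{k-1})\bigl(a_{k+1}+a_{k-1} - 4a_k\bigr) = 0,
\]
so $I_k$ is independent of $k$. Evaluating at $k=0$ gives $I_0 = 25 - 20 + 1 = 6$, whence $I_k = 6$ for all $k$. Completing the square then yields
\[
(a_{k+1}-2a_k)^2 = a_{k+1}^2 - 4a_{k+1}a_k + 4a_k^2 = I_k + 3a_k^2 = 3a_k^2 + 6 = 3(a_k^2+2),
\]
which is exactly the claimed identity.

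The only genuinely delicate point is spotting the invariant $I_k$ and checking that it telescopes; once the factorisation $I_k - I_{k-1} = (a_{k+1}-a_{k-1})(a_{k+1}+a_{k-1}-4a_k)$ is in hand, the vanishing of the second factor is immediate from the defining recurrence. I do not expect any serious obstacle beyond this: the main care needed is to confirm that the relation $a_{k+1}+a_{k-1}=4a_k$ is valid at every index where $I_k$ is used (that is, for $k \geqslant 1$), which holds because the recurrence defines $a_k$ for all $k \geqslant 2$.
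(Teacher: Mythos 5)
Your proof is correct and follows essentially the same route as the paper: the invariant $I_k = a_{k+1}^2 - 4a_{k+1}a_k + a_k^2 = 6$ that you establish by telescoping is exactly the identity the paper records in conjugate-factored form, $(a_{k+1}-2a_k+a_k\sqrt{3})(a_{k+1}-2a_k-a_k\sqrt{3})=6$, which it declares ``routine to check.'' You simply make that routine verification explicit, and likewise spell out the parity induction for oddness that the paper leaves implicit.
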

\begin{proof}
    It is routine to check that $(a_{k+1}-2a_k+a_k\sqrt{3})(a_{k+1}-2a_k-a_k\sqrt{3})=6$, as required.
\end{proof}

If $n = a_k^2$ for some $k \in \mathbb N$ then, by Lemma~\ref{lem:oddSquare}, $\frac{n}{2} - \frac{\sqrt{3n(n + 2)}}{6}$ is an integer.
Thus, using Theorem~\ref{thm:existence}, each prime power in the sequence $(a_k)_{k \in \mathbb N}$ can produce a biregular graph with three distinct eigenvalues.

\begin{corollary}
  \label{cor:3evsConstruct}
  Let $k \in \mathbb N$.
  Suppose that $q=a_k$ is a prime power.
  Then $\mathsf G\left (\frac{q^2-1}{2},q^2 \right)$ is biregular and has spectrum
  \[
    \left \{ \left [\frac{q^4+1}{2} \right ]^1, \left [\frac{q^2+1}{2} \right ]^{\frac{q^4-2q^2-1}{2}}, \left [\frac{1-q^2}{2}\right ]^{\frac{q^4+2q^2-1}{2}} \right \}.
    \]
\end{corollary}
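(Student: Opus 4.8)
The plan is to specialise Lemma~\ref{lem:N} to the parameters $n = q^2$ and $m = (q^2-1)/2$, so that $\mathsf G(m,n) = \mathsf S_N(m,n)$ falls into the case $n = 2m+1$. First I would check that these parameters meet the hypotheses of Lemma~\ref{lem:N}. Since $q = a_k$ is a prime power, $n = q^2$ is a prime power; and since every $a_k$ is odd (immediate by induction from $a_0 = 1$, $a_1 = 5$ and the recurrence $a_k = 4a_{k-1}-a_{k-2}$), the number $m = (n-1)/2$ is a positive integer satisfying $2 \leqslant m \leqslant n+1$. Hence $\operatorname{OA}(m,n)$ exists by Theorem~\ref{thm:existence}. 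In the case $n = 2m+1$ one has $n-2m = 1$ and $n-2m+2 = 3$, so the defining quantity of $\mathsf G(m,n)$ collapses to $N = \tfrac{n}{2} - \tfrac{\sqrt{3n(n+2)}}{6}$, which is a positive integer strictly less than $n$ by the discussion preceding the statement (via Lemma~\ref{lem:oddSquare}); explicitly, $\sqrt{3n(n+2)} = a_k(a_{k+1}-2a_k)$ gives $N = a_k(a_k+a_{k-1})/6$, and $a_k+a_{k-1}$ is divisible by $6$ (even, since both terms are odd; and $\equiv 0 \pmod 3$ by the period-two pattern of $(a_k \bmod 3)$). Thus $\mathsf G(m,n)$ is well defined and exists.

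Next I would confirm biregularity. Because $\sigma$ is equitable for $\Gamma^\sigma$, every vertex in a fixed part of $\sigma$ has constant degree, so $\mathsf G(m,n)$ has at most two distinct degrees, namely the two row sums of $Q_\sigma(\Gamma^\sigma)$. Subtracting these row sums and simplifying, the degree difference factors as $(2N-n)(2m-n-2)$; with $n = 2m+1$ this is $-3(2N-n) = 3(n-2N)$, which is nonzero since $N < n/2$ strictly (the square-root term is positive). Hence the two part-degrees are distinct and the graph is genuinely biregular rather than regular, consistent with the remark that $\mathsf G(m,n)$ is regular precisely when $n = 2m$.

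Finally I would read off the spectrum from Lemma~\ref{lem:N}, which for admissible $N$ yields eigenvalues $n+m(n-1)$, $n-m$, $-m$ with multiplicities $1$, $m(n-1)-1$, $(n-1)(n+1-m)+1$. Substituting $n = q^2$ and $m = (q^2-1)/2$ and simplifying gives the eigenvalues $\tfrac{q^4+1}{2}$, $\tfrac{q^2+1}{2}$, $\tfrac{1-q^2}{2}$ together with the multiplicities $1$, $\tfrac{q^4-2q^2-1}{2}$, $\tfrac{q^4+2q^2-1}{2}$, as claimed; as a check these multiplicities sum to $q^4 = n^2 = |V(\mathsf G(m,n))|$ and are easily seen to be nonnegative even integers for $q$ odd. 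The argument is otherwise routine, so there is no substantive obstacle beyond bookkeeping: the real content resides in Lemma~\ref{lem:N} and Lemma~\ref{lem:oddSquare}, and the only points demanding care are verifying that $N$ is a positive integer in the admissible range $\{1,\dots,n-1\}$ and that the two part-degrees differ.
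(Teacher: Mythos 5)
Your proposal is correct and takes essentially the same route as the paper: the paper's own justification of this corollary is precisely the specialisation of Lemma~\ref{lem:N} to $n=2m+1$, with Lemma~\ref{lem:oddSquare} giving integrality of $N$ and Theorem~\ref{thm:existence} giving the orthogonal array; in fact you spell out details the paper leaves implicit, namely the divisibility $6 \mid a_k(a_k+a_{k-1})$ behind the claim that $N$ is an integer, and the explicit check that the two part-degrees differ. One trivial slip in your final sanity check: the multiplicities $\tfrac{q^4-2q^2-1}{2}$ and $\tfrac{q^4+2q^2-1}{2}$ are positive \emph{odd} integers, not even ones (for $q=5$ they equal $287$ and $337$), but since only positivity and integrality are needed there, this does not affect the argument.
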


There are only finitely many terms of the form $p^e$ in the sequence $(a_k)_{k \in \mathbb N}$ where $e > 1$ and $p$ is a prime~\cite{Stewart83}.
The (probable) primes in the sequence $(a_k)_{k \in \mathbb N}$ are listed in OEIS sequence A299107~\cite{oeis}.

\begin{remark}
\label{rem:infinitefamily}
    It is conjectured \cite[Conjecture 47]{Hone18} that $(a_k)_{k \in \mathbb N}$ contains infinitely many primes.
By Corollary~\ref{cor:3evsConstruct}, the validity of this conjecture implies the existence of a new infinite family of biregular graphs with three distinct eigenvalues: $\mathsf G \left (\frac{b_k^2-1}{2},b_k^2 \right )$ where $(b_k)_{k \in \mathbb N}$ is the subsequence of primes in $(a_k)_{k \in \mathbb N}$.
\end{remark}

Note that Van Dam~\cite{VANDAM1998101} already discovered two instances of Corollary~\ref{cor:3evsConstruct}, that is, $\mathsf G(12,25)$ and $\mathsf G(180,361)$.
In the same paper, Van Dam also discovered the graphs $\mathsf G(7,16)$ and $\mathsf G(120,243)$.
One can show, using elementary number theoretic arguments, that $\mathsf G(7,16)$ is the only graph of the form $\mathsf G(2^{k-1}-1,2^k)$.
We leave this to the reader to verify.

\section{Conclusion and open problems}
\label{sec:open}
We conclude with a selection of questions that arise naturally from our investigations.

\begin{question}
\label{qn:largerank}
    Can graphs in $\mathscr G_3(\theta_0,\theta_1,\theta_2)$ have arbitrarily large coherent rank?
\end{question}

All currently known infinite families in $\mathscr G_3(\theta_0,\theta_1,\theta_2)$ have a fixed coherent rank.
However, we have empirical evidence that suggests that the coherent rank of the graphs $\mathsf G(m,n)$ grows with $n$.
In particular, $\mathsf G(7,16)$ has coherent rank 2048 and $\mathsf G(12,25)$ has coherent rank $25^3 = 15625$.
Note that we are assuming the construction implicit in Theorem~\ref{thm:existence} - alternative constructions of orthogonal arrays could potentially produce different coherent ranks.
We conjecture that if an infinite family of graphs $\mathsf G(m,n)$ exists then the answer to Question~\ref{qn:largerank} is yes.

\begin{question}
    Does there exist $\Gamma \in \mathscr G_3(\theta_0,\theta_1,\theta_2)$ such that $\mathcal {W}(\Gamma)$ has rank $11$?
\end{question}

By Theorem~\ref{thm:muzychuk} together with Example~\ref{ex:rank8} and Example~\ref{ex:rank9}, we observe that there exist infinite families of graphs in $\mathscr G_3(\theta_0,\theta_1,\theta_2)$ having coherent ranks $3$, $5$, $6$, $8$, and $9$.
We know that no graph in $\mathscr G_3(\theta_0,\theta_1,\theta_2)$ can have coherent rank $4$ or $7$, by Theorem~\ref{thm:muzychuk} and Theorem~\ref{thm:wl7}.
Van Dam gave a construction of an infinite family of graphs in  $\mathscr G_3(\theta_0,\theta_1,\theta_2)$ that have coherent rank $10$, using a certain combination of symmetric designs and distance regular graphs~\cite[Section 6.1]{VANDAM1998101}. 
There are currently no known constructions of graphs in $\mathscr G_3(\theta_0,\theta_1,\theta_2)$ that have coherent rank $11$.
In Table~\ref{tab:rankExist}, we list the known realisable coherent ranks from graphs in $\mathscr G_3(\theta_0,\theta_1,\theta_2)$.

\begin{table}[htbp]
    \centering
    \begin{tabular}{c|cccccccccccccccccc}
        $r$ & 3 & 4 & 5 & 6 & 7 & 8 & 9 & 10 & 11 & 12 & 13 & 14 & 15 & 16 & 17 & 18 & 19 & 20 \\
        \hline
        Exists & Y & N & Y & Y & N & Y & Y & Y & ? & Y & Y & ? & ? & Y & ? & Y & ? & Y
    \end{tabular}
    \caption{The existence of graphs in $\mathscr G_3(\theta_0,\theta_1,\theta_2)$ having coherent rank $r$ for $3 \leqslant r \leqslant 20$.}
    \label{tab:rankExist}
\end{table}

Coherent ranks $16$, $18$, and $20$ correspond to triregular graphs in Table~\ref{tab:3distinctValencies}.
We also have examples of graphs in $\mathscr G_3(\theta_0,\theta_1,\theta_2)$ that have coherent ranks $12$ and $13$.

\begin{example}
    \label{ex:12}
    Let $\Gamma \in \operatorname{SRG}(40,12,2,4)$.
    The cardinality of $\operatorname{SRG}(40,12,2,4)$ is 28~\cite{Srg40}.
    Fix a vertex $\mathsf v \in V(\Gamma)$ such that $N(\mathsf v)$ is the disjoint union of four copies of $K_3$.
    Now partition $N(\mathsf v) = U \cup W$ such that $U$ and $W$ are disjoint subsets of cardinality $6$ and there are no edges between vertices in $U$ and vertices in $W$.
    Obtain a new graph on $39$ vertices by adding all edges between vertices of $U$ and $W$ and deleting the vertex $\mathsf v$. 
    The resulting graph $\Delta$ has degree sequence $\left \{ [12]^{27},[17]^{12} \right \}$ and spectrum $\{ [14]^1, [2]^{23},[-4]^{15} \}$.
    This method can produce $55$ pairwise non-isomorphic graphs of which $27$ have coherent rank $12$.

    Moreover, each graph can be switched with respect to its valency partition to produce a graph with degree sequence $\left \{ [16]^{27},[26]^{12} \right \}$ and spectrum $\{ [20]^1, [2]^{22},[-4]^{16} \}$.
    The coherent rank of switched graphs obtained from graphs with coherent rank $12$ remains equal to $12$.
    The type of the coherent closure of each of these graphs is $\left [ \begin{smallmatrix}
        4 & 2 \\
         & 4
    \end{smallmatrix} \right ]$.
\end{example}

\begin{example}
    \label{ex:13}
    Let $\mathcal D$ be a $2$-$(45,12,3)$ design that possesses a polarity with $36$ absolute points, that is, $\mathcal D$ has a symmetric incidence matrix $M$ whose diagonal contains $36$ entries equal to $1$.
    Suppose that, in addition, the $9 \times 9$ principal submatrix $N$ of $M$ induced on the rows/columns with a 0 on the diagonal is equal to the zero matrix $O$.
    Replace all the entries of $M$ in this $9 \times 9$ submatrix by $1$s.
    Now, form the adjacency matrix $A$ by setting its off-diagonal entries equal to those of $M$ and its diagonal entries equal to $0$.
    The resulting graph whose adjacency matrix is $A$ has degree sequence $\left \{ [11]^{36},[20]^{9} \right \}$ and spectrum $\{ [14]^1, [2]^{27},[-4]^{17} \}$.
    Using the list of $2$-$(45,12,3)$ designs from~\cite{Sanja}, we can produce nine pairwise non-isomorphic graphs of which four have coherent rank $13$.

    Moreover, each graph can be switched with respect to its valency partition to produce a graph with degree sequence $\left \{ [14]^{36},[32]^{9} \right \}$ and spectrum $\{ [20]^1, [2]^{26},[-4]^{18} \}$.
    The coherent rank of switched graphs obtained from graphs with coherent rank $13$ remains equal to $13$.
    The type of the coherent closure of each of these graphs is $\left [ \begin{smallmatrix}
        2 & 2 \\
         & 7
    \end{smallmatrix} \right ]$.
\end{example}

\begin{question}
    Does there exist $\Gamma \in \mathscr G_3$ such that $\mathcal {W}(\Gamma)$ has type $\left [ \begin{smallmatrix}
        3 & 2 \\
         & 3
    \end{smallmatrix} \right ]$?
\end{question}

Both constructions of Van Dam~\cite[Section 6.1 and Section~6.2]{VANDAM1998101} produce graphs whose coherent closure have type $\left [ \begin{smallmatrix}
        2 & 2 \\
         & 4
    \end{smallmatrix} \right ]$.
Coherent configurations that have type $\left [ \begin{smallmatrix}
        3 & 2 \\
         & 3
    \end{smallmatrix} \right ]$ are known as \emph{strongly regular designs}~\cite{srds}.
    We do not know if any graphs in $\mathscr G_3$ have a coherent closure equal to the adjacency algebra of a strongly regular design.

\begin{question}
\label{q:14}
    Does there exist a QSD with parameters $(85,35,34; 204,84,\{10,15\})$?
\end{question}

Using Example~\ref{ex:14}, the existence of such a QSD would show that the bound in Theorem~\ref{thm:rank14lb} is sharp.

\begin{question}
    Does there exist a graph in $\Gamma \in \mathscr G_3$ that has coherent rank $9$ and class $4$?
\end{question}

We have examples of graphs in $\mathscr G_3$ having coherent rank $9$ in classes 1, 2, and 3.
However, we are not aware of any example of a graph in class 4.

\begin{question}
    Does there exist a graph in $\Gamma \in \mathscr G_3$ that satisfies Lemma~\ref{lem:case3b}?
\end{question}

Example~\ref{ex:MK} is an example of a graph in class 3.
This graph is in $\mathscr G_3(21,5,-2)$ and its smallest eigenvalue $-2$ is an eigenvalue of the quotient matrix of its valency partition.
\textit{A priori}, it is possible for the second largest eigenvalue of a graph of class 3 to be an eigenvalue of the quotient matrix of its valency partition.
Such graphs must satisfy the assumption of Lemma~\ref{lem:case3b}.
However, we are not aware of any such example.

\section{Acknowledgements}

The first author is grateful to Saveliy Skresanov for informing us about the stabilization routine due to Sven Reichard~\cite{Sven}, to Edwin van Dam, Bill Martin, Misha Klin and Misha Muzychuk for their comments and information about the history of Haemer's question.
We have also benefited from insightful conversations with Ian Wanless and Vedran Kr\v cadinac.

The first author was partially supported by the Singapore Ministry of Education Academic Research Fund; grant numbers: RG18/23 (Tier 1) and MOE-T2EP20222-0005 (Tier 2).

\bibliographystyle{amsplain}
\providecommand{\bysame}{\leavevmode\hbox to3em{\hrulefill}\thinspace}
\providecommand{\MR}{\relax\ifhmode\unskip\space\fi MR }
% \MRhref is called by the amsart/book/proc definition of \MR.
\providecommand{\MRhref}[2]{%
  \href{http://www.ams.org/mathscinet-getitem?mr=#1}{#2}
}
\providecommand{\href}[2]{#2}

\appendix

\section{Tables of feasible parameters for QSDs}

In the appendix we provide tables of QSD parameters that correspond to the lemmas in Section~\ref{sec:param9}.
Together with Theorem~\ref{thm:CHNineq}, we use following necessary condition on the parameters of a QSD.

\begin{theorem}[{\cite[Theorem 48.13]{shrikhandeHCD}}]
    \label{thm:abs}
   Let $\mathcal Q$ be a QSD with parameters $(v,k,\lambda;b,r,\{x,y\})$ without repeated blocks.
   Then $b \leqslant \binom{v}{2}$, with equality if and only if $(\mathcal P, \mathcal B)$ is a $4$-design.
\end{theorem}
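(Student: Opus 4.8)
The plan is to realise each block as a vector in the space of unordered point-pairs and to bound the number of blocks by a dimension count. For each block $B\in\mathcal B$, let $u_B\in\mathbb R^{\binom v2}$ be the indicator of the $\binom k2$ pairs of points contained in $B$, and let $U$ be the $\binom v2\times b$ matrix whose columns are the $u_B$. Since $\langle u_B,u_{B'}\rangle=\binom{|B\cap B'|}{2}$, the Gram matrix $G:=U^\transpose U$ equals
\[
G=\binom k2 I+\binom x2 X+\binom y2(J-I-X),
\]
where $X=A(\mathsf B_x(\mathcal Q))$ is the adjacency matrix of the (strongly regular) block graph. As $G=U^\transpose U$ is positive semidefinite and lies in the Bose--Mesner algebra of $\mathsf B_x(\mathcal Q)$, it suffices to prove that $G$ is nonsingular: then the $u_B$ are linearly independent, whence $b=\operatorname{rank}(U)\le\binom v2$.

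To show $G$ is nonsingular I would compute its three eigenvalues from the spectrum of $\mathsf B_x(\mathcal Q)$ given in Theorem~\ref{thm:blockspec}. Writing $G=\bigl(\binom k2-\binom y2\bigr)I+\bigl(\binom x2-\binom y2\bigr)X+\binom y2 J$ and using $\binom x2-\binom y2=\tfrac12(x-y)(x+y-1)$, the eigenvalue on the restricted eigenspace of $e_2=\frac{y-k}{x-y}$ collapses to $\tfrac12(k-x)(k-y)$, which is strictly positive because, with no repeated blocks, distinct blocks meet in fewer than $k$ points, so $x,y<k$. The eigenvalue on $\mathbf 1$ is the (common) row sum of $G$, which is at least $\binom k2>0$; and a short computation using the $2$-design identities \eqref{eqn:2des1} and \eqref{eqn:2des2} shows that the remaining, restricted eigenvalue equals $\tfrac12\bigl(k(k-1)-y(y-1)+(x+y-1)(r-\lambda-k+y)\bigr)$, which is again positive. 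Since $G\succeq 0$ has no zero eigenvalue, it is invertible, giving $b\le\binom v2$.

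For the equality case, nonsingularity of $G$ makes the $u_B$ \emph{always} independent, so $b=\binom v2$ holds precisely when the $u_B$ form a basis of $\mathbb R^{\binom v2}$, i.e.\ when $U$ (equivalently the pair-by-block inclusion matrix $W_2$) is square and invertible. One direction follows from the Ray--Chaudhuri--Wilson inequality: a $4$-design has $\operatorname{rank}(W_2)=\binom v2$, forcing $b\ge\binom v2$, which together with the bound above yields $b=\binom v2$ for a quasi-symmetric $4$-design. The converse --- that a QSD with $b=\binom v2$ must be a $4$-design --- is the main obstacle. Here $W_2$ is invertible, and I would argue that $S:=UU^\transpose$, whose $(\{i,j\},\{k,l\})$-entry counts the blocks containing $\{i,j\}\cup\{k,l\}$, is forced (using that $G$ lies in the two-class Bose--Mesner algebra and is invertible) to lie in the Johnson-scheme algebra of $J(v,2)$, so that the $3$- and $4$-point block counts are constant, i.e.\ the design is a $4$-design; this is exactly the assertion that a quasi-symmetric design attaining $b=\binom v2$ is a tight $4$-design, which I would close by invoking the tight-design theory of Ray--Chaudhuri and Wilson. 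Making this converse rigorous, rather than the inequality itself, is where the real work lies.
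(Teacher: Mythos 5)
First, note the paper does not prove this statement at all: it is imported verbatim from the literature (it cites \cite[Theorem 48.13]{shrikhandeHCD}), so there is no internal proof to compare against; your proposal has to stand on its own as a reconstruction of the classical argument. For the inequality $b\leqslant\binom v2$ your argument is the standard Ray--Chaudhuri--Wilson-style proof and it is correct, except that the positivity of the eigenvalue on the $e_1$-eigenspace is asserted rather than shown. It does hold, and the verification is worth writing out: your expression simplifies, using $k(k-1)-y(y-1)=(k-y)(k+y-1)$, to
\[
\tfrac12\bigl((k-x)(k-y)+(x+y-1)(r-\lambda)\bigr),
\]
which is strictly positive because $x\ne y$ are nonnegative integers (so $x+y\geqslant 1$), $r>\lambda$ for a nontrivial $2$-design, and $x,y<k$ in the absence of repeated blocks. (Also, the $\mathbf 1$-eigenvalue is exactly $\lambda\binom k2$, by counting pairs-in-$B$ against blocks.) With that, $G$ is positive definite, the columns of $U=W_2$ are independent, and $b\leqslant\binom v2$ follows; the forward equality direction via generalized Fisher for $4$-designs is likewise fine.

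The genuine gap is the converse of the equality case, and you have correctly located it but not closed it. The step ``$S=UU^\transpose$ is forced, using that $G$ lies in the two-class Bose--Mesner algebra and is invertible, to lie in the Johnson-scheme algebra of $J(v,2)$'' has no argument behind it, and it cannot be waved through: invertibility of $W_2$ only makes $S=W_2GW_2^{-1}$ \emph{similar} to $G$, which yields spectral information about $S$ but nothing entrywise, whereas membership of $S$ in the Bose--Mesner algebra of $J(v,2)$ means precisely that the entries of $S$ are constant on pairs $P,Q$ with $|P\cap Q|=0$ and $|P\cap Q|=1$ --- i.e.\ that $\lambda_4$ and $\lambda_3$ are constant, which (given the $2$-design property) \emph{is} the $4$-design conclusion. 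So the claimed forcing is logically equivalent to what must be proved, not a route to it. Closing it ``by invoking the tight-design theory of Ray--Chaudhuri and Wilson'' is also uncomfortably close to circular, since the characterisation of quasi-symmetric designs attaining $b=\binom v2$ as (tight) $4$-designs is essentially the very theorem in question. A genuine proof of the converse needs an actual mechanism --- e.g.\ showing that the orthogonal projection onto the column space of $W_2$ (here the identity) forces the higher inclusion matrices $W_3^\transpose W_3$, $W_4^\transpose W_4$ to interact with the design so that the $3$- and $4$-point block counts are determined, as in the Ray--Chaudhuri--Wilson tight-design analysis. As submitted, the inequality half is sound (modulo the small computation above) but the ``only if'' half of the equality statement is unproven.
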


Calderbank~\cite{CALDERBANK198794,CALDERBANK1988} established additional necessary conditions on QSD parameters, which we refer to as C1 and C2 indicated in Theorem~\ref{thm:A}.
Let $\mathbb F_p$ denote the finite field with $p$ elements and let $S_p$ denote the subset of squares in $\mathbb F_p$.

\begin{theorem}
\label{thm:A}
    Let $\mathcal Q$ be a QSD with parameters $(v,k,\lambda;b,r,\{x,y\})$ and intersection numbers $x$ and $y$.
\begin{itemize}
    \item[(C1)] Suppose $x \equiv y \pmod 2$ and $r \not \equiv \lambda \pmod 4$.
    Then either
    \begin{align*}
        x &\equiv 0 \pmod 2, \quad  k \equiv 0 \pmod 4, \text { and }  v \equiv \pm 1 \pmod 8, \text{ or } \\
        x &\equiv 1 \pmod 2, \quad k \equiv v \pmod 4, \text { and }  v \equiv \pm 1 \pmod 8.
    \end{align*}
    \item[(C2)] Suppose $x \equiv y \pmod p$, where $p$ is an odd prime and $r \not \equiv \lambda \pmod {p^2}$.
    Then either
    \begin{itemize}
        \item $v \equiv 0 \pmod 2, \quad  v \equiv k \equiv x \equiv 0 \pmod p, \text { and }  (-1)^{v/2} \in S_p \text{ or }$
        \item $v \equiv 1 \pmod 2, \quad  v \equiv k \equiv x \not \equiv 0 \pmod p, \text { and }  x(-1)^{(v-1)/2} \in S_p;$
        \item $\lambda \equiv r \equiv 0 \pmod p$ and either
        \begin{align*}
        v &\equiv 0 \pmod 2, \quad \text{and} \quad  v \equiv k \equiv x \not \equiv 0 \pmod p; \\
        v &\equiv 0 \pmod 2, \quad  k \equiv x \not \equiv 0 \pmod p, \text { and }  v/x \not \in S_p; \\
        v &\equiv 1 \pmod {2p}, \quad  r \equiv 0 \pmod {p^2}, \text { and }  k \equiv x \not \equiv 0 \pmod p; \\
        v &\equiv p \pmod {2p}, \quad \text{and} \quad  k \equiv x \equiv 0 \pmod p; \\
        v &\equiv 1 \pmod 2, \quad   k \equiv x \equiv 0 \pmod p, \text { and }  v \not \in S_p; \\
        v &\equiv 1 \pmod 2, \quad  k \equiv x \equiv 0 \pmod p, v \in S_p \text{ and } (-1)^{(v-1)/2}\in S_p. \\
    \end{align*}
    \end{itemize}
\end{itemize}
\end{theorem}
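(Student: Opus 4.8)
The plan is to follow Calderbank's invariant-theoretic method and reduce the question to the arithmetic of a linear code over $\mathbb{F}_p$. Let $M$ be the $v \times b$ incidence matrix of $\mathcal{Q}$, and recall from \eqref{eqn:mtmquasisym} that $M^{\transpose} M = kI + xX + y(J - I - X)$, where $X = A(\mathsf{B}_x(\mathcal{Q}))$. The hypothesis $x \equiv y \pmod p$ annihilates the term $(x - y)X$ modulo $p$, so that
\[
M^{\transpose} M \equiv (k - y)I + yJ \pmod p.
\]
Thus the block characteristic vectors, viewed in $\mathbb{F}_p^{v}$, span a code $\mathcal{C}$ whose Gram matrix under the standard inner product is governed entirely by the residues $k \bmod p$ and $y \bmod p$. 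The second hypothesis fixes the $p$-adic size of the ambient form: from \eqref{eqn:2desmmt} and \eqref{eqn:2des1} one has $\det(M M^{\transpose}) = (r - \lambda)^{v-1} r k$, and $r \not\equiv \lambda \pmod{p^2}$ (respectively $r \not\equiv \lambda \pmod 4$ when $p = 2$) bounds the valuation $v_p(r - \lambda)$, hence controls the $p$-rank and the radical of the form carried by $\mathcal{C}$.

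Next I would equip $\mathcal{C}$, modulo its radical, with the induced nondegenerate symmetric bilinear (for $p = 2$, quadratic) form and extract its invariants. For odd $p$ such a form is classified up to equivalence by its rank and discriminant, and matching these invariants against the standard form on $\mathbb{F}_p^{v}$ — while tracking the parity of $v$, whether $p \mid k$ and $p \mid x$, and the value of $k - y$ — forces precisely the divisibility conditions of (C2), including the Legendre-symbol constraints such as $x(-1)^{(v-1)/2} \in S_p$ and $(-1)^{v/2} \in S_p$. For $p = 2$ the same mechanism specialises to the theory of binary self-orthogonal codes: when $k \equiv 0 \pmod 4$ the code is doubly even, and Gleason-type weight congruences, together with the index $[\mathcal{C}^{\perp} : \mathcal{C}]$ read off from $r - \lambda$, yield the two alternatives of (C1), each forcing $v \equiv \pm 1 \pmod 8$.

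The main obstacle is the case analysis: a single statement about the rank, discriminant, and radical of the associated $\mathbb{F}_p$-form must be unpacked into the long disjunction appearing in (C2), where each branch reflects a different combination of $v \bmod 2$, of the vanishing of $k, x \bmod p$, and of $\lambda \equiv r \equiv 0 \pmod p$. Keeping the quadratic-residue symbols coherent as the radical grows or shrinks across these branches is the delicate bookkeeping. Since the statement is exactly the theorem of Calderbank established in \cite{CALDERBANK198794, CALDERBANK1988}, for this paper I would cite those sources rather than reproduce the derivation, and then use the conditions (C1) and (C2) purely as a feasibility sieve on the candidate QSD parameters tabulated in this appendix.
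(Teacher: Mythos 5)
Your proposal is correct and matches the paper's treatment: the paper gives no proof of Theorem~\ref{thm:A}, but simply attributes conditions (C1) and (C2) to Calderbank and cites \cite{CALDERBANK198794,CALDERBANK1988}, exactly as you conclude by deferring to those sources and using the theorem as a feasibility sieve on the tabulated QSD parameters. Your sketch of the underlying code-theoretic/invariant-theoretic mechanism is a faithful outline of Calderbank's method, but it is supplementary; the operative step, citation, coincides with the paper's.
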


In the first column of the tables below, we list parameters of QSD corresponding to Class 1, Class 2, Class 3, and Class 4 graphs from Section~\ref{sec:param9}.
We only list parameters that satisfy both Theorem~\ref{thm:CHNineq} and Theorem~\ref{thm:abs}.
In the second column, we write the three distinct eigenvalues of the resulting total graph or whole graph.
The third column indicates the existence of a QSD having the given parameters.
If the nonexistence of a QSD with given parameters is due to a violation of C1 or C2 from Theorem~\ref{thm:A} then this is indicated in the fourth column, together with the relevant prime in the case when C2 is violated.
If a corresponding QSD exists, further details are given in the fourth column.

\begin{table}[htp]
    \centering
    \scriptsize
    \begin{tabular}{ |r|l|c|c|}
        \hline
        $(v,k,\lambda;b,r,\{x,y\}$)  & $(\theta_0,\theta_1,\theta_2)$ & Exists & Remark  \\ \hline % Header row
${(225, 36, 10; 400, 64,\{0,6\})}$ & ${(384, 9, -6)}$ & {?} & {} \\
${(232, 36, 15; 638, 99,\{0,6\})}$ & ${(594, 14, -6)}$ & {N} & {C2 $(p=3)$} \\
${(3501, 225, 40; 9725, 625,\{0,15\})}$ & ${(9375, 39, -15)}$ & {?} & {} \\
${(64576, 1600, 65; 105945, 2625,\{0,40\})}$ & ${(105000, 64, -40)}$ & {N} & {C2 $(p=5)$} \\
${(1075, 100, 66; 7697, 716,\{0,10\})}$ & ${(7160, 65, -10)}$ & {N} & {C1} \\

${(9549, 441, 70; 32891, 1519,\{0,21\})}$ & ${(31899, 69, -21)}$ & {?} & {} \\
${(2891, 196, 78; 17051, 1156,\{0,14\})}$ & ${(16184, 77, -14)}$ & {N} & {C1} \\
${(43561, 1225, 85; 107569, 3025,\{0,35\})}$ & ${(105875, 84, -35)}$ & {?} & {} \\
${(3550, 225, 96; 23998, 1521,\{0,15\})}$ & ${(22815, 95, -15)}$ & {N} & {C2 $(p=3)$} \\
       \hline
    \end{tabular}
    \caption{List of QSD parameters corresponding to Lemma~\ref{lem:case1} where $1\leqslant \theta_1 \leqslant 100$.}
    \label{tab:paramC1}
\end{table}
\begin{table}[htp]
    \centering
    \scriptsize
\begin{tabular}{ |r|l|c|c|}
        \hline
        $(v,k,\lambda;b,r,\{x,y\})$  & $(\theta_0,\theta_1,\theta_2)$ & Exists & Remark  \\ \hline % Header row
        ${(121, 46, 69; 484, 184, \{16, 21\})}$ & ${(368, 5, -23)}$ & {?} & {} \\
${(172, 64, 112; 817, 304, \{22,28\})}$ & ${(608, 6, -32)}$ & {N} & {C2 $(p=3)$} \\
${(661, 177, 236; 3305, 885, \{45,56\})}$ & ${(2655, 11, -59)}$ & {?} & {} \\
${(379, 136, 340; 2653, 952, \{46, 55\})}$ & ${(1904, 9, -68)}$ & {?} & {} \\
${(361, 145, 522; 3249, 1305, \{55,64\})}$ & ${(2175, 9, -87)}$ & {?} & {} \\
${(3627, 540, 330; 14911, 2220, \{78,99\})}$ & ${(13320, 21, -90)}$ & {N} & {C2 $(p=7)$} \\
       \hline
    \end{tabular}
    \caption{List of QSD parameters corresponding to Lemma~\ref{lem:case2} where $-100\leqslant \theta_2\leqslant -2$.}
    \label{tab:paramC2}
\end{table}
\begin{table}[htp]
    \centering
    \scriptsize
    \begin{tabular}{ |r|l|c|c|}
        \hline
        $(v,k,\lambda;b,r,\{x,y\})$  & $(\theta_0,\theta_1,\theta_2)$ & Exists & Remark  \\ \hline % Header row
        ${(8, 6, 15; 28, 21, \{4, 5\})}$ & ${(21, 5, -2)}$ & {Y} & {Example~\ref{ex:MK}} \\
        ${(120, 50, 35; 204, 85, \{20,25\})}$ & ${(153, 9, -6)}$ & {?} & {} \\
        ${(76, 40, 52; 190, 100, \{20, 24\})}$ & ${(125, 11, -5)}$ & {?} & {} \\
${(120, 75, 370; 952, 595, \{45,50\})}$ & ${(476, 44, -6)}$ & {?} & {} \\
${(169, 105, 585; 1521, 945, \{63, 69\})}$ & ${(735, 59, -7)}$ & {?} & {} \\
       \hline
    \end{tabular}
    \caption{List of QSD parameters corresponding to Lemma~\ref{lem:case3a} where $1 \leqslant \theta_1 \leqslant 100$.}
    \label{tab:paramC3}
\end{table}
\begin{table}[h!tp]
    \centering
    \scriptsize
    \begin{tabular}{ |r|l|c|c|}
        \hline
        $(v,k,\lambda;b,r,\{x,y\})$  & $(\theta_0,\theta_1,\theta_2)$ & Exists & Remark  \\ \hline % Header row
        
${(141, 45, 33; 329, 105, \{9, 15\})}$ & ${(175, 5, -13)}$ & {?} & {} \\
${(85, 40, 130; 595, 280, \{15,20\})}$ & ${(224, 4, -31)}$ & {?} & {} \\
${(232, 112, 296; 1276, 616, \{48,56\})}$ & ${(539, 7, -41)}$ & {?} & {} \\
${(5866, 1666, 777; 9637, 2737, \{441, 476\})}$ & ${(6647, 34, -57)}$ & {N} & {C2 $(p=5)$} \\
${(3655, 1450, 1380; 8772, 3480, \{550,580\})}$ & ${(5046, 29, -71)}$ & {N} & {C2 $(p=3)$} \\
       \hline
    \end{tabular}
    \caption{List of QSD parameters corresponding to Lemma~\ref{lem:case4b} where $-100 \leqslant \theta_2 \leqslant -2$.}
    \label{tab:paramC4}
\end{table}
\end{document}